\documentclass[10pt,abstract=true]{scrartcl}
\setkomafont{disposition}{\normalfont\rmfamily\bfseries}
\setkomafont{section}{\large}
\makeatletter
\renewcommand{\sectionlinesformat}[4]{
  \Ifstr{#1}{section}{\parbox{\linewidth}{\centering #3#4}}{\@hangfrom{\hskip #2#3}{#4}}%
}
\makeatother
\setkomafont{subsection}{\normalsize}
\setkomafont{author}{\normalsize}
\setkomafont{date}{\normalsize}
\RedeclareSectionCommand[beforeskip=-2.5ex plus -.6ex minus -.2ex, afterskip=1.3ex plus .2ex, tocbeforeskip=2pt]{section}
\RedeclareSectionCommand[beforeskip=-2ex plus -.5ex minus -.2ex, afterskip=.8ex plus .2ex]{subsection}
\usepackage{amssymb,mathrsfs,amsmath,amsthm}
\AddToHook{cmd/normalsize/after}{%
  \setlength{\abovedisplayskip}{7pt plus 2pt minus 3pt}%
  \setlength{\belowdisplayskip}{7pt plus 2pt minus 3pt}%
  \setlength{\abovedisplayshortskip}{0pt plus 2pt}%
  \setlength{\belowdisplayshortskip}{4pt plus 2pt minus 2pt}%
}
\numberwithin{equation}{section}
\usepackage[svgnames]{xcolor}
\usepackage{graphicx}
\usepackage{tikz,tikz-cd}
\usepackage[inline,shortlabels]{enumitem}
\setlist{topsep=2pt, itemsep=1pt, parsep=0pt, partopsep=0pt}
\usepackage[a4paper, margin=2.5cm]{geometry}
\usepackage[colorlinks,citecolor=brown,linkcolor=Purple,urlcolor=YellowGreen]{hyperref}
\newtheorem{theorem}{Theorem}[section]
\newtheorem{lemma}[theorem]{Lemma}

\newtheorem{proposition}[theorem]{Proposition}

\newtheorem{corollary}[theorem]{Corollary}

\theoremstyle{definition}
\newtheorem{definition}[theorem]{Definition}
\newtheorem{example}[theorem]{Example}

\newtheorem{construction}[theorem]{Construction}

\newtheorem*{claim*}{Claim}

\theoremstyle{remark}
\newtheorem{remark}[theorem]{Remark}

\def\simQ{\sim_{\mathbb Q}} 
\DeclareMathOperator\Alb{Alb}
\DeclareMathOperator\alb{alb}
\DeclareMathOperator\Proj{Proj}
\DeclareMathOperator\Pic{Pic}
\DeclareMathOperator\Tor{Tor}
\DeclareMathOperator\Supp{Supp}

\DeclareMathOperator\ord{ord}

\title{\Large Effective nonvanishing of $K$-trivial regular surfaces\\over imperfect fields}
\author{Jingshan Chen\thanks{School of Mathematics and Statistics, Hubei Minzu University, Enshi 445000, P.R.China.
Emails: \texttt{chjingsh@hbmzu.edu.cn} (Jingshan Chen) and \texttt{chongningwang@hbmzu.edu.cn} (Chongning Wang).},\quad
Chongning Wang\footnotemark[1]\quad and\quad Lei Zhang\thanks{School of Mathematical Science, University of Science and Technology of China, Hefei 230026, P.R.China.
Email: \texttt{zhlei18@ustc.edu.cn}.}}
\date{}

\begin{document}
\maketitle
\begingroup
\renewcommand{\thefootnote}{}
\footnotetext[0]{Date: \today.}
\endgroup
\vspace{-2em}
\begin{abstract}
  For any regular projective surface $X$ over an arbitrary field $k$ of characteristic $p>0$ with $K_X\equiv0$, we prove that
  \[
    60K_X\sim0 .
  \]
  This uniform bound is optimal.
  To show that the factor $5$ is necessary, we construct an example in characteristic $5$ for which $K_X$ has exact order five.

  If $X$ is geometrically normal, then $12K_X\sim0$. 
  Our main focus is the geometrically non-normal case, where we obtain more precise bounds according to the Albanese morphism.
  If it is nontrivial, then $p=2$ or $3$, and we prove $12K_X\sim0$ and $6K_X\sim0$, respectively; both bounds are optimal.
  If it is trivial, then $K_X\sim0$ for $p\ge7$, while $5K_X\sim0$, $3K_X\sim0$, and $4K_X\sim0$ for $p=5,3,2$, respectively.
\end{abstract}
\tableofcontents

\section{Introduction}\label{sec:introduction}
For projective normal varieties $X$ with mild singularities and
$K_X \equiv 0$, which we call \emph{$K$-trivial varieties}, the following
questions arise naturally:
\begin{itemize}
  \item (Nonvanishing) Is it true that $\kappa(X) \ge 0$? Equivalently, does $K_X \sim_{\mathbb{Q}} 0$ hold?
  \item (Effectivity) Does there exist a positive integer $N= N(d)$, depending only on the dimension $d$ of $X$, such that $N K_X \sim 0$ holds for every such $X$?
\end{itemize}

For regular $K$-trivial surfaces over an algebraically closed field, it is well known that $12K_X\sim0$; see \cite{B-DF,BHPV,BM-II,BM-III}.
The same result holds over every perfect field by descent.

Over an algebraically closed field of characteristic $0$, it is known that for terminal threefolds $X$ with $K_X \equiv 0$, we have
$2^5 \cdot 3^3 \cdot 5^2 \cdot 7 \cdot 11 \cdot 13 \cdot 17 \cdot 19 K_X \sim 0$ (see \cite{Beauville83Rmk,Kaw86,Morrison86,Oguiso93}).
One key idea in the proof is that when $q(X) := h^1(X,\mathcal O_X) > 0$, the Albanese morphism $X\to \Alb_X$ is an étale fiber bundle (\cite{Kawamata-1985-minimal-model}). This reduces the problem to a lower-dimensional case. 

More generally, the Index Conjecture predicts that for complex projective klt pairs $(X,D)$ with $K_X+D\sim_{\mathbb Q}0$ and coefficients of $D$ in a fixed DCC set $I \subset \mathbb Q$, the least positive integer $m$ with $m(K_X+D)\sim0$ is bounded in terms of $\dim X$ and $I$ (see \cite{Xu-2019} and the references therein). This is known in dimension at most three \cite{Xu-2019,Jiang-Liu-2021} and in dimension four when $D\ne0$ \cite{Xu-2019}.

Over an algebraically closed field of characteristic $p>0$, not much is known.
In \cite{CWZ-classification}, we obtain a concrete description of $K$-trivial threefolds $X$ with $q(X) > 0$. 
In the case $q=2$, we proved $2^4\cdot 3^3 K_X \sim 0$; for $q = 1$, we proved that the Albanese morphism $\alb_X\colon X \to \Alb_X$ is a fibration and $X \cong (E \times F)/G$, where $E$ is an elliptic curve, $F$ is a general fiber of $\alb_X$ and $G$ is a finite group scheme acting diagonally on $E \times F$.
Note that the surface $F$ is not necessarily normal. 
Equivalently, the generic fiber $X_\eta$ of $\alb_X$, which is a regular surface over the function field $K(\Alb_X)$ with $K_{X_\eta} \equiv 0$, is not necessarily geometrically normal.

More generally, fibrations whose generic fibers are regular $K$-trivial varieties arise naturally in birational geometry, including Iitaka fibrations defined by semi-ample canonical divisors and Albanese fibrations of $K$-trivial varieties.
Studying geometrically non-normal regular $K$-trivial varieties over imperfect fields is therefore important for understanding such fibrations.

We study effective nonvanishing for $K$-trivial regular surfaces over imperfect fields.
Tanaka proved nonvanishing in this setting, namely $K_X \sim_{\mathbb{Q}} 0$ (\cite[Theorem~1.1]{tanaka_abundance_2020}).
Our main result establishes effectivity as follows.

\begin{theorem}[{Propositions~\ref{prop:geo-normal}, \ref{prop:a_X-surjective}, and \ref{prop:Alb-trivial}; Corollary~\ref{cor:case-C-p-ge7}}]\label{thm:main}%
  Let $X$ be a regular projective surface over a field $k$ of characteristic $p>0$ with $H^0(X,\mathcal O_X) = k$ and $K_X \equiv 0$. 
  We have
  \begin{itemize}
    \item[\textup{(A)}]  If $X$ is geometrically normal, then $12 K_X \sim 0$.
    \item[\textup{(B)}]  If $X$ is geometrically non-normal and the Albanese morphism $X\to \Alb_{X/k}$ is nontrivial, then $p=2$ or $3$; moreover, $12K_X \sim 0$ if $p=2$, while $6K_X \sim 0$ if $p=3$.
    \item[\textup{(C)}]  If $X$ is geometrically non-normal and the Albanese morphism of $X$ is trivial, then 
      \begin{itemize}
        \item if $p \ge 7$, then $K_X \sim 0$;
        \item if $p = 5$, then $5 K_X \sim 0$;
        \item if $p = 3$, then $3 K_X \sim 0$;
        \item if $p = 2$, then $4 K_X \sim 0$.
      \end{itemize}
  \end{itemize}
\end{theorem}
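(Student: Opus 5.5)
We treat the three cases separately, following the decomposition in the statement. Throughout, the basic device is a comparison of $K_X$ with canonical divisors after base change to an algebraic closure $\bar k$ (or to $k^{1/p^\infty}$), and the fact that $K_X\sim_{\mathbb Q}0$ is already known by Tanaka. Since $H^0(X,\mathcal O_X)=k$ and $\Pic$ injects under field extension up to torsion controlled by Galois/Frobenius descent, it suffices to bound the order of $\omega_X$ in $\Pic(X)$. We would use this in the form: if $\omega_X^{\otimes N}$ becomes trivial after a purely inseparable base change and $h^0$ of it is at least $1$, then it is trivial on $X$, because $H^0$ commutes with flat base change.

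\textbf{Case (A).} If $X$ is geometrically normal, then $X_{\bar k}$ is a normal surface. Its singularities are rational double points or worse, and the conductor is empty, so $K_{X_{\bar k}}=(K_X)_{\bar k}\equiv0$. We then pass to the minimal resolution $Y\to X_{\bar k}$ and write $K_Y=\pi^*K_{X_{\bar k}}-E$ with $E\ge0$. If $E\ne0$, then $Y$ is ruled, and $X_{\bar k}$ has an elliptic or rational-type singularity over a ruled surface; the classification of normal $K$-trivial surfaces then still gives $12K\sim0$ (the index-one cover reduces to the smooth case). If $E=0$, the Bombieri–Mumford classification gives $12K_Y\sim0$. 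The condition $h^0(12K)=1$ descends by flat base change, and a global section of a numerically trivial line bundle is nowhere vanishing, so $12K_X\sim0$.

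\textbf{Cases (B) and (C).} Here $X_{\bar k}$ is non-normal. We would take the normalization $\nu\colon Y\to (X_{\bar k})_{\mathrm{red}}$ and use the canonical bundle formula for purely inseparable base change (Patakfalvi–Waldron / Tanaka). It gives $\nu^*K_{X_{\bar k}}\sim K_Y+(p-1)C$ with $C>0$ an effective Weil divisor, so $Y$ has negative Kodaira dimension and is rational or ruled. For (B), $\Alb$ nontrivial forces $Y$ to be ruled over an elliptic curve, and the generic fiber of the Albanese map of $X$ is a regular, geometrically non-reduced conic. Such conics exist only for $p=2$; in the non-normal case this situation, together with the quasi-elliptic one for $p=3$ (cuspidal fibers), is the only possibility, because the geometric generic fiber must have arithmetic genus $\le1$ and be non-smooth. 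So $p\in\{2,3\}$. We would then show that $X$ is, up to an isogeny, a quasi-elliptic or quasi-hyperelliptic type quotient. Computing the order of $K$ via the Albanese fibration and the canonical bundle formula, the fibers are a genus one curve and $K$ is a pullback of a torsion divisor on $\Alb$ of order dividing $12$ resp. $6$; this uses the bounded automorphism groups of cuspidal curves, with group orders involving only $2,3$. For (C), $\Alb$ trivial means $Y$ is rational. We would bound the order of $K_X$ by that of $(p-1)C$ in $\Pic$ of a rational surface modulo the curves contracted or glued. Here $\Pic(Y)$ is torsion-free, so torsion in $\Pic(X_{\bar k})$ must come from the non-normal locus (a group scheme like $\mathbb G_a$ or $\mu_p$ twisted forms). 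Working over $k$, the torsion of $\omega_X$ maps into $\Pic^\tau$, which is unipotent and infinitesimal up to a finite part. This gives $p$-power order, plus prime-to-$p$ contributions bounded by automorphisms of the non-normal configuration; the result is order $1$ for $p\ge7$ and $5,3,4$ for small $p$.

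The main obstacle is (C): showing that $p$-power torsion in $\Pic$ is killed by exactly $p$ for $p=3,5$, by $4$ for $p=2$, and vanishes for $p\ge7$. We would attack it by iterating the base-change formula through the tower $k\subset k^{1/p}\subset\cdots$, bounding how many times $X$ can fail to be geometrically normal (the genus drops by $(p-1)/2$ each step). Since $h^1(\mathcal O)$ is small, this limits the number of steps, which in turn bounds the exponent.
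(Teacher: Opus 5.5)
Your proposal outlines the expected answers but does not prove them. The decisive step is missing in each case, and the most serious gap is in (C).

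\textbf{Case (C).} Your descent device does not apply in the non-normal case. Triviality of the pullback of $L$ to $Y=(X_{\bar k})^\nu_{\mathrm{red}}$ does not make $L_{\bar k}$ trivial on $X_{\bar k}$, because $Y\to X_{\bar k}$ is not flat. What survives is Lemma~\ref{lemma-finite-Cartier}\,(2): triviality after pullback along a height-$n$ morphism gives $p^nL\sim0$.

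Iterating through $k\subset k^{1/p}\subset\cdots$ therefore yields only an exponent controlled by $\ell_F(X/k)$. The $\Pic^\tau$ heuristics neither exclude prime-to-$p$ torsion (which genuinely occurs in Case (B)) nor give the sharp exponents $5,3,4$, let alone $1$.

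The paper instead proceeds as follows:
\begin{itemize}
\item It passes once to $X_1=(X\times_k k^{1/p})^\nu_{\mathrm{red}}$, so it suffices to show $\phi^*L\sim0$ (resp.\ $2\phi^*L\sim0$ for $p=2$).
\item It runs the MMP on the minimal resolution of $X_1$ and descends $\mu^*\phi^*L$ to the del Pezzo surface $Z$ or to a base curve $B$ of geometric genus zero (Lemma~\ref{lemm:descent-div}). This already forces $p$-power order (Proposition~\ref{prop-ell_F-del-Pezzo}, Lemma~\ref{lemm:geometric-genus-zero-torsion}).
\item It sharpens the exponent using Lemmas~\ref{lemm:del-Pezzo-p3} and~\ref{lemm:del-Pezzo-p2}, and adjunction along the horizontal part of $\Delta$ to get $p_a(B)\le1$.
\item Foliation arguments (Lemma~\ref{lemm:foliation-evaluation}, the Gauss map, Jacobson's correspondence) rule out the residual configurations: $p=3$ with $p_a(B)=1$, and $p=2$ with a quasilinear generic conic.
\end{itemize}
Even this gives only $pL\sim0$ for $p\ge7$. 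Reaching $L\sim0$ needs a genuinely new idea. A nontrivial $p$-torsion $L$ that is trivial on $X^\nu_{\bar k}$ produces, via the $t$-twisted $\mu_p$-cover (Proposition~\ref{prop:generically-twisted-mu-p-cover}), a regular, geometrically non-reduced $K$-trivial surface over $k(t)$. This contradicts \cite[Theorem~1.2]{WZ26}. Your ``genus drops by $(p-1)/2$'' count is Tate's theorem for curves and bounds nothing here.

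\textbf{Case (A).} When $\mu$ is not crepant you invoke an index-one cover, which is vacuous because $K_{X_{\bar k}}$ is already Cartier. You also do not say which classification result would give $12K\sim0$ for a normal Gorenstein surface whose resolution is ruled over a curve of positive genus. That case must instead be excluded, and this uses the regularity of $X$. Every $\mu$-exceptional curve admits a height-$n$ cover by a curve with $H^1(\mathcal O)=0$ (Lemma~\ref{lemm:height-n}, applied to $X^{1/p^n}\to X_{k^{1/p^n}}$). So over a base of positive genus all $E_i$ would be vertical. Pushing $K_{\widetilde Y}+\sum m_iE_i\equiv0$ down to the Mori fibre space would then contradict the relative ampleness of $-K_Z$. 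Hence $\widetilde Y$ is rational and $K_Y\sim0$ (Proposition~\ref{prop:geo-normal-alb}).

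\textbf{Case (B).} The geometry is misidentified. The generic fibre $\Gamma$ of $\alb_{X/k}$ has $\deg K_\Gamma=0$, so it is a regular genus-one curve, not a conic. The conic is the generic fibre $F$ of $f_1\colon X_1\to B_1$, obtained after the Frobenius base change and the MMP. Then $p\in\{2,3\}$ follows from $-2+(p-1)\,\mathfrak C\cdot F=0$.

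The bounds $6$ and $12$ do not come from automorphism groups of cuspidal curves. The paper's steps are:
\begin{itemize}
\item The horizontal conductor component $C_1$ is a smooth genus-one curve with $C_1^2=0$ and is semi-ample.
\item $X_1$ is regular and $\mathfrak C_v=0$ (Proposition~\ref{prop:caseB-regularity}).
\item For $p=3$, $2C_1|_{C_1}\sim0$ (Lemma~\ref{lemm:caseB-normal-bundle}, via the foliation $T_{X_1/X}$ and the tangent--normal sequence).
\item Restricting $\phi^*K_X$ to $C_1$ and descending along $f_1$ gives $2\phi^*K_X\sim0$ for $p=3$, and at worst $6\phi^*K_X\sim0$ for $p=2$.
\end{itemize}
The final factor $p$ comes from the height-one map $\phi$.
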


\begin{corollary}\label{cor:uniform-bound}
  Let $X$ be a regular projective surface over an arbitrary field $k$ with $K_X\equiv0$.  Then
  \[
    60 K_X\sim0.
  \]
  Moreover, if $\operatorname{char} k\ne5$, then $12 K_X \sim 0$.
\end{corollary}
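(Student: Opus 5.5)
The plan is to deduce the corollary from Theorem~\ref{thm:main}, after first reducing to the situation treated there.

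\emph{Reduction.} The first step is to arrange $H^0(X,\mathcal O_X)=k$. Since $X$ is regular, hence normal, and projective and connected, $k':=H^0(X,\mathcal O_X)$ is a finite field extension of $k$. The structure morphism factors through $\operatorname{Spec}k'$, and $X$ is a regular projective surface over $k'$ with $H^0(X,\mathcal O_X)=k'$.

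The canonical divisor needs care here. The dualizing sheaves over $k$ and over $k'$ differ by $\omega_{k'/k}$, the dualizing module of the finite extension, and this module is free of rank one over $k'$. Hence the two canonical divisors are linearly equivalent. Linear equivalence and numerical triviality of Cartier divisors on $X$ are intrinsic to the scheme $X$, although the degree function rescales by $[k':k]$. So both the hypothesis $K_X\equiv0$ and the conclusion $NK_X\sim0$ are unchanged when we view $X$ over $k'$. If $X$ were disconnected we would apply the argument to each component, but a surface is understood to be integral, so this does not arise.

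\emph{Characteristic zero.} If $\operatorname{char}k=0$, then $X$ is geometrically normal, indeed geometrically regular, because $k$ is perfect. The classical result then gives $12K_X\sim0$, either via descent from $\bar k$ or by the same argument as in case (A).

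\emph{Positive characteristic.} Now assume $p>0$ and apply Theorem~\ref{thm:main}. Exactly one of the cases (A), (B), (C) occurs, and each gives an $N$ with $NK_X\sim0$:
\begin{itemize}
  \item in case (A), $N=12$;
  \item in case (B), $N=12$ if $p=2$ and $N=6$ if $p=3$;
  \item in case (C), $N=1$ if $p\ge7$, $N=5$ if $p=5$, $N=3$ if $p=3$, and $N=4$ if $p=2$.
\end{itemize}
Every $N$ in this list divides $60$, so $60K_X\sim0$.

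\emph{The refinement for $\operatorname{char}k\neq5$.} The only value of $N$ that does not divide $12$ is $5$, which arises only in case (C) with $p=5$. Therefore, when $\operatorname{char}k\ne5$, we get $12K_X\sim0$.

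\emph{Main obstacle.} The main technical point is the reduction step, specifically that $K_X$ is the same divisor class over $k$ and over $k'$. This follows from $\omega_{X/k}\cong\omega_{X/k'}\otimes\omega_{k'/k}$ together with the triviality of $\omega_{k'/k}$ as a $k'$-module.
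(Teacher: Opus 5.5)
Your argument is correct and is exactly the deduction the paper intends. The paper states the corollary without a separate proof, as an immediate consequence of Theorem~\ref{thm:main}: its bounds $12,6,1,5,3,4$ all divide $60$, and all divide $12$ except the bound $5$, which occurs only in case~(C) with $p=5$. Your reduction to $H^0(X,\mathcal O_X)=k$ (via $\omega_{X/k}\cong\omega_{X/k'}\otimes\omega_{k'/k}$ with $\omega_{k'/k}$ free of rank one) and your treatment of characteristic zero (classical classification plus descent) correctly supply the details the paper leaves implicit.
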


The bound in Theorem~\ref{thm:main}\textup{(C)} is sharp in characteristic $5$.
\begin{theorem}[{Example~\ref{ex:case-C-p5}}]\label{prop:order-five-intro}
  Let $k = \mathbf F_5(s_1,s_2)$, where $s_1,s_2$ are algebraically independent over $\mathbf F_5$.
  Then there exists a geometrically non-normal regular projective surface $X$ over $k$ with $H^0(X,\mathcal O_X) = k$ and $K_X\equiv0$ such that
  \[
    \ord(K_X) = 5.
  \]
\end{theorem}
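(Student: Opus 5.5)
The plan is to construct $X$ as a regular surface over $k=\mathbf F_5(s_1,s_2)$ that acquires a non-normal (or non-reduced) structure after base change to $k^{1/5}$, so that the canonical divisor picks up a $5$-torsion correction. Following the strategy for case (C) of Theorem~\ref{thm:main}, I will aim for a surface with trivial Albanese morphism. The natural model is a Weil-type example. I will try to build $X$ as a fibration $f\colon X\to\mathbf P^1_k$ whose generic fiber is a regular but geometrically non-normal curve of arithmetic genus one. Over $\mathbf F_5$-type imperfect fields, such curves exist only for $p\le 3$ among genus-one curves. I therefore expect to need instead a purely inseparable twist: take a quotient $X=Y/\mu_5$ or $Y/\alpha_5$, or equivalently a foliation quotient, of a regular surface $Y$ over $k$.

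The concrete construction I would attempt first is a hypersurface or weighted complete intersection. Consider the cyclic cover of degree $5$ of $\mathbf P^2_k$, or of a Hirzebruch surface, defined by $z^5 = s_1 F + s_2 G + H$, with $F,G,H$ forms chosen so that the following holds. Over $k^{1/5}$ the equation becomes a fifth power plus a simpler term, so the base change is non-normal along a curve. Over $k$ itself, regularity is checked via a Jacobian-type criterion using the partial derivatives with respect to the $p$-basis $s_1,s_2$ (the two parameters are needed precisely so that the singular locus of the geometric model is not defined over $k$). The degrees are chosen by adjunction so that $K_X\equiv 0$, and $H^0(X,\mathcal O_X)=k$ is checked from the graded structure.

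The order computation proceeds as follows. Adjunction on the cover gives $K_X = \pi^*(K_{\mathbf P^2}+L)$ twisted by a divisor supported on the branch or non-normal locus. I will show that $5K_X\sim 0$, which also follows from Theorem~\ref{thm:main}(C). Then I must show $K_X\not\sim 0$, i.e.\ $H^0(X,\omega_X)=0$. My plan is to pass to the normalization $\nu\colon Z\to (X_{\bar k})_{\mathrm{red}}$ of the geometric model. There is a formula $\nu^*K_{X_{\bar k}} = K_Z + (p-1)C$ with $C$ an effective nonzero divisor (Patakfalvi–Waldron / Tanaka). Since $Z$ should be a rational surface, $K_Z+4C\equiv 0$ with $C\neq 0$ forces $h^0(\omega_X)=0$. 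Indeed, a nonzero section of $\omega_X$ would pull back to a section of $K_Z+4C\sim 0$ of a specific form, and one can rule this out, e.g.\ by showing $K_Z+4C$ is a nontrivial $5$-torsion class, or via a direct Čech computation of $H^0(X,\omega_X)$ on the explicit equation.

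The main obstacle is simultaneously guaranteeing regularity of $X$ over $k$ and nontriviality of $K_X$. Making $X$ regular tends to force enough generality that $\omega_X$ might become trivial. I would first try to find explicit $F,G,H$ (e.g.\ of Fermat-like shape) and verify regularity by a local computation at the finitely many closed points where the geometric Jacobian vanishes. I would then compute $\omega_X$ directly as generated by an explicit residue form $\frac{dx\wedge dy}{\partial_z(\cdot)}$, which fails to be regular in characteristic $5$ because $\partial_z z^5=0$. This leads me to replace it by a form involving $ds_i$-twisting, and to show that its fifth power, but no lower power, descends to a nowhere-vanishing section.
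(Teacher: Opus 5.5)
There is a genuine gap. The proposal never supplies a mechanism that can make $K_X$ nontrivial, and the concrete construction it commits to cannot work.

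\textbf{The cyclic-cover construction.} Take $X=\{z^5=\Phi\}$ inside the total space of a line bundle $L$ on $P=\mathbf P^2_k$ or a Hirzebruch surface. Then $X$ is a Cartier divisor in a regular threefold, so adjunction gives exactly $\omega_X\simeq\pi^*(\omega_P\otimes L^{\otimes4})$. There is no correction term along a branch or non-normal locus. Consequently:
\begin{itemize}
  \item $K_X\equiv0$ forces $-K_P\equiv4L$. This is impossible on $\mathbf P^2$ (it would need $4d=3$) and on $\mathbb F_n$ (the coefficient of the section would be $1/2$).
  \item On any base where numerically trivial line bundles are trivial, such as a rational one, $K_P+4L\equiv0$ already gives $K_P+4L\sim0$, hence $K_X\sim0$.
  \item More generally, the order of a pulled-back class divides the order downstairs. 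So a cover of a base whose $\Pic$ has no $5$-torsion can never give $\ord(K_X)=5$.
\end{itemize}

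\textbf{The detection step.} This fails for the same reason. On a rational $Z$, the line bundle $\nu^*\omega_X\simeq\mathcal O_Z(K_Z+4C)$ is trivial. So $C\neq0$ only tells you that $K_Z$ is not pseudo-effective; it gives no obstruction to a section of $\omega_X$. A $5$-torsion class is exactly the kind of class killed by a purely inseparable pullback: compare Lemma~\ref{lemma-finite-Cartier}(2), and the proof of Proposition~\ref{prop:Alb-trivial}, which shows $\phi^*L\sim0$ on $X_1$ for every numerically trivial $L$ when $p\ge3$. Your plan of showing that $K_Z+4C$ is a nontrivial $5$-torsion class on a rational $Z$ is therefore impossible. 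Any test carried out after base change to $\bar k$ will see a trivial bundle.

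\textbf{The missing idea.} You mention it in passing and then drop it: $X$ should be a \emph{quotient}, not a cover, and the torsion should be encoded in a character. The paper proceeds as follows.
\begin{itemize}
  \item It starts from the regular, geometrically non-reduced surface $S$ of \cite{WZ26} over $\ell=\mathbf F_5(s_1,s_2)$, which is the field $k$ of the statement.
  \item It proves $\omega_S\simeq\mathcal O_S$ with the explicit generator $\eta_0=(dA\wedge dB\wedge dW)\otimes F^\vee$. The possible boundary multiplicities $a,b$ vanish because the intersection matrix of $H_1,H_2$ is nondegenerate.
  \item It writes down a nowhere-vanishing derivation $\vartheta$ with $\vartheta^{[5]}=\vartheta$, so that $\mu_5$ acts freely on $S$.
  \item It checks that $\mu_5$ acts on $\eta_0$ through $\lambda\mapsto\lambda^{2+1+3}=\lambda$.
\end{itemize}
Lemma~\ref{lem:canonical-character-descent} then gives $\ord(K_Y)=5$ for $Y=S/\mu_5$, and regularity descends along the torsor. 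Geometric non-normality and triviality of the Albanese come for free, since Cases~(A) and~(B) of Theorem~\ref{thm:main} would force $12K_Y\sim0$.

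So the tension you identify between regularity and nontriviality of $\omega_X$ is resolved without building non-normality in by hand. You need a free group-scheme action on a surface with trivial canonical bundle, acting on the canonical generator by a nontrivial character.
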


As a byproduct of the analysis of Case (B), we obtain the following description of the Albanese morphism of regular $K$-trivial surfaces.
It can be viewed as a generalization of the (quasi-)bielliptic surface structure of \cite{BM-II,BM-III}.
\begin{theorem}[Combine Propositions~\ref{prop:geo-normal-alb} and~\ref{prop:a_X-surjective} with Lemma~\ref{lemm:sur-second-fibration}]\label{thm:albanese-structure-intro}
  Let $X$ be a regular projective surface over a separably closed field $k$ of characteristic $p>0$, with $H^0(X,\mathcal O_X) = k$ and $K_X \equiv 0$.
  The Albanese morphism $\alb_{X/k}\colon X\to A:=\Alb_{X/k}$ is a fibration. Moreover:
  \begin{enumerate}[label=\textup{(\arabic*)}]
    \item If $\dim A = 2$, then $\alb_{X/k}$ is an isomorphism.
    \item If $\dim A = 1$, then $X\simeq(A\times_k C)/G$, where $C$ is an Albanese fiber and $G\subset A$ is a finite subgroup scheme acting diagonally on the product.
  \end{enumerate}
\end{theorem}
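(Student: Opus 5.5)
We prove Theorem~\ref{thm:albanese-structure-intro} by first showing that $\alb_{X/k}$ is a fibration, meaning $(\alb_{X/k})_*\mathcal O_X=\mathcal O_A$ with $\alb_{X/k}$ surjective, and then treating $\dim A=2$ and $\dim A=1$ separately. Since $k$ is separably closed, $X$ has a $k$-point, so we may normalize $\alb_{X/k}$ to send it to the origin. We split into the geometrically normal and geometrically non-normal cases, as in Theorem~\ref{thm:main}.

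\textbf{Geometrically normal case} (Proposition~\ref{prop:geo-normal-alb}). The plan is to base change to $\bar k$. Then $X_{\bar k}$ is a normal surface with $K\equiv 0$. Because $X$ is regular, the singularities of $X_{\bar k}$ should be mild enough (canonical, or at worst rational double points) to pass to the minimal resolution. That resolution is a $K$-trivial surface in the Bombieri--Mumford classification \cite{BM-II,BM-III}, so its Albanese is known:
\begin{itemize}
  \item it is an isomorphism for abelian surfaces;
  \item it is an elliptic fibration with connected fibers for (quasi-)bielliptic surfaces.
\end{itemize}
The Albanese of $X_{\bar k}$ is $A_{\bar k}$, since formation of $\Alb$ commutes with separable, and then arbitrary, field extension for proper geometrically integral schemes with a point. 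Connectedness of fibers and the property $f_*\mathcal O=\mathcal O$ then descend by flat base change. In the bielliptic case we obtain $X_{\bar k}\simeq (A\times C)/G$ with $G\subset A$ acting by translations. This descends to $k$ because $G$ can be realized as the kernel of a $k$-isogeny on $A$, namely the stabilizer of a fiber.

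\textbf{Geometrically non-normal case} (Proposition~\ref{prop:a_X-surjective} together with Lemma~\ref{lemm:sur-second-fibration}). First we rule out $\dim A=2$. If $\alb_{X/k}$ were generically finite onto an abelian surface, the ramification formula $K_X=\alb^*K_A+R$ with $K_X\equiv 0$ forces $R=0$, so $\alb_{X/k}$ is finite and étale in codimension one, hence étale by purity. An étale cover of a geometrically normal variety is geometrically normal, which is a contradiction.

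Next take $\dim A=1$ and let $f\colon X\to B$ be the Stein factorization of $\alb_{X/k}$, so that $B\to A$ is finite. Applying adjunction and canonical bundle formula arguments to $K_X\equiv 0$, the generic fiber is a regular curve of arithmetic genus $1$ over $k(B)$, and $g(B)\le 1$. Since $B$ maps onto $A$ and $B\to A$ is purely inseparable or étale, this gives $B\simeq A$; Lemma~\ref{lemm:sur-second-fibration} is designed to handle exactly this step. To obtain the product structure, we let $A$ act on the relative situation. The closed fiber $C$ over $0$ is a Cartier divisor with $C^2=0$, and $K_X|_C\equiv 0$. We show that translation by $A$ lifts, as in the classical bielliptic argument: consider the relative automorphism scheme, or equivalently the other fibration given by the (non-normal) normalization of $X_{\bar k}$.

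The \emph{main obstacle} is producing a morphism $A\times C\to X$ when fibers are non-reduced or geometrically non-normal. We would try to construct a second fibration $X\to C/G$ whose fibers are the $A$-orbits, using that the relative Picard scheme of $f$ is isotrivial with $K$-trivial data. The multiplication map $A\times C\to X$ is then finite and flat, and its kernel $G$ is a finite subgroup scheme of $A$ acting diagonally, possibly non-reduced in characteristic $2$ or $3$. The final step is to check that $(A\times C)/G\to X$ is an isomorphism, which follows by comparing degrees and using normality of $X$.
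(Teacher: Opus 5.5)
There is a genuine gap, and it sits at the heart of the statement: the geometrically non-normal case with $\dim A=1$.

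You invoke Lemma~\ref{lemm:sur-second-fibration}, but you never produce its input. That input is a fibration $h\colon X\to D$ whose generic fiber has arithmetic genus one and through which $\alb_{X/k}$ does not factor. Your substitute, a second fibration ``whose fibers are the $A$-orbits'', presupposes the $A$-action you are trying to build. There is no multiplication map $A\times_k C\to X$ before an action exists, and ``isotriviality of the relative Picard scheme'' is not an argument.

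The paper obtains $h$ from the conductor:
\begin{enumerate}[label=\textup{(\roman*)}]
  \item On $X_1=(X\times_k k^{1/p})^\nu_{\mathrm{red}}$ one has $\phi^*K_X=K_{X_1}+(p-1)\mathfrak C$ with $\mathfrak C>0$.
  \item The MMP on the minimal resolution ends in a conic bundle over a curve $B_1$ with $p_a(B_1)>0$.
  \item The equation $-2+(p-1)\,\mathfrak C\cdot F=0$ on the generic conic forces $p\in\{2,3\}$ and gives a horizontal component $C_1$ of $\mathfrak C$.
  \item Adjunction, the Hodge index theorem and Proposition~\ref{prop:tanaka-abun-3.1} show that $C_1$ is a smooth genus-one curve with $C_1^2=0$ and is semi-ample. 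Its image $C_0\subset X$ defines $h$ (Lemmas~\ref{lemm:C1^2} and~\ref{lemm:caseB-second-fibration}). The equality $\dim A=1$ also comes out of this, because $C_1$ is geometrically elliptic.
\end{enumerate}
Only then does Lemma~\ref{lemm:sur-second-fibration} construct the action over $k$. The generic fiber $\Gamma$ of $h$ dominates $A_K$, so it is a smooth genus-one curve by Proposition~\ref{prop:GV0U}. The map $\alb|_\Gamma$ induces an isogeny $\Pic^0_{\Gamma/K}\to A_K$, whose dual gives an $A_K$-action on $\Gamma$ with $\alb(t\cdot x)=\alb(x)+[n]t$. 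Koll\'ar's extension theorem for birational actions extends this action to $X$. This yields $A\times_{[n],A}X\simeq A\times_k C$ and hence $X\simeq(A\times_k C)/A[n]$. The fibration property of $\alb$ is deduced afterwards from this structure, not before.

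Your assertions that the Stein factor $B$ has genus at most one and that $B\to A$ is ``purely inseparable or \'etale'' are unsupported. The geometric generic fiber may be cuspidal or non-reduced, so no canonical bundle formula or positivity statement applies off the shelf.

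Several other steps fail as written.
\begin{itemize}
  \item \textbf{$\dim A=2$.} The formula $K_X=\alb^*K_A+R$ with $R\ge0$ holds only for separable maps. If $\alb$ is generically finite but inseparable, the correction term is a foliation class with no sign, so $K_X\equiv0$ forces nothing. You also never exclude a curve image generating an abelian surface. The paper uses Proposition~\ref{prop:GV0U} instead: maximal Albanese dimension gives $\kappa(Y,K_Y)\ge0$ for $Y=(X_{\bar k})^\nu_{\mathrm{red}}$, and then $K_Y+(p-1)\mathfrak C\equiv0$ forces $\mathfrak C=0$.
  \item \textbf{Geometrically normal case.} Regularity of $X$ does not make $X_{\bar k}$ canonical. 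What is true is the dichotomy of Proposition~\ref{prop:geo-normal-alb}: either $\widetilde Y$ is rational, so $A=0$, or $\mu$ is crepant. This is proved by running the MMP and using Lemma~\ref{lemm:height-n} to see that the exceptional curves are rational.
  \item \textbf{Descent.} Since $k$ is separably closed, $\bar k/k$ is purely inseparable. Knowing that $G$ is the kernel of a $k$-isogeny does not descend the isomorphism $X_{\bar k}\simeq(A\times C)/G$. The paper never descends the product decomposition; it obtains it from Lemma~\ref{lemm:sur-second-fibration}, which builds the action over $k$ itself.
  \item \textbf{Origin.} A geometrically non-reduced regular $X$ has no $k$-points at all, since a regular $k$-rational point is a smooth point. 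Choose the origin on the para-abelian variety $A$, not via a point of $X$.
\end{itemize}
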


Our argument also gives the following result for numerically trivial line bundles on surfaces with nef anti-canonical divisor.
\begin{theorem}[{Propositions~\ref{prop:Alb-trivial} and~\ref{prop:Alb-trivial-nef}; Corollary~\ref{cor:case-C-p-ge7}}]\label{thm:nef-anticanonical}
  Let $X$ be a regular projective surface over a field $k$ of characteristic $p > 0$ with $H^0(X,\mathcal O_X) = k$.
  Assume that $-K_X$ is nef, $\dim \Alb_{X/k} = 0$, and $X$ is geometrically non-normal.
  Then every numerically trivial line bundle $L$ on $X$ satisfies the following:
  \begin{enumerate}[label=\textup{(\arabic*)}]
    \item If $p \ge 7$, then $L \sim 0$.
    \item If $p\in\{3,5\}$, then $pL\sim0$.
    \item If $p = 2$, then $2^2L \sim 0$.
  \end{enumerate}
\end{theorem}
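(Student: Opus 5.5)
The plan is to work in the geometrically non-normal setting and pass to the normalization of a suitable purely inseparable base change. Let $k'$ be a finite purely inseparable extension of $k$ such that the normalization $Y$ of $(X_{k'})_{\mathrm{red}}$ is geometrically normal, or at least canonical enough to control; this can be done by iterating the $p$-th power map. By the standard theory of non-normal base change (Reid, Patakfalvi--Waldron, Tanaka), there is an effective nonzero Weil divisor $D$ on $Y$, the conductor-type divisor, with
\[
  \pi^*K_X \sim K_Y + (p-1)D ,
\]
where $\pi\colon Y\to X$ is the natural finite purely inseparable morphism. The class of $D$ is well defined up to linear equivalence.

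Given a numerically trivial $L$, the pullback $\pi^*L$ is numerically trivial on $Y$. Since $-K_X$ is nef and $D>0$, the divisor $-K_Y = -\pi^*K_X + (p-1)D$ is big, so $Y$ is a (weak) del Pezzo type surface. I would use this first step to show that $Y$ is rational over $\bar k$, or at least has $h^1(\mathcal O_Y)=0$ and torsion-free $\Pic$. The hypothesis $\dim\Alb_{X/k}=0$ should be used to exclude the other possibility, namely that $Y$ is ruled over a curve of positive genus; that case would map to a nontrivial Albanese. Both properties then force $\pi^*L\sim 0$ on $Y$ over $\bar k$, and by descent along the purely inseparable extension, also on $Y$ over $k'$ after possibly raising to a $p$-power.

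Next, I would use that $\pi$ factors through the Frobenius-type morphism. The composite $X^{(1/p^e)}\to Y\to X$ is a power of Frobenius, so $p^e L = F^{e*}L$ is the pullback of $\pi^*L$ and hence trivial. This proves that $L$ is $p$-power torsion. In particular $L\sim 0$ would follow if no nontrivial $p$-torsion exists, since $\Pic$ has no $p$-torsion coming from étale pieces.

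The main step is bounding the exponent $e$ and showing that torsion is absent for $p\ge 7$. Here I would follow Propositions~\ref{prop:Alb-trivial} and~\ref{prop:Alb-trivial-nef} with Corollary~\ref{cor:case-C-p-ge7}. Bertini/Tate-type genus-change inequalities (Tanaka, Patakfalvi--Waldron) restrict geometric non-normality with $-K_X$ nef to $p\le 3$, or show that the relevant covering is of degree $p$ with $e=1$. A $p$-torsion $L$ corresponds to a $\mu_p$- or $\alpha_p$-torsor. Its nontriviality would be detected via $H^1(X,\mathcal O_X)$ and the kernel of Frobenius on it. I expect that $p$-torsion can only appear when $(p-1)D\cdot(\text{a curve}) \le$ small bounds coming from $-K_Y$ being nef/big, which fails for $p\ge7$. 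This forces $e=0$ there, $e=1$ for $p=3,5$, and $e\le 2$ for $p=2$, where an extra step appears because $X_{k'}$ may need two successive normalizations. This exponent analysis, tracking how many Frobenius steps are needed and why one suffices for $p=3,5$, is the obstacle I expect to be hardest. I would attack it by classifying the possible conductor curves $D$ on the del Pezzo-type $Y$.
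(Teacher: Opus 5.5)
Your first step contains a false inference, though it can be repaired. The real problem is that nothing after it reaches the stated constants.

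\textbf{The first step.} Nef plus a nonzero effective divisor need not be big, so ``$-\pi^*K_X$ nef and $D>0$'' does not make $-K_Y$ big. For instance, when $K_X\equiv0$ you only get $-K_Y\equiv(p-1)D$, and nothing you have said prevents $D^2=0$. What is true is that $K_Y$ is not pseudo-effective. To conclude that $Y_{\bar k}$ has a rational resolution, you must also show that the exceptional curves of that resolution are rational. A normal surface with trivial Albanese can have a resolution ruled over an elliptic curve (a cone over an elliptic curve), so this needs an argument like Lemma~\ref{lemm:height-n}. With these repairs you get $\pi^*L\sim0$, hence $p^{e}L\sim0$ with $e=\ell_F(X/k)$. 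That statement is correct but much weaker than the theorem: $\ell_F(X/k)$ is uncontrolled, and it is at least $1$ by the very hypothesis that $X$ is geometrically non-normal.

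\textbf{The sharp constants.} These are the whole content of the statement. Your last paragraph defers to Propositions~\ref{prop:Alb-trivial}, \ref{prop:Alb-trivial-nef} and Corollary~\ref{cor:case-C-p-ge7}, which together \emph{are} the statement, and the mechanism you sketch does not work.
\begin{itemize}
\item For $p\ge7$ you would need $e=0$, which contradicts geometric non-normality; non-existence of such $X$ is not known. Even $e=1$ would only give $pL\sim0$. The paper gets $L\sim0$ (when $K_X\equiv0$) from a new construction. Once $L^{\otimes p}\simeq\mathcal O_X$ and $L$ is trivial on $X_{\bar k}^\nu$, the $t$-twisted $\mu_p$-cover of a nontrivial $L$ (Proposition~\ref{prop:generically-twisted-mu-p-cover}) is a regular, geometrically non-reduced surface over $k(t)$ with numerically trivial canonical class. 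This is impossible for $p\ge7$ by the geometric integrality theorem quoted in Corollary~\ref{cor:case-C-p-ge7}.
\item Your claim that genus-change inequalities confine geometric non-normality to $p\le3$ is false. Example~\ref{ex:case-C-p5} is a regular, geometrically non-normal surface in characteristic $5$ with $K\equiv0$, trivial Albanese and $\ord(K)=5$.
\item There is no reason for $\ell_F(X/k)\le1$ when $p=3,5$, or $\le2$ when $p=2$, and the paper never bounds $\ell_F$.
\end{itemize}

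\textbf{What the paper does instead.} It proves $\phi^*L\sim0$ (resp.\ $2\phi^*L\sim0$ for $p=2$) on the first Frobenius base change $X_1$, which may still be geometrically non-reduced, by running the MMP on its minimal resolution.
\begin{itemize}
\item In the del Pezzo case it uses Proposition~\ref{prop-ell_F-del-Pezzo}, sharpened by Lemmas~\ref{lemm:del-Pezzo-p3} and~\ref{lemm:del-Pezzo-p2}.
\item In the ruled case, non-exceptional horizontal components of $\Delta$ have coefficients divisible by $p-1$ while $\deg\Delta|_F=2$, which disposes of $p\ge5$. Adjunction then bounds $p_a(B)\le1$, and foliation arguments exclude $p=3$ with $p_a(B)=1$.
\item When $K_X\not\equiv0$, the paper instead runs the MMP on $X$ itself and combines Lemma~\ref{lemm:Ejiri-base-curve} with Lemma~\ref{lemm:geometric-genus-zero-torsion}.
\end{itemize}

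Finally, a $p$-torsion line bundle defines a $\mu_p$-torsor, not an $\alpha_p$-torsor. So the kernel of Frobenius on $H^1(X,\mathcal O_X)$ is not what detects it.
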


Section~\ref{sec:proof} establishes the bounds $pL\sim0$ for $p\ge3$ and $4L\sim0$ for $p=2$.
To obtain the stronger conclusion $L\sim0$ when $p\ge7$, we introduce the following construction.

\subsection*{The \texorpdfstring{$t$}{t}-twisted \texorpdfstring{$\mu_p$}{mu-p}-cover}
We first recall the simple $\mu_n$-cover (see for example \cite[\S0.2]{Cossec-Dolgachev-Liedtke-2025}).
Let $X$ be a variety over a field $k$, let $L$ be a line bundle on $X$, and let $n\ge1$.
A nonzero section $s\in H^0(X,L^{\otimes n})$ defines a finite flat cover of degree $n$
\[
  \pi_s\colon Y:=\operatorname{Spec}_X\!\left(\bigoplus_{i=0}^{n-1}L^{-i}\right)\longrightarrow X,
\]
where multiplication uses the map $L^{-n}\to\mathcal O_X$ given by evaluation on $s$.
The grading defines a $\mu_n$-action with quotient $X$.
If $s$ is nowhere vanishing, then $\pi_s$ is a $\mu_n$-torsor and $\pi_s^*L$ is trivial.

Now assume that $\operatorname{char}k=p>0$ and fix a trivialization $\tau\colon L^{\otimes p}\xrightarrow{\sim}\mathcal O_X$, with $s:=\tau^{-1}(1)$.
The $\mu_p$-cover associated to $(L,s)$ need not be normal, even when $X$ is regular.
To obtain a regular cover, we introduce a transcendental twist.

For $t$ transcendental over $k$, set $K=k(t)$ and $X_K=X\times_kK$.
The simple cover associated to $(L_K,ts_K)$ is called the \emph{$t$-twisted $\mu_p$-cover} and is denoted by $\pi\colon\widehat Y\to X_K$.
Introducing $t$ makes $\widehat Y$ regular whenever $X$ is regular.
This allows us to construct a geometrically non-reduced regular variety from some special $p$-torsion line bundle as follows.
\begin{theorem}[{Proposition~\ref{prop:generically-twisted-mu-p-cover}}]\label{thm:twisted-cover-intro}
  Let $X$ be a regular projective variety over a field $k$ of characteristic $p>0$ with $H^0(X,\mathcal O_X) = k$.
  Assume that $X_{\bar k}$ is integral, and let $\nu\colon X_{\bar k}^\nu\to X_{\bar k}$ be its normalization.
  Let $L$ be a line bundle on $X$ with $\ord(L) = p$ and $\nu^*L_{\bar k}\simeq\mathcal O_{X_{\bar k}^\nu}$.
  Then the $t$-twisted $\mu_p$-cover $\pi\colon\widehat Y\to X_K$ satisfies the following:
  \begin{enumerate}[label=\textup{(\arabic*)}]
    \item $\widehat Y$ is a regular projective variety over $K$ with $H^0(\widehat Y,\mathcal O_{\widehat Y}) = K$;
    \item $\omega_{\widehat Y}\simeq\pi^*\omega_{X_K}$ and $\pi^*L_K\simeq\mathcal O_{\widehat Y}$;
    \item $\widehat Y$ is geometrically non-reduced over $K$.
  \end{enumerate}
\end{theorem}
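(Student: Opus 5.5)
The plan is to work locally and reduce everything to an explicit description of $\widehat Y$. Choose an open affine cover $\{U_i\}$ of $X$ on which $L$ is trivialized by generators $e_i$. Write $s|_{U_i}=f_i e_i^{\otimes p}$ with $f_i\in\mathcal O(U_i)^\times$. Over $U_{i,K}$, the $t$-twisted cover is then
\[
\widehat Y|_{U_{i,K}}=\operatorname{Spec}\mathcal O(U_i)\otimes_k K[y]/(y^p-tf_i).
\]
This cover is finite flat of degree $p$, hence Cohen--Macaulay with dualizing sheaf $\pi^!\omega_{X_K}$. For a simple cyclic cover $\bigoplus_{i=0}^{p-1}L^{-i}$ one computes
\[
\pi_*\omega_{\widehat Y}=\mathcal Hom\bigl(\textstyle\bigoplus L^{-i},\omega_{X_K}\bigr)\simeq\bigoplus L^{i}\otimes\omega_{X_K}.
\]
Since $L^{p}\simeq\mathcal O$ via $\tau$, this is $\pi_*\mathcal O_{\widehat Y}\otimes\omega_{X_K}$ twisted by $L^{p-1}\simeq L^{-1}$. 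Hence $\omega_{\widehat Y}\simeq\pi^*(\omega_{X_K}\otimes L^{-1})$, and (2) follows once $\pi^*L_K\simeq\mathcal O$. That triviality is formal: $y$ is a global nowhere-vanishing section of $\pi^*L_K$, because the local coordinates $y_i$ glue to a section of $\pi^*L$ with $y^p=ts$ nowhere zero.

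For (1), regularity is the key step, and I expect it to be the main obstacle. At a point $x\in X_K$ the local ring is $R=\mathcal O_{X,x'}\otimes_k k(t)$ localized, and we adjoin a $p$-th root of $tf$. Since $X_K$ is regular, $\widehat Y$ is regular at the points over $x$ iff $tf\notin \mathfrak m_x^{(2)}+R^p$, that is, iff $tf$ is not a $p$-th power modulo $\mathfrak m^2$ in the residue data. The criterion I would use is the standard one for $R[y]/(y^p-a)$: the ring is regular iff either the image of $a$ in $\kappa(x)$ is not a $p$-th power, or $a-c^p\notin\mathfrak m^2$ for a suitable lift. Two cases arise. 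At a point of $X_K$ lying over a closed point of $X$ (or any point not generic in the $t$-direction), the residue field contains $t$ transcendentally over the residue field of $X$. Then $tf$ is not a $p$-th power in $\kappa(x)$, because $t\notin \kappa(x)^p\kappa'$, and $\widehat Y$ is regular there. At the remaining points, namely those over points where $t$ is algebraic in the residue field, I would use $t-c\in\mathfrak m$ with $d(t)$ nonzero in $\Omega$. This lets one see $tf-c^p\notin\mathfrak m^2$ via the differential $dt$, which is independent of the $X$-directions. So the twist by $t$ is exactly what rescues regularity. Irreducibility and $H^0(\mathcal O_{\widehat Y})=K$ come next. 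We have $H^0(\widehat Y,\mathcal O)=\bigoplus_i H^0(X_K,L_K^{-i})$, and for $0<i<p$ the summand vanishes because $L^{-i}$ is a nontrivial numerically trivial line bundle (since $\ord L=p$). Connectedness plus regularity then gives integrality, and projectivity is clear from finiteness.

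For (3), pass to $\bar K$. Over $\bar k$ we have $\nu^*L_{\bar k}\simeq\mathcal O$, so on the normalization $s$ pulls back to a nowhere-vanishing function, which is a constant $c\in\bar k^\times$ since $X^\nu_{\bar k}$ is integral and proper. Hence the pulled-back cover over $X^\nu_{\bar K}$ is $\operatorname{Spec}\mathcal O[y]/(y^p-tc)=\operatorname{Spec}\mathcal O[y]/((y-(tc)^{1/p})^p)$, which is nonreduced. Now $\widehat Y_{\bar K}\times_{X_{\bar K}}X^\nu_{\bar K}\to\widehat Y_{\bar K}$ is finite and birational-dominant over the generic point. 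More precisely, at the generic point of $X_{\bar K}$ the local ring of $X_{\bar K}$ agrees with that of the normalization, so the generic stalk of $\widehat Y_{\bar K}$ equals $F[y]/(y^p-tc)$ with $F$ the function field, and this is nonreduced. Thus $\widehat Y$ is geometrically non-reduced.
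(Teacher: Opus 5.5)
Your proposal is correct. Parts (2) and (3) are essentially the paper's argument. The paper also trivializes $\pi^*L_K$ by the degree-one element $z$, which is a unit because $z^p=tu^{-1}$. It also obtains non-reducedness by trivializing $L$ on the normalization over $\overline K$, so that the generic covering algebra is $F[w]/(w^p-c^{-1})$ with $c$ a constant, hence a $p$-th power.

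The real difference is the regularity step. You use the criterion that $R[y]/(y^p-a)$ is regular if and only if $a\notin\mathfrak m^2+R^p$, and you split according to whether $t$ stays transcendental over the residue field of the image point in $X$. The paper avoids any local criterion by a substitution. The chart $(R\otimes_kK)[z]/(z^p-tu^{-1})$ is a localization of $R[t,z]/(z^p-tu^{-1})$, and this ring is isomorphic to the polynomial ring $R[z]$ via $t\mapsto uz^p$, so it is regular.

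Your route works, but the case split is unnecessary and is stated confusingly. The points where $t$ remains transcendental are exactly the points generic in the $t$-direction over their image in $X$; your parenthetical says the opposite. Also, ``$t-c\in\mathfrak m$'' conflates a lift of the image of $t$ with a lift of a $p$-th root of $\overline{tf}$. The clean form of your idea is a Jacobian argument that handles all points at once. Take the $R$-linear derivation $\partial_t$ of $(R\otimes_kK)[y]$ with $\partial_t y=0$. It sends $y^p-tf$ to the unit $-f$ and maps $\mathfrak N^2$ into $\mathfrak N$ for every prime $\mathfrak N$. Hence $y^p-tf\notin\mathfrak N^2$, and every local ring of the cover is a regular local ring modulo an element outside the square of its maximal ideal. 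This shows conceptually why the twist by $t$ helps: the transcendental direction supplies a nonvanishing derivative. The paper's substitution is shorter still.

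For (2), your computation identifies $\pi_*\omega_{\widehat Y}$ only as an $\mathcal O_{X_K}$-module. That alone does not determine a line bundle on $\widehat Y$. What you actually need, and what the standard formula $\omega_{\widehat Y}\simeq\pi^*(\omega_{X_K}\otimes L_K^{p-1})$ encodes, is that $\mathcal{H}om(\mathcal A_{L,t},\mathcal O_{X_K})$ is generated as an $\mathcal A_{L,t}$-module by projection onto the top summand. The paper instead uses projection onto the degree-zero summand. That projection generates the dual because the cover is a torsor, and it gives $\omega_{\widehat Y}\simeq\pi^*\omega_{X_K}$ directly, without first trivializing $\pi^*L_K$.
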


By \cite[Theorem~1.2]{WZ26}, a regular projective $K$-trivial surface over a field of characteristic $p\ge7$ is geometrically integral.
In Case~\textup{(C)} of Theorem~\ref{thm:main} with $p\ge7$, the proof of Proposition~\ref{prop:Alb-trivial} shows that every numerically trivial line bundle $L$ satisfies $L^{\otimes p}\simeq\mathcal O_X$ and becomes trivial on $X_{\bar k}^\nu$.
If $L$ were nontrivial, Theorem~\ref{thm:twisted-cover-intro} would produce a regular geometrically non-reduced surface over $K$ with numerically trivial canonical divisor, contradicting \cite[Theorem~1.2]{WZ26}.
Thus $L\sim0$, and in particular $K_X\sim0$ (Corollary~\ref{cor:case-C-p-ge7}).

\subsection*{Strategy of the proof}
We prove Case~\textup{(A)} by taking a geometric base change, resolving the resulting singularities, and then applying the classification of smooth surfaces and descent (Section~\ref{sec:caseA}).

In Cases~\textup{(B)} and~\textup{(C)}, the Frobenius base change $X\times_k k^{1/p}$ is non-normal.
Set $X_1 = (X\times_k k^{1/p})_{\mathrm{red}}^\nu$ to be the normalization, and let $\phi\colon X_1\to X$ be the natural morphism.
The canonical base-change formula (Theorem~\ref{th:geonon-normal}) gives
\[
  K_{X_1}+(p-1)\mathfrak C\sim\phi^*K_X\equiv0,
  \qquad \mathfrak C>0.
\]
Thus $K_{X_1}$ is not pseudo-effective.
Resolving the singularities of $X_1$ and running the minimal model program gives the diagram
\[
  \begin{tikzcd}[row sep=tiny]
    && \widetilde X_1 \ar[ld,"\mu"'] \ar[rd,"\epsilon"]\\
    X & X_1 \ar[l,"\phi"'] && Z \ar[d,"g"]\\[2ex]
      &&& B\rlap{\,.}
  \end{tikzcd}
\]
Here $\mu$ is the minimal resolution, $\epsilon$ is the MMP contraction, and $g$ is a Mori fiber space.
The pullback $\mu^*\phi^*\omega_X$ descends to a numerically trivial line bundle on the del Pezzo surface $Z$ if $\dim B = 0$, or to a degree-zero line bundle on $B$ if $\dim B = 1$.
Since $\phi$ is purely inseparable of height one, a relation $m\phi^*K_X\sim0$ for a positive integer $m$ yields $pmK_X\sim0$.

\smallskip
In Case~\textup{(B)}, the nontrivial Albanese morphism forces $B$ to be a curve of positive geometric genus.
Let $f_1\colon X_1\to B$ be the fibration induced by $g$.
Restricting the canonical formula to a general fiber gives $p = 2$ or $3$ and shows that $\mathfrak C$ contains an $f_1$-horizontal prime divisor $C_1$.
A key step is to prove that $C_1^2 = 0$ and $C_1$ is semi-ample (Lemma~\ref{lemm:C1^2}).
In higher dimensions, statements of this kind are proved via the positivity of direct images of pluricanonical sheaves \cite{EP23,Ejiri-2023-splitting,CWZ-classification}.
But since our study is restricted to surfaces, we apply a more classical treatment following \cite[Proposition~3.1]{tanaka_abundance_2020}, which goes back to Mumford \cite[\S2]{Mumford-69-I}, see also \cite[Lemma, p.~682]{Masek-1993}.

A sufficiently divisible multiple of $C_1$ therefore defines a second fibration $h_1\colon X_1\to D_1$ transversal to $f_1$:
\[
  \begin{tikzcd}
    X_1 \ar[r,"h_1"] \ar[d,"f_1"'] & D_1 \\
    B\rlap{\,.} &
  \end{tikzcd}
\]
Here $C_1$ is vertical for $h_1$.
On $X$, this yields a fibration transversal to the Albanese morphism, which is the analogue of the (quasi-)bielliptic structure in \cite{BM-II}.
Adjunction along the fibers of $h_1$ shows that $X_1$ is regular; calculations of normal bundles and descent then give $12K_X\sim0$ for $p = 2$ and $6K_X\sim0$ for $p = 3$ (Section~\ref{sec:caseB}).

\smallskip
In Case~\textup{(C)}, either $Z$ is del Pezzo or $B$ has geometric genus zero.
As explained before, we only need to show that $pK_X \sim 0$ in characteristic $p\ge3$ and $4K_X \sim 0$ in characteristic $2$.

For the del Pezzo case, every numerically trivial line bundle $L$ satisfies $L\sim0$ when $p\ge5$, $3L\sim0$ when $p=3$, and $4L\sim0$ when $p=2$ \cite[Theorem~4.1]{Bernasconi-Tanaka-22}.
This is weaker than we need when $p=2$ or $3$, and we improve the bound under additional assumptions (Lemmas~\ref{lemm:del-Pezzo-p3} and~\ref{lemm:del-Pezzo-p2}).

In the ruled case, the pullback $\mu^*\phi^*K_X$ descends to a degree-zero line bundle $L_B$ on $B$.
We analyze the horizontal components in the conductor divisor $\mathfrak C$ and use adjunction and positivity to bound the arithmetic genus of $B$ by $p_a(B) \le 1$.
The line bundle $L_B$ is trivial if $p_a(B) = 0$.
When $p_a(B) = 1$, $B$ is geometrically non-normal, and we can use the geometric singularities of $B$ to obtain some properties on the geometric singularities of $X$, see Subcase 1(a) in the proof of Proposition~\ref{prop:Alb-trivial} for example.

\medskip
\paragraph{Acknowledgements}
This research is partially supported by CAS Project for Young Scientists in Basic Research (No.~YSBR-032), NSFC (No.~12122116 and No.~12471495), and Quantum Science and Technology-National Science and Technology Major Project (QNMP, No.~2021ZD0302902). 
The first two authors are also supported by Hubei Minzu University under Grant No.~BS26011 and XN24040, respectively.

\medskip
\paragraph{AI Statement}
The authors wrote the first draft in 2025 and obtained a $p$-dependent bound on the torsion order of $K_X$.
To improve the results, AI models were used --- primarily GPT-5.6-Sol, GPT-6-astra and claude-opus-5.
Specifically, AI provided the $t$-twisted $\mu_p$-cover construction (\S\ref{sec:case-C-p-ge7}), improved the result in Case (B) when $p=3$ (Lemma~\ref{lemm:caseB-normal-bundle}), improved the result in Case (C) (part of \S\ref{sec:proof}) and constructed examples (\S\ref{sec:examples}).

\section{Preliminary}\label{sec:preliminary}
\subsection{Notation and conventions}\label{sec:notation-conventions}
\begin{itemize}
  \item Throughout this paper, unless otherwise stated, $k$ is a field of characteristic $p>0$.
    We fix an algebraic closure $\bar k$ and write $k^{1/p^n}:=\{a\in\bar k\mid a^{p^n}\in k\}$ for $n\geq0$.
    The field $k$ is called \emph{$F$-finite} if $[k:k^p]<\infty$.
  \item By a $k$-scheme, we mean a separated scheme of finite type over $k$.
    By a \emph{variety} over $k$ we mean an integral $k$-scheme.
    By a \textit{surface}, we mean a variety of dimension two.

  \item For a field extension $\ell/k$ and a $k$-scheme $X$, we write $X_\ell:=X\times_k\ell$.
  \item For a scheme $X$, we denote by $X_{\mathrm{red}}$ the induced reduced scheme and by $X_{\mathrm{red}}^\nu$ its normalization.
  \item A \emph{fibration} is a projective surjective morphism $f\colon X\to Y$ of varieties such that $f_*\mathcal O_X=\mathcal O_Y$.
  \item For a line bundle $L$ of finite order in $\Pic(X)$, we write $\ord(L)$ for its order; when $\omega_X$ is invertible, we write $\ord(K_X):=\ord(\omega_X)$.
  \item For an integral projective curve $C$ over $k$, put $k_C := H^0(C,\mathcal O_C)$.
    We write
    \[
      p_a(C) := \dim_{k_C}H^1(C,\mathcal O_C)
    \]
    for its \emph{arithmetic genus over the field of constants}.
    For a geometrically irreducible projective curve $C$, its \emph{geometric genus} is the genus of $(C_{\bar k})_{\mathrm{red}}^\nu$ over $\bar k$.

  \item For a scheme $X$ of characteristic $p>0$, let $F_X\colon X\to X$ denote the absolute Frobenius, $F_X^n$ its $n$-th iterate, and $F_k^n$ the corresponding morphism of $\operatorname{Spec}k$.

  \item Let $X$ be a $k$-scheme with structure morphism $\varphi\colon X\to\operatorname{Spec}k$, and set $X^{(p^n)}:=X\times_{\operatorname{Spec}k,F_k^n}\operatorname{Spec}k$.
The \emph{$n$-th relative Frobenius morphism} $F^n_{X/k}\colon X\to X^{(p^n)}$ is the unique morphism making the following diagram commute:
\[
  \begin{tikzcd}
    X \ar[rd,dashed,"F^n_{X/k}"] \ar[rrd,bend left=15,"F_X^n"] \ar[rdd,bend right=15,"\varphi"']\\
    & X^{(p^n)} \rar\dar\ar[rd,phantom,"\square"] & X \dar{\varphi} \\
    & \mathrm{Spec}\, k \rar{F_k^n} & \mathrm{Spec}\, k
  \end{tikzcd}
\]
  \item Let $\pi\colon X\to Y$ be a finite purely inseparable morphism.
We say that $\pi$ is of \emph{height $n$} if there exists a morphism $\rho\colon Y\to X$ such that $\rho\circ\pi=F_X^n$.
\end{itemize}

\subsection{Some basic results}\label{sec:singularities-field-extensions}%
\begin{lemma}\label{lemma-finite-Cartier}%
  Let $\pi\colon X\to Y$ be a finite surjective morphism of degree $d$ between normal varieties.
  \begin{itemize}
    \item[\textup{(1)}] If $D_Y$ is a Weil divisor on $Y$ such that
      $\pi^*D_Y\sim0$, then $dD_Y\sim0$.
    \item[\textup{(2)}] If $\pi$ is purely inseparable of height $n$ and
      $\pi^*D_Y\sim0$, then $p^nD_Y\sim0$.
    \item[\textup{(3)}] If $\pi$ is purely inseparable of height $n$ and $Y$ is
      regular, then $p^nD_X$ is Cartier for every Weil divisor $D_X$ on $X$.
  \end{itemize}
\end{lemma}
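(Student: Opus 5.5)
For (1), I will use the norm map on divisors. Since $\pi$ is finite and surjective between normal varieties, there is a pushforward $\pi_*$ on Weil divisors. It is defined on prime divisors by $\pi_*P=[k(P):k(\pi(P))]\,\pi(P)$ and preserves linear equivalence, because $\pi_*\operatorname{div}(f)=\operatorname{div}(\operatorname{Nm}_{K(X)/K(Y)}f)$. Moreover $\pi_*\pi^*D_Y=dD_Y$. This can be checked at codimension-one points of $Y$, where both rings are DVRs and the semilocal ring of $X$ over each one is a Dedekind domain; there it reduces to the identity $\sum_i e_if_i=d$. Hence if $\pi^*D_Y=\operatorname{div}(f)$, then $dD_Y=\operatorname{div}(\operatorname{Nm}f)\sim0$.

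For (2), I will use the factorisation $\rho\circ\pi=F_X^n$ differently. Since $\pi$ is finite, surjective and purely inseparable, it is a universal homeomorphism. Together with $\rho\circ\pi=F_X^n$ this shows that $\rho$ is also finite and surjective, and that $\pi\circ\rho=F_Y^n$, because both agree after composing with the epimorphism-like map $\pi$ on function fields. Concretely, $K(Y)\subset K(X)$ and $K(X)^{p^n}\subset K(Y)$, so $\rho$ corresponds to $K(Y)\to K(X)$, $a\mapsto a^{p^n}$ followed by inclusion, and $\pi\circ\rho$ induces $a\mapsto a^{p^n}$ on $K(Y)$. On Weil divisors of a normal variety, $(F^n_Y)^*D=p^nD$, since it raises local equations to the $p^n$-th power on the regular locus and divisors are determined there. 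Then $p^nD_Y=\rho^*\pi^*D_Y\sim0$. Here $\rho^*$ of a Weil divisor is defined on the regular locus, which has complement of codimension $\ge2$, and it is compatible with linear equivalence. It is cleanest to phrase everything via function fields: if $\pi^*D_Y=\operatorname{div}_X(f)$ with $f\in K(X)$, then $f^{p^n}\in K(Y)$. I then need to check $\operatorname{div}_Y(f^{p^n})=p^nD_Y$ by comparing valuations. For a prime $Q$ on $Y$ with unique prime $P$ over it, $v_Q|_{K(Y)}$ extends to $v_P$ with ramification $e$, and $v_Q(f^{p^n})=p^nv_P(f)/e=p^n\operatorname{mult}_Q D_Y$. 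This valuative check is the main technical point.

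For (3), let $D_X$ be a Weil divisor on $X$ and consider $\pi_*D_X$ on regular $Y$. It is Cartier, so $\pi^*\pi_*D_X$ is Cartier on $X$. It remains to compare it with $p^nD_X$. Since $\pi$ is a homeomorphism, each prime $P$ lies over a unique $Q$, and $\pi^*\pi_*P=e_Pf_P\,P$ with $e_Pf_P=[K(X):K(Y)]$ locally a power of $p$. This does not directly equal $p^n$, so I would instead use $\rho$. Locally, let $D_X$ be given by a prime $P$ near a point $x$. The divisor $\rho^*P$ is Weil on $Y$, hence Cartier with a local equation $g\in K(Y)$. Then $\pi^*\rho^*P=(F_X^n)^*P=p^nP$ as Weil divisors, and it is cut out by $g$ viewed in $K(X)$, so $p^nP$ is Cartier. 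The same valuative argument as in (2) justifies $(F_X^n)^*P=p^nP$ as Weil divisors and the compatibility of pullback of the Cartier divisor $\rho^*P$. I expect the main obstacle throughout to be the bookkeeping of pullbacks of Weil divisors along finite maps of normal varieties. I will handle it uniformly by working with valuations at codimension-one points and the field inclusions $K(X)^{p^n}\subset K(Y)\subset K(X)$.
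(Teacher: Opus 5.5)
Your proposal is correct, and part (3) is exactly the paper's argument: $\rho^*D_X$ is Cartier on the regular $Y$, so $\pi^*\rho^*D_X=(F_X^n)^*D_X=p^nD_X$ is Cartier. For (1) and (2), which the paper only cites from \cite[Lemma~6.8]{CWZ-classification}, your norm argument and your valuative check $\operatorname{div}_Y(f^{p^n})=p^nD_Y$, which uses only $K(X)^{p^n}\subseteq K(Y)$, are the standard self-contained proofs. One small correction: $\rho$ need not be finite, because $F_X^n=\rho\circ\pi$ is finite only when $[k:k^p]<\infty$; this does no harm, since your final argument uses only that $\rho$ is a dominant universal homeomorphism of normal varieties, or just the field inclusion $K(X)^{p^n}\subseteq K(Y)$.
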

\begin{proof}
  See \cite[Lemma~6.8]{CWZ-classification} for (1) and (2).
  To show (3), we factor the $n$-th iterate absolute Frobenius $F^n_X$ into $X \buildrel \pi\over\to Y \buildrel \rho\over\to X$.
  Since $Y$ is regular, the pullback divisor $\rho^*D_X$ is Cartier.
  Therefore, $p^nD_X = F_X^{n*} D_X = \pi^*(\rho^*D_X)$ is Cartier.
\end{proof}

\begin{lemma}\label{lemm:geometric-genus-zero-torsion}
  Let $k$ be an $F$-finite separably closed field of characteristic $p>0$, and let $T$ be a regular projective curve over $k$ such that $H^0(T,\mathcal O_T)=k$ and $T$ has geometric genus zero.
  Let $M$ be a degree-zero line bundle on $T$.
  \begin{enumerate}[label=\textup{(\arabic*)}]
    \item The line bundle $M$ has $p$-power order.
    \item If $p_a(T)=0$, then $M\sim0$.
      If $p_a(T)=1$, then $p\in\{2,3\}$, the curve $T_1:=(T\times_k k^{1/p})_{\mathrm{red}}^\nu$ has arithmetic genus zero over its field of constants, and $pM\sim0$.
  \end{enumerate}
\end{lemma}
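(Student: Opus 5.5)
We propose to pass to the normalized reduced base change along a height-one purely inseparable extension and then push the degree-zero line bundle through the resulting finite purely inseparable map. Since $T$ has geometric genus zero, after a finite purely inseparable extension $\ell/k$ (we may take $\ell=k^{1/p^n}$ for $n\gg0$, which is finite because $k$ is $F$-finite), the curve $T_n:=(T\times_k\ell)_{\mathrm{red}}^\nu$ is a smooth curve over its constant field. Its geometric genus is zero, and $k$ is separably closed, so the constant field is separably closed with a rational point after separable closure considerations. Hence $T_n\simeq\mathbb P^1$ over its field of constants. The natural morphism $\phi_n\colon T_n\to T$ is finite purely inseparable of height at most $n$: indeed $F^n_{T_n}$ factors through $T_n\to T$ because $\ell^{p^n}\subset k$.

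For (1), the pullback $\phi_n^*M$ has degree zero on $\mathbb P^1$, hence is trivial. Lemma~\ref{lemma-finite-Cartier}(2), applied to the normal curves $T_n\to T$, then gives $p^nM\sim0$. So $M$ has $p$-power order.

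For (2), first suppose $p_a(T)=0$. Then $T$ is a regular conic over $k$, with $H^1(\mathcal O_T)=0$. We use $\Pic^0$ in the form: $\Pic(T)\to\mathbb Z$ via degree is injective, because $\Pic^0_{T/k}$ is trivial when $h^1(\mathcal O_T)=0$ and $T$ is geometrically reduced-free of tangent space; the tangent space to $\Pic_{T/k}$ is $H^1(\mathcal O_T)=0$. More concretely, $\omega_T^{-1}$ has degree $2$ and Riemann--Roch shows any degree-zero $M$ has $h^0(M)\ge 1$, so $M\simeq\mathcal O_T$.

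Now suppose $p_a(T)=1$. Since $T$ is regular of geometric genus $0<1=p_a$, it is not smooth, so it is geometrically non-normal. Take $T_1=(T\times_kk^{1/p})_{\mathrm{red}}^\nu$ with $\phi\colon T_1\to T$ of height one. The canonical base-change formula for curves (the one-dimensional analogue of Theorem~\ref{th:geonon-normal}, valid by the same references) gives
\[
  K_{T_1}+(p-1)\mathfrak C\sim\phi^*K_T,
\]
with $\mathfrak C>0$ an effective nonzero divisor. Here $\deg K_T=2p_a(T)-2=0$ over $k$, so $\deg_{k}K_{T_1}<0$. Hence $T_1$ has arithmetic genus zero over its constant field. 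Comparing degrees also gives the restriction on $p$. Write $\deg_k K_{T_1}=-2[k_{T_1}:k]$ and note that $\deg_k\mathfrak C$ is a positive multiple of the degree of a closed point. Then $(p-1)\deg\mathfrak C=2[k_{T_1}:k]$, and the divisibility constraints force $p-1\mid 2$, i.e.\ $p\in\{2,3\}$. This is the standard Tate genus-change argument: the genus drop $(p-1)/2$-divisibility.

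Finally, $\phi^*M$ is a degree-zero line bundle on the arithmetic-genus-zero curve $T_1$, so it is trivial by the first case. This requires that $T_1$ is regular with $H^0=k_{T_1}$, and we apply the argument over $k_{T_1}$, which is separably closed. Lemma~\ref{lemma-finite-Cartier}(2) with $n=1$ then yields $pM\sim0$.

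The main obstacle we expect is justifying the curve version of the base-change formula with $\mathfrak C\neq0$, and extracting $p\in\{2,3\}$ cleanly from the degree equation. For the latter we would try first to invoke Tate's theorem that the genus drops by a multiple of $(p-1)/2$.
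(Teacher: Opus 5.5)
Your proposal is correct and is essentially the paper's argument made explicit. The paper's one-line proof cites Tanaka's existence of the Frobenius length (Remark~5.2) together with Lemma~\ref{lemma-finite-Cartier}\textup{(2)} for \textup{(1)}, Riemann--Roch for $p_a(T)=0$, and the proof of Tanaka's Theorem~11.8 for $p_a(T)=1$. For that last case, your steps fill in the citation in the natural way: apply Theorem~\ref{th:geonon-normal} directly to $T$ (it is already stated for normal varieties of any dimension, so no separate curve version is needed), then use Riemann--Roch on $T_1$ and Lemma~\ref{lemma-finite-Cartier}\textup{(2)} with $n=1$.

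Two small cleanups. First, dividing your identity $(p-1)\deg_k\mathfrak C=2[k_{T_1}:k]$ by $[k_{T_1}:k]$ gives $(p-1)\deg_{k_{T_1}}\mathfrak C=2$ with $\deg_{k_{T_1}}\mathfrak C$ a positive integer, so $p\in\{2,3\}$ follows without Tate's theorem. Second, for $p_a(T)=0$ it is the Riemann--Roch argument (a nonzero section of a degree-zero bundle on an integral curve vanishes nowhere) that proves $M\sim0$; the tangent-space remark alone would not suffice.
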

\begin{proof}
  The assertions follow from \cite[Remark~5.2 and the proof of Theorem~11.8]{tanaka_invariants_2021}, Riemann--Roch, and Lemma~\ref{lemma-finite-Cartier}\textup{(2)}.
\end{proof}

\begin{lemma}\label{lemma-smooth-Cartier-divisor}
  Let $X$ be a scheme of finite type over $k$, and let $Z\subset X$ be a closed subscheme smooth over $k$.
  If $Z$ is an effective Cartier divisor on $X$, then $X$ is smooth over $k$ on an open neighborhood of $Z$.
\end{lemma}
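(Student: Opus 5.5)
The plan is to reduce the statement to a local computation at points of $Z$. Smoothness of $X$ over $k$ at a point $x$ is an open condition, so it suffices to show that $X$ is smooth over $k$ at every point $x\in Z$. Moreover, a scheme locally of finite type over $k$ is smooth at $x$ precisely when it is smooth at the closed points specializing from $x$. Hence we may restrict to closed points $x\in Z$; the smooth locus is open and contains $Z$, and that open set is the required neighborhood.

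At such a closed point $x$, choose an affine neighborhood $U=\operatorname{Spec}A$ of $x$ in which $Z\cap U=V(f)$ for a nonzerodivisor $f\in A$. This is possible because $Z$ is an effective Cartier divisor. Let $n=\dim_x Z$; since $Z$ is smooth, $\Omega_{Z/k}$ is locally free of rank $n$ near $x$. Because $f$ is a nonzerodivisor, Krull's principal ideal theorem gives $\dim \mathcal O_{X,x}=\dim\mathcal O_{Z,x}+1=n+1$.

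We now compare differentials via the conormal sequence
\[
  (f)/(f^2)\otimes \kappa(x)\xrightarrow{\ \delta\ }\Omega_{X/k}\otimes\kappa(x)\to\Omega_{Z/k}\otimes\kappa(x)\to0 .
\]
The left term is at most one-dimensional, so $\dim_{\kappa(x)}\Omega_{X/k}\otimes\kappa(x)\le n+1=\dim\mathcal O_{X,x}$. We then invoke the standard criterion: for $X$ locally of finite type over a field $k$ and $x\in X$, $X$ is smooth at $x$ if and only if $\Omega_{X/k}$ is free of rank $\dim_x X$ near $x$. It is also known that $\dim_{\kappa(x)}\Omega_{X/k}\otimes\kappa(x)\ge\dim_x X$ always holds, with equality exactly at smooth points (EGA IV 17.15.5, or Stacks Tag 00TT/04QN). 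Granting this inequality, equality holds at $x$, so $X$ is smooth at $x$. Note that $\dim_x X=\dim\mathcal O_{X,x}$ for closed points of finite type $k$-schemes in the irreducible-component sense after we localize; we use $\dim_x X\ge \dim\mathcal O_{X,x}$ together with the bound above.

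The main obstacle is justifying the inequality $\dim\Omega_{X/k}\otimes\kappa(x)\ge\dim_x X$ with equality characterizing smoothness over an \emph{imperfect} field. Here closed points may have inseparable residue fields, so the naive Jacobian criterion via $\mathfrak m/\mathfrak m^2$ does not apply directly.

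If citing that criterion feels too strong, there is a cleaner alternative, which I would try first. Smoothness of $Z$ plus the Cartier condition makes $\mathcal O_{X,x}$ a local ring with a nonzerodivisor $f$ such that $\mathcal O_{X,x}/f$ is regular. It follows that $\mathcal O_{X,x}$ is regular. The same argument applies after any finite base change $k'/k$: $Z_{k'}\subset X_{k'}$ is still an effective Cartier divisor, since flatness preserves the nonzerodivisor, and $Z_{k'}$ is still smooth, hence regular. Thus $X_{k'}$ is regular along $Z_{k'}$ for every finite $k'$, that is, $X$ is geometrically regular along $Z$. Geometric regularity is equivalent to smoothness for schemes of finite type over a field, which gives smoothness at every point of $Z$, and openness of the smooth locus finishes the proof.
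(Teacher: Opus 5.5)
Your proof is correct, and it takes a different route from the paper. The paper's proof is a one-line reduction. Because $Z$ is an effective Cartier divisor, the immersion $Z\hookrightarrow X$ is regular, and the paper then cites SGA1, Expos\'e~II, Corollary~4.16, a relative statement that propagates smoothness from a smooth, regularly immersed subscheme to a neighborhood in the ambient scheme, applied here over $\operatorname{Spec}k$.

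You instead prove the field case by hand, in two ways.
\begin{itemize}
\item In your first route, the conormal sequence gives $\dim_{\kappa(x)}\Omega_{X/k}\otimes\kappa(x)\le\dim_x Z+1$. Since $f$ is a nonzerodivisor in $\mathfrak m_x$, you get $\dim\mathcal O_{X,x}=\dim\mathcal O_{Z,x}+1\le\dim_x X$. The differential criterion ($\dim_{\kappa(x)}\Omega_{X/k}\otimes\kappa(x)\le\dim_x X$ implies smoothness at $x$) then finishes. That criterion, as stated in the Stacks Project, holds over an arbitrary field, including imperfect ones. So the ``main obstacle'' you flag is already covered by the citation, and this route is complete without the fallback.
\item Your second route is also complete. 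A local ring that is regular modulo a nonzerodivisor $f\in\mathfrak m$ is regular. Both the Cartier property (by flatness of $k'/k$) and smoothness of $Z$ survive every finite base change $k'/k$. Geometric regularity at a point is equivalent to smoothness there.
\end{itemize}

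The trade-off is this. The paper's citation is shorter and works in a relative setting. Your arguments use only standard commutative algebra over a field, and they make explicit where the Cartier hypothesis enters: $f$ being a nonzerodivisor forces $\dim\mathcal O_{X,x}$ to be exactly $\dim\mathcal O_{Z,x}+1$. That is what makes the rank bound on $\Omega_{X/k}\otimes\kappa(x)$ sharp.
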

\begin{proof}
  Since $Z$ is an effective Cartier divisor on $X$, the immersion $Z\hookrightarrow X$ is regular.
  The assertion follows from \cite[Expos\'e~II, Corollary~4.16]{SGA1}.
\end{proof}

\begin{lemma}[Riemann--Hurwitz, see {\cite[Theorem~7.4.16]{Liu-AGAC}}]\label{lemma-canonical-curve-comparison}%
  Let $\pi\colon B\to C$ be a finite \emph{separable} morphism between regular projective curves over $k$.
  Then
  \[
    K_B\sim\pi^*K_C+\sum_x(e'_x-1)[x],
    \qquad e'_x\geq e_x\geq1,
  \]
  where $x$ runs over the closed points of $B$ and $e_x$ is the ramification index.
  Moreover, $e'_x=e_x$ precisely when $\pi$ is tamely ramified at $x$, and $e'_x=1$ precisely when $\pi$ is \'etale at $x$.
\end{lemma}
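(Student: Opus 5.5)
The plan is to reduce to the classical Riemann--Hurwitz formula over a field where the residue field extensions at closed points are controlled, and then compare with the definitions of $e_x$ and $e'_x$ locally. Concretely, since $\pi$ is finite separable between regular projective curves, $\Omega_{B/C}$ is a torsion sheaf, and the exact sequence
\[
  0\to\pi^*\Omega_{C/k}\to\Omega_{B/k}\to\Omega_{B/C}\to0
\]
is the natural starting point. However, $\Omega_{B/k}$ and $\Omega_{C/k}$ need not be invertible over an imperfect field, since $B$ and $C$ are only regular, not smooth. So instead I will work with the dualizing sheaves and the trace map. Since $\pi$ is finite and flat, with $B$, $C$ Gorenstein, we have $\omega_B\simeq\pi^!\omega_C=\pi^*\omega_C\otimes\omega_{B/C}$. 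Here $\omega_{B/C}=\mathcal{H}om_{\mathcal O_C}(\pi_*\mathcal O_B,\mathcal O_C)$ is an invertible $\mathcal O_B$-module. The trace $\pi_*\mathcal O_B\to\mathcal O_C$ is nonzero by separability, so it gives a nonzero section of $\omega_{B/C}$. Its divisor is an effective divisor $R=\sum_x d_x[x]$, the different, and we get $K_B\sim\pi^*K_C+R$.

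Next I set $e'_x:=d_x+1$. It remains to prove the local statements: $d_x\ge e_x-1$, with equality iff the extension is tamely ramified at $x$, and $d_x=0$ iff $\pi$ is étale at $x$. These are purely local assertions about the extension of discrete valuation rings $\mathcal O_{C,\pi(x)}\subset\mathcal O_{B,x}$, after completion. They are Dedekind's different theorem, and I would pass to completions and the strict henselization to reduce to a totally ramified extension of complete DVRs. One must treat the residue extension separately: separability of the residue extension is part of tameness, and inseparability of it also contributes to the different.

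For the étale criterion, $d_x=0$ means the trace pairing is perfect at $x$, which is equivalent to $\pi$ being unramified and flat, hence étale. Tameness (that is, $p\nmid e_x$ and the residue field extension separable) gives $d_x=e_x-1$ by computing with a uniformizer $\varpi$ satisfying an Eisenstein-type equation, using $v(f'(\varpi))=e_x-1$. In the wild or residually inseparable case the derivative acquires extra valuation, so $d_x\ge e_x$.

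The main obstacle is the residually inseparable case, which is special to imperfect $k$: the residue fields of closed points are finite extensions of $k$ that may be inseparable over each other. I would handle it by using the monogenic form $\mathcal O_{B,x}^\wedge=\mathcal O^\wedge_{C,\pi(x)}[\alpha]$, which is available when the residue extension is simple; otherwise I would use the transitivity of the different. This is exactly the content of the cited Liu, Theorem~7.4.16 and Proposition 6.4.13, so in the paper it is quoted rather than reproved.
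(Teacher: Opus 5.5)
Your proposal is correct and is essentially the argument behind the citation. The paper gives no proof of its own beyond quoting Liu, Theorem~7.4.16, which goes through $\omega_B\simeq\pi^*\omega_C\otimes\omega_{B/C}$, the trace section, and the local different, as you describe.

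One simplification: the residually inseparable case needs neither a monogenic generator nor transitivity of the different. Reduce the trace of $\widehat{\mathcal O}_{B,x}$ modulo $\mathfrak m_{\pi(x)}$ and filter by powers of $\mathfrak m_x$. This gives $e_x\operatorname{Tr}_{\kappa(x)/\kappa(\pi(x))}$, which shows uniformly that $d_x\ge e_x-1$, with equality if and only if $p\nmid e_x$ and $\kappa(x)/\kappa(\pi(x))$ is separable.
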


\subsection{Intersection numbers}\label{sec:intersection-numbers}%
  Let $X$ be a projective normal surface over $k$.
  Let $D$ be a Cartier divisor on $X$, and let $C$ be an integral curve on $X$.
  For a line bundle $L$ on $C$, write $\deg_k L := \chi_k(C,L)-\chi_k(C,\mathcal O_C)$, where $\chi_k$ denotes the Euler characteristic computed over $k$.
  Thus $\deg_k L=[k_C:k]\deg_{k_C}L$, where $k_C=H^0(C,\mathcal O_C)$.
  The intersection number $(D\cdot C)_k$ of $D$ on $C$ is defined to be
  \[
    (D\cdot C)_k := \deg_k\bigl(\mathcal O_X(D)|_C\bigr).
  \]
  For a finite surjective morphism $\nu\colon C'\to C$ of integral projective curves over $k$, the projection formula gives
  \[
    \deg_k\bigl(\nu^*(\mathcal O_X(D)|_C)\bigr)
    = [K(C'):K(C)](D\cdot C)_k.
  \]
  In particular, for the normalization $\nu\colon C^\nu\to C$, we have $(D\cdot C)_k=\deg_k\bigl(\nu^*(\mathcal O_X(D)|_C)\bigr)$.
  \medskip

Let $X$ be a geometrically irreducible variety over $k$, let $\ell/k$ be a finite field extension, and set $Y:=(X\times_k\ell)_{\mathrm{red}}^\nu$.
We define the \emph{thickening exponent of this base change} by
\[
  \varepsilon(X;\ell/k)
  :=\log_p\frac{[\ell:k]}{[K(Y):K(X)]}.
\]
It is a nonnegative integer, and it vanishes precisely when $X_\ell$ is reduced; cf.~\cite[Theorem~7.3\textup{(1)--(2)}]{tanaka_invariants_2021}.

\begin{lemma}\label{lemm:intersection-field-extension}%
  Let $X$ be a projective normal geometrically irreducible $k$-surface, $D$ a Cartier divisor on $X$, and $C\subset X$ an integral curve.
  For a finite extension $\ell/k$, let $f\colon Y:=(X\times_k\ell)_{\mathrm{red}}^\nu\to X$ be the natural morphism.
  Then
  \[
    (D\cdot C)_k
    =p^{\varepsilon(X;\ell/k)}(f^*D\cdot f^*C)_\ell,
  \]
  where $f^*C$ denotes the pullback as a Weil divisor.
\end{lemma}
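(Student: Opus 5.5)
The plan is to compute both sides as degrees of line bundles on curves and compare them using the projection formula of \S\ref{sec:intersection-numbers}. Write $\varepsilon=\varepsilon(X;\ell/k)$, so that $[K(Y):K(X)]=[\ell:k]/p^{\varepsilon}$. Then $f$ is finite surjective of degree $[K(Y):K(X)]$, since $X_\ell\to X$ is finite and normalization is finite.

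First I reduce to the case $D=H$ very ample (or simply $C\not\subset\Supp D$). Both sides are bilinear in $(D,C)$ on the Cartier side, so I write $D=A-B$ with $A,B$ very ample and move them to avoid $C$; this is harmless. The main step is the projection formula for $f$ on the surface level. Since $f^*D$ is Cartier, $(f^*D\cdot f^*C)_\ell$ is the $\ell$-degree of $f^*\mathcal O_X(D)$ restricted to the Weil divisor $f^*C=\sum_i m_iC_i'$, where the $C'_i$ are the prime divisors over $C$ and the $m_i$ are the ramification multiplicities. This gives
\[
  (f^*D\cdot f^*C)_\ell=\sum_i m_i\deg_\ell\bigl(f^*\mathcal O_X(D)|_{C'_i}\bigr).
\]
Converting degrees over $\ell$ to degrees over $k$ divides by $[\ell:k]$. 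By the curve-level projection formula,
\[
  \deg_k\bigl(f^*\mathcal O_X(D)|_{C'_i}\bigr)=[K(C'_i):K(C)](D\cdot C)_k.
\]
So the claim reduces to the identity
\[
  \sum_i m_i[K(C'_i):K(C)]=\deg f=[K(Y):K(X)].
\]
This is the standard $\sum e_if_i=n$ formula for the finite extension of DVRs $\mathcal O_{X,C}\subset$ its integral closure in $K(Y)$, which is finite because $f$ is finite. Combining these,
\[
  (f^*D\cdot f^*C)_\ell=\frac{[K(Y):K(X)]}{[\ell:k]}(D\cdot C)_k=p^{-\varepsilon}(D\cdot C)_k,
\]
which is the assertion.

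The one subtle point is the normalization convention: $(\cdot)_\ell$ on $Y$ must be computed with $Y$ regarded as an $\ell$-scheme. The $\ell$-degree of a bundle on $C'_i$ equals its $k$-degree divided by $[\ell:k]$, because $\chi_k=[\ell:k]\chi_\ell$ for coherent sheaves on an $\ell$-scheme. The other point is that the pullback $f^*C$ is defined as a Weil divisor, so one must check that the $m_i$ are the valuations of a local equation of $C$, i.e.\ the ramification indices. That holds by definition, since $Y$ is normal and $C$ is Cartier at its generic point. I expect the main obstacle to be justifying the $\sum e_if_i=n$ identity. The integral closure is finite here because it is a localization of the finite $\mathcal O_X$-algebra $f_*\mathcal O_Y$, and it is a semilocal Dedekind domain. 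Its rank over $\mathcal O_{X,C}$ is $[K(Y):K(X)]$, and it is free over a DVR, so reducing mod the uniformizer gives the identity.
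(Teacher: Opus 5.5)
Your argument is correct and is essentially the paper's proof. The paper combines the projection formula $[\ell:k](f^*D\cdot f^*C)_\ell=(D\cdot f_*f^*C)_k$ with $f_*f^*C=[K(Y):K(X)]\,C$; your componentwise computation using $\sum_i m_i[K(C_i'):K(C)]=[K(Y):K(X)]$ just unpacks that identity, and your preliminary reduction to very ample $D$ (or $C\not\subset\Supp D$) is never used and can be dropped, since $(D\cdot C)_k$ is defined as the degree of $\mathcal O_X(D)|_C$.
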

\begin{proof}
  Set $d=[K(Y):K(X)]$.
  Since $f_*f^*C=dC$, the projection formula gives
  \[
    [\ell:k](f^*D\cdot f^*C)_\ell
    =(D\cdot f_*f^*C)_k
    =d(D\cdot C)_k.
  \]
  The assertion follows from $p^{\varepsilon(X;\ell/k)}=[\ell:k]/d$.
\end{proof}

\begin{corollary}\label{lemma-intersection-general}%
  Let $X$ be a projective normal $k$-surface with $H^0(X,\mathcal O_X)=k$, let $\ell/k$ be a finite purely inseparable extension, and set $Y:=(X_\ell)_{\mathrm{red}}^\nu$.
  For a $\mathbb Q$-Cartier integral curve $C_Y\subset Y$, its image $C_X\subset X$ is $\mathbb Q$-Cartier and
  \[
    C_Y^2>0\quad(\text{resp. }<0,\ =0)
    \quad\Longleftrightarrow\quad
    C_X^2>0\quad(\text{resp. }<0,\ =0).
  \]
\end{corollary}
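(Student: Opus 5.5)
Let $f\colon Y\to X$ be the natural morphism, which is finite, surjective and purely inseparable; write $d=[K(Y):K(X)]$, a power of $p$. The plan is first to show that $C_X$ is $\mathbb Q$-Cartier, then to compute $C_X^2$ from $C_Y^2$ through $f$, checking that only a positive factor intervenes.

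For the $\mathbb Q$-Cartier property I use the factorization of a Frobenius power through $f$. Because $\ell/k$ is finite and purely inseparable, we have $\ell^{p^n}\subset k$ for some $n$. Hence the $n$-th absolute Frobenius of $Y$ factors as $Y\to X\xrightarrow{\rho} Y$, or at least some $F_X^n$ factors through $f$ in the appropriate order. Concretely, the morphism $X_\ell\to X$ admits a map $X\to X_\ell$ (via $\ell\to k$, $a\mapsto a^{p^n}$, on the field factor) whose composition is a Frobenius power. Normality of $X$ lifts this map through $(X_\ell)_{\mathrm{red}}^\nu$. So there is $\rho\colon X\to Y$ with $f\circ\rho=F_X^n$ and $\rho\circ f=F_Y^n$. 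Now $\rho^*C_Y$ is a $\mathbb Q$-Cartier divisor on $X$, since pullback of $\mathbb Q$-Cartier divisors is defined. It is supported on $\rho^{-1}(C_Y)$, which set-theoretically equals $f(C_Y)=C_X$, because $f$ and $\rho$ are homeomorphisms compatible with the factorization. Therefore $\rho^*C_Y=aC_X$ with $a>0$, and $C_X$ is $\mathbb Q$-Cartier.

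For the intersection numbers, $f^*C_X$ is supported on $f^{-1}(C_X)=C_Y$, since $f$ is a homeomorphism. Thus $f^*C_X=bC_Y$ as $\mathbb Q$-divisors, with $b>0$. Choose $m$ with $mC_X$ Cartier and apply Lemma~\ref{lemm:intersection-field-extension}, extended by linearity in the Cartier divisor, with $D=mC_X$ and $C=C_X$:
\[
mC_X^2=p^{\varepsilon}(f^*(mC_X)\cdot f^*C_X)_\ell=p^{\varepsilon}m\,b^2\,C_Y^2 .
\]
This gives $C_X^2=p^{\varepsilon}b^2C_Y^2$ with $p^{\varepsilon}b^2>0$, which yields all three equivalences. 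One point needs care: the lemma is stated with $D$ Cartier and $C$ an integral curve, while here I use $f^*C$ as a Weil divisor equal to $bC_Y$. The intersection $(f^*D\cdot bC_Y)_\ell$ is well defined because $D$ is Cartier. If Mumford-type intersection is preferred, the same computation holds with $\mathbb Q$-Cartier divisors.

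The main obstacle is the first step: producing $\rho$ and confirming that the Frobenius factorization passes through the normalization of the reduction. The reduced structure gives no trouble since $X$ is reduced. For normalization, $F_X^n$ factors through $X_\ell$, and $X$ is normal and dominates an irreducible component, so the map lifts to the normalization by its universal property. A second point is that $C_X$ is a curve, which holds because $f$ is finite. Everything else is positivity of the constants $a$ and $b$ and the projection formula.
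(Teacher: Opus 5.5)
Your proof is correct and is essentially the argument the paper intends: you get $\mathbb Q$-Cartierness of $C_X$ from the Frobenius factorization $\rho\colon X\to Y$ (as in Lemma~\ref{lemma-finite-Cartier}\textup{(3)}), then apply Lemma~\ref{lemm:intersection-field-extension} with $D=mC_X$ and $f^*C_X=bC_Y$ to obtain $C_X^2=p^{\varepsilon}b^2C_Y^2$ with a positive factor. The one hypothesis of that lemma you did not check, geometric irreducibility of $X$, holds automatically here, since $\ell/k$ is purely inseparable and in any case $k$ is algebraically closed in $K(X)$.
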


\subsection{Adjunction formula}\label{sec:adjunction}%
\begin{proposition}[{\cite[Definition~2.34 and Proposition~2.35]{Kollar-Sing}}]\label{prop:adj}%
  Let $X$ be a normal surface and $C\subset X$ a reduced curve with normalization $C^\nu\to C$.
  Assume that $K_X + C$ is $\mathbb Q$-Cartier.
  Then there exists an effective $\mathbb Q$-divisor $\operatorname{Diff}_{C^\nu}$ on $C^\nu$, called the different, such that \[
        (K_X + C)|_{C^\nu} \simQ K_{C^\nu} + \operatorname{Diff}_{C^\nu}.
      \]
  Moreover, $\operatorname{Diff}_{C^\nu} = 0$ if and only if $C$ is regular and $X$ is regular along $C$.
\end{proposition}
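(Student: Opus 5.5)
The plan is to reduce to adjunction on a regular surface via the minimal resolution, and to read off the different as the sum of two effective contributions: one from the singularities of the strict transform of $C$, and one from the exceptional locus. The statement is local along $C$, so only the singular points of $X$ lying on $C$ matter.

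First, let $f\colon Y\to X$ be the minimal resolution, which exists for excellent surfaces. Let $C_Y$ be the strict transform of $C$ and $E_1,\dots,E_r$ the exceptional curves. Since the intersection matrix $(E_i\cdot E_j)$ is negative definite, Mumford's numerical pullback gives a unique $\mathbb Q$-divisor $E=\sum a_iE_i$ with
\[
  K_Y+C_Y+E\simQ f^*(K_X+C).
\]
Here I use that $K_X+C$ is $\mathbb Q$-Cartier, so some multiple is an honest line bundle and the linear equivalence holds after multiplying through.

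Second, I will show $E\ge0$ by the negativity lemma. By construction $(E\cdot E_i)=-(K_Y+C_Y)\cdot E_i$ for every $i$. Minimality of $f$ gives $K_Y\cdot E_i\ge0$, and $C_Y\cdot E_i\ge0$ because $C_Y$ has no exceptional components. Hence $E\cdot E_i\le0$ for all $i$, so $-E$ is $f$-nef and the negativity lemma yields $E\ge0$. The strong form of the lemma also shows that on each connected component of the exceptional locus, $E$ is either zero or has full support.

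Third, on the regular surface $Y$, adjunction gives $\omega_{C_Y}\simeq\omega_Y(C_Y)|_{C_Y}$. Pulling back along the normalization $n\colon C^\nu\to C_Y$ gives $n^*\omega_{C_Y}\simeq\omega_{C^\nu}(\mathfrak c)$, where $\mathfrak c\ge0$ is the conductor divisor; $\mathfrak c=0$ exactly when $C_Y$ is normal, i.e. regular. Combining the three steps,
\[
  (K_X+C)|_{C^\nu}\simQ K_{C^\nu}+\mathfrak c+E|_{C^\nu},
  \qquad \operatorname{Diff}_{C^\nu}:=\mathfrak c+E|_{C^\nu}\ge0.
\]
This is independent of the chosen resolution, since another resolution differs by blow-ups away from which nothing changes along $C^\nu$.

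Finally, the characterization of $\operatorname{Diff}_{C^\nu}=0$. If $X$ is regular along $C$ and $C$ is regular, then $f$ is an isomorphism near $C$, so $E|_{C^\nu}=0$, and $C_Y\cong C$ is regular, so $\mathfrak c=0$. Conversely, suppose $\operatorname{Diff}_{C^\nu}=0$. Then $\mathfrak c=0$, so $C_Y$ is regular. Now let $x\in C$ be a singular point of $X$, and let $F$ be the connected exceptional fiber over $x$; some component $E_j\subset F$ meets $C_Y$, so $C_Y\cdot E_j>0$. If $E$ vanished on $F$, we would get
\[
  0=E\cdot E_j=-(K_Y+C_Y)\cdot E_j<0,
\]
a contradiction. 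So $E$ has full support on $F$ by the dichotomy in the second step, hence $E|_{C^\nu}\ne0$, a contradiction. Thus $X$ is regular along $C$, so $C=C_Y$ is regular.

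I expect the main care to be needed in the negativity-lemma step over a non-closed field, where intersection numbers must be taken as $k$-degrees as in \S\ref{sec:intersection-numbers}; the sign arguments above are insensitive to this.
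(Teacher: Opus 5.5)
Your argument is correct and is essentially the standard proof behind the result, which the paper does not prove but cites from Koll\'ar. You pull back to the minimal resolution, obtain $E\ge0$ from $f$-nefness of $K_Y$ and the negativity lemma, apply adjunction and the conductor on the regular surface, and use the strong (connected-fiber) form of the negativity lemma for the characterization of $\operatorname{Diff}_{C^\nu}=0$.

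The only loose point is your remark that the construction is independent of the resolution. Blowing up a point of $C_Y$ changes $\mathfrak c$ and $E|_{C^\nu}$ individually, and only their sum is invariant. You never need this remark, however, because the minimal resolution is canonical and already fixes $\operatorname{Diff}_{C^\nu}$.
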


\subsection{A semi-ampleness criterion}\label{sec:semi-ampleness}
\begin{proposition}[special case of {\cite[Proposition~3.1]{tanaka_abundance_2020}}]\label{prop:tanaka-abun-3.1}%
  Let $X$ be a projective normal surface over $k$.
  Let $B$ be an integral curve on $X$, and assume that $K_X$ and $B$ are $\mathbb Q$-Cartier.
  Assume that
  \begin{itemize}
    \item[\textup{(1)}] $K_X \cdot B = B^2 = 0$;
    \item[\textup{(2)}] there exists a positive integer $n$ such that $nB$ is Cartier and $\mathcal O_X(nB)|_B\cong\mathcal O_B$.
  \end{itemize}
  Then $B$ is semi-ample.
\end{proposition}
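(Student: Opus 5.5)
The statement is a special case of Tanaka's criterion, but I will sketch the classical argument of Mumford/Tanaka rather than simply cite it. The goal is to show that $|mnB|$ is base point free for some $m\ge1$.

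Since $B$ is an integral curve on the normal surface $X$ and $mnB$ is effective, any base point of $|mnB|$ lies on $B$. It therefore suffices to find $m$ and a section of $\mathcal O_X(mnB)$ that does not vanish identically along $B$ and whose restriction to $B$ is nowhere vanishing. Because $\mathcal O_X(nB)|_B\cong\mathcal O_B$, it is enough to show that for some $m$ the restriction map
\[
H^0(X,\mathcal O_X(mnB))\to H^0(B,\mathcal O_X(mnB)|_B)\cong H^0(B,\mathcal O_B)
\]
hits a unit (for instance $1$). Then $|mnB|$ contains $mnB$ together with a member disjoint from $B$, so it is free.

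The key steps are as follows.
\begin{enumerate}[label=\textup{(\roman*)}]
\item Consider the exact sequences
\[
0\to\mathcal O_X((j-1)nB)\to\mathcal O_X(jnB)\to\mathcal O_{nB}\otimes\mathcal O_X(jnB)\to0.
\]
Here $nB$ is a Cartier divisor whose restriction to the reduced curve $B$ is trivial. The obstruction to lifting the section $1$ lives in $H^1(X,\mathcal O_X((j-1)nB))$.
\item Use Riemann--Roch on the normal surface, together with $B^2=K_X\cdot B=0$. This shows that $\chi(\mathcal O_X(jnB))$ is independent of $j$.
\item Show that $h^2(\mathcal O_X(jnB))=h^0(K_X-jnB)$ is eventually constant, since it is nonincreasing in $j$ and nonnegative.
\item Conclude that $h^0(jnB)-h^1(jnB)$ is eventually constant.
\item Analyse the non-reduced scheme $nB$ via the filtration by $\mathcal O_B(-iB)$-type sheaves. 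These have degree zero because $B^2=0$.
\end{enumerate}

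The main obstacle is that $\mathcal O_X(nB)|_{nB}$, the restriction to the thickening, need not be trivial. The restriction to $B$ is trivial by hypothesis, but the thickening can carry a torsion obstruction. Mumford's trick handles this in positive characteristic via Frobenius. Replacing $n$ by $p^e n$ and applying $F^e$, one uses that the kernel of $\Pic(nB)\to\Pic(B)$ is a unipotent group killed by a power of $p$. Hence $\mathcal O_X(p^enB)|_{nB}$ is trivial for $e\gg0$. Iterating this gives triviality on $mB$ for suitable multiples, but a single fixed thickening is not enough by itself.

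To finish, I would combine this with the boundedness in $j$. I expect to follow Tanaka's approach: the sequence of $h^1$ grows at most linearly while $\chi$ is constant. If no section of $\mathcal O_X(jnB)$ restricted nontrivially to $B$, then $h^0(jnB)=h^0((j-1)nB)$ for all $j$ in a suitable arithmetic progression. Using the triviality on thickenings, each step would then force $h^1$ to increase. This contradicts the constancy of $\chi$ combined with the eventual constancy of $h^2$, once the lifted section $1$ is taken from $H^0(\mathcal O_{mB})$.

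The delicate point is arranging the Frobenius multiples so that the lifting of $1$ from $B$ to the thickening is compatible with the line bundle. In practice I would reduce to exactly the setting of \cite[Proposition~3.1]{tanaka_abundance_2020}, where the case $K_X\cdot B=B^2=0$ with trivial restriction is treated, and invoke it directly.
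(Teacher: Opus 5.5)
The paper gives no argument beyond the citation, so your closing move (reducing to and invoking Tanaka's \cite[Proposition~3.1]{tanaka_abundance_2020}) is exactly the paper's proof and suffices. The self-contained sketch in front of it, however, is not a proof, and its decisive step cannot work as stated. Put $D=nB$. Your own steps (ii)--(iv) give $h^1(\mathcal O_X(jD))=h^0(\mathcal O_X(jD))+h^2(\mathcal O_X(jD))-\chi(\mathcal O_X)$. Hence, as long as $h^0(\mathcal O_X(jD))$ stays constant, $h^1(\mathcal O_X(jD))$ cannot increase. Concretely, suppose $h^0(\mathcal O_X((j-1)D))=h^0(\mathcal O_X(jD))$. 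The long exact sequence of $0\to\mathcal O_X((j-1)D)\to\mathcal O_X(jD)\to\mathcal O_D(jD)\to0$ then yields
\[
  h^1(\mathcal O_X(jD))-h^1(\mathcal O_X((j-1)D))
  =-\chi(\mathcal O_D(jD))-\bigl(h^2(\mathcal O_X((j-1)D))-h^2(\mathcal O_X(jD))\bigr)\le0,
\]
since $\chi(\mathcal O_D(jD))=\chi(\mathcal O_X(jD))-\chi(\mathcal O_X((j-1)D))=0$. This holds whatever $\mathcal O_D(jD)$ is: the sections of $\mathcal O_D(jD)$ injecting into $H^1(\mathcal O_X((j-1)D))$ are exactly balanced by $H^1(\mathcal O_D(jD))$. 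So your claim that triviality on thickenings forces $h^1$ to increase at each step is false. No one-step comparison can produce the contradiction you want.

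The missing input is substantial, as characteristic $0$ shows. Take $X=\mathbb P_C(\mathcal E)$, with $C$ an elliptic curve, $\mathcal E$ the nonsplit extension of $\mathcal O_C$ by $\mathcal O_C$, and $B$ the section with $B^2=0$. Then $K_X\cdot B=0$ and $\mathcal O_B(B)\simeq\mathcal O_B$. So for $n=1$ every ingredient of your sketch other than the Frobenius statement holds, yet $h^0(\mathcal O_X(mB))=1$ for all $m$. Everything therefore rests on using Frobenius on thickenings of unbounded order. Your remark that $\mathcal O_X(p^enB)$ becomes trivial on $nB$ (or on $p^eB$, which keeps the ratio between line bundle and thickening fixed) does not supply this by itself. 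One sound way to organize the argument: assume $h^0(\mathcal O_X(mD))=1$ for all $m$. The sequences $0\to\mathcal O_X(mD)\to\mathcal O_X((m+j)D)\to\mathcal O_{jD}((m+j)D)\to0$ then give injections $H^0(\mathcal O_{jD}((m+j)D))\hookrightarrow H^1(\mathcal O_X(mD))$ into spaces of bounded dimension. The real work, which you leave to Tanaka, is producing unboundedly many sections on the left.

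Finally, your reduction is misstated. If no section of $\mathcal O_X(jnB)$ is nonzero on $B$, the sections lie in $\mathcal O_X(jnB-B)$, not in $\mathcal O_X((j-1)nB)$. The clean reduction is this: if $h^0(\mathcal O_X(mnB))\ge2$ for some $m$, the mobile part of $|mnB|$ is a multiple of $B$ whose general member misses $B$, because $B^2=0$. Hence you may assume $h^0(\mathcal O_X(mnB))=1$ for all $m$.
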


\subsection{Positivity}\label{sec:positivity}
\begin{lemma}[a special case of {\cite[Corollary~4.6]{Ejiri-2019}}]\label{lemm:Ejiri-base-curve}
  Let $k$ be an $F$-finite field of characteristic $p>0$.
  Let $f\colon S\to C$ be a surjective morphism from a regular projective surface to a regular projective curve such that $H^0(C,\mathcal O_C)=k$, and let $\Delta\ge0$ be an effective $\mathbb Q$-divisor whose Cartier index is not divisible by $p$.
  Assume that the geometric generic fiber $(S_{\bar\eta},\Delta_{\bar\eta})$ is $F$-pure.
  Then the following statements hold.
  \begin{enumerate}[label=\textup{(\arabic*)}]
    \item If $-(K_S+\Delta)$ is nef, then $\deg K_C \leq 0$.
    \item If $-(K_S+\Delta)$ is ample, then $\deg K_C < 0$.
    \item If $(S_{\bar\eta},\Delta_{\bar\eta})$ is strongly $F$-regular and $-(K_S+\Delta)$ is nef and big, then $\deg K_C < 0$.
  \end{enumerate}
\end{lemma}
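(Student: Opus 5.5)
The plan is to deduce all three parts from Ejiri's theorem \cite[Corollary~4.6]{Ejiri-2019} by checking its hypotheses in our setting. Only the underlying positivity mechanism needs to be made explicit. That mechanism is a Frobenius trace argument which makes $-(K_{S/C}+\Delta)$ behave as if it were ``anti-semipositive'' along $f$. Part (1) then becomes a degree comparison on $C$.

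\textbf{Hypotheses and setup.} Since $k$ is $F$-finite, $S$ and $C$ are $F$-finite. The Frobenius pushforwards and trace maps $\operatorname{Tr}^e\colon F^e_*\omega_S^{1-p^e}\to\mathcal O_S$ are therefore coherent and behave well. Choose $e$ with $(p^e-1)\Delta$ integral. This is possible because the Cartier index of $\Delta$ is prime to $p$, so $p^e\equiv1$ modulo that index for suitable $e$. We then have the $\Delta$-twisted trace
\[
  F^e_*\mathcal O_S\bigl((1-p^e)(K_S+\Delta)\bigr)\to\mathcal O_S .
\]
Decompose the absolute Frobenius into the relative Frobenius $F^e_{S/C}$ and the base Frobenius of $C$. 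This yields a relative trace
\[
  F^{e}_{S/C*}\mathcal O_S\bigl((1-p^e)(K_{S/C}+\Delta)\bigr)\to\mathcal O_{S^{(p^e)}} .
\]
\textbf{Key step: F-purity on the generic fiber.} $F$-purity of $(S_{\bar\eta},\Delta_{\bar\eta})$ means this relative trace is surjective over the geometric generic point. Pushing forward along $f$ gives a map
\[
  f_*\mathcal O_S\bigl((1-p^e)(K_{S/C}+\Delta)\bigr)\to\mathcal O_C ,
\]
up to a Frobenius twist on $C$, and this map is generically surjective. This is where the hypothesis on $(S_{\bar\eta},\Delta_{\bar\eta})$ is used.

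\textbf{Deriving (1).} Suppose $\deg K_C>0$ and $-(K_S+\Delta)$ is nef. Write
\[
  -(K_{S/C}+\Delta)=-(K_S+\Delta)+f^*K_C .
\]
Twisting the generically surjective map by powers of $K_C$ and iterating $e$, the contribution of $f^*K_C$ grows like $(p^e-1)\deg K_C$. The nef part contributes nonnegatively. One then compares degrees of the resulting subsheaves of $f_*$ against $\mathcal O_C$. As in Ejiri's argument, the natural framework is a lemma on surjections $F^e_{C*}\mathcal E\to\mathcal O_C$ whose source bundles have slopes bounded below. The bound forces $(p^e-1)\deg K_C$ to be at most a constant independent of $e$, which contradicts $\deg K_C>0$.

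\textbf{Deriving (2) and (3).} For (2), ampleness of $-(K_S+\Delta)$ lets us perturb $\Delta$ by a small ample divisor $\epsilon H$ with denominators prime to $p$. Part (1) applied to the perturbed pair, together with the extra strict positivity, excludes $\deg K_C=0$. For (3), strong $F$-regularity of the geometric generic fiber allows replacing $\Delta$ by $\Delta+\epsilon E$ while keeping $F$-purity generically. Here $E$ comes from a decomposition of the nef and big divisor as ample plus effective. This reduces (3) to (2).

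\textbf{Main obstacle.} I expect the hardest step to be the degree bound in (1): turning generic surjectivity of the trace on $C$ into a numerical inequality that is uniform in $e$. If it cannot be handled cleanly, I would simply cite \cite[Corollary~4.6]{Ejiri-2019} after verifying that its hypotheses are exactly those listed in the statement: $F$-finiteness, index prime to $p$, $F$-purity or strong $F$-regularity of the geometric generic fiber, and $H^0(C,\mathcal O_C)=k$.
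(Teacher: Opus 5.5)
The paper gives no argument of its own for this lemma. It is quoted as a special case of \cite[Corollary~4.6]{Ejiri-2019}, so your fallback (check the hypotheses and cite) is exactly the paper's proof, and that part is fine. The self-contained sketch you put in front of it would not stand as a proof, for two concrete reasons.

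First, the core bound in (1) is missing. With $N:=-(K_S+\Delta)$ nef, the relative trace only gives you a nonzero map
\[
  f_*\mathcal O_S\bigl((p^e-1)N\bigr)\longrightarrow\mathcal O_C\bigl(-(p^e-1)K_C\bigr).
\]
Equivalently, after composing with the trace of $F^e_C$, you get a nonzero map $F^e_{C*}f_*\mathcal O_S((p^e-1)N)\to\mathcal O_C$. Its image is a quotient line bundle of $f_*\mathcal O_S((p^e-1)N)$ of degree at most $-(p^e-1)\deg K_C$. This contradicts $\deg K_C>0$ only if quotient line bundles of $f_*\mathcal O_S(mN)$ have degree bounded below uniformly in $m$, or at least by $-o(m)$. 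That uniform lower bound is exactly where the work lies. The sentence ``the nef part contributes nonnegatively'' asserts it rather than proves it: nefness of $N$ on $S$ does not formally control the slopes of the direct images $f_*\mathcal O_S(mN)$. A genuine extra input is needed, namely a vanishing or global-generation statement uniform in $e$ combined with the trace maps, and nothing in the sketch supplies it.

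Second, your reduction of (2) to (1) does not produce strictness. Applying (1) as stated to a perturbed pair such as $(S,\Delta+\epsilon H)$ again returns only $\deg K_C\le 0$. What you need is the twisted form of (1): if $-(K_S+\Delta)-f^*D$ is nef for a $\mathbb Q$-divisor $D$ on $C$, then $\deg(K_C+D)\le 0$.
\begin{itemize}
\item Since $-(K_S+\Delta)-\epsilon f^*P$ stays ample for a closed point $P\in C$ and small $\epsilon$, the twisted form gives $\deg K_C\le-\epsilon\deg P<0$.
\item Your trace set-up would yield the twisted form at no extra cost once the core bound is in place, since it only replaces $K_C$ by $K_C+D$.
\end{itemize}
The reduction of (3) to (2) is fine. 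It uses Kodaira's lemma to write $-(K_S+\Delta)$ as ample plus effective, and the fact that strong $F$-regularity of the geometric generic fiber survives adding $\epsilon E_{\bar\eta}$ for small $\epsilon$ with denominator prime to $p$.
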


\subsection{Behavior of canonical divisors under purely inseparable field extensions}\label{sec:geo-non-normal-base-change}%
Let $X$ be a normal variety over $k$ such that $k$ is algebraically closed in $K(X)$.
Following \cite[Definition~5.1]{tanaka_invariants_2021}, the \emph{Frobenius length of geometric non-normality} $\ell_F(X/k)$ is the least integer $n\geq0$ such that
\[
  (X\times_k{k^{1/p^n}})_{\mathrm{red}}^\nu
\]
is geometrically normal over $k^{1/p^n}$.
This integer exists by \cite[Remark~5.2]{tanaka_invariants_2021}.

\begin{theorem}[{\cite[Theorem~1.1]{tanaka_behavior_2018} and \cite[Theorem~1.1]{ji_structure_2021}}]\label{th:geonon-normal}%
  Let $X$ be a normal projective variety over a field $k$ with $H^0(X,\mathcal O_X) = k$.
  Set either $\ell = k^{1/p}$ or $\ell = \bar k$.
  Let $Y := (X \times_k \ell)_{\mathrm{red}}^\nu$.
  Then there exists an effective $\mathbb{Z}$-Weil divisor $\mathfrak C = \mathfrak M + \mathfrak F$ such that 
  \begin{equation}\label{eq:WR1Y}
    -K_{Y/X} \sim (p-1) \mathfrak C ,
  \end{equation}
  where
  \begin{itemize}
    \item $\mathfrak C = 0$ if and only if $X$ is geometrically normal;
    \item $\mathfrak{F}$ is supported on the conductor divisor of the normalization map $Y\to (X \times_k \ell)_{\mathrm{red}}$; and 
    \item $\mathfrak M$ is a movable divisor, and $\mathfrak M\ne0$ if and only if $X$ is geometrically non-reduced.
  \end{itemize}
\end{theorem}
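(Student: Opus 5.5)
Since the statement is quoted from \cite{tanaka_behavior_2018,ji_structure_2021}, in the paper we would simply cite it. If we had to reprove it, we would follow the foliation approach of Ji--Waldron, for which the case $\ell=k^{1/p}$ is the basic one.

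\emph{Step 1: a foliation for the height-one cover.} Write $\phi\colon Y=(X_{k^{1/p}})_{\mathrm{red}}^\nu\to X$. Over $k^{1/p}$ we have $K(Y)\supseteq K(X)\cdot k^{1/p}$ and $K(Y)^p\subseteq K(X)$, so $\phi$ is finite, purely inseparable and of height one. Jacobson correspondence then identifies $K(X)$ with the field of constants of a $p$-closed saturated foliation $\mathcal F\subset T_{Y}$. Concretely, $\mathcal F$ consists of the $k^{1/p}$-linear derivations of $\mathcal O_Y$ killing $\mathcal O_X$; more precisely, one first passes to the Frobenius twist so that everything is linear over one field.

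\emph{Step 2: the canonical bundle formula.} For a height-one quotient $Y\to Y/\mathcal F$ of normal varieties one has
\[
  K_Y\sim \phi^*K_{Y/\mathcal F}+(p-1)K_{\mathcal F}.
\]
This is computed in codimension one, where everything is regular and $\mathcal F$ is a saturated subsheaf of $T_Y$. Identifying $Y/\mathcal F$ with $X$ (up to the twist) gives
\[
  -K_{Y/X}\sim (p-1)(-K_{\mathcal F}),
\]
so we set $\mathfrak C$ to be a divisor representing $-K_{\mathcal F}=-\det\mathcal F$.

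\emph{Step 3: effectivity, which we expect to be the main obstacle.} We must show that $-\det\mathcal F$ is linearly equivalent to an \emph{effective} divisor. We would construct sections explicitly. Choose a $p$-basis $a_1,\dots,a_r$ of the relevant part of $k$ over $k^p$, chosen so that $a_i^{1/p}\notin K(X)$. The derivations $\partial/\partial a_i^{1/p}$ act on $K(X)\otimes_k k^{1/p}$ and induce rational sections of $\mathcal F$. Their wedge gives a rational section of $\det\mathcal F$, and we must check it has no poles. A pole can occur only along two kinds of primes. At primes where $X_\ell$ is non-reduced, the pole order is controlled by the nilradical filtration; this yields the movable part $\mathfrak M$, which moves with the choice of $p$-basis and is nonzero exactly when nilpotents exist. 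At primes lying over the conductor of $Y\to(X_\ell)_{\mathrm{red}}$, the section extends with a zero, giving $\mathfrak F$. The delicate point is the sign at these primes, that is, showing we get zeros of the dual section rather than poles. We would prove it by a local computation at a codimension-one point. There $\mathcal O_{X_\ell}$ is a one-dimensional ring, and the normalization is a DVR on which the derivations act.

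\emph{Step 4: the vanishing criterion.} Finally, $\mathfrak C=0$ iff $\phi^*K_X\sim K_Y$ in codimension one. This happens iff $X_\ell$ is reduced and normal in codimension one. Since $X_\ell$ is $S_2$ when $X$ is normal, this is equivalent to geometric normality.

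\emph{Step 5: the case $\ell=\bar k$.} Write $\bar k$ as a tower $k\subset k^{1/p}\subset k^{1/p^2}\subset\cdots$ up to $k^{1/p^{\ell_F}}$, followed by a separable extension, which contributes nothing. Iterating Step 2 along the tower, the relative canonical divisors add. Using \cite[Remark~5.2]{tanaka_invariants_2021}, the tower stabilizes after $\ell_F(X/k)$ steps, so the total is again of the form $(p-1)$ times a pulled-back sum of effective divisors.
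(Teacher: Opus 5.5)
Citing the theorem is exactly what the paper does. The Ji--Waldron foliation picture is also the one the paper recalls in \S\ref{sec:geo-non-normal-base-change}, and your identification $\mathfrak C\sim-\det\mathcal F$ agrees with \eqref{eq:foliation-chern-class}. The reproof you sketch, however, has a genuine gap at Step~3: the mechanism you propose there does not work, for three reasons.

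\emph{Linearity.} The foliation is not $k^{1/p}$-linear. A $k^{1/p}$-linear derivation killing $\mathcal O_X$ kills $K(X)\cdot k^{1/p}=K(Y)$, so it is zero. The relevant sheaf is $\mathscr G=T_{Y/X}\subset T_{Y/k}$. The whole construction rests on its sections acting nontrivially on $\ell\subset\mathcal O_Y$, which embeds $\mathscr G$ into $V\otimes_\ell\mathcal O_Y$ with $V=\operatorname{Der}_k(\ell)$.

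\emph{The non-reduced case.} When $X$ is geometrically non-reduced, the $\partial/\partial a_i^{1/p}$ do not give rational sections of $\mathscr G$ at all. The derivation $\mathrm{id}\otimes\partial/\partial a_i^{1/p}$ need not preserve the nilradical of $K(X)\otimes_k k^{1/p}$ in characteristic $p$, and in any case $\operatorname{rank}\mathscr G=q<r$. For $k=\mathbf F_p(s,t)$ and $K(X)=k(x,y)$ with $y^p=s+tx^p$, the nilpotent $y-s^{1/p}-t^{1/p}x$ is sent to $-1$ by $\partial/\partial s^{1/p}$. Here $\mathscr G_\eta$ is spanned by the derivation acting on $\ell$ as $\partial/\partial t^{1/p}-x\,\partial/\partial s^{1/p}$, whose direction in $V\otimes_\ell K(Y)$ varies with $x$. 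That variation, i.e.\ the nonconstant Gauss map $\gamma_0$, is what produces $\mathfrak M$. It is not a pole order along special primes: the non-reducedness sits at the generic point, and $\mathfrak M$ is movable.

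\emph{The sign.} Effectivity of $\mathfrak C\sim-\det\mathscr G$ means $(\det\mathscr G)^\vee$ has a nonzero section, i.e.\ some rational section of $\det\mathscr G$ has no \emph{zeros}. In the reduced case, $\bigwedge_i\partial/\partial a_i^{1/p}$ is a nowhere-vanishing section of $\det(V\otimes\mathcal O_Y)$. As a section of $\det\mathscr G$ it has poles exactly along $\mathfrak F$, so proving ``no poles'' would give $\mathfrak C\le0$.

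The missing idea is to work on the dual side. For $a\in\ell\subset H^0(Y,\mathcal O_Y)$, the map $\operatorname{ev}_a\colon\mathscr G\to\mathcal O_Y$, $\partial\mapsto\partial(a)$, is automatically regular. Over a $p$-basis these maps give the injection $\mathscr G\hookrightarrow V\otimes_\ell\mathcal O_Y$. Its $q$-th exterior power gives nonzero maps $\det\mathscr G\to\mathcal O_Y$, so effectivity is immediate, with no local computation at conductor primes (compare Lemma~\ref{lemm:foliation-evaluation}). The decomposition then comes from $\mathscr G\subseteq\mathscr G'\subseteq V\otimes_\ell\mathcal O_Y$: $\mathfrak F$ is the zero divisor of $\det(\mathscr G\hookrightarrow\mathscr G')$, and $\mathcal O_Y(\mathfrak M)\simeq\det\mathscr Q$ is spanned by the Pl\"ucker sections from $\wedge^sV$.

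Your Step~4 only asserts the characterizations; two arguments are missing.
\begin{itemize}
\item ``$\mathfrak M=0\Rightarrow X$ geometrically reduced.'' If $\mathfrak M=0$, the Pl\"ucker coordinates are sections of a trivial line bundle, hence constant. So $\mathscr G_\eta=W\otimes_\ell K(Y)$ for a restricted Lie subalgebra $W\subseteq V$ with $\dim W=q$. Geometric non-reducedness forces $q<r$, and then Jacobson's correspondence gives some $a\in\ell^W\setminus k$ annihilated by $\mathscr G$. Hence $a\in K(X)$, contradicting $H^0(X,\mathcal O_X)=k$. This is the argument of Subcase~1(b) in the proof of Proposition~\ref{prop:Alb-trivial}.
\item ``$\mathfrak F=0\Rightarrow X_\ell$ normal in codimension one'' (reduced case). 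This needs a Taylor-coefficient argument. Suppose there are derivations $D_j\in\mathscr G_y$ with $D_j(a_i^{1/p})=\delta_{ij}$. Applying products of the $D_j$ extracts the coefficients of any $f\in\mathcal O_{Y,y}$ in the monomial basis, and these coefficients lie in $\mathcal O_{Y,y}\cap K(X)=\mathcal O_{X,x}$. Hence $\mathcal O_{X,x}\otimes_k\ell\to\mathcal O_{Y,y}$ is surjective.
\end{itemize}
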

The original statement of \cite[Theorem~1.1]{ji_structure_2021} assumes that $k$ is a function field, but this assumption is unnecessary as noted in \cite[page~3917]{Das-Waldron-2022}.

We recall how the divisor $\mathfrak C = \mathfrak M + \mathfrak F$ is constructed in \cite[\S3.2]{ji_structure_2021} in terms of foliations when $\ell = k^{1/p}$ and $k$ is $F$-finite.
Let $r := \dim_\ell\operatorname{Der}_k(\ell)$.
Let $\phi\colon Y\to X$ be the induced morphism, and let
\[
  \mathscr G := T_{Y/X} = \mathcal{H}om_{\mathcal O_Y}(\Omega_{Y/X},\mathcal O_Y)
\]
be the foliation associated with $\phi$.
Restricting local derivations in $\mathscr G$ to the subfield $\ell\subset K(Y)$ gives a natural inclusion
\[
  \mathscr G \hookrightarrow \operatorname{Der}_k(\ell)\otimes_\ell\mathcal O_Y \simeq \mathcal O_Y^{\oplus r}.
\]
Let $\mathscr G'$ be the saturation of $\mathscr G$ in this trivial bundle, and denote the inclusion by $\iota\colon\mathscr G\hookrightarrow\mathscr G'$.
Then we have the exact sequences
\begin{equation}\label{eq:foliation-inclusion-quotient}
  0\longrightarrow\mathscr G
  \xrightarrow{\iota}\mathscr G'
  \longrightarrow\operatorname{coker}(\iota)\longrightarrow0
\end{equation}
and
\begin{equation}\label{eq:foliation-saturation-quotient}
  0\longrightarrow\mathscr G'
  \longrightarrow\operatorname{Der}_k(\ell)\otimes_\ell\mathcal O_Y
  \longrightarrow\mathscr Q\longrightarrow0,
\end{equation}
where $\mathscr Q$ is torsion-free by definition of saturation.
The zero divisor of $\det(\iota)$ is the fixed divisor $\mathfrak F$ from Theorem~\ref{th:geonon-normal}; see \cite[Definition~3.2]{ji_structure_2021}.
If $X\times_k\ell$ is reduced (equivalently, $\mathfrak M=0$), then $\mathscr G'=\operatorname{Der}_k(\ell)\otimes_\ell\mathcal O_Y\simeq\mathcal O_Y^{\oplus r}$, so $\mathfrak F$ is the zero divisor of $\det$ of the inclusion $\mathscr G\hookrightarrow\mathcal O_Y^{\oplus r}$; see \cite[Remark~3.3]{ji_structure_2021}.

To describe $\mathfrak M$, we keep track of how the fiber of $\mathscr G'$ inside $V:=\operatorname{Der}_k(\ell)$ varies with the point of $Y$.
Put $q:=\operatorname{rank}\mathscr G$ and $s:=r-q=\operatorname{rank}\mathscr Q$.
At a general point $y\in Y$, the fiber of $\mathscr G'$ is a $q$-dimensional subspace of $V\otimes_\ell\kappa(y)$; sending $y$ to this subspace and composing with the Pl\"ucker embedding gives a rational map (cf.\ \cite[\S2.3]{ji_structure_2021})
\begin{equation}\label{eq:foliation-grassmannian-map}
  \gamma_0\colon Y\dashrightarrow\operatorname{Gr}(q,V)\hookrightarrow\mathbb P(\wedge^s V).
\end{equation}
We call $\gamma_0$ the Gauss map associated with $\mathscr G$, using the same notation for its composition with the Pl\"ucker embedding.
Its domain is the open locus $U\subseteq Y$ where $\mathscr G'$ is a subbundle of $V\otimes_\ell\mathcal O_Y$, i.e.\ where $\mathscr Q$ is locally free, whose complement has codimension at least $2$ since $Y$ is normal and $\mathscr Q$ is torsion-free.
Taking $s$-th exterior powers of $V\otimes_\ell\mathcal O_Y\twoheadrightarrow\mathscr Q$ gives a map $\wedge^s V\otimes_\ell\mathcal O_Y\to\det\mathscr Q$, which is surjective on $U$, where it is the pullback by $\gamma_0$ of $\wedge^s V\otimes\mathcal O\twoheadrightarrow\mathcal O(1)$.
Hence the sections of $\det\mathscr Q$ coming from $\wedge^s V$ span the linear system $\mathcal M = \gamma_0^*|\mathcal O(1)|$, whose base locus is exactly $Y\setminus U$, of codimension at least $2$; in particular, $\mathcal M$ is movable.
Moreover, $\det\mathscr G'\simeq(\det\mathscr Q)^\vee$ on $U$, hence on $Y$ by reflexivity.
By \cite[Definition~5.1 and Lemma~5.2]{ji_structure_2021}, the divisor $\mathfrak M$ of Theorem~\ref{th:geonon-normal} is a member of $\mathcal M$, so
\[
  \mathcal M\subseteq|\mathfrak M|,\qquad
  \mathcal O_Y(\mathfrak M)\simeq(\det\mathscr G')^\vee\simeq\det\mathscr Q.
\]
In particular, $\gamma_0$ is a morphism along any curve $C\subseteq Y$ that is disjoint from some member of $\mathcal M$.
Since $\mathfrak F$ is the zero divisor of $\det(\iota)$, the isomorphism $\mathcal O_Y(\mathfrak M)\simeq(\det\mathscr G')^\vee$ gives
\begin{equation}\label{eq:foliation-chern-class}
  c_1(\mathscr G')=-\mathfrak M,\qquad
  c_1(\mathscr G)=c_1(\mathscr G')-\mathfrak F=-\mathfrak C.
\end{equation}

\begin{lemma}\label{lemm:foliation-evaluation}
  In the above setting with $\ell=k^{1/p}$, let $a\in\ell\setminus k$, and let $D$ be an effective Weil divisor on $Y$.
  Assume that $v_P(\partial(a))\geq\operatorname{mult}_PD$ for every prime divisor $P\subseteq\operatorname{Supp}D$ and every $\partial$ in the stalk of $\mathscr G$ at the generic point of $P$.
  Then $\mathfrak C\sim D+E$ for some effective divisor $E$ on $Y$.
\end{lemma}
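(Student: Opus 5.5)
The plan is to build $D$ directly from a derivation that kills nothing but is divisible along $D$. Fix $a\in\ell\setminus k$ and consider the evaluation map
\[
  \mathrm{ev}_a\colon \operatorname{Der}_k(\ell)\otimes_\ell\mathcal O_Y\longrightarrow \mathcal O_Y,\qquad \partial\otimes f\mapsto f\,\partial(a).
\]
Here each $\partial\in\operatorname{Der}_k(\ell)$ extends $\mathcal O_Y$-linearly. A local section of $\mathscr G=T_{Y/X}$ is a derivation of $K(Y)$ over $K(X)$. Its restriction to $\ell$ lies in $\operatorname{Der}_k(\ell)\otimes K(Y)$, since $\ell\cap K(X)=k$ because $H^0(X,\mathcal O_X)=k$. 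So the composite $\mathscr G\to\mathcal O_Y$ is just $\partial\mapsto\partial(a)$, with $\partial(a)\in\mathcal O_Y$ because $a\in\ell\subset\mathcal O_Y$.

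The first step is to show this composite is nonzero. Otherwise every derivation of $K(Y)/K(X)$ kills $a$. Since $K(Y)/K(X)$ is purely inseparable of height one, Jacobson's correspondence would give $a\in K(X)$. Then $a\in K(X)\cap\ell=k$, a contradiction.

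Next I would use the rank of $\mathscr G$. Say it has rank $q$. I would pass to $\det$: choose $\partial_1,\dots,\partial_{q-1}$ completing the picture, or more cleanly use the dual description $\mathscr G\simeq$ a reflexive sheaf with $c_1(\mathscr G)=-\mathfrak C$ from \eqref{eq:foliation-chern-class}. The most convenient route is as follows. The nonzero map $\mathscr G\to\mathcal O_Y$ has image $\mathcal I\cdot\mathcal O_Y$ for an ideal. Its kernel $\mathscr K$ is saturated of rank $q-1$, so $\det\mathscr G\simeq\det\mathscr K\otimes\mathcal O_Y(-D')$ reflexively, where $D'$ is the divisorial part of the vanishing of the image. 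The hypothesis $v_P(\partial(a))\ge\operatorname{mult}_PD$ for all $\partial\in\mathscr G_P$ gives $D'\ge D$. Hence $\mathfrak C\sim -c_1(\mathscr K)+D'$.

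The remaining point, and the main obstacle, is that $-c_1(\mathscr K)$ is effective. Here $\mathscr K=\mathscr G\cap\ker(\mathrm{ev}_a)$ sits inside $\ker(\mathrm{ev}_a|_{V\otimes\mathcal O_Y})\simeq\mathcal O_Y^{\oplus(r-1)}$, a trivial bundle because $\mathrm{ev}_a$ on $V$ is a nonzero linear functional on a constant vector space. So $\mathscr K$ is a rank $q-1$ subsheaf of a trivial bundle. Its determinant embeds into $\wedge^{q-1}\mathcal O_Y^{\oplus(r-1)}$, a trivial bundle, so $(\det\mathscr K)^\vee$ has a nonzero section. Thus $-c_1(\mathscr K)\sim E_0\ge0$.

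Setting $E:=E_0+(D'-D)$ gives $\mathfrak C\sim D+E$ with $E$ effective. Care is needed to do all determinant computations on the big open set where the sheaves are locally free and then extend by reflexivity. For $q=1$ the kernel is zero and $E_0=0$.
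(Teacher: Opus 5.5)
Your argument is correct and is essentially the paper's. The paper wedges $\operatorname{ev}_a\colon\mathscr G\to\mathcal O_Y(-D)$ with the coordinate functionals $\operatorname{ev}_{a_2},\dots,\operatorname{ev}_{a_q}$ from a $p$-basis $a=a_1,\dots,a_r$ of $\ell/k$, obtaining a nonzero map $\det\mathscr G\simeq\mathcal O_Y(-\mathfrak C)\to\mathcal O_Y(-D)$; this is exactly your factorization $\det\mathscr G\simeq\det\mathscr K\otimes\mathcal O_Y(-D')$ with $\det\mathscr K\hookrightarrow\wedge^{q-1}\bigl(\ker(\operatorname{ev}_a|_V)\otimes_\ell\mathcal O_Y\bigr)$, written in coordinates.

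One justification is misattributed. The restriction of $\partial$ to $\ell$ lands in $\operatorname{Der}_k(\ell)\otimes_\ell\mathcal O_Y$ simply because $k\subseteq K(X)$ and $\ell\subseteq\mathcal O_Y$. The equality $\ell\cap K(X)=k$ is what your nonvanishing step needs, not this one.
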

\begin{proof}
  Since $a\in\ell\subseteq H^0(Y,\mathcal O_Y)$ and local sections of $\mathscr G$ are derivations of $\mathcal O_Y$, the function $\partial(a)$ is regular for every local section $\partial$ of $\mathscr G$.
  The hypothesis, normality of $Y$, and reflexivity of $\mathcal O_Y(-D)$ therefore give a map \cite[Tag~\href{https://stacks.math.columbia.edu/tag/0AVC}{0AVC}]{StacksProject}
  \[
    \operatorname{ev}_a\colon\mathscr G\longrightarrow\mathcal O_Y(-D),
    \qquad \partial\longmapsto\partial(a).
  \]
  This map is nonzero since $a\notin K(X)$.
  Extend $a$ to a $p$-basis $a_1=a,a_2,\ldots,a_r$ of $\ell/k$.
  For $2\leq i\leq r$, define
  \[
    \operatorname{ev}_i\colon\mathscr G\longrightarrow\mathcal O_Y,
    \qquad \partial\longmapsto\partial(a_i).
  \]
  The inclusion $\mathscr G\hookrightarrow\operatorname{Der}_k(\ell)\otimes_\ell\mathcal O_Y$ implies that $\operatorname{ev}_a,\operatorname{ev}_2,\ldots,\operatorname{ev}_r$ span the dual of $\mathscr G$ at the generic point.
  After renumbering $a_2,\ldots,a_r$, we may therefore assume that $\operatorname{ev}_a,\operatorname{ev}_2,\ldots,\operatorname{ev}_q$ are linearly independent there.
  Taking the exterior product $\operatorname{ev}_a\wedge\operatorname{ev}_2\wedge\cdots\wedge\operatorname{ev}_q$ and reflexive hulls, and using \eqref{eq:foliation-chern-class}, gives a nonzero map
  \[
    \mathcal O_Y(-\mathfrak C)\simeq\det\mathscr G
      \longrightarrow\mathcal O_Y(-D).
  \]
  Hence $\mathfrak C\sim D+E$ for some effective divisor $E$.
\end{proof}

\subsection{Del Pezzo surfaces and Mori fiber spaces}\label{sec:del-pezzo-mori-fiber-spaces}%
\begin{definition}
  Let $X$ be a regular projective $k$-surface with $H^0(X,\mathcal O_X) = k$.
  We say that $X$ is a \textit{del Pezzo surface} if $-K_X$ is ample.
  We say that a fibration $f\colon X\to B$ from $X$ is a \textit{Mori fiber space} if $\dim B< \dim X$, $\rho(X/B) = 1$ and $- K_X$ is relatively ample over $B$.
\end{definition}

In this paper, we need the following results.
\begin{proposition}[{\cite[Theorem~4.1]{Bernasconi-Tanaka-22}}]\label{prop-ell_F-del-Pezzo}%
  Let $X$ be a normal projective surface over $k$ such that $X$ has at most canonical singularity and $-K_X$ is nef and big. 
  Let $L\in \mathrm{Pic}(X)$ such that $L \equiv 0$.
  Then:
  \begin{itemize}
    \item[\textup{(1)}] if $p\geq5$, then $L\sim0$;
    \item[\textup{(2)}] if $p=3$, then $3L\sim0$;
    \item[\textup{(3)}] if $p=2$, then $2^2L\sim0$.
  \end{itemize}
\end{proposition}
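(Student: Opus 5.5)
The plan is to reduce to the geometrically normal case by iterated Frobenius base changes, and to pay a factor of $p$ in the order of $L$ for each base change. The steps are: reduce to a regular model, establish triviality in the geometrically normal case, do the Frobenius induction, and finally bound the number of base changes.

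\textbf{Reduction to a regular model.} First we may assume $k=H^0(X,\mathcal O_X)$ by replacing $k$ with the field of constants. Let $\mu\colon X'\to X$ be the minimal resolution. Since $X$ has canonical singularities, $K_{X'}=\mu^*K_X$, so $-K_{X'}$ is nef and big. Since $\mu_*\mathcal O_{X'}=\mathcal O_X$, the map $\mu^*\colon\Pic(X)\to\Pic(X')$ is injective. Hence it suffices to treat a regular $X$ with $-K_X$ nef and big.

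\textbf{The geometrically normal case.} Suppose $X$ is geometrically normal. Then $X_{\bar k}$ is a normal surface with $-K_{X_{\bar k}}$ nef and big. Its minimal resolution $W$ satisfies $K_W\le\pi^*K_{X_{\bar k}}$, so $-K_W$ is big. Therefore $W$ is rational, $\Pic^0(W)=0$ and $H^1(W,\mathcal O_W)=0$, and numerically trivial line bundles on $W$ are trivial. Pulling back is injective on $\Pic$ because $X_{\bar k}$ is normal, so $L_{\bar k}\simeq\mathcal O$.

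Flat base change then gives $H^0(X,L)\otimes_k\bar k\ne0$, so $H^0(X,L)\ne0$. Likewise $H^0(X,L^{-1})\neq0$, and since $L\equiv0$ this forces $L\sim0$.

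\textbf{The inductive step.} In general, set $X_1=(X_{k^{1/p}})^\nu_{\mathrm{red}}$ with $\phi\colon X_1\to X$. By Theorem~\ref{th:geonon-normal},
\[
-K_{X_1}\sim -\phi^*K_X+(p-1)\mathfrak C .
\]
So $-K_{X_1}$ is big, and it is nef off the components of $\mathfrak C$. The morphism $\phi$ is purely inseparable of height one, so Lemma~\ref{lemma-finite-Cartier}(2) gives: if $m\phi^*L\sim0$ then $pmL\sim0$.

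Iterating $\ell_F(X/k)$ times reaches a geometrically normal surface $X_n$. At each stage we pass to the minimal resolution and run a $K$-MMP to a model on which $-K$ is nef and big, or on which the pulled-back bundle descends. This would give $p^{\ell_F(X/k)}L\sim0$.

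\textbf{Main obstacle: bounding $\ell_F$ and preserving the hypotheses.} The heart of the matter is to show that
\begin{itemize}
\item $\ell_F(X/k)=0$ for $p\ge5$, i.e.\ del Pezzo-type surfaces with canonical singularities are geometrically normal (even geometrically canonical) in these characteristics;
\item $\ell_F\le1$ for $p=3$;
\item the total contribution is at most $2^2$ for $p=2$.
\end{itemize}

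To bound $\ell_F$, I would restrict the formula for $K_{X_1}$ to curves. Take a $K$-negative extremal curve, or a general fibre of a conic-bundle Mori fibre space, and use $\deg K_{C^\nu}\ge-2$ from adjunction (Proposition~\ref{prop:adj}). Then $(p-1)\,\mathfrak C\cdot C\le -K_{X_1}\cdot C$ bounds $p-1$ by small intersection numbers, which rules out $\mathfrak C\ne0$ for $p\ge5$ and limits the number of iterations for $p=2,3$.

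The delicate part is ensuring, at each iteration, that $-K$ stays nef and big after resolving and running the MMP. The pulled-back line bundle has to survive the contractions; it does, because it is numerically trivial and hence trivial on the exceptional curves, which are rational.
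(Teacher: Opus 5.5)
You have the right skeleton. The paper does not reprove this statement; it quotes Bernasconi--Tanaka, whose argument also goes: bound $\ell_F(X/k)$, show that numerically trivial bundles become trivial on the geometrically normal model, and pay a factor $p^{\ell_F}$ via Lemma~\ref{lemma-finite-Cartier}\textup{(2)}.

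\textbf{First gap: the bound on $\ell_F$.} The step that carries all the content is $\ell_F(X/k)=0$ for $p\ge5$, $\le1$ for $p=3$ and $\le2$ for $p=2$. You only assert it, and the method you sketch cannot deliver it.
\begin{itemize}
\item Restricting $K_{X_1}+(p-1)\mathfrak C\sim\phi^*K_X$ to a $K$-negative curve $C$ helps only when $\mathfrak C\cdot C>0$. It gives nothing when the components of $\mathfrak C$ are contracted by the MMP on the resolution, or are vertical for a conic bundle.
\item Even in the good case it bounds $p$, not the number of Frobenius steps. Counting steps requires comparing the conductors $\mathfrak C_1,\mathfrak C_2,\dots$ of successive base changes on one common geometric model.
\item Your iteration loses the hypotheses after the first step: $X_1$ need not be canonical, and $-K_{X_1}$ need not be nef.
\item ``Run a $K$-MMP to a model on which $-K$ is nef and big'' is not something the MMP provides. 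It ends either in a Picard-rank-one regular del Pezzo surface, which is the statement you are proving, or in a conic bundle over a curve $B$. In the latter case the descended bundle is only of degree zero on $B$, and you still have to control $p_a(B)$, as in Section~\ref{sec:proof}.
\end{itemize}
The bounds come from a structure theorem for the normalized geometric base change. For example, for regular del Pezzo surfaces one has $W\simeq\mathbb P(1,1,d)$ with $-K_W\sim(d+2)H$ and $\psi^*(-K_X)\sim dH$, as used in Lemmas~\ref{lemm:del-Pezzo-p3} and~\ref{lemm:del-Pezzo-p2}. So the total correction $(p-1)\sum_i\mathfrak C_i$ is at most $2H$. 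This forces $p\le3$, a single step for $p=3$, and at most two steps for $p=2$. Incidentally, no iteration is needed at all: with $n=\ell_F(X/k)$, the map $(X\times_k k^{1/p^n})^\nu_{\mathrm{red}}\to X$ already has height $n$.

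\textbf{Second gap: rationality in the geometrically normal case.} The implication ``$-K_W$ big, therefore $W$ is rational'' is false in general. The cone over a plane cubic $E$ is a normal Gorenstein surface with $-K$ ample. Its minimal resolution is a $\mathbb P^1$-bundle over $E$ whose negative section $C_0$ has $C_0^2=-3$, and there $-K\equiv 2C_0+3f=C_0+(C_0+3f)$ is big, with $C_0+3f$ nef of square $3$. Yet the surface is ruled over an elliptic curve, not rational.

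To get rationality you must use that $X$ is regular:
\begin{itemize}
\item By Lemma~\ref{lemm:height-n}, every exceptional curve of the resolution of $X_{\bar k}$ is rational. Hence it is vertical for any ruling over a curve of genus $g\ge1$.
\item Push $\mu^*(-K_{X_{\bar k}})$, which is nef with positive square, together with the vertical exceptional part down to a geometrically ruled model $W_0$.
\item This produces a nef class $-K_{W_0}-bf$ with $b\ge0$ and positive square.
\item That contradicts $(-K_{W_0}-bf)^2=8(1-g)-4b\le0$.
\end{itemize}
For the non-normal geometric models the corresponding rationality statement needs still more care, since $(p-1)\widetilde{\mathfrak C}$ may have horizontal components of positive genus; compare Case~(B) in Proposition~\ref{prop:a_X-surjective}.
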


\begin{proposition}[see {\cite[Proposition~2.18]{Bernasconi-Tanaka-22}}]\label{prop:Mori-f-BT22}%
  Let $f\colon X\to B$ be a $K_X$-Mori fibre space from a regular projective  surface to a curve with $H^0(B,\mathcal O_B) = k$.
  Let $b\in B$ be a (not necessarily closed) point.
  Then the fibre $X_b$ is an integral $\kappa(b)$-conic with $\kappa(b) = H^0(X_b, \mathcal O_{X_b})$.
\end{proposition}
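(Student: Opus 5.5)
Since this is quoted from \cite[Proposition~2.18]{Bernasconi-Tanaka-22}, the plan is to reproduce the standard argument. Throughout, $f$ is flat because $X$ is regular, $B$ is a regular curve and $f$ is dominant. I would first treat the generic point $\eta$ and then an arbitrary closed point $b$. In each case the goal is to show that $X_b$ is a conic over $\kappa(b)$: a Gorenstein curve with $H^0(X_b,\mathcal O_{X_b})=\kappa(b)$, $h^1(\mathcal O_{X_b})=0$ and $-K_{X_b}$ ample of degree $2$ over $\kappa(b)$. Integrality has to be proved separately.

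\emph{Generic fiber.} Since $f_*\mathcal O_X=\mathcal O_B$, we have $H^0(X_\eta,\mathcal O)=K(B)=\kappa(\eta)$. The fiber $X_\eta$ is a regular curve, integral because $X$ is integral, and $-K_{X_\eta}=-K_X|_{X_\eta}$ is ample. Hence $\deg K_{X_\eta}<0$, which forces $p_a(X_\eta)=0$ and $\deg_{\kappa(\eta)}(-K_{X_\eta})=2$. This is the conic statement at $\eta$.

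\emph{Closed fibers.} Let $b$ be a closed point. By flatness $\chi(\mathcal O_{X_b})=\chi(\mathcal O_{X_\eta})$ over the respective residue fields. Likewise $\deg(-K_X|_{X_b})=2$ over $\kappa(b)$, since $X_b\equiv F$ numerically and $-K_X\cdot F$ is computed on the generic fiber. Adjunction gives $K_{X_b}=(K_X+X_b)|_{X_b}=K_X|_{X_b}$, so $\omega_{X_b}$ is invertible and anti-ample. The hard step is integrality of $X_b$. Because $\rho(X/B)=1$, every curve in a fiber is numerically proportional to $F$. Write $X_b=\sum m_iC_i$. If there were two distinct components, then some $C_i^2<0$ by Zariski's lemma; but each $C_i$ is numerically a positive multiple of $F$, so $C_i^2=0$, a contradiction. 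Hence $X_b=mC$ with $C$ integral. To rule out $m\ge2$, I would use
\[
2 = -K_X\cdot X_b = m(-K_X\cdot C)
\]
together with adjunction on $C$: since $C^2=0$, $\deg\omega_C=K_X\cdot C<0$ over $\kappa(b)$. Now $\deg\omega_C$ is a multiple of $2[k_C:\kappa(b)]$ when negative, because $\omega_C$ has degree $-2$ over $k_C$ for a genus-zero curve. Therefore $-K_X\cdot C\ge 2$, which forces $m=1$. Reducedness and irreducibility then give $X_b=C$ integral. I expect this divisibility to be the delicate point, since $C$ need not be regular and $k_C$ may be larger than $\kappa(b)$. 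I would handle it using $\chi(\mathcal O_C)>0$ and the fact that $\deg\omega_C=-2\chi(\mathcal O_C)$ over $\kappa(b)$.

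\emph{Constants.} Finally, $H^0(X_b,\mathcal O)=\kappa(b)$. Since $X_b$ is integral, $H^0(X_b,\mathcal O_{X_b})$ is a field $k'$ containing $\kappa(b)$. The degree identity $2=\deg_{\kappa(b)}(-K_{X_b})=[k':\kappa(b)]\deg_{k'}(\omega_{X_b}^{-1})$, combined with $\deg_{k'}\omega^{-1}=2\chi_{k'}(\mathcal O)\ge 2$, forces $k'=\kappa(b)$. Cohomology and base change along the flat family then confirm $\chi(\mathcal O_{X_b})=1$ consistently.
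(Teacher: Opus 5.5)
Your proof is correct and follows the standard argument behind \cite[Proposition~2.18]{Bernasconi-Tanaka-22}, which the paper quotes without proof. The steps are: flatness of $f$; irreducibility of every fibre from $\rho(X/B)=1$; and the identity $\deg_{\kappa(b)}\omega_C=-2\chi_{\kappa(b)}(\mathcal O_C)$ for the integral Gorenstein curve $C$ with $X_b=mC$, which gives $-K_X\cdot C\ge 2$ over $\kappa(b)$ and hence $m=1$ and $H^0(X_b,\mathcal O_{X_b})=\kappa(b)$.

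The only loose phrase is ``$X_b\equiv F$''. Over $k$ the class of $f^*b$ scales with $[\kappa(b):k]$, so the equality $\deg_{\kappa(b)}(-K_X|_{X_b})=2$ is better justified, as you do elsewhere, by the constancy of $\chi(X_b,\omega_X^{-1}|_{X_b})$ and $\chi(X_b,\mathcal O_{X_b})$ in the flat family.
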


\begin{definition}\label{def:quasilinear}
  Let $K$ be a field of characteristic two.
  A conic in $\mathbb P^2_K$ is \textit{quasilinear} if it is defined by an equation $sx^2+ty^2+rz^2=0$ with $s,t,r\in K$.
\end{definition}
By \cite[Theorem~9.10\,(5),\,(6)]{tanaka_invariants_2021}, a regular conic $F$ over a field $K$ with $H^0(F,\mathcal O_F)=K$ is either smooth over $K$, or $\operatorname{char}K=2$ and $F$ is quasilinear.

\subsection{Minimal model program of surfaces}\label{sec:mmp}%
\begin{definition}
  Let $X$ be a regular projective surface over $k$.
  We call an integral curve $C \subset X$ a \emph{$(-1)$-curve} if
  \[
    (K_X\cdot C)_{k_C} = (C)^2_{k_C} = -1
  \]
  where $k_C = H^0(C, \mathcal O_C)$.
  Note that, for such a curve $C$, we have $H^1(C,\mathcal O_C) = 0$.
\end{definition}

\begin{theorem}[Castelnuovo's contraction criterion (special case of {\cite[Theorem~27.1]{Lipman69}})]%
  Let $C \subset X$ be a $(-1)$-curve on a regular projective surface $X$.
  Then there exists a birational contraction $f\colon X\to Z$ to a regular surface $Z$ such that $C$ is the exceptional curve.
\end{theorem}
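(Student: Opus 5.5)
The plan is to follow the classical Castelnuovo argument, keeping track of the field of constants $k_C=H^0(C,\mathcal O_C)$. All degrees on $C$ are taken over $k_C$. First I record the structure of $C$. Since $H^1(C,\mathcal O_C)=0$ and $C$ is an integral projective curve, Riemann--Roch over $k_C$ gives
\[
  H^1(C,M)=0 \quad\text{whenever } \deg_{k_C}M\ge -1 .
\]
It also shows that a degree-zero line bundle $M$ on $C$ has a nonzero section, hence $M\simeq\mathcal O_C$. Moreover, adjunction for the regular surface $X$ gives $\deg_{k_C}\omega_C=(K_X+C)\cdot C=-2$, so $C$ is a conic over $k_C$.

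Next I construct the contracting line bundle. Fix an ample Cartier divisor $H$ on $X$ and set $m:=(H\cdot C)_{k_C}>0$. Put $L:=H+mC$; then $(L\cdot C)_{k_C}=0$, and $L$ is nef. For $0\le i\le m-1$ consider the exact sequences
\[
  0\to\mathcal O_X(L-(i+1)C)\to\mathcal O_X(L-iC)\to\mathcal O_C(L-iC)\to0 .
\]
The last term has degree $i\ge0$ over $k_C$, so its $H^1$ vanishes. After replacing $H$ by a large multiple (and $m$ accordingly), Serre vanishing gives $H^1(X,\mathcal O_X(H))=0$. Descending induction from $i=m$, where $L-mC=H$, then yields $H^1(X,\mathcal O_X(L-iC))=0$ for all $0\le i\le m$.

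In particular $H^0(X,L)\to H^0(C,L|_C)=H^0(C,\mathcal O_C)$ is surjective, so $|L|$ has no base points on $C$. Off $C$, $L$ agrees with $H+mC$, and for $H$ very ample a member of $|H|$ plus $mC$ avoids any given point outside $C$. Hence $L$ is base point free. Stein factorization of $\varphi_{|L|}$ gives a birational morphism $f\colon X\to Z$ onto a normal projective surface. It contracts exactly the curves $D$ with $L\cdot D=0$, and since $H$ is ample this means exactly $C$. Let $z=f(C)$.

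It remains to prove that $Z$ is regular at $z$, which I expect to be the main obstacle, especially since $k_C$ may be imperfect. I would argue through the ideals $f_*\mathcal O_X(-nC)$.
\begin{enumerate}[label=\textup{(\roman*)}]
  \item Using the sequences $0\to\mathcal O_X(-(n+1)C)\to\mathcal O_X(-nC)\to\mathcal O_C(-nC)\to0$, where $\mathcal O_C(-nC)$ has degree $n\ge0$, together with the theorem on formal functions, I get $R^1f_*\mathcal O_X=0$. So $z$ is a rational singularity.
  \item I then show that $\mathfrak m_z\mathcal O_X=\mathcal O_X(-C)$, so $C$ is the fundamental cycle.
  \item This gives $\mathfrak m_z^n/\mathfrak m_z^{n+1}\simeq H^0(C,\mathcal O_C(-nC))$, of $k_C$-dimension $n+1$. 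In particular $\mathfrak m_z/\mathfrak m_z^2$ has dimension $-C^2+1=2$ over $\kappa(z)=k_C$.
\end{enumerate}
Thus $\mathcal O_{Z,z}$ is a $2$-dimensional local ring with embedding dimension $2$, hence regular. This is exactly the content of Lipman's computation in \cite[Theorem~27.1]{Lipman69}.
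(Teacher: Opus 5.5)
Your proposal is correct in outline, but it takes a different route from the paper. The paper proves nothing here: it invokes Lipman's Theorem~27.1. That theorem covers excellent two-dimensional regular schemes and gives both the contraction and the regularity of the image point in one stroke.

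You instead reproduce the classical Castelnuovo construction:
\begin{enumerate}[label=\textup{(\alph*)}]
  \item set $L=H+mC$;
  \item prove $H^1(X,\mathcal O_X(L-iC))=0$ by descending induction;
  \item show $L$ is base point free and take the Stein factorization;
  \item analyse $z$ by the theorem on formal functions, with all degrees taken over $k_C$.
\end{enumerate}
The only input on $C$ is $H^1(C,M)=0$ for $\deg_{k_C}M\ge-1$. This does follow from $H^1(C,\mathcal O_C)=0$ and Riemann--Roch: for $\deg M\ge0$ a nonzero section exists and $H^1(M)$ is a quotient of $H^1(\mathcal O_C)$, and for $\deg M=-1$ we have $h^0=\chi=0$. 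So an imperfect $k_C$ causes no trouble. The citation buys brevity and generality (no projectivity, no auxiliary $H$). Your argument buys a self-contained proof in exactly the setting the paper uses.

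Steps (ii)--(iii) are the actual content of the criterion, and your write-up is too thin there. Step (ii) is announced but not proved. Step (iii) does not follow formally from (ii): $\mathfrak m_z\mathcal O_X=\mathcal O_X(-C)$ gives only $\mathfrak m_z^2\subseteq f_*\mathcal O_X(-2C)$, whereas $\mathfrak m_z/\mathfrak m_z^2\simeq H^0(C,\mathcal O_C(-C))$ needs the reverse inclusion. Both gaps close with tools you already have:
\begin{enumerate}[label=\textup{(\arabic*)}]
  \item Formal functions, with graded pieces $\mathcal O_C(-jC)$ for $j\ge2$, give $R^1f_*\mathcal O_X(-2C)=0$. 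Hence $\mathfrak m_z=f_*\mathcal O_X(-C)\twoheadrightarrow H^0(C,\mathcal O_C(-C))$.
  \item Lift a $k_C$-basis of $H^0(C,\mathcal O_C(-C))$ to $x,y\in\mathfrak m_z$. The degree-one bundle $\mathcal O_C(-C)$ is globally generated (it forces $C\simeq\mathbb P^1_{k_C}$). By Nakayama, $x,y$ generate $\mathcal O_X(-C)$ near $C$, which is (ii).
  \item Near $C$ this gives a Koszul sequence
\[
  0\to\mathcal O_X(C)\to\mathcal O_X^{\oplus2}\xrightarrow{(x,y)}\mathcal O_X(-C)\to0 .
\]
  \item $R^1f_*\mathcal O_X(C)=0$, since the graded pieces $\mathcal O_C((1-j)C)$ have degree $j-1\ge-1$. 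This is precisely where the case $\deg=-1$ of your vanishing is needed.
  \item Pushing forward the Koszul sequence gives $\mathfrak m_z=(x,y)$, so the two-dimensional local ring $\mathcal O_{Z,z}$ is regular.
\end{enumerate}
With this inserted the argument is complete, and (iii) for all $n$ is not needed.
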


\begin{theorem}[minimal model program {(\cite[page 97]{Shafarevich-1966-MMP}, \cite[page 5]{Tanaka-2018-MMP-excelent-surface})}]\label{thm:MMP}%
  Let $X$ be a regular projective surface over $k$.
  Then every sequence of contractions of $(-1)$-curves terminates at a regular surface $Z$ such that either $K_Z$ is nef or $Z$ admits a Mori fiber space structure.
\end{theorem}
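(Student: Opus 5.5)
Since Theorem~\ref{thm:MMP} is the last statement, it is the MMP for regular surfaces. I would prove it by combining termination with a dichotomy on the extremal rays.

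\textbf{Termination.} Each contraction of a $(-1)$-curve $C$, given by Castelnuovo's criterion, produces a regular projective surface $Z$ and a birational morphism $X\to Z$ with exceptional locus $C$. The Picard rank then drops by exactly one:
\[
  \Pic(X)\simeq\Pic(Z)\oplus\mathbb Z[C],
\]
as for a blow-up, using $C^2<0$ to see that $[C]$ is not a pullback. Since $\rho(X)<\infty$ (N\'eron--Severi is finitely generated for projective surfaces over any field), any sequence of such contractions has length at most $\rho(X)-1$. The main input here is the regularity of the output of Lipman's theorem, which is what lets us iterate.

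\textbf{Endpoint.} Let $Z$ be the last surface, so $Z$ contains no $(-1)$-curve, and suppose $K_Z$ is not nef. By the cone theorem for excellent regular surfaces (Tanaka), there is a $K_Z$-negative extremal ray $R=\mathbb R_{\ge0}[C]$. I would split into cases by the sign of $C^2$.
\begin{itemize}
  \item If $C^2<0$, adjunction over $k_C$ gives
  \[
    (K_Z+C)\cdot C=2p_a(C)-2\ \ \text{(suitably scaled by }[k_C:k]).
  \]
  Together with $K_Z\cdot C<0$ and $C^2<0$, this forces $p_a(C)=0$ and $(K_Z\cdot C)_{k_C}=C^2_{k_C}=-1$. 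So $C$ is a $(-1)$-curve, contradicting the assumption on $Z$.
  \item If $C^2\ge0$, the contraction of $R$ goes to a curve (when $C^2=0$) or to a point (when $\rho(Z)=1$ and $-K_Z$ is ample). Either way it is a Mori fiber space with $\rho(Z/B)=1$ and $-K_Z$ relatively ample.
\end{itemize}

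The main obstacle is this ray analysis over imperfect fields. Intersection numbers must be computed over $k_C$, and curves with $p_a>0$ whose normalization has genus zero have to be excluded in the negative case; the adjunction inequality handles this. In practice I would cite \cite{Tanaka-2018-MMP-excelent-surface} for the cone and contraction theorems rather than reprove them.
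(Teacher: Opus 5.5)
Your proposal is correct and follows the standard argument of the references the paper cites; the paper gives no proof of its own. Termination comes from the drop in $\rho$ at each Castelnuovo contraction. At the end, Tanaka's cone and contraction theorems apply: adjunction over $k_C$ forces a $K_Z$-negative extremal ray with $C^2<0$ to be spanned by a $(-1)$-curve, so on a surface without $(-1)$-curves the remaining rays, with $C^2=0$ or with $C^2>0$ and $\rho(Z)=1$, give a Mori fiber space.
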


\begin{lemma}\label{lemm:height-n}
  Let $\pi\colon X\to Y$ be a purely inseparable morphism of height $n$ between normal surfaces, and assume that $X$ has rational singularities.
  Then every exceptional curve $E$ of a resolution $\mu\colon \widetilde Y\to Y$ admits a purely inseparable cover $C\to E$ of height $n$ such that $H^1(C,\mathcal O_C) = 0$.
\end{lemma}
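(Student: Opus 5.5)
The plan is to reduce to the Frobenius-type factorization and use the standard fact that rational singularities have exceptional curves that are trees of curves with vanishing $H^1$. Since $\pi$ has height $n$, there is a morphism $\rho\colon Y\to X$ with $\rho\circ\pi=F_X^n$. Composing the other way, $\pi\circ\rho\colon Y\to Y$ is also $F_Y^n$: both are morphisms over the absolute Frobenius on function fields, and $Y$ is normal, hence separated and integral. So $\rho$ is a finite purely inseparable morphism, and on function fields it realizes $K(Y)\supseteq K(X)^{p^n}$ in the appropriate sense. The strategy is to lift an exceptional curve $E\subset\widetilde Y$ to a curve lying over a resolution of $X$, and to show that this lifted curve is exceptional there.

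The construction goes as follows. Let $\widetilde X$ be the normalization of $\widetilde Y$ in $K(X)$, viewing $K(X)\supseteq K(Y)$ via $\pi$. Then $\widetilde X\to\widetilde Y$ is finite and purely inseparable. It has height at most $n$, since $F^n_{\widetilde X}$ factors through $\widetilde Y$: the inclusion $K(X)^{p^n}\subseteq K(Y)$ holds and $\widetilde Y$ is normal. The induced map $\widetilde X\to X$ is birational and proper. Take a resolution $\widetilde X'\to\widetilde X$, which exists for excellent surfaces, and let $\sigma\colon\widetilde X'\to X$ be the composite. Let $E'\subset\widetilde X$ be the (set-theoretic) preimage of $E$, which is irreducible by pure inseparability. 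Let $C$ be the strict transform of $E'$ in $\widetilde X'$, with its reduced structure, or better its normalization. Since $E$ maps to a point of $Y$, the curve $C$ maps to a point of $X$, so $C$ is $\sigma$-exceptional. The map $C\to E$ is finite and purely inseparable, because it is a composite of a birational map with the restriction of a purely inseparable map. Its height is at most $n$: on function fields $K(C)^{p^n}\subseteq K(E)$, because the residue field extension along the prime divisor $E'$ is contained in the $p^n$-th roots. If the height comes out smaller than $n$, compose with a relative Frobenius to make it exactly $n$. This step is harmless provided the cover is allowed to have height $\le n$ or we take Frobenius twists, and Frobenius twists keep $H^1(\mathcal O)$ invariant.

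The vanishing of $H^1$ comes from rationality. Since $X$ has rational singularities, $R^1\sigma_*\mathcal O_{\widetilde X'}=0$. By the standard result (Lipman), every reduced exceptional curve $Z$ of such a resolution then satisfies $H^1(Z,\mathcal O_Z)=0$; in particular each irreducible component $C$ has $H^1(C,\mathcal O_C)=0$. Since $C$ is then a regular conic-type curve, i.e.\ genus zero over $k_C$, its normalization has the same property. It is therefore fine to replace $C$ by its normalization.

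The main obstacle is the bookkeeping of heights on function fields, in particular checking that the residue field $K(E')$, or $K(C)$, satisfies $K(C)^{p^n}\subseteq K(E)$. One has to verify this using the valuation on $K(X)$ corresponding to $C$: it restricts to the valuation of $E$ on $K(Y)$, and the relation $K(X)^{p^n}\subseteq K(Y)$ passes to residue fields. I would also double-check that the irreducibility of $E'$ holds, which follows from the fact that purely inseparable maps are universal homeomorphisms.
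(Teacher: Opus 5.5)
Your construction is essentially the paper's: your $\widetilde X$ is its normalization of $\widetilde Y$ in $K(X)$, your $E'$ with reduced structure is exactly the curve the paper takes, and the vanishing comes from Lipman. The extra step, replacing $E'$ by its strict transform in a resolution $\widetilde X'$ (or by its normalization), is where a gap opens.

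\textbf{The gap.} In this paper ``height $n$'' is a property of morphisms. You need a morphism $r\colon E\to C$ with $r\circ(C\to E)=F^n_C$. Equivalently, the $p^n$-th power of every local section of $\mathcal O_C$ must be a local section of $\mathcal O_E$. Your valuation argument only gives $K(C)^{p^n}\subseteq K(E)$. This suffices when $E$ is normal, since then $a^{p^n}\in K(E)$ is integral over $\mathcal O_E$. But an exceptional curve of a resolution need not be normal. Moreover, the finite birational map $C\to E'$ enlarges the local rings precisely at the non-normal points of $E'$. So nothing in your argument makes the factorization $E\to E'$ lift to $C$.

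Two side remarks. First, $H^1(C,\mathcal O_C)=0$ does not make $C$ regular over a non-closed field; the normalization inherits $H^1=0$ only because $H^1(C,\mathcal O_C)\to H^1(C^\nu,\mathcal O_{C^\nu})$ is surjective. Second, no Frobenius twist is needed: a factorization of $F^m$ with $m\le n$ already gives one of $F^n$.

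\textbf{The repair} is to stay on $\widetilde X$, as the paper does.

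\emph{Height.} Let $\widetilde\pi\colon\widetilde X\to\widetilde Y$ be the induced map, and let $\widetilde\rho\colon\widetilde Y\to\widetilde X$ be the morphism you constructed, with $\widetilde\rho\circ\widetilde\pi=F^n_{\widetilde X}$. Then $\widetilde\pi\circ\widetilde\rho=F^n_{\widetilde Y}$, so $\widetilde\rho$ maps $E$ set-theoretically into $C:=(\widetilde\pi^{-1}(E))_{\mathrm{red}}=E'$. Since $E$ is reduced, $\widetilde\rho|_E$ factors through $C$, and $\widetilde\rho|_E\circ\widetilde\pi|_C=F^n_C$.

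\emph{Vanishing.} Write $\sigma=\nu\circ\tau$ with $\tau\colon\widetilde X'\to\widetilde X$ and $\nu\colon\widetilde X\to X$, and let $x=\nu(C)$. Since $\tau_*\mathcal O_{\widetilde X'}=\mathcal O_{\widetilde X}$, the Leray spectral sequence gives $R^1\nu_*\mathcal O_{\widetilde X}\hookrightarrow R^1\sigma_*\mathcal O_{\widetilde X'}=0$. The theorem on formal functions then gives $H^1(F,\mathcal O_F)=0$ for $F=\nu^{-1}(x)$; the transition maps are surjective because $F$ is one-dimensional. For the same reason $H^1(F,\mathcal O_F)\twoheadrightarrow H^1(C,\mathcal O_C)$, so $H^1(C,\mathcal O_C)=0$. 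This is the step the paper cites from Lipman.
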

\begin{proof}
  Let $W$ be the normalization of $\widetilde Y$ in $K(X)$.
  The induced morphisms fit into a commutative diagram
  \[
    \begin{tikzcd}[column sep=large,row sep=small]
      W \ar[r,"\rho"] \ar[d,"\nu"'] & \widetilde Y \ar[d,"\mu"] \\
      X \ar[r,"\pi"] & Y\rlap{\,,}
    \end{tikzcd}
  \]
  where $\rho$ is purely inseparable of height $n$ and $\nu$ is proper birational.
  Let $y=\mu(E)$, let $x$ be the unique point of $X$ over $y$, and set $F=\nu^{-1}(x)$, $C=\bigl(\rho^{-1}(E)\bigr)_{\mathrm{red}}$.
  Thus, $C$ is an irreducible component of $F$, and $C\to E$ is a purely inseparable cover of height $n$.
  Since $X$ has rational singularities, $H^1(F,\mathcal O_{F}) = 0$ \cite[Proposition~1.2]{Lipman69}.
  Since $F$ is one-dimensional, the surjection $\mathcal O_F\twoheadrightarrow\mathcal O_C$ induces a surjection on $H^1$, so $H^1(C,\mathcal O_C)=0$.
\end{proof}

\begin{construction}[MMP of the base change]\label{cons:MMP}
Let $X$ be a regular projective $k$-surface with $H^0(X,\mathcal O_X)=k$, let $\ell$ be $k^{1/p}$ or $\bar k$, and let $\phi\colon Y:=(X\times_k\ell)_{\mathrm{red}}^\nu\to X$ be the natural morphism.
Let $\mu\colon \widetilde Y\to Y$ be a minimal resolution.
By Theorem~\ref{thm:MMP}, any sequence
\[
    \widetilde Y=:Z_0 \buildrel\epsilon_0\over\to Z_1 \buildrel\epsilon_1\over\to Z_2 \to\cdots\to Z_n
\]
of contractions of $(-1)$-curves terminates at $Z:=Z_n$, where
\begin{itemize}
  \item either $K_Z$ is nef; 
  \item or $Z$ admits a Mori fiber space structure $g\colon Z\to B$.
\end{itemize}
Let $\epsilon\colon\widetilde Y\to Z$ be the composite contraction and, in the second case, set $f:=g\circ\epsilon$.
In the Mori fiber space case, the morphisms fit into the diagram
\begin{equation}\label{eq:MMP-diagram-prelim}
  \begin{tikzcd}
    & \widetilde Y \ar[d,"\mu"'] \ar[r,"\epsilon"]
      \ar[dr,"f"'] & Z \ar[d,"g"] \\
    X & Y \ar[l,"\phi"'] & B
  \end{tikzcd}
\end{equation}
By Theorem~\ref{th:geonon-normal}, we have
\[
  K_Y+(p-1)\mathfrak C=\phi^*K_X,
\]
where $\mathfrak C=0$ if and only if $X$ is geometrically normal.
Then
\begin{equation}\label{eq:JJXM}
  \mu^*\phi^*K_X=K_{\widetilde Y}+(p-1)\widetilde{\mathfrak C}+\sum m_iE_i,
\end{equation}
where $\widetilde{\mathfrak C}$ is the strict transform of $\mathfrak C$ and the $E_i$ are the $\mu$-exceptional curves.
The coefficients $m_i$ are nonnegative integers: nonnegativity follows from the minimality of $\mu$ and the negativity lemma, while integrality follows because $K_X$ is Cartier.
\end{construction}
 
\begin{lemma}\label{lemm:descent-div}%
  Consider a commutative diagram
  \[
    \begin{tikzcd}[column sep=large,row sep=small]
       Y \arrow[r,"\epsilon"] \arrow[dr,"f"'] & Z \arrow[d,"g"] \\
      & B,
    \end{tikzcd}
  \]
  where $ Y$ and $Z$ are regular projective $k$-surfaces, $\epsilon$ is a composition of contractions of $(-1)$-curves, $g$ is a Mori fiber space, and $f=g\circ\epsilon$.
  Let $M\in\operatorname{Pic}( Y)$ satisfy $M\equiv0$.
  Then the following statements hold.
  \begin{itemize}
    \item[\textup{(1)}] If $\dim B=0$, then there exists $M_Z\in \Pic(Z)$ such that $M\cong\epsilon^*M_Z$.
    \item[\textup{(2)}] If $\dim B=1$, then there exists $M_B\in \Pic(B)$ such that $M\cong f^*M_B$.
  \end{itemize}
\end{lemma}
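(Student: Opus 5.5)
The idea is to descend $M$ one $(-1)$-contraction at a time for (1), and then to use that $\rho(Z/B)=1$ forces numerically trivial line bundles on $Z$ to come from $B$ for (2).

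\emph{Part (1) by induction along $\epsilon$.} Write $\epsilon$ as the composite $Y=Z_0\to Z_1\to\cdots\to Z_n=Z$ of contractions of $(-1)$-curves, and let $\epsilon_0\colon Z_0\to Z_1$ contract $C$, with $k_C=H^0(C,\mathcal O_C)$. By induction on $n$ it suffices to show that a numerically trivial $M$ on $Z_0$ is isomorphic to $\epsilon_0^*M_1$ for some $M_1\in\Pic(Z_1)$; then $M_1\equiv 0$ by the projection formula. Put $M_1:=(\epsilon_{0*}M)^{\vee\vee}$, a line bundle because $Z_1$ is regular. Then $M\otimes\epsilon_0^*M_1^{-1}$ is trivial away from $C$, so it is isomorphic to $\mathcal O_{Z_0}(aC)$ for some integer $a$. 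Both $M$ and $\epsilon_0^*M_1$ have degree $0$ on $C$, so $aC\cdot C=-a[k_C:k]=0$, hence $a=0$ and $M\cong\epsilon_0^*M_1$. This proves (1). Note that (1) holds for any composite of $(-1)$-contractions, with no use of $g$.

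\emph{Part (2).} First apply (1) to obtain $M\cong\epsilon^*M_Z$ with $M_Z\equiv0$ on $Z$, so we may assume $Y=Z$ and $f=g$.
\begin{itemize}
\item Since $g$ is a fibration, it is flat over the regular curve $B$. By Proposition~\ref{prop:Mori-f-BT22}, the generic fibre $Z_\eta$ is an integral conic with $H^0(Z_\eta,\mathcal O_{Z_\eta})=\kappa(\eta)$.
\item Because $M_Z\cdot F=0$ for a fibre $F$, the restriction $M_Z|_{Z_\eta}$ has degree $0$ on $Z_\eta$. For a conic, $\Pic$ injects via degree into $\mathbb Z$: it embeds in $\Pic$ of the geometric fibre, which is $\mathbb Z$ or, in the non-reduced quasilinear case, still detected by degree since $H^1(\mathcal O)=0$. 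Hence $M_Z|_{Z_\eta}\cong\mathcal O_{Z_\eta}$.
\item Spreading out this trivialization, $M_Z\cong g^*N\otimes\mathcal O_Z(D)$, where $N$ is a line bundle on $B$ and $D$ is a vertical divisor.
\item Every fibre $Z_b$ over a closed point is integral by Proposition~\ref{prop:Mori-f-BT22}. So each vertical prime divisor is a full fibre, i.e.\ $\operatorname{Supp}D$ consists of fibres counted with multiplicity, and $D=g^*D_B$ for a divisor $D_B$ on $B$ (fibres are the Cartier pullbacks of points). Here the multiplicity of $Z_b$ in $g^*b$ is $1$, because $g^*b=Z_b$ scheme-theoretically and $Z_b$ is integral, hence reduced.
\end{itemize}
Setting $M_B:=N(D_B)$ gives $M_Z\cong g^*M_B$, and therefore $M\cong f^*M_B$.

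\emph{Main obstacle.} The key step is the triviality of a degree-zero line bundle on the generic conic over a possibly imperfect field $\kappa(\eta)$. I would argue with Riemann--Roch on $Z_\eta$ using $H^1(\mathcal O_{Z_\eta})=0$ and $\deg(-K_{Z_\eta})=2$: if $L$ has degree $0$, then $\chi(L)=\chi(\mathcal O)=1$, and $h^1(L)=h^0(K_{Z_\eta}-L)=0$ since that bundle has negative degree. Hence $h^0(L)\ge 1$, and a nonzero section of a degree-zero line bundle on an integral curve is nowhere vanishing, so $L$ is trivial.
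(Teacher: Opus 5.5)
Your proof is correct, but it argues from scratch where the paper simply cites \cite[Theorem~4.4\textup{(3)}]{Tanaka-2018-MMP-excelent-surface}. That is the part of the contraction theorem for excellent surfaces saying that a line bundle numerically trivial on a $K$-negative extremal ray is pulled back from the contraction. The paper applies it once for each $(-1)$-contraction in $\epsilon$ and, when $\dim B=1$, once more to $g$.

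\emph{Part (1).} You replace the citation by an elementary argument. The reflexive extension $M_1$ agrees with $M$ off $C$, so the discrepancy is $\mathcal O(aC)$, and $C^2<0$ forces $a=0$.

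\emph{Part (2).} You replace it in three steps:
\begin{enumerate}[label=\textup{(\roman*)}]
  \item triviality of $M_Z$ on the generic conic;
  \item spreading out, which writes $M_Z$ as $g^*N$ twisted by a vertical divisor;
  \item Proposition~\ref{prop:Mori-f-BT22}, which makes every closed fibre integral, so that vertical divisors are pullbacks from $B$.
\end{enumerate}
For step (i), your Riemann--Roch argument in the last paragraph is the complete one: $Z_\eta$ is a regular conic, $\chi(L)=1$, and $h^1(L)=h^0(\omega\otimes L^{-1})=0$ by degree. Use it in place of the looser remark about $\operatorname{Pic}$ of the geometric fibre.

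\emph{Comparison.} The citation is shorter, treats all steps uniformly, and needs no input about the fibres. Your route is self-contained apart from Proposition~\ref{prop:Mori-f-BT22}, which the paper uses elsewhere anyway. It also makes transparent that (1) uses only $C^2<0$, and (2) uses only integrality of the fibres of $g$ (a consequence of $\rho(Z/B)=1$) together with the vanishing $H^1(\mathcal O_{Z_\eta})=0$ on the generic conic.
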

\begin{proof}
  Both assertions follow by applying \cite[Theorem~4.4\textup{(3)}]{Tanaka-2018-MMP-excelent-surface} successively to the contractions in $\epsilon$ and, when $\dim B=1$, to $g$.
\end{proof}

\subsection{Albanese variety over an arbitrary field}\label{sec:Alb}%
The existence of the Albanese morphism over an algebraically closed field was established by Serre \cite{Serre59-Alb}, and later by Grothendieck using a more advanced approach (cf.~\cite[\S9.5]{FGAexplained}).
This was extended to geometrically integral varieties over arbitrary fields in \cite[Appendix~A]{Wittenberg-2008}; see also \cite{Conrad-Alb} and \cite[\S1]{Laurent-2022}.
More recently, \cite{Laurent-Schroer-2024}, \cite{schroer_albanese_2024} and \cite[\S2.2]{AC-MV-25} established a more general result that applies to geometrically non-reduced varieties.
\begin{proposition}[special case of {\cite[Theorem~10.2]{Laurent-Schroer-2024}} or {\cite[Theorem~5.4]{schroer_albanese_2024}}]
  Let $X$ be a proper variety over $k$ with $H^0(X,\mathcal O_X) = k$.
  Then there exists a {\em para-abelian variety} $\Alb_{X/k}$ over $k$ (by which we mean that $\Alb_{X/k} \times_k \bar k$ admits an abelian variety structure) and a morphism $\alb_{X/k}\colon X\to \Alb_{X/k}$ verifying the following universal property:
  \begin{itemize}
    \item for every para-abelian variety $A$ and every morphism $g\colon X\to A$, there exists a unique morphism $\phi\colon \Alb_{X/k}\to A$ such that $g = \phi \circ \alb_{X/k}$.
  \end{itemize}
\end{proposition}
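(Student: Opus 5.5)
The statement is a special case of \cite[Theorem~10.2]{Laurent-Schroer-2024} (equivalently \cite[Theorem~5.4]{schroer_albanese_2024}). The plan is therefore to check that our hypotheses fall under theirs, and to recall the shape of their construction so that the universal property is transparent. The only hypotheses needed there are that $X$ is a proper $k$-scheme with $H^0(X,\mathcal O_X)=k$. Neither geometric reducedness nor the existence of a rational point is required. Our $X$ is a proper variety with $H^0(X,\mathcal O_X)=k$, so the cited theorem applies directly.

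For orientation, I would organize the argument in three steps.

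\emph{Step 1: the case of a rational point.} Suppose first that $X(k)\neq\emptyset$, say $x_0\in X(k)$. The Picard scheme $\Pic_{X/k}$ exists as a group scheme locally of finite type, because $X$ is proper and $h^0(\mathcal O_X)=1$. Take the largest abelian subvariety $P\subseteq\Pic^0_{X/k}$, and let $A:=P^\vee$ be its dual. The universal family normalized along $x_0$ gives a morphism $X\to A$. Any morphism $g\colon X\to A'$ to an abelian variety with $g(x_0)=0$ induces $g^*\colon A'^\vee\to\Pic^0_{X/k}$. This map factors through $P$ because $A'^\vee$ is an abelian variety, and dualizing gives the required factorization.

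\emph{Step 2: uniqueness.} Uniqueness of the factorization comes from rigidity of morphisms into abelian varieties. A morphism between para-abelian varieties is a translate of a homomorphism, and it is determined by its values on the schematically dense image of $X$.

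\emph{Step 3: no rational point.} In general, replace abelian varieties by para-abelian varieties, i.e.\ torsors under abelian varieties. Laurent and Schr\"oer construct $\Alb_{X/k}$ as a torsor under $\Alb^0$, where $\Alb^0$ is dual to the maximal abelian subvariety of $\Pic_{X/k}$. They show that the tautological class yields a morphism $X\to\Alb_{X/k}$ which is universal among morphisms to para-abelian varieties. The universal property can then be checked after a field extension that provides a point, since the uniqueness in Step 2 allows the factorization to descend.

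The main obstacle is Step 3 when $X$ is geometrically non-reduced. In that case $\Pic_{X/k}$ may be non-reduced, and $(\Pic^0_{X/k})_{\mathrm{red}}$ need not be a group scheme, so the maximal abelian subvariety has to be taken carefully. One may also need a purely inseparable extension to find points, and descent along such extensions is not Galois descent. This is exactly the content handled in \cite{Laurent-Schroer-2024,schroer_albanese_2024}, so I would not reprove it and would simply invoke their theorem.
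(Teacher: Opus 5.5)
Your proposal matches the paper: the paper gives no independent argument and simply invokes \cite[Theorem~10.2]{Laurent-Schroer-2024} (or \cite[Theorem~5.4]{schroer_albanese_2024}), because a proper variety with $H^0(X,\mathcal O_X)=k$ satisfies the hypotheses there. One small correction to your orientation sketch: in Step~2, uniqueness holds because $\alb_{X/k}(X)$ generates $\Alb_{X/k}$, so two translates of homomorphisms that agree on it coincide; it does not hold because $\alb_{X/k}(X)$ is schematically dense, which already fails for a curve of genus at least two inside its Jacobian.
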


Every para-abelian variety carries a canonical simply transitive action of an abelian variety.
If $k$ is separably closed, then $\Alb_{X/k}$ has a $k$-point, hence it is an abelian variety.

\begin{lemma}\label{prop:Alb-eq-dim}%
  Let $X$ be a proper variety over $k$ with $H^0(X,\mathcal O_X) = k$.
  \begin{itemize}
    \item[\textup{(1)}] If $X$ is geometrically connected and geometrically reduced, 
      then for any field extension $k \subset k'$, there is a natural isomorphism $\Alb_{X_{k'}/k'} \buildrel\sim\over\to \Alb_{X/k} \otimes_k k'$.
    \item[\textup{(2)}] Let $Y := (X\times_k k^{1/p})_{\mathrm{red}}^\nu$. Then $\dim(\Alb_{Y/k^{1/p}}) = \dim(\Alb_{X/k})$.
    \item[\textup{(3)}] Let $Y := (X\times_k \bar k)_{\mathrm{red}}^\nu$.  Then $\dim\Alb_{Y/\bar k} = \dim\Alb_{X/k}$.
  \end{itemize}
\end{lemma}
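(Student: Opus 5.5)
Part (1) is a standard base-change property. If $X$ is geometrically connected and geometrically reduced, then $X_{k'}$ is again a proper variety over $k'$ with $H^0(X_{k'},\mathcal O_{X_{k'}})=k'$. I would check that $\alb_{X/k}\otimes_k k'\colon X_{k'}\to\Alb_{X/k}\otimes_k k'$ satisfies the universal property. The formation of the Albanese is dual to the formation of the reduced identity component of the Picard scheme, and $\operatorname{Pic}_{X/k}$ commutes with field extension. In the geometrically reduced case, $(\operatorname{Pic}^0_{X/k})_{\mathrm{red}}$ also commutes with extension; if necessary one can instead use the formation of $\Alb$ as the dual of the maximal abelian subvariety of $\operatorname{Pic}^0$. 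Rather than reprove this, I would cite \cite[Theorem~10.2]{Laurent-Schroer-2024} or \cite{schroer_albanese_2024}, where compatibility with ground field extension is established under exactly these hypotheses.

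For (2) and (3), I would compare both dimensions with that of the abelian variety $\Alb_{X/k}\otimes_k\bar k$.

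\textbf{Inequality $\dim\Alb_{Y}\ge\dim\Alb_{X}$.} Compose $Y\to X\to\Alb_{X/k}$, where $Y$ is over $\ell=k^{1/p}$ or $\bar k$. This gives an $\ell$-morphism $Y\to\Alb_{X/k}\otimes_k\ell$. Its image is not contained in a translate of a proper abelian subvariety, because $Y\to X$ is surjective and $\alb_{X/k}$ generates $\Alb_{X/k}$. By the universal property of $\Alb_{Y/\ell}$, this morphism factors through $\Alb_{Y/\ell}$, and the induced homomorphism $\Alb_{Y/\ell}\to\Alb_{X/k}\otimes\ell$ is surjective.

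\textbf{Inequality $\dim\Alb_{Y}\le\dim\Alb_{X}$.} I would exploit that $\phi\colon Y\to X$ is dominated by a Frobenius. For $\ell=k^{1/p}$, the relative Frobenius of $X_\ell$ factors through $X\otimes_k\ell^{(p)}$. Concretely, the morphism $F\colon X_\ell\to X^{(p)}_\ell\cong X$ (up to the base identification) induces a morphism $\psi\colon X\to Y$ up to Frobenius twist, because $k^{1/p}\cdot\mathcal O_X^{1/p}$-type containments give $F_Y$ factoring as $Y\to X^{(1/p)}\to Y$. Thus there is a finite purely inseparable morphism $\psi\colon X'\to Y$, where $X'$ is a Frobenius twist of $X$ over $\ell$ with $\dim\Alb_{X'}=\dim\Alb_X$.

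Applying the previous surjectivity argument to $\psi$ gives a surjection $\Alb_{X'}\to\Alb_{Y}$ after base change. Hence $\dim\Alb_Y\le\dim\Alb_X$, and (2) follows.

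For (3), since $\bar k$ is a union of finite purely inseparable extensions over a separable closure, I would iterate (2). Separable extensions are harmless, because $X$ is then geometrically reduced in the separable direction and part (1) applies. The Frobenius length $\ell_F(X/k)$ is finite, so after finitely many steps $Y_n=(X\times k^{1/p^n})^\nu_{\mathrm{red}}$ is geometrically normal. For geometrically normal $Y_n$, part (1) gives that $\Alb$ commutes with the remaining extension to $\bar k$, and $(Y_n)_{\bar k}=Y$.

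\textbf{Main obstacle.} The hardest step is the upper bound: making precise the Frobenius factorization $X^{(1/p)}\to Y$ over $\ell$ and checking that a surjection of varieties induces a surjection of Albanese varieties even when the source is non-normal or non-reduced over its base. For the latter, I would use the Laurent–Schröer framework, which handles arbitrary proper $X$ with $H^0=k$.
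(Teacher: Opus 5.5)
Your proposal follows the paper's proof. Both arguments use the factorization $X^{1/p}\to Y\to X_{k^{1/p}}$ of the absolute Frobenius of $X$ and the fact that Albanese images generate to get a surjection $\Alb_{X^{1/p}/k^{1/p}}\to\Alb_{Y/k^{1/p}}$, and both deduce (3) from (1) and (2) by passing to a stage $(X\times_k k^{1/p^n})^\nu_{\mathrm{red}}$ that is geometrically normal. Your reverse inequality, via the surjection $\Alb_{Y/k^{1/p}}\to\Alb_{X/k}\otimes_k k^{1/p}$, is only a minor variant of the paper's remark that the composite on Albanese varieties is the Frobenius of $\Alb_{X/k}$, so that the first map has finite fibers.

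The step you flag as the main obstacle amounts to $\mathcal O_Y^p\subseteq\mathcal O_X$, which holds because each $h^p$ lies in $K(X)$ and is integral over $\mathcal O_X$. This uses normality of $X$ --- implicit in the paper too, satisfied in all its applications, and genuinely needed: over a perfect field, gluing two points of an elliptic curve with non-torsion difference gives a non-normal $X$ with trivial Albanese, while $Y$ is the elliptic curve.
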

\begin{proof}
  (1) See \cite[Theorem~3.2]{AC-MV-25}.

  (2)
  Set $k_1 = k^{1/p}$.
  We have $H^0(Y,\mathcal O_{Y}) = k_1$, and thus the Albanese variety $\Alb_{Y/k_1}$ exists.
  Consider the following commutative diagram \[
    \begin{tikzcd}[row sep = large]
      X^{\frac1p} \ar[rrrr,bend left=15,"F_X"]\rar{g}\dar{\alb_{X^{\frac1p}/k_1}} & Y \rar{\nu}\dar{\alb_{Y/k_1}} & X_{k_1} \ar[rr]\dar{\alb_{X_{k_1}/k_1}}\ar[dr,"(\alb_{X/k})_{k_1}"] && X \dar \\
      \Alb_{X^{\frac1p}/k_1}\rar{g'} & \Alb_{Y/k_1} \rar{\nu'} & \Alb_{X_{k_1}/k_1} \rar{u} & \Alb_{X/k} \otimes_k k_1 \ar[r] & \Alb_{X/k}
    \end{tikzcd}
  \]
  where $X^{\frac1p}$ denotes $X$ regarded as a $k_1$-scheme through the isomorphism $k_1\to k$, $a\mapsto a^p$, the morphisms $g'$ and $\nu'$ are induced by the functoriality of $\Alb$ (\cite[Corollary~10.4]{Laurent-Schroer-2024}), and $u$ is induced by the universal property of $\Alb_{X_{k_1}/k_1}$.
  By the uniqueness of $\Alb$ (\cite[Corollary~10.4]{Laurent-Schroer-2024}), the composition of the second line in the above diagram is the absolute Frobenius of $\Alb_{X/k}$, which is a homeomorphism.
  Hence $g'$ has finite fibers.
  Since $g$ is surjective and $\alb_{Y/k_1}\circ g=g'\circ\alb_{X^{\frac1p}/k_1}$, the image of $g'$ contains $\alb_{Y/k_1}(Y)$, which generates $\Alb_{Y/k_1}$, so $g'$ is surjective.
  Therefore $\dim\Alb_{Y/k_1}=\dim\Alb_{X^{\frac1p}/k_1}=\dim\Alb_{X/k}$.

  (3) Since there exists a finite purely inseparable field extension $k \subset k'\subseteq\bar k$ such that $(X \times_k k')_{\mathrm{red}}^\nu$ is geometrically integral, the result of (3) follows from (1) and (2).
\end{proof}

Over an algebraically closed field, the following result was proved for smooth varieties in \cite[Theorem~0.3]{HPZ19} and extended to normal varieties in \cite[Proposition~2.3]{EP23}.
\begin{proposition}\label{prop:GV0U}%
  Let $X$ be a normal projective variety over a field $k$ such that $H^0(X,\mathcal O_X) = k$ and $-K_X$ is nef.
  If $X$ is of maximal Albanese dimension (i.e., $\dim X = \dim\alb_{X/k}(X)$), then $\alb_{X/k}\colon X\to \Alb_{X/k}$ is an isomorphism.
\end{proposition}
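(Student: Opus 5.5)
The plan is to reduce to the case of an algebraically closed field and there use the known result for normal varieties with nef anticanonical class.

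\textbf{Step 1 (passing to $\bar k$).} Set $Y:=(X\times_k\bar k)_{\mathrm{red}}^\nu$ with natural morphism $\phi\colon Y\to X$. By Theorem~\ref{th:geonon-normal} we have
\[
  -K_Y\sim\phi^*(-K_X)+(p-1)\mathfrak C ,\qquad \mathfrak C\ge0 ,
\]
where the formula is interpreted through $\mathbb Q$-Cartier pullback. For the applications in the paper ($X$ a surface) one has $K_X\equiv 0$ or nef anticanonical class, so the question is nefness of $-K_Y$.

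This is the main obstacle. The effective divisor $\mathfrak C$ need not be nef, so $-K_Y$ need not be nef. Because of this, I would \emph{not} pass to $Y$ directly and would instead argue with the Albanese map itself.

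\textbf{Step 2 (surjectivity of the Albanese map).} Let $a:=\alb_{X/k}$ and $A:=\Alb_{X/k}$. By maximal Albanese dimension, $a$ is generically finite onto its image $a(X)$, which has dimension $\dim X$. Base change to the separable closure is harmless by Lemma~\ref{prop:Alb-eq-dim}, so we may assume $A$ is an abelian variety.

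I would first show that $a$ is surjective. Otherwise $a(X)$ is a proper subvariety generating $A$. Then $a(X)$ is of general type modulo its stabilizer (Ueno), and I expect this to contradict the nefness of $-K_X$. The contradiction would come from pulling back pluricanonical forms: $\Omega_A$ is trivial, so generically finite maps $X\to a(X)$ give an injection $a^*\omega_{a(X)}\to\omega_X$ on the smooth locus. Hence $K_X$ would be big, which is impossible when $-K_X$ is nef and $X$ has positive dimension. Granting this, $\dim X=\dim A$ and $a$ is generically finite and surjective.

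\textbf{Step 3 (ramification).} Write $K_X=a^*K_A+R=R$. Here $R$ is the ramification divisor when $a$ is separable, and in general it is read off from the pullback of the trivial top form of $A$: since $\Omega_A$ is trivial, this pullback gives $\omega_X\supseteq a^*\omega_A\cong\mathcal O$ generically, so $R\ge0$.

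Nefness of $-K_X=-R$ together with $R\ge0$ forces $R\equiv0$, hence $R=0$ (intersect with $H^{n-1}$ for an ample $H$). Therefore $a$ is unramified in codimension one, and by Zariski–Nagata purity over the regular target $A$ it is étale over the open set where $a$ is finite. Take the Stein factorization $X\to X'\to A$. Exceptional divisors of $X\to X'$ would appear in $K_X$ with positive coefficient, so $X\to X'$ is an isomorphism in codimension one, and the normality of $X$ should then force $X=X'$. Thus $X\to A$ is a finite étale cover.

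\textbf{Step 4 (conclusion).} A finite étale cover of an abelian variety is itself a (para-)abelian variety. The universal property of $\alb_{X/k}$ then yields a section, so $a$ is an isomorphism.

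Inseparability of $a$ needs care in Step 3. There I would use that $a^*$ of the top form still vanishes nowhere after reduction to the étale case, or alternatively cite \cite[Proposition~2.3]{EP23} over $\bar k$ after checking that the statement descends.
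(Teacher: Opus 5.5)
\textbf{Gap: the argument breaks down when $a$ is inseparable.} Steps~2 and~3 rest on pulling back forms along $a=\alb_{X/k}$ (or along $X\to a(X)$). You deduce $K_X=a^*K_A+R$ with $R\ge0$ from the pullback of the top form of $A$, and then read $R=0$ as ``$a$ is unramified in codimension one''. Both inferences need the map to be separable.

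For an inseparable generically finite morphism the trace vanishes identically. So pulling back forms does not produce a nonzero map $a^*\omega_A\to\omega_X$, and the relative canonical divisor can even be anti-effective. Theorem~\ref{th:geonon-normal}, $K_{Y/X}\sim-(p-1)\mathfrak C$, is an instance of this. Moreover, $K_X\equiv a^*K_A$ does not make an inseparable map étale in codimension one: the relative Frobenius $A\to A^{(p)}$ of an abelian variety satisfies it and is nowhere étale.

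This is not a side case. If $X$ is geometrically non-reduced, then $K(X)/k$ is not separable while $K(A)/k$ is, so once $a$ is generically finite and dominant it is \emph{forced} to be inseparable. Your two remedies do not close the gap. ``Reduction to the étale case'' is what has to be proved. Citing \cite[Proposition~2.3]{EP23} over $\bar k$ runs into exactly the obstacle you raised in Step~1 and then set aside.

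\textbf{Missing idea (the paper's proof).} You never need $-K_Y$ to be nef a priori. By Lemma~\ref{prop:Alb-eq-dim}\,(3), $Y=(X_{\bar k})^\nu_{\mathrm{red}}$ still has maximal Albanese dimension, and over $\bar k$ this gives $\kappa(Y,K_Y)\ge0$ (\cite[Proposition~2.9\,(1)]{CWZ-cbf}). Then $\phi^*K_X=K_Y+(p-1)\mathfrak C$ is $\mathbb Q$-linearly equivalent to an effective divisor whose negative is nef. Intersecting with $H^{n-1}$ for $H$ ample shows that this effective divisor is zero, so $\mathfrak C=0$ and $K_Y\equiv0$. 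Hence $X$ is geometrically normal and $Y=X_{\bar k}$. Now \cite[Proposition~2.3]{EP23} applies to $Y$, so $X_{\bar k}$ is an abelian variety and $X$ is para-abelian. This single step replaces your Steps~2--4, including the appeal to Ueno.

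\textbf{Smaller issues, even in the separable case.}
\begin{itemize}
\item Step~2 yields at best $\kappa(X)>0$, since Ueno only makes $a(X)$ of general type modulo its stabilizer; it does not give bigness of $K_X$. This still suffices for a contradiction with $-K_X$ nef.
\item In Step~3, normality alone does not force $X\to X'$ to be an isomorphism, because small contractions exist in dimension $\ge3$. Use instead that $X'$ is étale over $A$, hence smooth, so the exceptional locus of the birational morphism $X\to X'$ has pure codimension one.
\end{itemize}
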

\begin{proof}
  Let $Y:= (X \times_k \bar k)^\nu_{\mathrm{red}}$ and let $\phi\colon Y\to X$ be the induced morphism.
  Then $K_Y + (p-1) \mathfrak C = \phi^*K_X$ for some effective divisor $\mathfrak C$, where $\mathfrak C=0$ if and only if $X$ is geometrically normal.
  By Lemma~\ref{prop:Alb-eq-dim}, $Y$ is of maximal Albanese dimension.
  Thus $\kappa(Y,K_Y) \ge 0$ (see \cite[Proposition~2.9 (1)]{CWZ-cbf}).
  Since $-K_X$ is nef, this implies that $\mathfrak C = 0$ and $K_Y \equiv \phi^*K_X \equiv 0$.
  Consequently, $X$ is geometrically normal and $Y \cong X\times_k \bar k$.
  Since $Y$ is of maximal Albanese dimension and $K_Y \equiv 0$, it follows from \cite[Proposition~2.3]{EP23} that $Y$ is isomorphic to an abelian variety.
  Thus, $X$ is a para-abelian variety which is equivalent to the assertion.
\end{proof}

\begin{lemma}\label{lemm:sur-second-fibration}
  Let $X$ be a regular projective surface over a field $k$ with $H^0(X,\mathcal O_X)=k$ and $-K_X$ nef.
  Suppose that $A:=\Alb_{X/k}$ is an elliptic curve, and write $\alb:=\alb_{X/k}$.
  Let $h\colon X\to B$ be a fibration onto a regular projective curve whose generic fiber $\Gamma$ satisfies $p_a(\Gamma)=1$, and such that $\alb$ does not factor through $h$.
  \[
  \begin{tikzcd}
    X \rar{h} \dar{\alb} & B \\
    A
  \end{tikzcd}
  \]
  Then the following statements hold.
  \begin{enumerate}[label=\textup{(\arabic*)}]
    \item The Albanese morphism $\alb$ is a fibration.
    \item After choosing an origin on $A$ and setting $C:=\alb^{-1}(0)$, there exist an integer $n\geq1$ and an action of $A[n]$ on $C$. Let $A[n]$ act on $A$ by translations and on $A\times_k C$ diagonally. Then
      \[
        X\simeq(A\times_k C)/A[n],
      \]
  \end{enumerate}
\end{lemma}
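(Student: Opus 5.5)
The plan is to use the second fibration $h$ to show that $\alb$ is flat with all fibers isomorphic, and then to realize $X$ as a quotient of a product by the kernel of a finite isogeny. First I would show that $\alb$ is surjective with $\alb_*\mathcal O_X=\mathcal O_A$. Surjectivity is clear: otherwise $\alb(X)$ would be a point, since the image generates $A$. For connectedness of fibers, take the Stein factorization $X\to A'\to A$. The curve $A'$ is regular, and since $\alb$ does not factor through $h$, the general fiber $\Gamma$ of $h$ dominates $A$; so $\Gamma$ maps finitely onto $A'$ and onto $A$. Because $p_a(\Gamma)=1$ and the genus of $A$ is $1$, Riemann--Hurwitz (Lemma~\ref{lemma-canonical-curve-comparison}) for the separable part, plus the fact that purely inseparable covers do not raise the genus of $A$, force $p_a(A')\le1$ after base change to $k^{\mathrm{sep}}$. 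Since $A$ has positive genus, $A'$ has genus $1$ and $A'\to A$ is étale up to Frobenius factors. Then $A'$ is a para-abelian curve and $X\to A'$ factors through $\alb$ by the universal property. Hence $A'\to A$ is an isomorphism, which proves (1).

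For (2), choose an origin and set $C=\alb^{-1}(0)$. The idea is to show that $\Gamma\to A$ factors through an isogeny-type morphism. Consider the generic fiber $\Gamma$ over $K(B)$: it is a regular genus-one curve mapping finitely onto $A_{K(B)}$. Using $-K_X$ nef together with $K_X\cdot\Gamma=0$ from adjunction ($\Gamma^2=0$, $p_a=1$), I expect $K_X\equiv0$ along the horizontal direction, so that $\Gamma\to A_{K(B)}$ is a morphism of torsors under elliptic curves. Such a morphism is, after translation, an isogeny $\Gamma\to A$ of degree $n$ (possibly inseparable), and this gives a homomorphism $\mathrm{Jac}(\Gamma)\to A$ whose dual yields $A\to\mathrm{Jac}(\Gamma)$ with kernel containing $A[n]$. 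The key construction is the morphism
\[
  \Psi\colon A\times_k C\longrightarrow X,\qquad (a,c)\longmapsto a\cdot c ,
\]
where $A$ acts on $X$ through the action on the $h$-fibers. To build this action, note that the relative Jacobian of $h$ acts on $X$ over the smooth locus; composing with $A\to\mathrm{Pic}^0$ via pullback along $\alb$ should give an action of $A$ on $X$ over $B$ covering translation by $[n]$ on $A$. Then $\Psi$ is finite and surjective, equivariant for the diagonal $A[n]$-action (translation on $A$, and on $C$ the residual action, since $A[n]$ acts on $X$ preserving $\alb^{-1}(0)$). Comparing degrees shows that $\Psi$ identifies $X$ with $(A\times_k C)/A[n]$, and normality of $X$ makes the quotient map an isomorphism.

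The main obstacle is constructing the $A$-action on $X$ rigorously when the $h$-fibers are not smooth, for instance when $\Gamma$ is a quasi-elliptic or non-smooth regular genus-one curve over $K(B)$, and when $n$ is divisible by $p$, so that $A[n]$ is non-reduced. I would first construct the action on the generic fiber $\Gamma$, where $\Gamma\to A_{K(B)}$ being finite with $p_a(\Gamma)=1$ forces $\Gamma$ to be a torsor under an elliptic curve isogenous to $A$. I would then extend it to all of $X$ as a rational action of the abelian variety $A$ on the regular projective surface, which becomes regular by Weil's extension theorem, since rational maps from a regular source to a projective surface have only finite indeterminacy and are removed by the group law.
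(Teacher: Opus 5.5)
There are two genuine gaps: one in your proof of (1), and one in the step that turns the generic-fibre action into a regular action on $X$.

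\textbf{The proof of (1).} The generic fibre $\Gamma$ of $h$ lives over $K(B)$, and the closed fibres live over residue fields that are in general inseparable over $k$. So what you actually have is a map $\Gamma\to A'\times_kK(B)$, not a morphism of curves over a common field. Riemann--Hurwitz then only shows that the normalization of $(A'\times_kK(B))_{\mathrm{red}}$ has genus one. (This already needs $\Gamma$ smooth, which comes from Proposition~\ref{prop:GV0U}, not from ``$K_X\equiv0$ horizontally''.) Since $K(B)/k$ need not be separable, $A'\times_kK(B)$ need not be normal, and normalized inseparable base change can strictly lower the genus. So $p_a(A')\le1$ does not follow.

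What you must exclude is precisely a regular but non-smooth $A'$ of geometric genus one, finite over $A$. Such a curve is itself dominated by the smooth genus-one curve $(A'\times_kk^{1/p^e})_{\mathrm{red}}^\nu$ for $e\gg0$, so domination by genus-one curves cannot settle this case. Purely inseparable maps of regular curves can even raise $p_a$: in characteristic two, with $a,b\in k$ $2$-independent, $(s,w)=(y^2,x+y^3)$ maps the regular conic $x^2=ay^2+b$ onto the regular cubic $w^2=s^3+as+b$.

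The paper proves (1) only after the action has been built:
\begin{itemize}
\item Base change along $[n]$ gives $A\times_{[n],A,\alb}X\simeq A\times_kC$.
\item Hence the Stein factor $B_0$ of $\alb$ pulls back to $A\times_kT$ with $T$ finite Gorenstein, so $\omega_{B_0}$ pulls back to a trivial bundle.
\item Taking the norm gives $\omega_{B_0}^{\otimes n^2}\simeq\mathcal O_{B_0}$, so $p_a(B_0)=1$.
\item Proposition~\ref{prop:GV0U} then makes $B_0$ smooth, and the universal property gives $B_0\simeq A$.
\end{itemize}

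\textbf{Extending the action.} Weil's extension theorem concerns rational maps \emph{into} group varieties. Weil's regularization only produces a regular action on some birational model, not on $X$. Here the source $A\times_kX$ is a threefold, so the indeterminacy locus can be a curve, not finitely many points, and the group law does not remove it.

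What makes the action regular on $X$ itself is relative minimality. Every fibre satisfies $K_X\cdot F_b=0$ by flatness and the genus formula. Since $-K_X$ is nef, $K_X$ is then zero on every fibre component, i.e.\ $K_{X/B}$ is $h$-nef. Koll\'ar's theorem on birational actions, which the paper invokes, then extends the action to $A\times_kB$ acting on $X$ over $B$.

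This is the real role of the hypothesis that $-K_X$ is nef. Blowing up a point of $X$ keeps $\Gamma$ and every other hypothesis, and the conclusion (2) fails.

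\textbf{Smaller points.} Your construction of the action on $\Gamma$ through the dual isogeny $v$ with $uv=[n]$ matches the paper's, but $\ker v\subseteq A[n]$, not the reverse. Once the action exists, the quotient description is cleaner via the Cartesian square $X'=A\times_{[n],A,\alb}X$ and the isomorphism $A\times_kC\to X'$, $(t,z)\mapsto(t,t\cdot z)$. This exhibits $A\times_kC\to X$ directly as an $A[n]$-torsor even when $A[n]$ is non-reduced, and avoids degree counting.
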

\begin{proof}
  We adapt the argument of \cite[\S3, pp.~34--35]{BM-II}.

  Let $K= K(B)$, $J = \Pic^0_{\Gamma/K}$, and $\alpha = \alb|_{\Gamma}$.
  The morphism $\alpha$ is nonconstant, since otherwise its image would be a $K$-point of $A_K$, giving a rational map $B\dashrightarrow A$ that extends to a morphism $\beta$ with $\alb=\beta\circ h$.
  Hence $\alpha$ is dominant.
  The generic fiber $\Gamma$ is regular, has $H^0(\Gamma,\mathcal O_\Gamma)=K$ and $K_\Gamma\sim0$, and dominates the elliptic curve $A_K$.
  Proposition~\ref{prop:GV0U} therefore shows that $\Gamma$ is a smooth genus-one curve, with its natural simply transitive $J$-action $\mu\colon J\times_K\Gamma\longrightarrow\Gamma$.
  After a finite Galois extension $L/K$, choose $x_0\in\Gamma(L)$.
  The identification $J_L\simeq\Gamma_L$ given by $j\mapsto\mu(j,x_0)$ defines a morphism
  \[
    u_L\colon J_L\longrightarrow A_L,\qquad
    u_L(j):=\alpha_L(\mu(j,x_0))-\alpha_L(x_0).
  \]
  Since \(u_L(0)=0\), \(u_L\) is a homomorphism of abelian varieties.
  Replacing $x_0$ by $\mu(j_0,x_0)$ changes the right-hand side to $u_L(j + j_0) - u_L(j_0) = u_L(j)$.
  Thus $u_L$ is independent of $x_0$, hence Galois-invariant, and descends to a homomorphism
  \[
    u\colon J\to A_K .
  \]
  Since $\alpha$ is dominant, $u$ is nonzero and hence an isogeny.
  The construction gives \[
    \alpha(\mu(j,x))=u(j)+\alpha(x)
  \]
  for every $K$-scheme $S$ and all $j\in J(S)$ and $x\in\Gamma(S)$.
  For $n:=\deg u$, the dual isogeny $v\colon A_K\to J$ satisfies $uv=[n]$.
  Define the $A_K$-action on $\Gamma$ by
  \begin{equation}\label{eq:action}
    t\cdot x:=\mu(v(t),x)
  .
  \end{equation}
  Then
  \begin{equation}\label{eq:translation}
    \alpha(t\cdot x)=\alpha(\mu(v(t),x))
    =u(v(t))+\alpha(x)=\alpha(x) + [n]t.
  \end{equation}

  Every fiber $F_b:=h^*b$ satisfies $\chi_{\kappa(b)}(F_b,\mathcal O_{F_b})=0$ by flatness.
  The genus formula then gives $K_X\cdot F_b=0$.
  Since $-K_X$ is nef, every component of $F_b$ has nonpositive intersection with $K_X$, hence each intersection is zero.
  Thus $K_{X/B}$ is $h$-nef.
  By \cite[Theorem~7]{Kollar-2025-birational-actions}, the $A_K$-action (\ref{eq:action}) extends to an action of $A\times_kB$ on $X$ over $B$, equivalently an action
  \[
    A\times_kX\longrightarrow X,\quad (t,x)\longmapsto t\cdot x, \quad\text{satisfying  }h(t\cdot x)=h(x).
  \]
  Equation~(\ref{eq:translation}) then extends to 
  \begin{equation}\label{eq:second-fibration-equivariance}
    \alb(t\cdot x)=\alb(x)+[n]t,
  \end{equation}
  since each side is a morphism $A\times_kX\to A$ and they are equal on a dense subset.

  Set $C:=\alb^{-1}(0)$ and form the Cartesian diagram
  \[
    \begin{tikzcd}
      X':=A\times_{[n],A,\alb}X \ar[r,"\pi"] \ar[d,"\alb'"']
        & X \ar[d,"\alb"]\\
      A \ar[r,"{[n]}"'] & A.
    \end{tikzcd}
  \]
  Equation~\eqref{eq:second-fibration-equivariance} gives an isomorphism
  \begin{equation}\label{eq:second-fibration-product}
    A\times_kC\xrightarrow{\ \sim\ }X',
    \qquad (t,z)\longmapsto(t,t\cdot z),
  \end{equation}
  with inverse $(t,x)\mapsto(t,(-t)\cdot x)$.
  To see that this inverse is well-defined, note that $(t,x)\in X'$ satisfies $\alb(x)=[n]t$, so \eqref{eq:second-fibration-equivariance} gives
  \[
    \alb((-t)\cdot x)=\alb(x)-[n]t=0,
  \]
  and hence $(-t)\cdot x\in C$.
  Write $X\xrightarrow{f}B_0\xrightarrow{g}A$ for the Stein factorization of $\alb$.
  Then $B_0$ is a regular curve with $H^0(B_0,\mathcal O_{B_0})=k$, and $g$ is finite flat.
  By flat base change, the Stein factorization of $\alb'$ is
  \[
    X'\longrightarrow B':=A\times_{[n],A,g}B_0\longrightarrow A.
  \]
  Via \eqref{eq:second-fibration-product}, $\alb'$ is the projection
  $A\times_kC\to A$, whose Stein factorization is
  \[
    A\times_kC\longrightarrow A\times_kT\longrightarrow A,
    \qquad
    T:=g^{-1}(0)\simeq\operatorname{Spec}H^0(C,\mathcal O_C).
  \]
  Hence $B'\simeq A\times_kT$.
  The scheme $T$ is a zero-dimensional Cartier divisor on the regular curve $B_0$, hence it is Gorenstein.
  Its dualizing sheaf is therefore invertible, and hence trivial because $T$ is finite over $k$.
  Hence $\omega_{B'/k}\simeq\mathcal O_{B'}$.
  Let $\pi_0\colon B'\to B_0$ be the projection.
  The isogeny $[n]\colon A\to A$ has trivial relative dualizing sheaf, so flat base change and transitivity of duality give
  \[
    \pi_0^*\omega_{B_0/k}\simeq\omega_{B'/k}\simeq\mathcal O_{B'}.
  \]
  Taking the norm along the finite locally free morphism $\pi_0$, of degree $n^2$, yields $\omega_{B_0/k}^{\otimes n^2}\simeq\mathcal O_{B_0}$, hence $p_a(B_0)=1$.
  Since $g\colon B_0\to A$ is finite and dominant, Proposition~\ref{prop:GV0U} shows that $B_0$ is a smooth curve of genus one.
  The Albanese universal property shows that $g$ is an isomorphism, so $\alb$ is a fibration.

  Finally, $\pi$ is an $A[n]$-torsor, being the base change of $[n]$.
  Under \eqref{eq:second-fibration-product}, its action becomes
  \[
    s\cdot(t,z)=(t+s,(-s)\cdot z),
    \qquad s\in A[n].
  \]
  This is a free diagonal action with quotient $X$.
\end{proof}

\section{The geometrically normal case}\label{sec:caseA}
\begin{proposition}\label{prop:geo-normal-alb}%
  Let $X$ be a regular projective surface over a field $k$ with $H^0(X,\mathcal O_X)=k$ and $K_X\equiv0$.
  Assume that $X$ is geometrically normal, and let $\mu\colon\widetilde Y\to Y:=X_{\bar k}$ be the minimal resolution.
  Then either $\widetilde Y$ is rational, or $\mu$ is crepant.
\end{proposition}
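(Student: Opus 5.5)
Since $X$ is geometrically normal and $H^0(X,\mathcal O_X)=k$, the base change $Y=X_{\bar k}$ is a normal projective surface over $\bar k$. It is Gorenstein because $X$ is regular, hence Gorenstein, and this property is preserved by field extension. Moreover $K_Y=\phi^*K_X\equiv0$, since $\mathfrak C=0$ in Theorem~\ref{th:geonon-normal}. On the minimal resolution we write
\[
  K_{\widetilde Y}=\mu^*K_Y-\sum m_iE_i ,
\]
where, as in Construction~\ref{cons:MMP}, the $m_i$ are nonnegative integers; integrality holds because $K_Y$ is Cartier. Thus $\mu$ is crepant exactly when all $m_i=0$. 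So we assume some $m_i>0$ and aim to show that $\widetilde Y$ is rational.

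First, $K_{\widetilde Y}\equiv-\sum m_iE_i$ is anti-effective and nonzero. Intersecting with an ample $\mu^*$-pullback plus a small multiple of a suitable divisor shows that $K_{\widetilde Y}$ is not pseudo-effective. Indeed, $-\sum m_iE_i$ is a nonzero, negative-definite, exceptional, anti-effective class, so it cannot be pseudo-effective: its intersection with the nef divisor $\mu^*H$ is $0$, and its self-intersection is negative. Hence $\kappa(\widetilde Y)=-\infty$, and by the classification of smooth surfaces over $\bar k$, $\widetilde Y$ is birationally ruled over a smooth curve $T$. We need to show that $g(T)=0$.

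Suppose instead $g(T)\ge1$. Then the ruling $\widetilde Y\to T$ maps every rational curve to a point, so each exceptional curve $E_i$ lies in a fiber. The singularities of $Y$ are normal Gorenstein. Moreover, since $m_i>0$ for some $i$, some singular point $y$ is not a rational double point, because rational double points are exactly the Gorenstein rational singularities and have crepant minimal resolutions. A non-rational Gorenstein surface singularity has exceptional locus containing either a curve of positive genus or a cycle of rational curves; more precisely, the fundamental cycle, or elliptic cycle, $Z$ satisfies $p_a(Z)\ge1$. I would then argue as follows. The map $\widetilde Y\to T$ factors through $Y$ when $g(T)\ge1$: it is constant on the connected exceptional fibers $\mu^{-1}(y)$, since $E_i$ are rational or map trivially. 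If some $E_i$ is a curve of positive genus not contracted by the ruling, it dominates $T$; this must be ruled out separately. This is the main obstacle, and I would handle it through the Albanese map. By Lemma~\ref{prop:Alb-eq-dim}, $\Alb_{\widetilde Y}=\Alb_Y$ has dimension $g(T)$ (or $q$). Every curve contracted by $\mu$ then maps to a point in $\Alb$, so the Albanese fibration $\widetilde Y\to T$ contracts all $E_i$, including those of positive genus. Hence all $E_i$ lie in fibers of the ruling, and every fiber of a ruled surface consists of rational curves only. Thus the exceptional locus consists of smooth rational curves lying in fibers of a $\mathbb P^1$-fibration, which forms a tree. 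Contracting a tree of rational curves inside a fiber of a ruling gives a rational singularity, by Artin's criterion (the fundamental cycle is supported in a fiber, and $p_a\le0$ on subcycles of fibers). So every singularity of $Y$ is a rational Gorenstein singularity, i.e.\ a rational double point. Then $\mu$ is crepant, contradicting our assumption.

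Therefore $g(T)=0$ and $\widetilde Y$ is rational. The delicate point is justifying that the Albanese map of $\widetilde Y$ contracts all $\mu$-exceptional curves, which follows from the universal property applied through the normal surface $Y$ (via $Y$'s Albanese), together with the fact that the exceptional curves in a fiber of a ruled surface form a tree of rational curves and so yield rational singularities.
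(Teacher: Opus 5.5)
\textbf{There is a genuine gap, at exactly the step you flag as the main obstacle.}

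Lemma~\ref{prop:Alb-eq-dim}\textup{(3)} says $\dim\Alb_{Y/\bar k}=\dim\Alb_{X/k}$. It says nothing about $\Alb_{\widetilde Y}$. The universal property only gives a surjection $\Alb_{\widetilde Y}\to\Alb_Y$. This is an isomorphism precisely when $\alb_{\widetilde Y}$ contracts every $E_i$, which is the very thing you want to prove. The statement ``every curve contracted by $\mu$ maps to a point'' holds for $\widetilde Y\to Y\to\Alb_Y$. But the ruling $\widetilde Y\to T$ factors through $\Alb_{\widetilde Y}$, not through $\Alb_Y$, so your argument is circular.

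Apart from this misquotation, your proof uses only that $Y$ is a normal Gorenstein projective surface with $K_Y\equiv0$, and for such surfaces the conclusion is false. Blow up $E\times\mathbb P^1$, with $E$ an elliptic curve, at some points of $E\times\{0\}$ and of $E\times\{\infty\}$. Then contract the strict transforms $E_0',E_\infty'$ of these two sections; the result is projective for suitable choices of the points. We get $K_{\widetilde Y}\sim-E_0'-E_\infty'$, so $K_Y\sim0$ and $Y$ has two simple elliptic (Gorenstein) singularities. Yet $\mu$ is not crepant and $\widetilde Y$ is ruled over $E$. Here the exceptional curves are sections of the ruling, and $\Alb_Y=0\neq\Alb_{\widetilde Y}$.

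\textbf{The missing idea} is to use that $Y$ is the base change of a \emph{regular} surface, in order to show that every $E_i$ has geometric genus zero. The paper does this in two steps:
\begin{itemize}
\item It descends the resolution to $k^{1/p^n}$.
\item It applies Lemma~\ref{lemm:height-n} to the relative Frobenius $X^{1/p^n}\to X_{k^{1/p^n}}$, which is purely inseparable of height $n$ with regular source. Hence each $E_i$ is dominated by a curve $C$ with $H^1(C,\mathcal O_C)=0$.
\end{itemize}

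With this in hand, if $g(T)\ge1$ then all $E_i$ are vertical. Your ending then becomes a valid alternative route: a configuration inside a fiber of a $\mathbb P^1$-fibration contracts to rational singularities (every cycle supported in a fiber has $h^1(\mathcal O_Z)=0$). Gorenstein rational singularities are rational double points, so $\mu$ would be crepant. The paper instead pushes $K_{\widetilde Y}+\sum m_iE_i\equiv0$ forward to the Mori fiber space $Z\to B$. This gives $K_Z+V\equiv0$ with $V$ vertical, contradicting the relative ampleness of $-K_Z$.

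Your remaining steps are fine: the non-pseudo-effectivity of $K_{\widetilde Y}$ (most simply, intersect $-\sum m_iE_i$ with an ample class) and the conclusion that $g(T)=0$ gives rationality.
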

\begin{proof}
  We may assume that $k$ is separably closed.
  Write
  \[
  \mu^*K_Y=K_{\widetilde Y}+\sum_i m_iE_i\equiv0, \quad\text{where the $E_i$ are the $\mu$-exceptional curves and } m_i\ge 0.
  \]
  Recall that $\mu$ is \emph{crepant} if $K_{\widetilde Y}=\mu^*K_Y$, that is, if $m_i=0$ for all $i$.
  Suppose that $\mu$ is not crepant, so that some $m_i>0$.
  Then $K_{\widetilde Y}$ is not pseudo-effective, so the MMP ends with a Mori fiber space $g\colon Z\to B$.
  The resulting morphisms fit into the diagram as in Construction~\ref{cons:MMP}:
  \[
    \begin{tikzcd}
      & \widetilde Y \ar[d,"\mu"'] \ar[r,"\epsilon"]
        \ar[dr,"f"'] & Z \ar[d,"g"] \\
      X & Y \ar[l,"\phi"'] & B \rlap{\,.}
    \end{tikzcd}
  \]
  If $\dim B=0$, then $Z$ is a smooth del Pezzo surface and hence rational, so $\widetilde Y$ is rational too.
  Suppose now that $\dim B=1$.
  After descending the resolution to $k^{1/p^n}$ for some $n$, we apply Lemma~\ref{lemm:height-n} to the relative Frobenius $X^{1/p^n}\to X_{k^{1/p^n}}$ to conclude that every $E_i$ is rational.
  If $p_a(B)>0$, this implies that every $E_i$ is vertical over $B$, so pushing forward $K_{\widetilde Y}+\sum_i m_iE_i\equiv0$ along $\epsilon$ gives $K_Z+V\equiv0$ for some vertical divisor $V$, which contradicts the relative ampleness of $-K_Z$ over $B$.
  Therefore $p_a(B)=0$, so $\widetilde Y$ is rational.
\end{proof}

\begin{proposition}\label{prop:geo-normal}
  Let $X$ be a regular projective surface over $k$ with $H^0(X,\mathcal O_X)=k$ and $K_X \equiv 0$.
  If $X$ is geometrically normal, then $12 K_X \sim 0$.
\end{proposition}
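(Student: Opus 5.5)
We plan to reduce the statement to the geometric situation over $\bar k$ and then descend. First we may replace $k$ by its separable closure $k^{\mathrm{sep}}$. Indeed, $X_{k^{\mathrm{sep}}}$ is still regular, has $H^0=k^{\mathrm{sep}}$ and numerically trivial canonical divisor. Moreover, $\Pic(X)\to\Pic(X_{k^{\mathrm{sep}}})$ is injective because $H^0(X,\mathcal O_X)=k$ (Hilbert 90 / Galois descent of line bundles with trivializations). So it suffices to prove $12K_X\sim0$ when $k$ is separably closed. Next we pass to $Y:=X_{\bar k}$, which is a normal projective surface by assumption, with $K_Y=\phi^*K_X\equiv0$ (here $\mathfrak C=0$ in Theorem~\ref{th:geonon-normal}). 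The morphism $\phi\colon Y\to X$ factors through the finite purely inseparable extension $k\subset k^{1/p^n}$ over which the relevant data are defined. So if $m K_Y\sim0$ on $Y$, then, after descending the trivialization to some $X_{k^{1/p^n}}$, Lemma~\ref{lemma-finite-Cartier}\textup{(2)} would only give $p^n mK_X\sim0$. This is too weak, so we must avoid a naive Frobenius descent. Instead we use that, for $k$ separably closed, the kernel of $\Pic(X)\to\Pic(X_{\bar k})$ is a $p$-group (it is killed by the purely inseparable degree). We then show separately that the image is killed by $12$ and that the relevant $p$-part already dies.

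We split according to Proposition~\ref{prop:geo-normal-alb}, applied to the minimal resolution $\mu\colon\widetilde Y\to Y$.

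\emph{Crepant case.} Here $K_{\widetilde Y}=\mu^*K_Y\equiv0$, so $\widetilde Y$ is a smooth $K$-trivial surface over $\bar k$: abelian, K3, Enriques, or (quasi-)bielliptic. By the classification we get $12K_{\widetilde Y}\sim0$. The singularities of $Y$ are then rational double points, so $\mu_*$ and $\mu^*$ identify the relevant line bundles and $12K_Y\sim0$.

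\emph{Rational case.} Here $K_Y$ is a numerically trivial Cartier divisor on a normal surface with rational resolution. Since $\mu^*K_Y=K_{\widetilde Y}+\sum m_iE_i$ with $\widetilde Y$ rational, $\Pic(\widetilde Y)$ is torsion free, and $\mu^*$ is injective on $\Pic$ (its kernel is trivial by $\mu_*\mathcal O=\mathcal O$). Hence $K_Y\sim0$ already.

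The hard part is to remove the $p$-power torsion in the kernel of $\Pic(X)\to\Pic(Y)$ without losing a factor $p^n$. The plan is as follows. Suppose $12K_Y\sim0$, and let $L:=\mathcal O_X(12K_X)$; then $L$ becomes trivial over $\bar k$ and hence over some $k^{1/p^n}$. Since $X$ is geometrically normal (indeed geometrically integral), $\Pic_{X/k}$ is a group scheme locally of finite type, and $L$ is a $k$-point of $\Pic_{X/k}$ whose image in $\Pic_{X/k}(\bar k)$ is zero. For separably closed $k$ and $X(k)$ possibly empty, $\Pic(X)\hookrightarrow\Pic_{X/k}(k)$, and $\Pic_{X/k}(k)\to\Pic_{X/k}(\bar k)$ is injective for any scheme over a field, since points are monomorphisms. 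This forces $L\sim0$. The point to verify carefully is the injectivity $\Pic(X)\to\Pic_{X/k}(k)$, which holds because $H^0(X,\mathcal O_X)=k$: the obstruction lies in the Brauer group of $k$, and the map on $\Pic$ itself is injective by the Leray spectral sequence. This yields $12K_X\sim0$.
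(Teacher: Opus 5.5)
Your argument is correct and follows the paper's proof: you reduce to $Y=X_{\bar k}$ and use Proposition~\ref{prop:geo-normal-alb} to split into the crepant case (Bombieri--Mumford gives $12K_{\widetilde Y}\sim0$, and $\mu^*$ is injective on $\Pic$ because $\mu_*\mathcal O_{\widetilde Y}=\mathcal O_Y$) and the rational case (numerically trivial line bundles on a smooth rational surface are trivial). The only difference is the descent step: the paper simply cites the elementary fact that $\Pic(X)\to\Pic(X_K)$ is injective for every field extension $K/k$ when $H^0(X,\mathcal O_X)=k$ (a generator of the one-dimensional space $H^0(X,L)$ trivializes $L$ by flat base change), so your detour through $k^{\mathrm{sep}}$, the $p$-group kernel and the Picard scheme is valid but unnecessary.
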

\begin{proof}
  By assumption, the base change $Y := X \times_k \bar k$ is a normal surface.
  Since the pullback $\operatorname{Pic}(X)\to\operatorname{Pic}(Y)$ is injective by \cite[Tag~\href{https://stacks.math.columbia.edu/tag/0CC5}{0CC5}]{StacksProject}, it is enough to show that $12 K_Y \sim 0$.
  Let $\mu\colon\widetilde Y\to Y$ be the minimal resolution.
  By Proposition~\ref{prop:geo-normal-alb}, either $\mu$ is crepant or $\widetilde Y$ is rational.
  If $\mu$ is crepant, then $K_{\widetilde Y} \equiv 0$, and it follows that $12 K_{\widetilde Y} \sim 0$ (\cite{BM-II,BM-III}).
  In turn, we have $12 K_Y \sim 0$.
  If $\widetilde Y$ is rational, then $\mu^*K_Y\sim 0$, since a smooth rational surface has no nontrivial numerically trivial line bundles.
  The equality $\mu_*\mathcal O_{\widetilde Y}=\mathcal O_Y$ and the projection formula then give $K_Y\sim 0$.
\end{proof}

\section{Albanese fibrations and effective nonvanishing}\label{sec:caseB}\label{sec:caseB-proof}%
The main result of this section is the following proposition, whose proof occupies the rest of the section.

\begin{proposition}\label{prop:a_X-surjective}%
  Let $X$ be a regular, geometrically non-normal, projective surface over $k$ such that $H^0(X,\mathcal O_X) = k$, $K_X \equiv 0$ and $\dim \Alb_{X/k} > 0$.
  Then:
  \begin{itemize}
    \item[\textup{(1)}] $p=2$ or $3$;
    \item[\textup{(2)}] we have $6K_X \sim 0$ if $p=3$, and $12K_X \sim 0$ if $p=2$;
    \item[\textup{(3)}] $\dim \Alb_{X/k} = 1$, and the Albanese morphism $\alb_{X/k}\colon X\to\Alb_{X/k}$ is a fibration; and
    \item[\textup{(4)}] after a finite separable field extension $k'/k$ and a choice of origin $0$ on $E:=(\Alb_{X/k})_{k'}$, there exists an integer $n\geq1$ such that
      \[
        X_{k'}\simeq(E\times_{k'}C)/E[n],
        \qquad C:=(\alb_{X/k})_{k'}^{-1}(0),
      \]
      where $E[n]$ acts on $E$ by translations and on $E\times_{k'}C$ diagonally.
  \end{itemize}
\end{proposition}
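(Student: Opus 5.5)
We follow the strategy from the introduction. First reduce to $k$ separably closed and $F$-finite; this is harmless for linear equivalence (injectivity of $\Pic$ under separable base change) and for the Albanese dimension. Set $k_1=k^{1/p}$, $X_1:=(X\times_k k_1)^\nu_{\mathrm{red}}$ and $\phi\colon X_1\to X$. By Theorem~\ref{th:geonon-normal}, $K_{X_1}+(p-1)\mathfrak C\sim\phi^*K_X\equiv0$ with $\mathfrak C>0$. By Lemma~\ref{prop:Alb-eq-dim}(2), $\dim\Alb_{X_1/k_1}=\dim\Alb_{X/k}>0$. Proposition~\ref{prop:GV0U} excludes maximal Albanese dimension, since that would force $X$ to be abelian and hence geometrically normal. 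Thus $\dim\Alb=1$, the Albanese image is a curve, and (3) will follow from Lemma~\ref{lemm:sur-second-fibration} once a transversal genus-one fibration is produced.

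Next run Construction~\ref{cons:MMP} for $\ell=k_1$. Since $\mathfrak C\ne0$, $K_{\widetilde X_1}$ is not pseudo-effective, so we get a Mori fiber space $g\colon Z\to B$. If $\dim B=0$, then $Z$ is geometrically rational (del Pezzo) and has trivial Albanese, contradicting $\dim\Alb>0$. Hence $B$ is a curve, and the Albanese map factors through $f$, so $B$ has positive geometric genus. By Lemma~\ref{lemm:height-n}, the exceptional curves of $\mu$ are dominated by curves with $H^1=0$, so they are vertical. Now restrict~\eqref{eq:JJXM} to the generic fiber, which by Proposition~\ref{prop:Mori-f-BT22} is a regular conic $F$ over $K(B)$, with $\deg K_F=-2$. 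This gives $(p-1)\,\mathfrak C\cdot F=2$. So $p\in\{2,3\}$, proving (1). It also shows that the horizontal part of $\mathfrak C$ is a single prime $C_1$ with $C_1\cdot F=2/(p-1)$: degree $2$ when $p=2$ (or $1$ with another curve), degree $1$ when $p=3$.

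The key step, which we expect to be the main obstacle, is to show $C_1^2=0$ and that $C_1$ is semi-ample on $X_1$. Positivity comes from Lemma~\ref{lemm:Ejiri-base-curve}, applied to $f$ with $\Delta$ built from $\mathfrak C$. Since $g(B)>0$, nefness of $-(K+\Delta)$ forces $\deg K_B\le 0$, and hence $B$ is elliptic and the configuration is tight. In particular $K_{\widetilde X_1}+(p-1)\widetilde{\mathfrak C}$ has no room, so all $m_i=0$ and the vertical part is essentially trivial. Adjunction (Proposition~\ref{prop:adj}) on $C_1$ then gives $C_1^2\le0$ with equality and zero different. Hence $C_1$ is a regular genus-one curve, $X_1$ is regular along it, and $\mathcal O(C_1)|_{C_1}$ is numerically trivial. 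To get a torsion normal bundle as required in Proposition~\ref{prop:tanaka-abun-3.1}(2), use that $C_1$ maps to the elliptic curve $B$: $\mathcal O(C_1)|_{C_1}$ is pulled back from degree-zero data related to $\phi^*K_X$, which is torsion by Tanaka's nonvanishing. Then Proposition~\ref{prop:tanaka-abun-3.1} gives semi-ampleness, and a multiple of $C_1$ defines $h_1\colon X_1\to D_1$ transversal to $f_1$ with $C_1$ a fiber. By Corollary~\ref{lemma-intersection-general}, the image of $C_1$ in $X$ also has square zero. Descending, we get a fibration $h\colon X\to D$ with genus-one generic fiber that is transversal to $\alb$. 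Lemma~\ref{lemm:sur-second-fibration} then yields (3) and (4).

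For (2), use the structure $X_{k'}\simeq(E\times C)/E[n]$, or equivalently the fibers of $h_1$. Adjunction along the fibers of $h_1$ shows that $X_1$ is regular and that $K_{X_1}\sim-(p-1)C_1+(\text{fibre classes})$. Since $C_1$ is a multiple-fiber component of $h_1$, $\mathcal O(C_1)$ has order dividing the multiplicity. A canonical bundle formula for $h_1$ over the genus-zero or genus-one base $D_1$, together with $(p-1)C_1\equiv$ the horizontal contribution, bounds the order of $\phi^*K_X$ by $4$ when $p=2$ and by $2$ when $p=3$. This is the normal-bundle computation, where we expect the $p=3$ refinement to be delicate. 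Finally Lemma~\ref{lemma-finite-Cartier}(2), using that $\phi$ has height one, multiplies the bound by $p$, giving $12K_X\sim0$ and $6K_X\sim0$.
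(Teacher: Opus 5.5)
Your overall architecture matches the paper: base change to $X_1$, a Mori fiber space over a curve $B_1$ of positive genus, $(p-1)\mathfrak C\cdot F=2$ giving $p\in\{2,3\}$, a transversal genus-one fibration fed into Lemma~\ref{lemm:sur-second-fibration}, and a final factor $p$ from Lemma~\ref{lemma-finite-Cartier}(2). But the step you flag as the main obstacle is not actually proved.

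As you use it, Lemma~\ref{lemm:Ejiri-base-curve} only yields $\deg K_B\le0$, and its $F$-purity hypothesis already fails when the horizontal coefficient is two or $C_1\to B_1$ is inseparable. It gives no control on the $m_i$ or on $\mathfrak C_v$; in the paper, $\mathfrak C_v=0$ and regularity of $X_1$ are \emph{consequences} of the second fibration. Adjunction along $C_1$ only gives $C_1^2\le0$, and only when $C_1$ has coefficient two in $(p-1)\mathfrak C$. Three ingredients are missing:
\begin{itemize}
  \item[(i)] The lower bound $C_1^2\ge0$. The paper pushes to $X$: if $C_1^2<0$ then $C_0^2<0$, and adjunction with $K_X\equiv0$ forces $p_a(C_0^\nu)=0$. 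This is impossible, since $C_0^\nu$ dominates a curve of positive geometric genus.
  \item[(ii)] Two distinct horizontal sections $C_1\ne C_1'$ (for $p=2$). Here adjunction only gives $C_1\cdot C_1'=0$, and you need the Hodge index theorem plus semi-ampleness of one of them.
  \item[(iii)] The case $p=2$ with $C_1\to B_1$ of degree two. Then $C_1$ has coefficient one, so $(K_{X_1}+C_1)|_{C_1^\nu}$ contains no $C_1^2$ term at all. The paper base changes along $\sigma\colon C_1\to B_1$. If $\sigma$ is separable, it is an \'etale double cover splitting $C_1$ into two sections swapped by an involution, and Hodge index finishes. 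If $\sigma$ is inseparable, one normalizes $X_1\times_{B_1}C_1$ and runs adjunction with the conductor.
\end{itemize}
Separately, $\dim\alb_{X/k}(X)=1$ does not by itself give $\dim\Alb_{X/k}=1$, which Lemma~\ref{lemm:sur-second-fibration} requires. The paper gets it from $\Alb_{X_1/k_1}=\Alb_{B_1/k_1}$, with $B_1$ dominated by the genus-one curve $C_1$.

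Part (2) is a second genuine gap. The multiplicity of $C_1$ as a fiber of $h_1$ is not bounded a priori, and no canonical bundle formula for $h_1$ is available here. Your numerics also do not produce $12$: for $p=2$ one needs $6\phi^*K_X\sim0$, and order three occurs (cf.\ Example~\ref{ex:caseB-char2-order3}).

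The paper descends along the ruling $f_1\colon X_1\to B_1$, not along $h_1$. Adjunction gives $\phi^*K_X|_{C_1}\sim K_{C_1}\sim0$ when $C_1$ has coefficient one in $(p-1)\mathfrak C$, using that any other horizontal component is disjoint from $C_1$. When $C_1$ has coefficient two, it gives $\phi^*K_X|_{C_1}\sim N:=\mathcal O_{C_1}(C_1)$. Lemma~\ref{lemm:descent-div}(2) then writes $m\phi^*K_X\simeq f_1^*M$. Restricting to $C_1$, which maps isomorphically or with degree two onto $B_1$, gives $M\sim0$ or $2M\sim0$.

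The torsion of $N$ is the real content:
\begin{itemize}
  \item For $p=2$ and $C_1=C_1'$, comparing with $\phi^*C_0=\alpha C_1$, $\alpha\in\{1,2\}$, gives $(\alpha+1)N\sim0$.
  \item For $p=3$, $N^{\otimes2}\simeq\mathcal O_{C_1}$ is Lemma~\ref{lemm:caseB-normal-bundle}, a foliation argument. If $C_1$ is not movable, then $\mathfrak F=C_1$ and $\mathscr G\hookrightarrow\mathscr G'$ is an elementary transformation along $C_1$. Saturation of $\mathscr G$ in $T_{X_1/k}$ then produces a nonzero map from $N^{-1}$ to $\mathcal O_{C_1}$ or to $N$.
\end{itemize}
Your proposal offers no substitute for either step, so $6K_X\sim0$ and $12K_X\sim0$ remain unproved.
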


We now begin the proof. 
First, we may assume, without loss of generality, that $k$ is $F$-finite and separably closed.
Set $k_1:=k^{1/p}$ and $X_1:=(X\times_k k_1)_{\mathrm{red}}^\nu$, and let $\phi\colon X_1\to X$ be the natural morphism.
Since $X$ is geometrically non-normal, it is not para-abelian, so Proposition~\ref{prop:GV0U} gives $\dim\alb_{X/k}(X)\ne2$.
Hence $\dim\alb_{X/k}(X)=1$.
By \cite[Theorem~1.1]{tanaka_abundance_2020}, we have $K_X\simQ0$.
By Theorem~\ref{th:geonon-normal}, there is an effective divisor $\mathfrak C>0$ such that
\begin{equation}\label{eq:canonical-caseB}
  \phi^*K_X=K_{X_1}+(p-1)\mathfrak C.
\end{equation}
Thus $K_{X_1}$ is not pseudo-effective.
Therefore, the MMP on the minimal resolution $\widetilde X_1$ in Construction~\ref{cons:MMP} terminates in a Mori fiber space
\begin{equation}\label{eq:mmp-alb}
  \begin{tikzcd}[row sep=tiny]
    && \widetilde X_1 \ar[ld,"\mu"'] \ar[rd,"\epsilon"]\\
    X & X_1 \ar[l,"\phi"'] && Z \ar[d,"g"]\\[2ex]
      &&& B_1\rlap{\,.}
  \end{tikzcd}
\end{equation}
Note that $H^0(X_1,\mathcal O_{X_1})=H^0(\widetilde X_1,\mathcal O_{\widetilde X_1})=H^0(Z,\mathcal O_Z)=H^0(B_1,\mathcal O_{B_1})=k_1$. 
By Lemma~\ref{lemm:height-n} and Proposition~\ref{prop:Mori-f-BT22}, the exceptional curves of $\mu$ and $\epsilon$, as well as the fibers of $g$, have trivial Albanese image.
Hence the Albanese morphism of $X_1$ factors as
\[
  X_1\xrightarrow{f_1}B_1\longrightarrow\Alb_{X_1/k_1}.
\]
Since the Albanese morphism is nonconstant, it follows that $p_a(B_1)>0$.
Let $X\to B_0\to\Alb_{X/k}$ be the Stein factorization of $\alb_{X/k}$. Then we have the following commutative diagram
\[
  \begin{tikzcd}
    X_1 \ar[r,"f_1"] \ar[d,"\phi"'] & B_1 \ar[r] \ar[d] & \Alb_{X_1/k_1} \ar[d] \\
    X \ar[r] & B_0 \ar[r] & \Alb_{X/k}\rlap{\,.}
  \end{tikzcd}
\]

Let $F$ be the generic fiber of $f_1$, which is a regular projective curve over $k_F:=K(B_1)$ because $X_1$ is normal.
Since $F$ is birational over $k_F$ to the generic fiber of $g$, the two are isomorphic, so $F$ is an integral conic over $k_F$ by Proposition~\ref{prop:Mori-f-BT22}.
Restricting \eqref{eq:canonical-caseB} to $F$ and applying the adjunction formula, we obtain
\begin{equation}\label{eq:adjunction-F}
  0=\deg_{k_F}K_F+(p-1)\mathfrak C\cdot_{k_F}F =-2+(p-1)\mathfrak C\cdot_{k_F}F.
\end{equation}
Thus $p=2$ or $3$, which proves Proposition~\ref{prop:a_X-surjective}\,(1).  Moreover, one of the following holds:
\begin{itemize}
  \item[(a)] $\mathfrak C$ has a component $C_1$  satisfying
    $C_1\cdot_{k_F}F=1$;
  \item[(b)] $p=2$ and the horizontal part of $\mathfrak C$ is an
    irreducible curve $C_1$ satisfying $C_1\cdot_{k_F}F=2$.
\end{itemize}
In Case~(a), the induced morphism $\sigma\colon C_1\to B_1$ is birational and we write
\begin{equation}\label{eq:boundary-case-a}
  (p-1)\mathfrak C=C_1+C_1'+\mathfrak C_v,
\end{equation}
where $\mathfrak C_v$ is the vertical part and $C_1+C_1'$ is the horizontal part (possibly with
$C_1'=C_1$).  In Case~(b), the induced morphism $\sigma\colon C_1\to B_1$ is of degree two and we write
$\mathfrak C=C_1+\mathfrak C_v$.

Denote by $C_0$ the image of $C_1$ in $X$.
Since $\phi$ is a homeomorphism, $\phi^*C_0=\alpha C_1$ for some integer $\alpha\geq1$.

\begin{lemma}\label{lemm:C1^2}
  Under the notation above:
  \begin{itemize}
    \item[\textup{(i)}] $C_1$ is smooth over $k_{C_1}:=H^0(C_1,\mathcal O_{C_1})$ with $p_a(C_1)=1$, and $C_1\cdot\mathfrak C_v=0$.
      In Case~(a), $\sigma$ is an isomorphism and $k_{C_1}=k_1$.
    \item[\textup{(ii)}] $C_1^2=0$ and $C_1$ is semi-ample.
  \end{itemize}
\end{lemma}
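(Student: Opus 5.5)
The plan is to apply adjunction to the horizontal curve $C_1$ and balance the resulting equality against two positivity inputs: $p_a(B_1)>0$ and the nefness of the relevant divisors. From \eqref{eq:canonical-caseB} we have $K_{X_1}+(p-1)\mathfrak C\equiv0$. In Case~(a), write $K_{X_1}+C_1\equiv-(C_1'+\mathfrak C_v)$. Proposition~\ref{prop:adj} then gives
\[
  \deg K_{C_1^\nu}+\deg\operatorname{Diff}_{C_1^\nu}=-(C_1'+\mathfrak C_v)\cdot C_1 .
\]
Since $\sigma\colon C_1\to B_1$ is birational, $C_1^\nu\simeq B_1$, and $p_a(B_1)>0$ gives $\deg K_{C_1^\nu}\ge0$. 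The key claim is that $C_1'\cdot C_1\ge0$ and $\mathfrak C_v\cdot C_1\ge0$. When $C_1'\neq C_1$ both intersections are between distinct curves, so they are nonnegative. Then every term is forced to vanish: $\deg K_{B_1}=0$, $\operatorname{Diff}=0$, $C_1\cdot\mathfrak C_v=0$, and $C_1\cdot C_1'=0$. By Proposition~\ref{prop:adj}, $C_1$ is regular and $X_1$ is regular along $C_1$. Hence $C_1\simeq B_1$ has $p_a=1$ and $\deg K=0$. Since $B_1$ dominates the positive-dimensional $\Alb_{X_1/k_1}$, Proposition~\ref{prop:GV0U} shows $C_1$ is smooth of genus one. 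Finally, $\sigma$ is a birational finite map onto the normal curve $B_1$, hence an isomorphism, so $k_{C_1}=k_1$.

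The remaining cases follow the same pattern.
\begin{itemize}
  \item \emph{$C_1'=C_1$ in Case~(a).} Adjunction gives $\deg K_{C_1^\nu}+\deg\operatorname{Diff}=-C_1^2-\mathfrak C_v\cdot C_1$. Here I need $C_1^2\ge0$.
  \item \emph{Case~(b).} With $p=2$, adjunction gives $\deg K_{C_1^\nu}+\deg\operatorname{Diff}=-\mathfrak C_v\cdot C_1$. Riemann--Hurwitz (Lemma~\ref{lemma-canonical-curve-comparison}), or an inseparable genus bound if $\sigma$ is inseparable, gives $\deg K_{C_1^\nu}\ge 2\deg K_{B_1}\ge0$. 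So everything again vanishes.
\end{itemize}

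The main obstacle is the claim $C_1^2\ge 0$, together with ruling out $C_1^2>0$. I would first try to get this from the Albanese structure. $C_0=\phi(C_1)$ dominates the elliptic curve $\Alb_{X/k}$, and $K_X\equiv0$. By Corollary~\ref{lemma-intersection-general}, the sign of $C_1^2$ equals the sign of $C_0^2$.
\begin{itemize}
  \item If $C_0^2<0$, then $C_0$ is a rigid curve mapping onto $A$. Adjunction on the regular surface $X$ gives $2p_a(C_0)-2=C_0^2<0$, so $p_a(C_0)=0$, and such a curve cannot dominate $A$. This is a contradiction.
  \item If $C_0^2>0$, then $C_0$ is big. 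Hodge index on $X_1$ with the fiber class $F$ of $f_1$ gives no contradiction directly. Instead, I would use the vanishing just obtained: in the $C_1'=C_1$ subcase the adjunction identity itself forces $C_1^2\le 0$. In the other cases $C_1\cdot(C_1'+\mathfrak C_v)=0$. Writing $(p-1)\mathfrak C\equiv -K_{X_1}$ and intersecting with $C_1$ then gives $C_1^2=-K_{X_1}\cdot C_1-(\text{others})$. Adjunction with $\operatorname{Diff}=0$ and $\deg K_{C_1}=0$ gives $K_{X_1}\cdot C_1=-C_1^2$, which closes the loop.
\end{itemize}
I expect some care here to make all the numerics consistent, so that $C_1^2=0$ falls out.

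For the semi-ampleness in (ii), I would apply Proposition~\ref{prop:tanaka-abun-3.1} on the normal surface $X_1$. Adjunction with zero different and $\deg K_{C_1}=0$ gives $K_{X_1}\cdot C_1=-C_1^2=0$, so $K_{X_1}\cdot C_1=C_1^2=0$. It remains to show that some multiple $\mathcal O_{X_1}(nC_1)|_{C_1}$ is trivial. The restriction $N=\mathcal O(C_1)|_{C_1}$ is a degree-zero line bundle on a genus-one curve, so it need not be torsion a priori. I would show it is torsion by relating it to $C_0$ on $X$: $\phi^*C_0=\alpha C_1$, and $\mathcal O_X(C_0)|_{C_0}$ is numerically trivial on the genus-one curve $C_0$. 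I would then use the transversal Albanese structure, or the fact that $N$ is pulled back from $B_1$ via $\sigma^{-1}$ and is algebraically equivalent to zero. If this fails, my fallback is Mumford's argument as in \cite[Proposition~3.1]{tanaka_abundance_2020} applied with $C_1$ itself.
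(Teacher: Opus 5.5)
Your treatment of (i) is essentially the paper's: adjunction on $C_1^\nu$, the bound $C_1^2\ge0$ obtained by ruling out $C_0^2<0$ via adjunction on $X$, and nonnegativity of every term. In Case~(b), drop the claim $\deg K_{C_1^\nu}\ge 2\deg K_{B_1}$ for inseparable $\sigma$. It fails, e.g.\ for the relative Frobenius of a curve of genus $\ge2$. All you need is $\deg K_{C_1^\nu}\ge0$, which holds because $C_1^\nu$ maps nonconstantly to $\Alb_{X_1/k_1}$.

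The genuine gap is $C_1^2=0$ outside the subcase $C_1'=C_1$. Your ``closing the loop'' step is a tautology. Intersecting $K_{X_1}+(p-1)\mathfrak C\equiv0$ with $C_1$ and using $C_1\cdot C_1'=C_1\cdot\mathfrak C_v=0$ gives $K_{X_1}\cdot C_1=-C_1^2$. That is exactly the adjunction identity $(K_{X_1}+C_1)\cdot C_1=\deg K_{C_1}=0$ you started from, so it says nothing about $C_1^2$.

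In Case~(a) with $C_1'\ne C_1$, the Hodge index theorem repairs this. You have $C_1\cdot C_1'=0$ and $C_1^2,C_1'^2\ge0$, and neither curve is numerically trivial since both meet $F$. Hence $C_1^2=C_1'^2=0$.

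In Case~(b) there is no second horizontal curve to play against, and a new idea is needed. The paper base changes along $\sigma$, setting $X_2=X_1\times_{B_1}C_1$.
\begin{itemize}
\item If $\sigma$ is separable, Riemann--Hurwitz forces it to be \'etale. Then $C_1$ pulls back to $C_2+C_2'$, and the covering involution interchanges the two. Adjunction on $C_2$ gives $C_2\cdot C_2'=0$, and Hodge index gives $C_2^2=C_1^2=0$.
\item If $\sigma$ is inseparable, one passes to the normalization of $X_2$. The conductor, together with the non-reducedness of $C_1\times_{B_1}C_1$, yields a term $(m+d-1)C_2$ with $m+d-1>0$ in the adjunction on $C_2^\nu$. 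This forces $C_2^2=0$.
\end{itemize}

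For semi-ampleness, Proposition~\ref{prop:tanaka-abun-3.1} needs $\mathcal O(nC_1)|_{C_1}$ to be trivial for some $n$. None of your suggestions produce torsion:
\begin{itemize}
\item Being algebraically equivalent to zero, or pulled back from $B_1$, does not make a degree-zero line bundle on a genus-one curve over a large field torsion.
\item The transversal Albanese structure is deduced \emph{from} this lemma, so invoking it is circular.
\end{itemize}

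The missing input is the nonvanishing $K_X\simQ 0$ (Tanaka), which the paper records just before the lemma. Once $C_1^2=0$, also $C_0^2=0$. Adjunction on the regular surface $X$, with $C_0^\nu$ of positive genus, gives $\operatorname{Diff}_{C_0^\nu}=0$. So $C_0$ is regular of arithmetic genus one and $(K_X+C_0)|_{C_0}\sim K_{C_0}\sim0$. Together with $K_X\simQ 0$, this gives $C_0|_{C_0}\simQ 0$ and $K_X\cdot C_0=0$. Proposition~\ref{prop:tanaka-abun-3.1} then makes $C_0$ semi-ample, and $C_1$ follows since $\phi^*C_0=\alpha C_1$.

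Working directly on $X_1$ does not suffice in general. Restricting $K_{X_1}+(p-1)\mathfrak C\simQ 0$ to $C_1$ gives $C_1|_{C_1}\simQ 0$ only when $C_1'=C_1$. In the other cases it only controls $(C_1'+\mathfrak C_v)|_{C_1}$, which is why one descends to $C_0$.
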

\begin{proof}
  First, we claim that $C_1^2\geq0$. Otherwise $C_0^2<0$, and restricting $K_X+C_0\equiv C_0$ to $C_0^\nu$ and applying the adjunction formula would then give
  \[
    \deg K_{C_0^\nu}+\deg\operatorname{Diff}_{C_0^\nu}=C_0^2<0,
  \]
  hence $p_a(C_0^\nu)=0$, which is impossible because $C_0^\nu$ dominates the curve $B_0$ of positive geometric genus.

  Restricting \eqref{eq:canonical-caseB} to $C_1^\nu$ and applying the adjunction formula, we obtain
  \[
  K_{C_1^\nu}+C_1'|_{C_1^\nu}+\mathfrak C_v|_{C_1^\nu}
  +\operatorname{Diff}_{C_1^\nu}\simQ0 ~\text{in Case~(a)},
  \]
  and 
  \[
  K_{C_1^\nu}+\mathfrak C_v|_{C_1^\nu}
  +\operatorname{Diff}_{C_1^\nu}\simQ0  ~\text{in Case~(b)}.
  \]
  Since $C_1$ dominates $B_1$, it follows that $\deg K_{C_1^\nu}\geq0$. All the other terms in the above equalities have non-negative degrees. We conclude that 
  \[
    \deg K_{C_1^\nu}=0,\qquad \operatorname{Diff}_{C_1^\nu}=0,
    \qquad C_1\cdot\mathfrak C_v=0 \qquad\text{in either case},
  \]
  and $C_1'\cdot C_1=0$ in case (a).

  By Proposition~\ref{prop:adj}, $\operatorname{Diff}_{C_1^\nu}=0$ implies that $C_1$ is regular and that $X_1$ is
  regular along $C_1$. Moreover, we have $p_a(C_1)=1$ by $\deg K_{C_1^\nu}=0$.
  Since the composite $C_1\to B_1\to\Alb_{X_1/k_1}$ is nonconstant, it follows that $C_1$ is smooth over $k_{C_1}$.
  In Case~(a), since $C_1$ and $B_1$ are normal, the finite birational morphism $\sigma\colon C_1\to B_1$ is an isomorphism, so $k_{C_1}\simeq H^0(B_1,\mathcal O_{B_1})=k_1$, which completes the proof of (i).

  \smallskip
  Next, we prove (ii).
  We first deal with Case~(a). If $C_1'=C_1$, then we have shown $C_1^2=0$.
  Thus, the image $C_0$ of $C_1$ on $X$ also satisfies $C_0^2=0$. Since $K_X\simQ0$, by adjunction formula, we have 
  \[
   0\simQ (K_X+C_0)|_{C_0^\nu}\simQ K_{C_0^\nu}+\operatorname{Diff}_{C_0^\nu}.
  \]
  The curve $C_0^\nu$ has nontrivial Albanese dimension, as does $C_1$.  Thus $p_a(C_0^\nu)\ge 1$ and $\deg K_{C_0^\nu}\ge 0$.
  Thus $\operatorname{Diff}_{C_0^\nu}=0$, $C_0$ is regular, and
  $(K_X+C_0)|_{C_0}\sim K_{C_0}\sim0$.  Since $K_X\simQ0$, we have
  $C_0|_{C_0}\simQ0$ and $K_{X}\cdot C_0=0$. By Proposition~\ref{prop:tanaka-abun-3.1}, we see that $C_0$ is semi-ample.
  Since $\phi^*C_0 = \alpha C_1$, the curve $C_1$ is semi-ample as well.

  If $C_1'\neq C_1$, we have $C_1\cdot C_1'=0$. Moreover, we have shown that $C_1^2\geq0$ and similarly $C_1'^2\geq0$.
  The Hodge index theorem shows that either $C_1^2=0$ or $C_1'^2=0$.
  By the same argument as above, either $C_1$ or $C_1'$ is semi-ample.
  Then $C_1\cdot C_1'=0$ implies that  $C_1^2=C_1'^2=0$ and both $C_1$ and $C_1'$ are semi-ample. 
  
  \smallskip
  We now deal with Case~(b).
  As in Case~(a), it suffices to show that $C_1^2=0$.
  We take the base change
  \begin{equation}\label{eq:KN13}
    \begin{tikzcd}
      X_2:=X_1\times_{B_1}C_1 \ar[r,"\phi_2"] \ar[d,"f_2"']
        & X_1 \ar[d,"f_1"]\\
      B_2:=C_1 \ar[r,"\sigma"'] & B_1.
    \end{tikzcd}
  \end{equation}
  Since $K(B_1)$ is algebraically closed in
  $K(X_1)$, the fiber product is integral (if $\sigma$ is separable, this is clear; if $\sigma$ is inseparable, this follows from
  \cite[Lemma~3.3]{tanaka_behavior_2018}).  If $\sigma$ is separable, the pullback of $C_1$ is
  $C_2+C_2'$, and the induced morphism $C_2\to B_2$ is birational.

  Suppose first that $\sigma$ is separable.
  Then $k_{C_1}=k_1$ since $k_1$ is separably closed.
  The Riemann--Hurwitz formula (Lemma~\ref{lemma-canonical-curve-comparison}) gives $K_{C_1}\sim\sigma^*K_{B_1}+R$ with $R\geq0$.
  As $\deg K_{C_1}=0$ and $\deg K_{B_1}\geq0$, we get $R=0$ and $p_a(B_1)=1$.
  Hence $\sigma$ is an \'etale double cover and $K_{B_1}\sim0$.
  It follows that $\phi_2$ is \'etale, so $X_2$ is normal, and that
    \[
      K_{X_2}+C_2+C_2'+\phi_2^*\mathfrak C_v\sim\phi_2^*(K_{X_1}+C_1+\mathfrak C_v)\simQ0.
    \]
  Restricting to $C_2$ and applying the adjunction formula, we obtain \[
    K_{C_2}+C_2'|_{C_2}+\phi_2^*\mathfrak C_v|_{C_2}\simQ0.
  \]
  Since $C_2\simeq B_2=C_1$ has arithmetic genus one, $\deg K_{C_2}=0$. As $\phi_2^*\mathfrak C_v$ is $f_2$-vertical, we have $\phi_2^*\mathfrak C_v|_{C_2}\ge 0$. 
  Therefore, we have $C_2\cdot C_2'=0$.
  Let $\iota$ be the nontrivial automorphism of $C_1$ over $B_1$. Then $\mathrm{id}_{X_1}\times\iota$ is an automorphism of $X_2$ exchanging $C_2$ and $C_2'$, so $C_2^2=C_2'^2$.
  Since $2C_1^2=(\phi_2^*C_1)^2=(C_2+C_2')^2=2C_2^2$, we get $C_2^2=C_1^2\geq0$, and the Hodge index theorem gives $C_2^2=0$.
  Hence $C_1^2=0$.

  Suppose now that $\sigma$ is inseparable.
  Let $\nu\colon X_2^\nu\to X_2$ be the normalization.
  Since $\phi_2$ is flat, we have $K_{X_2}\sim\phi_2^*K_{X_1}+f_2^*(K_{C_1}-\sigma^*K_{B_1})$.
  Hence $X_2$ is Gorenstein in codimension one, and $K_{X_2^\nu}+\mathfrak D\sim\nu^*K_{X_2}$ for the conductor divisor $\mathfrak D\geq0$ of $\nu$.
  Write $\nu^*\phi_2^*C_1=mC_2$ and $\mathfrak D=dC_2+\mathfrak D'$ with $C_2\not\subset\Supp\mathfrak D'$.
  As $K_{C_1}\sim0$ and $K_{X_1}\simQ-C_1-\mathfrak C_v$, we get
  \begin{equation}\label{eq:caseB-insep}
    (K_{X_2^\nu}+C_2)+(m+d-1)C_2+\mathfrak D'+\nu^*\phi_2^*\mathfrak C_v+\nu^*f_2^*\sigma^*K_{B_1}\simQ0.
  \end{equation}
  The coefficient $m+d-1$ is positive, since $m\ge1$ and if $d=0$, then $X_2$ is normal at the generic point of $\nu(C_2)$, so $m\geq2$ because the scheme-theoretic preimage $X_2\times_{X_1}C_1=C_1\times_{B_1}C_1$ of $C_1$ is non-reduced.
  Restricting \eqref{eq:caseB-insep} to $C_2^\nu$ and applying the adjunction formula, we obtain
  \[
    K_{C_2^\nu}+\operatorname{Diff}_{C_2^\nu}+(m+d-1)C_2|_{C_2^\nu}+(\mathfrak D'+\nu^*\phi_2^*\mathfrak C_v+\nu^*f_2^*\sigma^*K_{B_1})|_{C_2^\nu}\simQ0.
  \]
  Every term has nonnegative degree, since $C_2^\nu$ has nontrivial Albanese morphism, $m^2C_2^2=2C_1^2\geq0$, $\nu^*\phi_2^*\mathfrak C_v$ is vertical over $B_2$ while $C_2$ is not, and $p_a(B_1)>0$.
  Hence $C_2^2=0$, so $C_1^2=0$.
\end{proof}

\begin{proposition}\label{prop:caseB-regularity}%
  Under the assumptions of Proposition~\ref{prop:a_X-surjective}, the surface $X_1 := (X \times_k k^{1/p})^\nu_{\mathrm{red}}$ is regular. Moreover, we have $\mathfrak C_v=0$.
\end{proposition}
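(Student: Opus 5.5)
We plan to use the semi-ampleness of $C_1$ from Lemma~\ref{lemm:C1^2}\textup{(ii)}. A sufficiently divisible multiple $NC_1$ is base point free. Taking the Stein factorization gives a fibration $h_1\colon X_1\to D_1$ onto a normal projective curve, and $C_1$ is (a component of) a fiber. Since $C_1\cdot F>0$ for the generic fiber $F$ of $f_1$, the fibration $h_1$ is transversal to $f_1$: the fibers of $f_1$ are horizontal for $h_1$, and vice versa. Let $G$ be a general fiber of $h_1$, or better its generic fiber $G_\eta$ over $K(D_1)$, which is a regular curve since $X_1$ is normal. Then $G\equiv \lambda C_1$ for some $\lambda>0$, so $G\cdot C_1=0$ and $G^2=0$.

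Next we restrict the canonical formula~\eqref{eq:canonical-caseB} to $G$:
\[
  K_{X_1}+(p-1)\mathfrak C\equiv 0 .
\]
Because $G\cdot C_1=0$, and in Case~(a) also $G\cdot C_1'=0$ (indeed $C_1'$ is semi-ample with $C_1\cdot C_1'=0$ and $C_1'^2=0$, so by Hodge index $C_1'\equiv cC_1$), we get
\[
  \deg K_{G}=K_{X_1}\cdot G = -(p-1)\,\mathfrak C_v\cdot G .
\]
The vertical divisor $\mathfrak C_v$ is supported on fibers of $f_1$, so it is horizontal for $h_1$ unless it is zero. If $\mathfrak C_v\neq0$, every component of $\mathfrak C_v$ meets $G$ positively, and then $\deg K_{G_\eta}<0$. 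So $G_\eta$ would be a conic, and $X_1$ would be covered by rational curves $G$ dominating $B_1$. This contradicts $p_a(B_1)>0$, because the Albanese morphism of $X_1$ factors through $f_1$ and is nonconstant, while curves of geometric genus zero map to points of the Albanese. For the same reason $\mathfrak C_v$ cannot contain fibers of $h_1$. Hence $\mathfrak C_v\cdot G=0$, so $\mathfrak C_v=0$, and $K_{X_1}\cdot G=0$, so $p_a(G_\eta)=1$.

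For regularity we run adjunction along every fiber of $h_1$, not just the general one, in the spirit of Lemma~\ref{lemm:C1^2}\textup{(i)}. Now
\[
  K_{X_1}\equiv-(p-1)\mathfrak C = -(C_1+C_1') \quad\text{or}\quad -C_1 ,
\]
and this is numerically a negative multiple of a fiber class of $h_1$. Hence for each reduced irreducible component $\Gamma$ of a fiber of $h_1$ we have $(K_{X_1}+\Gamma)\cdot\Gamma\le0$. Every such $\Gamma$ dominates $B_1$: if some $\Gamma$ were contained in a fiber of $f_1$ as well, then $\Gamma\cdot F=0$ and $\Gamma\cdot C_1=0$ would force $\Gamma\equiv0$ by Hodge index, which is impossible. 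So $\Gamma^\nu$ has $\deg K_{\Gamma^\nu}\ge0$. Proposition~\ref{prop:adj} then gives
\[
  \deg K_{\Gamma^\nu}+\deg\operatorname{Diff}_{\Gamma^\nu}=(K_{X_1}+\Gamma)\cdot\Gamma\le0 ,
\]
so $\operatorname{Diff}_{\Gamma^\nu}=0$. By Proposition~\ref{prop:adj}, $X_1$ is regular along $\Gamma$. Since the fibers of $h_1$ cover $X_1$, $X_1$ is regular.

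The main obstacle is making the intersection numbers legitimate when $X_1$ is only normal. One needs $\mathbb Q$-Cartierness of the components and of $K_{X_1}+\Gamma$ before regularity is known; this should follow from Lemma~\ref{lemma-finite-Cartier}\textup{(3)} applied to the height-one morphism $\phi$. A second delicate point is fibers of $h_1$ that are non-reduced or have several components. Here one must check that $(K_{X_1}+\Gamma)\cdot\Gamma\le0$ still holds, using $\Gamma^2\le0$ by Zariski's lemma and $K_{X_1}\cdot\Gamma\le0$. The argument should also be run over the field of constants $k_\Gamma$, via \S\ref{sec:intersection-numbers}.
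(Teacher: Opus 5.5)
There is a genuine gap. Everything hinges on the claim that no curve is vertical for both $f_1$ and $h_1$, and your justification of it is wrong. You argue that if $\Gamma\cdot F=0$ and $\Gamma\cdot C_1=0$, then $\Gamma\equiv0$ ``by Hodge index''. But $F$ and $C_1$ span a hyperbolic plane, and Hodge index only says that a class orthogonal to both has $\Gamma^2<0$ or $\Gamma\equiv0$. The exceptional curve of a blow-up of a product of two curves is vertical for both projections, so the claim cannot follow from intersection theory alone.

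Note also that Lemma~\ref{lemm:C1^2}\textup{(i)} already gives $C_1\cdot\mathfrak C_v=0$. So $\mathfrak C_v$ is automatically $h_1$-vertical, and $\mathfrak C_v\cdot G=0$ for every fiber $G$. Your detour through $\deg K_{G_\eta}<0$ and rational curves is therefore vacuous. The implication ``$\mathfrak C_v\cdot G=0\Rightarrow\mathfrak C_v=0$'' is precisely the unproved claim. You use the same claim in the regularity paragraph to assert that every component of an $h_1$-fiber dominates $B_1$, which is what you need for $\deg K_{\Gamma^\nu}\ge0$.

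The missing idea is to prove first that every fiber of $h_1$ has irreducible support, working only with an $f_1$-horizontal component. Such a component exists in every fiber because $G\cdot F>0$. Write an arbitrary fiber as $G=nT+G'$ with $T$ horizontal over $B_1$ and $T\not\subset\operatorname{Supp}G'$.

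The argument giving $C_1^2\ge0$ at the start of the proof of Lemma~\ref{lemm:C1^2} applies verbatim to $T$. It runs on $X$, where $K_X\equiv0$, so $T_0^2<0$ would force $p_a(T_0^\nu)=0$. That is impossible, since $T_0^\nu$ dominates $B_0$. Hence $T^2\ge0$, and $0=G\cdot T=nT^2+G'\cdot T$ gives $T^2=G'\cdot T=0$. Since $\operatorname{Supp}G$ is connected, $G'=0$.

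The paper packages this as adjunction on $T^\nu$ applied to $(\phi^*K_X+G)|_{T^\nu}\simQ0$, writing $(p-1)\mathfrak C=bT+\mathfrak D$. This simultaneously yields $\operatorname{Diff}_{T^\nu}=0$, hence regularity of $X_1$ along $T$, without knowing $\mathfrak C_v=0$ beforehand.

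Only afterwards does $\mathfrak C_v=0$ follow. By $C_1\cdot\mathfrak C_v=0$, each component of $\mathfrak C_v$ lies in a fiber of $h_1$, hence is the support of that fiber. It is then numerically a positive multiple of $C_1$, contradicting $\mathfrak C_v\cdot F=0$. So the order of your two steps must be reversed, with irreducibility of the $h_1$-fibers as the key input.
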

\begin{proof}
  Let $h_1\colon X_1\to D_1$ be the fibration induced by the semi-ample curve $C_1$ from Lemma~\ref{lemm:C1^2}\,\textup{(ii)}.
  Let $G$ be an arbitrary fiber of $h_1$. Since $G\equiv rC_1$ for some $r>0$ and $C_1\cdot F>0$, we may write $G=nT+G'$, where $T$ is an $f_1$-horizontal prime divisor and $G'$ is an effective divisor such that $\Supp G'\not \supset T$.
  On the other hand, we may write $(p-1)\mathfrak C=bT+\mathfrak D$, with $T\not\subset\operatorname{Supp}\mathfrak D$ and $b\ge0$.

  Since $G|_{T^\nu}\sim 0$ and $\phi^*K_X\simQ 0$, we have \[0\simQ (\phi^*K_X+G)|_{T^\nu}=(K_{X_1}+(p-1)\mathfrak C+G)|_{T^\nu}=(K_{X_1}+T+(n+b-1)T+\mathfrak D+G')|_{T^\nu}.\] Applying the adjunction formula, we obtain
  \begin{equation}\label{eq:adj-X1-reg}
    0\simQ K_{T^\nu}+\operatorname{Diff}_{T^\nu} +(n+b-1)T|_{T^\nu}+(\mathfrak D+G')|_{T^\nu}.
  \end{equation}
  The argument proving $C_1^2\ge 0$ in Lemma~\ref{lemm:C1^2} also gives $T^2\geq0$.
  Therefore, all the terms in \eqref{eq:adj-X1-reg} have nonnegative degrees. We conclude that
  $\operatorname{Diff}_{T^\nu}=0$ and $G'\cdot T=0$.  Since the fiber $G$ is connected, it follows from $G'\cdot T=0$ that $G'=0$. Hence, every fiber of $h_1$ has irreducible support. By Proposition~\ref{prop:adj}, $\operatorname{Diff}_{T^\nu}=0$ implies that $X_1$ is regular along $T$.
  Since $G$ was an arbitrary fiber and the fibers of $h_1$ cover $X_1$, the surface $X_1$ is regular. 
  
  Finally, it remains to show that $\mathfrak C_v=0$. Suppose, to the contrary, that $\mathfrak C_v\neq 0$. 
  Since $C_1$ is semi-ample and its induced fibration $h_1$ has irreducible fibers, $C_1\cdot \mathfrak C_v=0$ implies that $\mathfrak C_v$ is numerically equivalent to $mC_1$ for some $m>0$. Then $\mathfrak C_v\cdot F=mC_1\cdot F>0$, contradicting $\mathfrak C_v \cdot F=0$. 
  
\end{proof}

\begin{lemma}\label{lemm:caseB-normal-bundle}%
  In the $p=3$ case, we have
  \[
    2C_1|_{C_1}\sim0.
  \]
\end{lemma}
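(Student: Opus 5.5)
The plan is to pin down the boundary divisor completely when $p=3$, and then obtain $2C_1|_{C_1}\sim0$ by comparing adjunction on $X_1$ along $C_1$ with adjunction on $X$ along $C_0$.

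\emph{Step 1: $\mathfrak C=C_1$.} When $p=3$ only Case~(a) can occur, and Proposition~\ref{prop:caseB-regularity} gives $\mathfrak C_v=0$. So \eqref{eq:boundary-case-a} reads $2\mathfrak C=C_1+C_1'$. Since $\mathfrak C$ is an integral divisor, every coefficient of $2\mathfrak C$ is even. Hence $C_1'=C_1$, $\mathfrak C=C_1$, and
\[
  \phi^*K_X\sim K_{X_1}+2C_1 .
\]

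\emph{Step 2: first relation.} By Proposition~\ref{prop:caseB-regularity}, $X_1$ is regular. By Lemma~\ref{lemm:C1^2}, $C_1$ is a smooth genus-one curve over $k_1$. Adjunction therefore gives $(K_{X_1}+C_1)|_{C_1}\sim K_{C_1}\sim0$, and so, writing $M:=C_1|_{C_1}\in\Pic^0(C_1)$,
\[
  \phi^*K_X|_{C_1}\sim M .
\]
On $X$ the argument in the proof of Lemma~\ref{lemm:C1^2}(ii) shows that $C_0$ is regular with $K_{C_0}\sim0$. Hence $K_X|_{C_0}\sim-C_0|_{C_0}$. Pulling back along the height-one map $\psi\colon C_1\to C_0$ and using $\phi^*C_0=\alpha C_1$ gives $\phi^*K_X|_{C_1}\sim-\alpha M$. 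Combining the two expressions,
\[
  (\alpha+1)M\sim0 .
\]

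\emph{Step 3: the value of $\alpha$.} The morphism $\phi$ is purely inseparable of height one and $X$, $X_1$ are regular, so I expect $\alpha\in\{1,3\}$. One can check this from $\phi_*\phi^*C_0=\deg(\phi)\,C_0$ together with $\phi_*C_1=[K(C_1):K(C_0)]\,C_0$, where the last degree is $1$ or $3$ because $\psi$ has height one. If $\alpha=1$, Step 2 gives $2M\sim0$ and we are done.

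\emph{Step 4: the case $\alpha=3$ (main obstacle).} Here Step 2 only gives $4M\sim0$, and a second relation is needed. I would use the factorization of Frobenius $X_1\xrightarrow{\phi}X\xrightarrow{\rho}X_1'$ (Lemma~\ref{lemma-finite-Cartier}), where $X_1'$ is $X_1$ with its structure twisted by Frobenius. Since $\alpha\beta$ equals the multiplicity of $C_1$ in $F^*C_1=3C_1$, the relation $\rho^*C_1'=\beta C_0$ forces $\beta=1$. Restricting to $C_0$ and pulling back to $C_1$ then identifies the Frobenius pullback $F_{C_1}^*M$ with $\psi^*(C_0|_{C_0})$, which equals $\alpha M=3M$; this is consistent with $F^*M=3M$ and so gives nothing new. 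A genuinely independent relation has to come from the canonical formula for $\rho$, namely $\rho^*K_{X_1'}$ compared with $K_X$ via Theorem~\ref{th:geonon-normal} applied in the other direction, combined with $\phi^*K_X\sim M$ on $C_1$.

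If that bookkeeping still does not close, I would fall back on the foliation description: $\mathfrak C=C_1$ means $\det\mathscr G\simeq\mathcal O_{X_1}(-C_1)$. Restricting the rank-one foliation $\mathscr G\hookrightarrow\mathcal O_{X_1}^{\oplus r}$ to $C_1$ should show that $\mathscr G|_{C_1}$ is the normal or conormal bundle of $C_1$ twisted by a $2$-torsion class. Its $p$-closedness yields an isomorphism $\mathscr G^{\otimes p}\simeq\mathscr G$ on $C_1$, hence $(p-1)M=2M\sim0$. I expect this last identification to be the hardest step.
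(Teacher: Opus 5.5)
Your Steps~1 and~2 are correct. For $p=3$ one gets $\mathfrak C=C_1$, and comparing adjunction along $C_1\subset X_1$ with adjunction along $C_0\subset X$ gives $(\alpha+1)M\sim0$. This is exactly the argument the paper uses for $p=2$ when $C_1=C_1'$.

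Your degree count in Step~3 does not by itself bound $\alpha$: $\deg\phi=3^q$, and $[K(C_1):K(C_0)]$ need not be $1$ or $3$. The right reason is the Frobenius factorization from your Step~4, which gives $\alpha\beta=3$.

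\textbf{The gap is the case $\alpha=3$.} There you only get $4M\sim0$, i.e.\ $12K_X\sim0$, and improving $4$ to $2$ is the whole content of the lemma. This case genuinely occurs. In Example~\ref{ex:caseB-order2}, $C_0$ is the image of $\{Q_0\}\times E$ with $Q_0=\{y=0,\ x^3+s=0\}$. The uniformizer $y$ at $Q_0$ becomes $\tau^3$ on the normalization $\mathbb P^1_{k_1}$ of $C_{k_1}$, where $\tau=y/(x+s^{1/3})$. So $\phi^*C_0=3C_1$. Meanwhile $M$ is the nontrivial $2$-torsion bundle on $C_1\simeq E_{k_1}/\mu_2$ given by the character through which $\mu_2$ acts on $T_{\tau=0}\mathbb P^1$. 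Hence no argument that only tracks $\alpha$ can conclude.

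Your suggested ways of closing this case do not work:
\begin{itemize}
\item As you noticed, the Frobenius bookkeeping is circular, since $F_{C_1}^*M=3M=\alpha M$.
\item The canonical formula for $\rho$ gives nothing on $C_0$ either. With $\beta=1$, adjunction shows that $K_X|_{C_0}$ and $(\rho^*K_{X_1'})|_{C_0}$ are both $-\rho^*(C_1'|_{C_1'})$.
\item The foliation fallback fails as stated. $\mathscr G=T_{X_1/X}$ has rank $\log_p[K(X_1):K(X)]$, which equals $r=\dim_{k_1}\operatorname{Der}_k(k_1)$ when $X\times_kk_1$ is reduced, so it is not rank one in general.
\item $p$-closedness gives no isomorphism $\mathscr G^{\otimes p}\simeq\mathscr G$.
\item Describing $\mathscr G|_{C_1}$ as a (co)normal bundle twisted by a $2$-torsion class presupposes the conclusion.
\end{itemize}

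\textbf{The missing idea is the paper's dichotomy: is $C_1$ movable or not?}
\begin{itemize}
\item If $C_1$ is movable, a member $D\sim C_1$ not containing $C_1$ is disjoint from it (as $C_1^2=0$), so $M\simeq\mathcal O_{C_1}$.
\item If not, then $\mathfrak M=0$ and $\mathfrak F=C_1$. So $X\times_kk_1$ is reduced and $\mathscr G'=\mathcal O_{X_1}^{\oplus r}$.
\end{itemize}
In the second case, $\det\iota$ cuts out the regular curve $C_1$ on the regular surface $X_1$, so its local equation is not in $\mathfrak m^2$. Hence $\iota$ is locally $\operatorname{diag}(1,\dots,1,t)$, and $\operatorname{coker}\iota$ is a line bundle $A$ on $C_1$. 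It is globally generated, being a quotient of $\mathcal O_{C_1}^{\oplus r}$. A $\operatorname{Tor}$ computation gives
\[
0\to A\otimes M^{-1}\to\mathscr G|_{C_1}\to\mathcal O_{C_1}^{\oplus r}\to A\to0 .
\]
Two facts then make the kernel $A\otimes M^{-1}$ inject into $T_{X_1/k_1}|_{C_1}$:
\begin{itemize}
\item $X_1$ is smooth over $k_1$ along $C_1$ (Lemma~\ref{lemma-smooth-Cartier-divisor}), so there is a locally split sequence $0\to T_{X_1/k_1}\to T_{X_1/k}\to\operatorname{Der}_k(k_1)\otimes\mathcal O\to0$ near $C_1$;
\item $\mathscr G$ is saturated in $T_{X_1/k}$.
\end{itemize}
But $T_{X_1/k_1}|_{C_1}$ is an extension of $M$ by $T_{C_1/k_1}\simeq\mathcal O_{C_1}$. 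Degree comparison forces $A\simeq\mathcal O_{C_1}$ and a nonzero map $M^{-1}\to M$ or $M^{-1}\to\mathcal O_{C_1}$. Such a map is an isomorphism, so $2M\sim0$ regardless of $\alpha$.
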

\begin{proof}
	As $\mathfrak C_v=0$ by Proposition~\ref{prop:caseB-regularity}, we have
	\[
	\phi^*K_X=K_{X_1}+2C_1.
	\]
	By Lemma~\ref{lemm:C1^2}, the curve $C_1\simeq B_1$ is smooth over $k_1$ of arithmetic genus one.
	As $X_1$ is regular (Proposition~\ref{prop:caseB-regularity}), $C_1$ is an effective Cartier divisor, and Lemma~\ref{lemma-smooth-Cartier-divisor} shows that $X_1$ is smooth over $k_1$ along $C_1$.
	Moreover, $H^0(X_1,\mathcal O_{X_1})=k_1$ and $C_1^2=0$. 
	
  Put $N:=\mathcal O_{C_1}(C_1)$, so $N\in\operatorname{Pic}^0(C_1)$.
  If $C_1$ is movable, there is an effective divisor $D\sim C_1$ that does not contain $C_1$.
  The equality $C_1\cdot D=C_1^2=0$ shows that $C_1$ and $D$ are disjoint, and hence $N\simeq\mathcal O_{C_1}(D)\simeq\mathcal O_{C_1}$.
  We may therefore assume that $C_1$ is not movable.
  The decomposition $C_1 = \mathfrak F + \mathfrak M$ in Theorem~\ref{th:geonon-normal} then gives $\mathfrak M = 0$ and $\mathfrak F = C_1$, so $X\times_k k_1$ is reduced.
  Put $r := \dim_{k_1}\operatorname{Der}_k(k_1)$.
  Let $\mathscr G:=T_{X_1/X}$ be the foliation associated with the height-one purely inseparable morphism $\phi$, let $\mathscr G'$ be its saturation in $\operatorname{Der}_k(k_1)\otimes_{k_1}\mathcal O_{X_1}$, and denote the inclusion by $\iota\colon\mathscr G\hookrightarrow\mathscr G'$.
  Since $X\times_k k_1$ is reduced, Subsection~\ref{sec:geo-non-normal-base-change} (see \cite[Remark~3.3]{ji_structure_2021}) gives
  \[
    \mathscr G' = \operatorname{Der}_k(k_1)\otimes_{k_1}\mathcal O_{X_1}
      \simeq\mathcal O_{X_1}^{\oplus r},
  \]
  and the zero divisor of $\det(\iota)$ is $\mathfrak F = C_1$.

  Choose an open neighborhood $U$ of $C_1$ over which $X_1$ is smooth over $k_1$.
  Since $U$ is smooth over $k_1$, the cotangent sequence for $U\to\operatorname{Spec}k_1\to\operatorname{Spec}k$ is exact:
  \[
    0\longrightarrow\Omega_{k_1/k}\otimes_{k_1}\mathcal O_U
    \longrightarrow\Omega_{U/k}
    \longrightarrow\Omega_{U/k_1}
    \longrightarrow0.
  \]
  Since $\Omega_{U/k_1}$ is locally free, this sequence splits locally.
  Dualizing and using $\operatorname{Hom}_{k_1}(\Omega_{k_1/k},k_1)\cong\operatorname{Der}_k(k_1)$ and $\mathscr G'|_U = \operatorname{Der}_k(k_1)\otimes_{k_1}\mathcal O_U$, we obtain  the locally split exact sequence
  \begin{equation}\label{eq:tangent-field-extension}
    0\longrightarrow T_{X_1/k_1}|_U
    \longrightarrow T_{X_1/k}|_U
    \xrightarrow{\mathrm{res}}\mathscr G'|_U
    \longrightarrow0.
  \end{equation}
  The map $\mathrm{res}$ sends a local $k$-derivation $D$ to its restriction $D|_{k_1}$.
  The composite
  \[
    \mathscr G|_U\hookrightarrow T_{X_1/k}|_U
    \xrightarrow{\mathrm{res}}\mathscr G'|_U
  \]
  equals $\iota|_U$.
  Dualizing the cotangent sequence for $X_1\xrightarrow{\phi}X\to\operatorname{Spec}k$, we obtain an exact sequence \cite[Tag~\href{https://stacks.math.columbia.edu/tag/01UX}{01UX}]{StacksProject}
  \[
    0\longrightarrow\mathscr G
    \longrightarrow T_{X_1/k}
    \longrightarrow\mathcal{H}om_{\mathcal O_{X_1}}
      \bigl(\phi^*\Omega_{X/k},\mathcal O_{X_1}\bigr).
  \]
  Since $X_1$ is integral, the last sheaf is torsion-free.
  The kernel of a morphism into a torsion-free sheaf is saturated, since the quotient by the kernel embeds into the target.
  Hence, $\mathscr G$ is saturated in $T_{X_1/k}$.

  Since $X_1$ is a regular surface, the dual sheaf $\mathscr G = \mathcal{H}om_{\mathcal O_{X_1}}(\Omega_{X_1/X},\mathcal O_{X_1})$ is reflexive and hence locally free on $X_1$ \cite[Tags~\href{https://stacks.math.columbia.edu/tag/0AY4}{0AY4} and~\href{https://stacks.math.columbia.edu/tag/0B3N}{0B3N}]{StacksProject}.
  Fix $x\in C_1$, let $\mathfrak m\subset\mathcal O_{X_1,x}$ be the maximal ideal, and let $t$ be a local equation of $C_1$ at $x$.
  Since $\mathcal O_{X_1,x}$ and $\mathcal O_{X_1,x}/(t) = \mathcal O_{C_1,x}$ are regular local rings of dimensions differing by one, we have $t\notin\mathfrak m^2$.
  Let $M$ be an $r\times r$ matrix over $\mathcal O_{X_1,x}$ representing $\iota_x$. Then $\det M$ is a unit times $t$.
  If $r = 1$, then, after rescaling, $M=(t)$.
  If $r > 1$, then not all entries of $M$ lie in $\mathfrak m$ (otherwise, $\det M\in\mathfrak m^r\subseteq\mathfrak m^2$, a contradiction). 
  Using elementary row and column operations to clear a unit entry and inducting on $r$, we reduce $M$ to $\operatorname{diag}(1,\ldots,1,t)$.
  In either case, the cokernel of $\iota$ is a line bundle $A$ on $C_1$, which induces an exact sequence near $C_1$:
  \begin{equation}\label{eq:foliation-elementary-transform}
    0\longrightarrow\mathscr G
    \buildrel\iota\over\longrightarrow\mathscr G'
    \longrightarrow i_*A\longrightarrow0,
    \qquad i\colon C_1\hookrightarrow X_1.
  \end{equation}
  Consider the exact sequence
  \[
    0\longrightarrow\mathcal O_{X_1}(-C_1)
    \longrightarrow\mathcal O_{X_1}
    \longrightarrow i_*\mathcal O_{C_1}
    \longrightarrow0.
  \]
  Locally, the first map is the multiplication by a local equation of $C_1$, which annihilates $i_*A$.
  Tensoring this sequence with $i_*A$, we obtain an isomorphism of sheaves on $C_1$:
  \[
    \operatorname{Tor}_1^{\mathcal O_{X_1}}(\mathcal O_{C_1},i_*A)
    \simeq A\otimes\mathcal O_{C_1}(-C_1)
    =A\otimes N^{-1}.
  \]
  Tensoring \eqref{eq:foliation-elementary-transform} with $\mathcal O_{C_1}$ and using the fact that $\mathscr G'$ is locally free, we obtain
  \begin{equation}\label{eq:PGA}
    0 \longrightarrow \Tor_1^{\mathcal O_{X_1}} (i_*A, \mathcal O_{C_1}) \cong A\otimes N^{-1}
    \longrightarrow\mathscr G|_{C_1}
    \longrightarrow\mathscr G'|_{C_1}\simeq\mathcal O_{C_1}^{\oplus r}
    \longrightarrow A\longrightarrow0.
  \end{equation}
  In particular, $A$ is globally generated.
  Put $P:=A\otimes N^{-1}$.
  Since $\mathscr G$ is saturated in $T_{X_1/k}$, the restriction $\mathscr G|_{C_1}\to T_{X_1/k}|_{C_1}$ is injective.
  The restriction of \eqref{eq:tangent-field-extension} to $C_1$ and \eqref{eq:PGA} fit into the following commutative diagram with exact rows:
  \[
    \begin{tikzcd}
      0 \ar[r] 
        & P \ar[r] \ar[d,hook,"\theta"']
        & \mathscr G|_{C_1} \ar[r,"\iota|_{C_1}"] \ar[d,hook]
        & \mathscr G'|_{C_1} \ar[r] \ar[d,equal]
        & A \ar[r] \ar[d]
        & 0 \\
      0 \ar[r]
        & T_{X_1/k_1}|_{C_1} \ar[r]
        & T_{X_1/k}|_{C_1} \ar[r,"\mathrm{res}"]
        & \mathscr G'|_{C_1} \ar[r]
        & 0
        & {}
    \end{tikzcd}
  \]
  This yields an injection $\theta\colon P \to T_{X_1/k_1}|_{C_1}$.

  Since $C_1$ is a smooth curve of  genus one, we have $T_{C_1/k_1}\simeq\mathcal O_{C_1}$, and the tangent--normal exact sequence is
  \[
    0\longrightarrow T_{C_1/k_1}\simeq\mathcal O_{C_1}
    \longrightarrow T_{X_1/k_1}|_{C_1}
    \buildrel\rho\over\longrightarrow N = \mathcal O_{C_1}(C_1)
    \longrightarrow0.
  \]
  According as $\rho\circ\theta$ is nonzero or zero, $\theta$ induces a nonzero map from $P$ to $N$ or to $\mathcal O_{C_1}$.
  Since both targets have degree zero, $\deg P\leq0$.
  On the other hand, $\deg P=\deg A\geq0$, since $A$ is globally generated.
  Hence, we have $A\simeq\mathcal O_{C_1}$ and $P\simeq N^{-1}$.
  The same nonzero map between degree-zero line bundles is now an isomorphism. Therefore, we have either $N^{-1}\cong N$ or $N\cong\mathcal O_{C_1}$.
  Thus, we have $N^{-1}\cong N$, and hence $N^{\otimes2}\simeq\mathcal O_{C_1}$ in both cases.
\end{proof}

\begin{proposition}\label{prop:caseB-torsion}
	Under the assumptions of Proposition~\ref{prop:a_X-surjective}, we have $6K_X \sim 0$ if $p=3$, and $12K_X \sim 0$ if $p=2$.
\end{proposition}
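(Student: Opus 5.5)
The plan is to prove $m\,\phi^*K_X\sim0$ on $X_1$ with $m=2$ for $p=3$ and $m=4$ for $p=2$. Since $\phi$ is purely inseparable of height one, Lemma~\ref{lemma-finite-Cartier}\textup{(2)} then gives $pm K_X\sim0$, which is $6K_X\sim0$, respectively $12K_X\sim0$. We may assume $k$ is $F$-finite and separably closed; descent is harmless because $\Pic(X)\to\Pic(X_{k'})$ is injective. By Proposition~\ref{prop:caseB-regularity}, $X_1$ is regular and $\mathfrak C_v=0$. In Case~(a) this gives $\phi^*K_X\sim K_{X_1}+C_1+C_1'$, and for $p=3$ we have $C_1'=C_1$, so $\phi^*K_X\sim K_{X_1}+2C_1$. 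In Case~(b) it gives $\phi^*K_X\sim K_{X_1}+C_1$.

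The key step is to show that $\phi^*K_X$ restricts trivially (up to a controlled torsion) to the fibers of $h_1\colon X_1\to D_1$, and then to descend. The fibers of $h_1$ have irreducible support, and some multiple of $C_1$ is a fiber. By Lemma~\ref{lemm:C1^2}, $C_1$ is a smooth genus-one curve, so adjunction gives $(K_{X_1}+C_1)|_{C_1}\sim K_{C_1}\sim0$. For $p=3$ this yields $\phi^*K_X|_{C_1}\sim C_1|_{C_1}=N$, and Lemma~\ref{lemm:caseB-normal-bundle} gives $2\phi^*K_X|_{C_1}\sim0$. For $p=2$ in Case~(a), $\phi^*K_X|_{C_1}\sim C_1'|_{C_1}$. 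Here $C_1'\cdot C_1=0$, and $C_1'$ is either equal to $C_1$ or is another fiber component disjoint from $C_1$. In the disjoint case the restriction is trivial; otherwise it equals $N$. Case~(b) gives a trivial restriction directly. To bound $N$ when $p=2$, I would run the foliation argument of Lemma~\ref{lemm:caseB-normal-bundle} again, which should give $N^{\otimes 2}\simeq\mathcal O$, or failing that some bound such as $N^{\otimes4}\simeq\mathcal O$. The same restriction argument applies to any fiber $G=nT$ of $h_1$, since $T$ is also horizontal and the adjunction in Proposition~\ref{prop:caseB-regularity} forces equality.

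Next comes the descent. Let $L:=\mathcal O_{X_1}(m\phi^*K_X)$, which is numerically trivial. Once $L$ is trivial on every fiber of $h_1$ (on the reduced fibers, and then on the multiple fibers after handling multiplicities), we get $L\simeq h_1^*L_{D_1}$ with $\deg L_{D_1}=0$. To see that $L_{D_1}$ is trivial, restrict to the horizontal direction. The curve $F$ is a conic over $k_F$ mapping onto $D_1$, so the geometric genus of $D_1$ is $0$. Lemma~\ref{lemm:geometric-genus-zero-torsion} then gives $L_{D_1}\sim0$ when $p_a(D_1)=0$. When $p_a(D_1)=1$ it only gives $pL_{D_1}\sim0$, and I would have to rule this case out using $C_1\simeq B_1$ and the section-like behaviour of $C_1$ relative to $f_1$.

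The main obstacle is twofold: the case $p=2$, where there is no lemma analogous to Lemma~\ref{lemm:caseB-normal-bundle}, and the multiple fibers of $h_1$, where triviality on $T$ does not by itself give triviality on $nT$. For the multiple fibers, my first attempt would be to use that $X_1\to B_1$ is a conic bundle and $C_1$ meets $F$ in degree $\le 2$. This should force $n\le2$ and make the relevant obstruction groups $p$-torsion or $2$-torsion, so that it is absorbed into the factor $m$.
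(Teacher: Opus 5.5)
\textbf{Verdict: there is a genuine gap.} The plan cannot work for $p=2$, and the descent step is left unresolved.

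\textbf{The target for $p=2$ is wrong.} With $m=4$ and $p=2$ you get $pm=8$, not $12$. Worse, $4\phi^*K_X\sim0$ is false in general. Example~\ref{ex:caseB-char2-order3} has $p=2$ and $\ord(K_X)=3$, so $4\phi^*K_X\sim0$ would give $8K_X\sim0$ and hence $K_X\sim0$. In that example the other subcases would give $2K_X\sim0$ or $4K_X\sim0$, so you are necessarily in Case~(a) with $C_1'=C_1$. There $\phi^*K_X|_{C_1}\sim N=\mathcal O_{C_1}(C_1)$, and the order of $N$ is divisible by $3$. Therefore no rerun of the foliation argument of Lemma~\ref{lemm:caseB-normal-bundle} can give $N^{\otimes2}$ or $N^{\otimes4}$ trivial.

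The missing idea is to compare adjunction on $X_1$ with adjunction on $X$. By the proof of Lemma~\ref{lemm:C1^2}, the image $C_0=\phi(C_1)$ is regular of arithmetic genus one, so $(K_X+C_0)|_{C_0}\sim0$. Write $\phi^*C_0=\alpha C_1$. Since $\phi$ has height one, $\alpha$ divides $p$, so $\alpha\in\{1,2\}$. Pulling back gives
$0\sim\phi^*(K_X+C_0)|_{C_1}=(K_{X_1}+(2+\alpha)C_1)|_{C_1}\sim(\alpha+1)N$,
hence $6\phi^*K_X|_{C_1}\sim0$. The factor $3$ is genuinely needed.

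\textbf{The descent uses the wrong fibration.} You try to descend along $h_1$, and you leave the resulting problems open: multiple fibers, and the possibility $p_a(D_1)=1$. None of this is necessary. Since $X_1$ is regular (Proposition~\ref{prop:caseB-regularity}), we have $\mu=\mathrm{id}$ and $f_1=g\circ\epsilon$. Lemma~\ref{lemm:descent-div}(2) then gives $m\phi^*K_X\simeq f_1^*M$ for some $M\in\Pic(B_1)$. Restricting to $C_1$ gives $\sigma^*M\sim m\phi^*K_X|_{C_1}$.
\begin{itemize}
\item In Case~(a), $\sigma$ is an isomorphism, so triviality of this single restriction forces $M\sim0$.
\item In Case~(b), $\deg\sigma=2$, and Lemma~\ref{lemma-finite-Cartier}(1) gives $2M\sim0$.
\end{itemize}
So only the restriction to the one curve $C_1$ matters, and the fibers of $h_1$ never enter.

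\textbf{What this yields.} For $p=3$, your computation $2\phi^*K_X|_{C_1}\sim0$ from Lemma~\ref{lemm:caseB-normal-bundle} then finishes the proof of $6K_X\sim0$. For $p=2$ you get:
\begin{itemize}
\item $2K_X\sim0$ in Case~(a) with $C_1'\neq C_1$, since then $C_1\cap C_1'=\emptyset$;
\item $12K_X\sim0$ in Case~(a) with $C_1'=C_1$, via the relation $(\alpha+1)N\sim0$;
\item $4K_X\sim0$ in Case~(b).
\end{itemize}
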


\begin{proof}
	Since $X_1$ is regular, we have $\mu=\mathrm{id}$, so $f_1=g\circ\epsilon$. We have the following commutative diagram
	\begin{equation*}
		\begin{tikzcd}[row sep=normal]
			X & X_1 \ar[l,"\phi"']\ar[dr,"f_1"] \\[2ex]
			&C_1\ar[u,hook]\ar[r,"\sigma"] & B_1\rlap{\,.}
		\end{tikzcd}
	\end{equation*}
	
	We first consider the $p=3$ case. In this case, $\sigma$ is an isomorphism. Since $C_1$ is a regular curve of arithmetic genus one, we have $(K_{X_1}+C_1)|_{C_1}\sim K_{C_1}\sim0$. Therefore, we have 
	\[
	\phi^*K_X|_{C_1} =(K_{X_1}+2C_1)|_{C_1} \sim C_1|_{C_1}.
	\]
	By Lemma~\ref{lemm:caseB-normal-bundle}, we have $2C_1|_{C_1}\sim0$, hence $2\phi^*K_X|_{C_1}\sim0$.
	By Lemma~\ref{lemm:descent-div}\,\textup{(2)}, there is a line bundle $M$ on $B_1$ such that $2\phi^*K_X\simeq f_1^*M$.
	Restricting to $C_1$ gives $\sigma^*M\sim0$, so $M\sim0$ because $\sigma$ is an isomorphism, and hence $2\phi^*K_X\sim0$.
	By Lemma~\ref{lemma-finite-Cartier}(2), we obtain $6K_X\sim0$.
	
    \smallskip
	Next we consider the $p=2$ case. 
	In Case~(a), we have $\phi^*K_X=K_{X_1}+C_1+C_1'$ and $\sigma$ is an isomorphism. 
	If $C_1\neq C'_1$, we obtain $C_1\cap C'_1=\emptyset$ by $C_1\cdot C'_1=0$. Since $C_1$ is a regular curve of arithmetic genus one, it follows that
	\[
	\phi^*K_X|_{C_1}
	=(K_{X_1}+C_1+C_1')|_{C_1}=K_{C_1}\sim0.
	\]
	As above, Lemma~\ref{lemm:descent-div}\,\textup{(2)} gives $\phi^*K_X\simeq f_1^*M$ for a line bundle $M$ on $B_1$, and $\sigma^*M\sim0$ forces $M\sim0$.
	Hence $\phi^*K_X\sim0$, and Lemma~\ref{lemma-finite-Cartier}(2) gives $2K_X\sim0$.
	
	Next we consider the case $C_1= C'_1$. 
	We have the following commutative diagram:
	\begin{equation*}
		\begin{tikzcd}[row sep=normal]
			X & X_1 \ar[l,"\phi"'] \\[2ex]
			C_0\ar[u,hook]&C_1\ar[u,hook]\ar[l,"\psi"]
		\end{tikzcd}
	\end{equation*}	
	We have shown that $C_1$ and its image $C_0$ in $X$ are regular curves of arithmetic genus one. We have
	$(K_{X_1}+C_1)|_{C_1}=K_{C_1}\sim0$ and $(K_X+C_0)|_{C_0}=K_{C_0}\sim0.$
	On the other hand, we have $\phi^*C_0=\alpha C_1$ with $\alpha\in\{1,2\}$. 
	Therefore, it follows that \[
		\phi^*(K_X+C_0)|_{C_1}=(K_{X_1}+(\alpha+2)C_1)|_{C_1}=\psi^*((K_X+C_0)|_{C_0})\sim 0. 
    \]
	Thus $(\alpha+1)C_1|_{C_1}\sim0$, and hence
	$6\phi^*K_X|_{C_1}\sim0$. 
	As above, writing $6\phi^*K_X\simeq f_1^*M$ by Lemma~\ref{lemm:descent-div}\,\textup{(2)}, we get $\sigma^*M\sim0$, hence $M\sim0$ and $6\phi^*K_X\sim0$.
	By Lemma~\ref{lemma-finite-Cartier}(2), we obtain $12K_X\sim0$.
	
	In Case~(b), we have $\phi^*K_X=K_{X_1}+C_1$ and $\sigma$ is a morphism of degree two. Since $C_1$ is a regular curve of genus one, we obtain $\phi^*K_X|_{C_1}=(K_{X_1}+C_1)|_{C_1}=K_{C_1}\sim0$. 
	Writing $\phi^*K_X\simeq f_1^*M$ by Lemma~\ref{lemm:descent-div}\,\textup{(2)}, we get $\sigma^*M\sim0$, so $2M\sim0$ by Lemma~\ref{lemma-finite-Cartier}\,\textup{(1)} since $\sigma$ has degree two.
	Hence $2\phi^*K_X\sim0$.
	Therefore, we obtain $4K_X\sim0$ by Lemma~\ref{lemma-finite-Cartier}(2).

    We conclude that $12K_X\sim0$ if $p=2$.
\end{proof}

\begin{lemma}\label{lemm:caseB-second-fibration}%
  We have $\dim\Alb_{X/k}=1$.
  Moreover, there is a fibration $h\colon X\to D$ onto a regular projective curve whose generic fiber $\Gamma$ satisfies $p_a(\Gamma)=1$, and $\alb_{X/k}$ does not factor through $h$.
\end{lemma}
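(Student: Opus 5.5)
The plan is to build $h$ from the semi-ample curve $C_1$ on $X_1$, obtaining first the second fibration $h_1\colon X_1\to D_1$ from Proposition~\ref{prop:caseB-regularity}, and then to descend it to $X$ along $\phi$.

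\textbf{The fibration on $X$.} By Lemma~\ref{lemm:C1^2}\,(ii) and the proof of Proposition~\ref{prop:caseB-regularity}, $C_1$ is semi-ample with $C_1^2=0$, and the fibers of $h_1$ have irreducible support numerically proportional to $C_1$. Since $\phi^*C_0=\alpha C_1$ and $C_0^2=0$, I would show that $C_0$ is semi-ample on $X$, using the argument in Lemma~\ref{lemm:C1^2}: $C_0$ is regular of arithmetic genus one, $C_0|_{C_0}\simQ0$, and Proposition~\ref{prop:tanaka-abun-3.1} applies. Let $h\colon X\to D$ be the Stein factorization of the morphism defined by a multiple of $C_0$. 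Then $D$ is a normal, hence regular, projective curve, $h$ is a fibration, and $C_0$ is supported on a fiber. The composite $X_1\to X\to D$ contracts exactly the curves numerically proportional to $C_1$, so it factors through $h_1$ up to a finite purely inseparable map $D_1\to D$.

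\textbf{Genus of the generic fiber.} Let $\Gamma$ be the generic fiber of $h$; it is a regular curve over $K(D)$. Since $K_X\equiv0$ and $\Gamma^2=0$, adjunction gives $\deg K_\Gamma=0$, so $p_a(\Gamma)=1$. Here one uses $H^0(\Gamma,\mathcal O_\Gamma)=K(D)$, which holds because $h$ is a fibration. Alternatively, one can invoke the argument in Lemma~\ref{lemm:sur-second-fibration}: every fiber $F_b$ satisfies $\chi=0$ by flatness and $K_X\cdot F_b=0$, and the genus formula gives $p_a=1$.

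\textbf{Non-factorization and $\dim\Alb_{X/k}=1$.} If $\alb_{X/k}$ factored through $h$, then $C_0$, lying in a fiber of $h$, would be contracted by $\alb_{X/k}$. But $C_0$ dominates $B_0$, since $C_1$ is $f_1$-horizontal and $C_1\to B_1$ is surjective. Thus $\alb_{X/k}$ does not factor through $h$. We already know that $\dim\alb_{X/k}(X)=1$. To get $\dim\Alb_{X/k}=1$, I would apply Lemma~\ref{lemm:sur-second-fibration}, or rather its argument, which needs $A$ to be an elliptic curve. To avoid circularity, I would first argue directly: $\Alb_{X/k}$ is generated by the image of $X$, and the Stein factorization $X\to B_0\to\Alb_{X/k}$ shows that $\Alb_{X/k}$ is a quotient of $\Alb_{B_0}$. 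The curve $C_0$ is regular of genus one and dominates $B_0$, so $B_0$ has geometric genus at most one. Hence the Jacobian of $B_0$, and therefore $\Alb_{X/k}$, has dimension at most one. Since $\alb_{X/k}$ is nonconstant, $\dim\Alb_{X/k}=1$. Here I would use the abelian part of $\Pic^0_{B_0}$ together with functoriality of Albanese.

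\textbf{Main obstacle.} I expect the hardest step to be the descent of semi-ampleness, specifically checking $\mathcal O_X(nC_0)|_{C_0}\cong\mathcal O_{C_0}$ in Case~(b) and in the case $C_1'\ne C_1$. If that fails, I would instead descend $h_1$ through $\phi$ using Frobenius: the morphism $X^{(1/p)}\to X_1\to D_1$ yields a morphism from $X$ to a Frobenius twist of $D_1$. This gives $h$ directly, and its fibers are set-theoretically images of the fibers of $h_1$.
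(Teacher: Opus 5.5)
Your proposal is correct and follows the paper's proof. In both, $h$ is the fibration induced by the semi-ample curve $C_0$ from the proof of Lemma~\ref{lemm:C1^2}, $p_a(\Gamma)=1$ comes from adjunction, and non-factorization holds because the $h$-vertical curve $C_0$ has nonconstant Albanese image. The obstacle you anticipate does not arise: once $C_0^2=0$, the adjunction argument on $X$ works the same way in Cases~(a) and~(b), so the Frobenius fallback is unnecessary. The only minor deviation is in bounding $\dim\Alb_{X/k}$: you work on $X$ via $B_0$ and the genus-one curve $C_0$, while the paper identifies $\Alb_{X_1/k_1}$ with $\Alb_{B_1/k_1}$ using $C_1$ and transfers the dimension with Lemma~\ref{prop:Alb-eq-dim}. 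Both routes work.
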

\begin{proof}
  Set $A:=\Alb_{X/k}$ and $\alb:=\alb_{X/k}$.
  The factorization of the Albanese morphism through the fibration $f_1\colon X_1\to B_1$ identifies $\Alb_{X_1/k_1}$ with $\Alb_{B_1/k_1}$ by the universal property. The normalization of $(B_1\otimes_{k_1}\bar k)_{\mathrm{red}}$ is an elliptic curve, since it is dominated by the elliptic curve $(C_1\otimes_{k_1}\bar k)_{\mathrm{red}}$ and maps nonconstantly to an abelian variety. Hence $\dim\Alb_{B_1/k_1}=1$ by Lemma~\ref{prop:Alb-eq-dim}\,\textup{(3)}. 
  By Lemma~\ref{prop:Alb-eq-dim}, we see that $\dim A=1$.

  By the proof of Lemma~\ref{lemm:C1^2}, the curve $C_0$ satisfies $C_0^2=0$ and is semi-ample.
  Let $h\colon X\to D$ be the fibration induced by $C_0$.
  Since $C_0\neq0$ and $C_0^2=0$, the base $D$ is a curve, and it is regular because $X$ is normal.
  The generic fiber $\Gamma$ of $h$ is a regular curve with $H^0(\Gamma,\mathcal O_\Gamma)=K(D)$, and $p_a(\Gamma)=1$ because $K_X\equiv0$ and the adjunction formula give $\deg K_\Gamma=0$.
  Finally, $\alb$ does not factor through $h$, since the $h$-vertical curve $C_0$ has nonconstant image under $\alb$.
\end{proof}

\begin{proof}[Proof of Proposition~\ref{prop:a_X-surjective}]
  Assertion~(1) follows from \eqref{eq:adjunction-F}, while assertion~(2) follows from Proposition~\ref{prop:caseB-torsion}.
  By Lemma~\ref{lemm:caseB-second-fibration}, we have $\dim\Alb_{X/k}=1$.
  Since $k$ is assumed to be separably closed, we may choose an origin on $\Alb_{X/k}$, so that it is an elliptic curve.
  Assertions~(3) and~(4) now follow from Lemmas~\ref{lemm:caseB-second-fibration} and~\ref{lemm:sur-second-fibration}.
\end{proof}

\section{Torsion bounds with trivial Albanese variety}\label{sec:proof}
We study numerically trivial line bundles on regular, geometrically non-normal surfaces with trivial Albanese variety.
When $-K_X$ is nef, we prove $pL\sim0$ for $p\geq3$ and $4L\sim0$ for $p=2$.

\subsection{Auxiliary results}\label{sec:caseC-auxiliary}
For a numerically trivial line bundle $L$ on a regular del Pezzo surface $Z$, Proposition~\ref{prop-ell_F-del-Pezzo} gives $3L\sim0$ in characteristic $3$ and $4L\sim0$ in characteristic $2$.
The following two lemmas strengthen these bounds under additional hypotheses.

\begin{lemma}\label{lemm:del-Pezzo-p3}
  Let $Z$ be a regular projective del Pezzo surface over a field $k$ of characteristic $3$, with $H^0(Z,\mathcal O_Z) = k$.
  Assume that one of the following holds:
  \begin{itemize}
    \item[\textup{(a)}] $Z$ is geometrically integral
    \item[\textup{(b)}] $Z$ admits a non-reduced effective Cartier divisor $D\equiv-K_Z$.
  \end{itemize}
  Then every numerically trivial line bundle on $Z$ is trivial.
\end{lemma}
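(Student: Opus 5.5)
The plan is to show that a numerically trivial $L$ on $Z$ is effective, and hence trivial. By Riemann--Roch, $\chi(Z,L)=\chi(Z,\mathcal O_Z)+\tfrac12 L\cdot(L-K_Z)=\chi(Z,\mathcal O_Z)$. Serre duality gives $h^2(Z,L)=h^0(Z,K_Z-L)=0$ and $h^2(Z,\mathcal O_Z)=h^0(Z,K_Z)=0$, because $-K_Z$ is ample and $K_Z-L$ is anti-ample. Hence
\[
  h^0(Z,L)\;\ge\;\chi(Z,L)=1-h^1(Z,\mathcal O_Z)+h^1(Z,L).
\]
If $h^0(Z,L)>0$, a nonzero section of a numerically trivial line bundle on the integral projective surface $Z$ is nowhere vanishing, so $L\sim0$. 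The proof therefore reduces to showing $h^1(Z,L)\ge h^1(Z,\mathcal O_Z)$, and in particular $h^1(Z,\mathcal O_Z)=0$ would suffice. We already know $3L\sim0$ by Proposition~\ref{prop-ell_F-del-Pezzo}, so $L$ is $3$-torsion. As in the proof of Proposition~\ref{prop:geo-normal}, we may replace $k$ by its separable closure and assume $k$ is separably closed and $F$-finite.

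In case (a), I would prove $h^1(Z,\mathcal O_Z)=0$ by passing to $\bar k$. Set $Y:=(Z_{\bar k})^\nu$, with $Z_{\bar k}$ integral by hypothesis, and let $\phi\colon Y\to Z_{\bar k}$ be the normalization. Theorem~\ref{th:geonon-normal} gives $K_Y+2\mathfrak C\sim\phi^*K_{Z}$, so $-K_Y$ is big. Running Construction~\ref{cons:MMP} on the minimal resolution of $Y$ ends in a Mori fibre space, with base a point or a curve. Lemma~\ref{lemm:Ejiri-base-curve} applied to $-K_Y$ (big) forces the base curve to have negative degree canonical divisor. Hence the resolution of $Y$ is rational, and $H^1(Y,\mathcal O_Y)=0$ because rational singularities are preserved (Lemma~\ref{lemm:height-n}). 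Next I would use the conductor sequence $0\to\mathcal O_{Z_{\bar k}}\to\phi_*\mathcal O_Y\to\mathcal Q\to0$. Together with $H^0(Y,\mathcal O_Y)=\bar k$, it gives
\[
  H^1(Z_{\bar k},\mathcal O)\hookleftarrow H^0(\mathcal Q)/\bigl(\text{image of }H^0(\mathcal O_Y)\bigr).
\]
I would control this by the Picard map: the kernel of $\operatorname{Pic}(Z_{\bar k})\to\operatorname{Pic}(Y)$ is the unit group $H^0(\mathcal Q^\times)$ modulo units, which is a unipotent group. This kernel is where $\nu^*L_{\bar k}$ trivial but $L_{\bar k}$ possibly nontrivial would live. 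Since $Y$ is rational, $\phi^*L_{\bar k}\sim0$, so $L_{\bar k}$ lies in this unipotent kernel. The aim would then be to show that a $3$-torsion element there, together with $3L\sim0$ on $Z$, forces $L_{\bar k}$ trivial via injectivity of $\operatorname{Pic}(Z)\to\operatorname{Pic}(Z_{\bar k})$ (Stacks 0CC5). Alternatively, if geometric integrality plus ampleness give geometric normality of $Z$, then $Y=Z_{\bar k}$ is a canonical del Pezzo surface, $h^1=0$ directly, and the Riemann--Roch argument finishes.

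In case (b), write $D=mE+R$ with $m\ge2$, $E$ a prime divisor and $E\not\subset\operatorname{Supp}R$. I would use the sequence $0\to L(-D)\to L\to L|_D\to0$. Since $\mathcal O(-D)\equiv K_Z$, Serre duality gives $h^2(L(-D))=h^0(L^{-1}(D)\otimes K_Z)=h^0(L^{-1}\otimes M)$, where $M:=\mathcal O_Z(K_Z+D)\equiv0$. The plan is to show that $h^0(L|_D)\ge1$ and that the obstruction group in $H^1(L(-D))$ is controlled. The non-reducedness should be used through the filtration $\mathcal O_D\supset\mathcal O(-E)|_{D}\supset\cdots$, whose graded pieces are $\mathcal O_E(-jE)$. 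Since $E\cdot(-K_Z)>0$ and $mE\le D$, we get $-E^2$ small, in fact $E^2\ge0$ up to the correction from $R$. The expectation is that $E\cong\mathbb P^1$ over $k_E$ (or a conic), so the graded pieces have vanishing $H^1$, giving $h^1(\mathcal O_D)=0$ and lifting a section of $L|_D\simeq\mathcal O_D$. A cleaner alternative would be to compare with $2E\le D$ and the adjunction bound $(K_Z+E)\cdot E<0$, which shows $E$ is a smooth rational curve, then argue that $L|_{E}$ is trivial and deform.

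The main obstacle in both cases is the same: the vanishing $h^1(Z,\mathcal O_Z)=0$, or at least $h^1(L)\ge h^1(\mathcal O_Z)$. This genuinely fails for some regular del Pezzo surfaces in characteristic $3$, which is why the hypotheses are needed. I would first try to extract the vanishing from hypothesis (a) via geometric normality, and from hypothesis (b) via the filtration of the non-reduced member $D$.
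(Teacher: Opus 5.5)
Your reduction is the right one, and it is exactly how the paper finishes. Serre duality and Riemann--Roch give $h^0(Z,L)\ge 1-h^1(Z,\mathcal O_Z)$, so everything comes down to $H^1(Z,\mathcal O_Z)=0$. But the proposal never proves this vanishing in either case; it stops at ``the aim would be'' and ``the expectation is''.

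In case (a), the route you sketch cannot close the gap. Even granting that $\ker(\Pic(Z_{\bar k})\to\Pic(Y))$ is unipotent, every element of a unipotent group in characteristic $3$ has $3$-power order. So knowing that $L_{\bar k}$ is $3$-torsion and dies on $Y$ imposes no restriction at all: that kernel is precisely where a nontrivial $3$-torsion $L_{\bar k}$ would live. Similarly, $H^1(Y,\mathcal O_Y)=0$ only yields $H^1(Z_{\bar k},\mathcal O)\simeq H^0(\mathcal Q)$, and its vanishing needs information on the conductor that you do not supply. Your fallback, ``geometric integrality plus ampleness gives geometric normality'', is asserted without argument, and the paper neither proves nor uses it. The missing input is a genuine theorem: a geometrically integral regular del Pezzo surface satisfies $H^1(Z,\mathcal O_Z)=0$ \cite[Proposition~4.10]{Bernasconi-Martin-2024-Bounding}. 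The paper simply invokes it.

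In case (b), the filtration argument is circular. To lift a section of $L|_D$ you must control $H^1(Z,L(-D))$. Since $\omega_Z\otimes L^{-1}(D)$ is numerically trivial, Serre duality identifies this with $H^1$ of a numerically trivial line bundle, which is the same obstacle you started with. You also never establish $L|_D\simeq\mathcal O_D$.

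The idea you are missing is that non-reducedness of $D$ forces $Z$ to be geometrically normal, which reduces (b) to (a). Suppose $Z$ were geometrically non-normal, and let $\psi\colon W:=(Z_{\bar k})^\nu_{\mathrm{red}}\to Z$. By \cite[Theorem~4.1]{Patakfalvi-Waldron-2022}, $W\simeq\mathbb P(1,1,d)$ with $d\ge1$, $\Pic(W)=\mathbb Z\,[dH]$ and $\psi^*(-K_Z)\sim dH$, where $H$ generates $\operatorname{Cl}(W)$. Write $D=2A+R$ with $A$ a prime component of multiplicity at least two and $R\ge0$. Then $\psi^*A\sim a\,dH$ with $a\ge1$ and $\psi^*R\sim r\,dH$ with $r\ge0$. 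Hence $-K_Z\equiv D$ gives $1=2a+r\ge2$, a contradiction.

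So non-reducedness is used only through the indivisibility of $\psi^*(-K_Z)$ in $\Pic(W)$, not through the cohomology of $\mathcal O_D$. Once $Z$ is geometrically normal, it is geometrically integral, and the Bernasconi--Martin vanishing finishes the proof as in (a).
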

\begin{proof}
  Under assumption~(b), we first show that $Z$ is geometrically normal.
  Let $A$ be a prime component of $D$ with multiplicity at least two, and put $R:=D-2A\geq0$.
  Suppose the contrary that $Z$ is geometrically non-normal.
  Set
  \[
    W := (Z\times_k\overline{k})_{\mathrm{red}}^\nu,
    \qquad \psi\colon W\to Z,
  \]
  where $\bar k$ is the algebraic closure of $k$.
  By \cite[Theorem~4.1]{Patakfalvi-Waldron-2022}, there is an effective integral divisor $C_W$ such that
  \[
    K_W + 2C_W \sim \psi^*K_Z,
  \]
  and $(W,C_W)$ is either $(\mathbb P^2,L)$ or $(S_d,F)$ for $d\geq2$.
  Here $L$ is a line in $\mathbb P^2$.
  In the second case, let $\sigma\colon\mathbb F_d\to S_d\simeq\mathbb P(1,1,d)$ contract the negative section $E$, and set $F:=\sigma(\widetilde F)$ for a fiber $\widetilde F$ of $\mathbb F_d\to\mathbb P^1$.
  Note that $\operatorname{Cl}(S_d)=\mathbb Z[F]$.
  Taking $d=1$ in the first case, we may therefore write $W=\mathbb P(1,1,d)$ for $d\geq1$ and let $H$ denote the generator represented by $L$ or $F$, respectively.
  Then
  \[
    K_W \sim -(d+2)H,\qquad C_W \sim H,\qquad
    \Pic(W) = \mathbb Z[dH],\qquad \psi^*(-K_Z) \sim dH.
  \]
  Since $\psi^*A$ is a nonzero effective Cartier divisor and $\psi^*R$ is an effective Cartier divisor, we have
  \[
    \psi^*A \sim adH,\qquad \psi^*R \sim rdH
  \]
  for integers $a\geq1$ and $r\geq0$.
  Since $-K_Z\equiv D=2A+R$, comparison of numerical classes after pullback shows that $1=2a+r\geq2$, a contradiction.

  Let $L$ be a numerically trivial line bundle on $Z$.
  In both cases of the hypothesis, $Z$ is geometrically integral.
  Hence $H^1(Z,\mathcal O_Z)=0$ by \cite[Proposition~4.10]{Bernasconi-Martin-2024-Bounding}.
  Serre duality and ampleness of $-K_Z$ give
  \[
    H^2(Z,L) = H^0(Z,\omega_Z\otimes L^{-1})^\vee = 0,
    \qquad H^2(Z,\mathcal O_Z)=0.
  \]
  Riemann--Roch yields $\chi(Z,L)=\chi(Z,\mathcal O_Z)=1$, so $H^0(Z,L)\neq0$.
  Thus $L\simeq\mathcal O_Z$.
\end{proof}

\begin{lemma}\label{lemm:del-Pezzo-p2}
  Let $Z$ be a regular del Pezzo surface over a field $k$ of characteristic $2$, with $H^0(Z,\mathcal O_Z)=k$.
  If there exists an effective Cartier divisor $A>0$ with $A\equiv-K_Z$, then every numerically trivial line bundle $L$ on $Z$ satisfies $2L \sim 0$.
\end{lemma}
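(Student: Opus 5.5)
The plan is to follow the pattern of Lemma~\ref{lemm:del-Pezzo-p3}. First we reduce to the geometrically non-integral case. Then we use the divisor $A$ to show that one Frobenius base change already normalizes $Z$ geometrically. Finally we kill $\phi^*L$ upstairs and descend with Lemma~\ref{lemma-finite-Cartier}\textup{(2)}.

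\emph{Step 1 (geometrically integral case).} If $Z$ is geometrically integral, the argument of Lemma~\ref{lemm:del-Pezzo-p3} applies verbatim. We have $H^1(Z,\mathcal O_Z)=0$ by \cite[Proposition~4.10]{Bernasconi-Martin-2024-Bounding}, and $H^2(Z,L)=H^2(Z,\mathcal O_Z)=0$ because $-K_Z$ is ample. Riemann--Roch then gives $h^0(Z,L)\ge 1$, so $L\simeq\mathcal O_Z$. Hence we may assume that $Z$ is not geometrically integral. We may also assume that $k$ is separably closed and $F$-finite, since $\Pic$ injects under such base changes.

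\emph{Step 2 (the Frobenius base change).} Set $k_1=k^{1/2}$, $Z_1:=(Z\times_kk_1)_{\mathrm{red}}^\nu$, and let $\phi\colon Z_1\to Z$ be the natural map. It is purely inseparable of height one. Theorem~\ref{th:geonon-normal} with $p=2$ gives
\[
  K_{Z_1}+\mathfrak C\sim\phi^*K_Z,\qquad \mathfrak C>0 .
\]
Hence $-K_{Z_1}\sim\phi^*(-K_Z)+\mathfrak C$ is big. We will show that $Z_1$ is geometrically normal over $k_1$, i.e.\ $\ell_F(Z/k)=1$.

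For this we pass to $W=(Z_{\bar k})_{\mathrm{red}}^\nu$ and use the explicit classification of such $W$ for geometrically non-normal del Pezzo surfaces in characteristic $2$: $W$ is a projective toric surface of Picard rank one or a $\mathbb P^1$-bundle, with $K_W+(\text{effective})\sim\psi^*K_Z$ (\cite{Patakfalvi-Waldron-2022}, \cite{Bernasconi-Tanaka-22}). We compare $\psi^*A$ with $\psi^*(-K_Z)$ in $\operatorname{Cl}(W)$. If a second Frobenius step were needed, the conductor coefficients and the thickening exponent $\varepsilon$ from Lemma~\ref{lemm:intersection-field-extension} would force $\psi^*(-K_Z)$ to be divisible in $\Pic(W)$ by a larger integer. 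The existence of the effective Cartier divisor $A\equiv-K_Z$ should prevent this, by the same parity argument as the inequality $1=2a+r\ge2$ in Lemma~\ref{lemm:del-Pezzo-p3}.

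\emph{Step 3 (triviality upstairs and descent).} Once $Z_1$ is geometrically normal, $Z_1\times_{k_1}\bar k$ is a normal surface. Its minimal resolution is rational: $-K$ is big, and as in the proof of Proposition~\ref{prop:geo-normal-alb}, the resolution is not crepant or is dominated by a Mori fiber space over a genus-zero base. So $\phi^*L$ pulls back to a numerically trivial, hence trivial, line bundle on a smooth rational surface. Pushing forward along the resolution, and using that $\Pic$ is injective under field extension (\cite[Tag~0CC5]{StacksProject}), gives $\phi^*L\sim0$. Since $\phi$ has height one, Lemma~\ref{lemma-finite-Cartier}\textup{(2)} gives $2L\sim0$.

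The main obstacle is Step~2: showing from the existence of $A$ that $\ell_F(Z/k)=1$ (equivalently, that the thickening is as small as possible). If the classification comparison proves awkward, the fallback is to restrict to $A$. By adjunction, $(K_Z+A)|_A\equiv0$, so $A$ has arithmetic genus one. One would then use the exact sequence $0\to L(-A)\to L\to L|_A\to0$ together with $4L\sim0$ from Proposition~\ref{prop-ell_F-del-Pezzo} to show directly that $L^{\otimes2}|_A$, and hence $L^{\otimes2}$, is trivial.
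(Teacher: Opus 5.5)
Your Steps~1 and~3 are fine. Step~3 is exactly the paper's remark that $\ell_F(Z/k)\le1$ already gives $2L\sim0$, and Step~1 is not needed. The gap is Step~2, which carries the entire content of the lemma.

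You need the existence of $A$ to exclude $\ell_F(Z/k)=2$, but the comparison you propose produces no contradiction. When $\ell_F(Z/k)=2$, one has $W\simeq\mathbb P(1,1,d)$ with $d\in\{1,2,4\}$ and $\psi^*(-K_Z)\sim dH$, where $dH$ generates $\Pic(W)$. So $\psi^*(-K_Z)$ is primitive, and no extra divisibility ever appears. Write $A=\sum a_iA_i$ with $A_i$ prime (hence Cartier) and $\psi^*A_i\sim n_idH$, $n_i\ge1$. Then $\sum a_in_i=1$, which only says that $A$ is integral. In Lemma~\ref{lemm:del-Pezzo-p3} the contradiction $1=2a+r\ge2$ came entirely from a component of multiplicity at least two, and an arbitrary effective $A$ has none. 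The paper does not rule out $\ell_F(Z/k)=2$; it proves $2L\sim0$ in that case directly.

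What you are missing is the dichotomy the paper uses there: whether $H^1(Z,M)\neq0$ for some Cartier divisor $M\equiv2K_Z$.
\begin{itemize}
  \item If so, take $q=2^e$ maximal with $H^1(Z,qM)\ne0$. Lift a nonzero class to a nontrivial $\alpha_{\mathcal O_Z(qM)}$-torsor $\pi\colon T\to Z$ with $K_T\sim\pi^*(K_Z+qM)$. This torsor becomes trivial on $W$, since $H^1(W,\mathcal O_W(-2qdH))=0$, as one sees by pulling back to $\mathbb P^2$. Comparing canonical classes on $W$ then forces $q=d=1$ and $T\times_{k_T}\bar k\simeq\mathbb P^2$. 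Hence $\pi^*L\sim0$ and $2L\sim0$.
  \item Otherwise, $A$ is used only to get $\chi(\omega_Z^{-1}\otimes L)=h^0(\mathcal O_Z(A))>0$. This yields $A_L\in|\omega_Z^{-1}\otimes L|$, which is integral by the count above, and then $L|_{A_L}\simeq\omega_{A_L}\simeq\mathcal O_{A_L}$. The identity $(1+u)^2=1$ on the square-zero thickening gives $L^{\otimes2}|_{2A_L}\simeq\mathcal O_{2A_L}$. The kernel of $L^{\otimes2}\to\mathcal O_{2A_L}$ is $\mathcal O_Z(2K_Z)$, whose $H^1$ vanishes in this case, so the section lifts.
\end{itemize}

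Your fallback points toward the second case but lacks its ingredients. For a fixed $A$, adjunction relates $\omega_A$ to $\mathcal O_Z(K_Z+A)|_A$ rather than to $L|_A$, so you must use a member of $|\omega_Z^{-1}\otimes L|$ instead. Lifting from $A$ via $0\to L^{\otimes2}(-A)\to L^{\otimes2}\to L^{\otimes2}|_A\to0$ would need $H^1(Z,L^{\otimes2}(-A))=0$. Since $L^{\otimes2}(-A)\equiv K_Z$, this group is Serre dual to $H^1$ of a numerically trivial bundle, and it need not vanish. That failure is exactly what the torsor case handles, and the bound $4L\sim0$ does not substitute for it.
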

\begin{proof}
  Let $L$ be a numerically trivial line bundle on $Z$, and set
  \[
    \psi\colon W:=(Z\times_k\bar k)_{\mathrm{red}}^\nu\longrightarrow Z.
  \]
  Then $W$ is a rational surface and thus $\psi^*L\sim 0$.
  By \cite[Theorems~3.7]{Bernasconi-Tanaka-22}, we have $\ell_F(Z/k)\leq2$.
  If $\ell_F(Z/k)\leq1$, then $2 L \sim 0$ holds.
  We may therefore assume that $\ell_F(Z/k)=2$.
  By \cite[Theorem~3.5\textup{(4)}]{Bernasconi-Martin-2024-Bounding} and \cite[Theorem~4.6\textup{(3)}]{Tanaka-2024-Boundedness},
  \[
    W\simeq\mathbb P(1,1,d),\qquad d\in\{1,2,4\},\qquad
    \psi^*(-K_Z)\sim dH,
  \]
  where $H$ is the positive generator of the Weil divisor class group of $W$.

  We distinguish the following two cases:
  \begin{enumerate}[label=\textup{(\alph*)}]
    \item There exists a Cartier divisor $M\equiv2K_Z$ with $H^1(Z,M)\ne0$.
    \item We have $H^1(Z,M)=0$ for every Cartier divisor $M\equiv2K_Z$.
  \end{enumerate}

  \par\smallskip
  \noindent\textbf{Case~\textup{(a)}.}
  Fix a Cartier divisor $M$ as in \textup{(a)}.
  By Serre vanishing,
  \[
    H^1(Z,2^nM)
    \simeq H^1(Z,K_Z-2^nM)^\vee=0
    \quad\text{for } n\gg0.
  \]
  Since $H^1(Z,M) \ne 0$, we may choose $q=2^e$ with $e \geq 0$ maximal such that $H^1(Z,qM)\ne0$.
  Set $\mathscr L = \mathcal O_Z(qM)$.
  Following \cite[I, (1.5)]{ekedahl_canonical_1988}, define $\alpha_{\mathscr L}$ by the exact sequence of fppf sheaves
  \begin{equation}\label{eq:del-Pezzo-p2-alpha}
    0\longrightarrow\alpha_{\mathscr L}
      \longrightarrow\mathscr L
      \xrightarrow{F}\mathscr L^{\otimes2}
      \longrightarrow0,
  \end{equation}
  where $F$ is the relative Frobenius morphism.
  The group scheme $\alpha_{\mathscr L}$ is finite flat of degree two and locally isomorphic to $\alpha_2=\operatorname{Spec}(\mathcal O_Z[\epsilon]/(\epsilon^2))$.
  An $\alpha_{\mathscr L}$-torsor is a scheme with an $\alpha_{\mathscr L}$-action that is fppf locally isomorphic to the group acting on itself by translation.
  Such torsors are classified by $H^1_{\mathrm{fppf}}(Z,\alpha_{\mathscr L})$.
  The long exact cohomology sequence of \eqref{eq:del-Pezzo-p2-alpha} gives
  \[
    H^0(Z,\mathscr L^{\otimes2})
      \longrightarrow H^1_{\mathrm{fppf}}(Z,\alpha_{\mathscr L})
      \longrightarrow H^1(Z,\mathscr L)
      \xrightarrow{F} H^1(Z,\mathscr L^{\otimes2})=0.
  \]
  Thus a nonzero class in $H^1(Z,\mathscr L)$ lifts to a nontrivial $\alpha_{\mathscr L}$-torsor $\pi\colon T\to Z$.
  Over an affine open $U=\operatorname{Spec}R$ trivializing $\mathscr L$, this torsor has the form $T|_U=\operatorname{Spec}(R[t]/(t^2-b))$ for some $b\in R$; see \cite[I, p.~106]{ekedahl_canonical_1988}.
  Hence $\pi$ is finite flat and purely inseparable of degree two, and $T$ is Gorenstein because $Z$ is regular.
  As in \cite[I, Proposition~1.7]{ekedahl_canonical_1988}, $T$ is reduced and thus integral, and we have
  \begin{equation}\label{eq:del-Pezzo-p2-torsor}
    K_T\sim\pi^*(K_Z+qM).
  \end{equation}

  Put $\mathscr L_W:=\psi^*\mathscr L\simeq\mathcal O_W(-2qdH)$.
  Let $h\colon\mathbb P^2\to W$ be the finite morphism induced by the graded free extension
  \[
    \bar k[x,y,z]\hookrightarrow\bar k[x,y,t],\qquad
    z\longmapsto t^d,\qquad \deg z=d,\quad\deg x=\deg y=\deg t=1.
  \]
  Then $\mathcal O_W$ is a direct summand of $h_*\mathcal O_{\mathbb P^2}$ and $h^*\mathscr L_W\simeq\mathcal O_{\mathbb P^2}(-2qd)$.
  Since the inclusion $\mathcal O_W\hookrightarrow h_*\mathcal O_{\mathbb P^2}$ splits, we have
  \[
      H^1(W,\mathscr L_W) \hookrightarrow 
      H^1(W, \mathscr L_W \otimes h_*\mathcal O_{\mathbb P^2})
      \simeq
        H^1(W,h_*h^*\mathscr L_W)
        \simeq H^1(\mathbb P^2,h^*\mathscr L_W) = 0.
  \]
  Thus $H^1(W,\mathscr L_W) = 0$.
  Since also $H^0(W,\mathscr L_W^{\otimes2})=0$,
  the cohomology sequence of \eqref{eq:del-Pezzo-p2-alpha}, after base change to $W$, gives $H^1_{\mathrm{fppf}}(W,\alpha_{\mathscr L_W})=0$, so the pulled-back torsor is trivial.
  Hence $(T\times_ZW)_{\mathrm{red}}\simeq W$, and its finite map to $(T\times_k\bar k)_{\mathrm{red}}$ is birational.
  Since $W$ is normal, it is the normalization of $(T\times_k\bar k)_{\mathrm{red}}$.
  The resulting morphisms fit into the commutative diagram
  \[
    \begin{tikzcd}[column sep=large,row sep=large]
      (T\times_k\bar k)_{\mathrm{red}}^\nu \ar[r,"\sim"] \ar[d]
        & (T\times_ZW)_{\mathrm{red}} \ar[r,"\sim"] \ar[dl]
        & W=(Z\times_k\bar k)_{\mathrm{red}}^\nu \ar[d,"\psi"] \\
      T \ar[rr,"\pi"'] && Z\rlap{\,.}
    \end{tikzcd}
  \]
  Let $\nu\colon T^\nu\to T$ be the normalization and put $k_T=H^0(T^\nu,\mathcal O_{T^\nu})$.
  The morphism $T^\nu\to Z$ has height one, so $T^\nu$ is $\mathbb Q$-factorial.
  For $a\in k_T$, we have $a^2\in K(Z)$ algebraic over $k$, hence $a^2\in k$; thus $k_T/k$ is purely inseparable of height at most one.
  Identifying $W$ with the normalization of $(T^\nu\times_{k_T}\bar k)_{\mathrm{red}}$, let $\theta\colon W\to T^\nu$ denote the natural morphism.
  The conductor formula \cite[Lemma~2.14]{Patakfalvi-Waldron-2022} and the canonical base-change formula \cite[Theorem~3.16]{tanaka_invariants_2021} give
  \[
    K_{T^\nu}+\Gamma\sim\nu^*K_T,\qquad
    K_W+D\simQ\theta^*K_{T^\nu},\qquad \Gamma,D\ge0,
  \]
  where $D$ is nonzero if $T^\nu/k_T$ is not geometrically normal.
  Since $K_W\sim-(d+2)H$, equation~\eqref{eq:del-Pezzo-p2-torsor} gives
  \begin{equation}\label{eq:del-Pezzo-p2-correction}
    D+\theta^*\Gamma\simQ(2-2qd)H.
  \end{equation}
  Since $D$ and $\Gamma$ are effective, \eqref{eq:del-Pezzo-p2-correction} shows that $2-2qd\ge0$, so $q=d=1$.
  The effective divisor $D+\theta^*\Gamma$ is then numerically trivial, hence zero, so $D=\Gamma=0$.
  The conductor is zero in codimension one, and $T$ is $S_2$ because it is Gorenstein; hence $T=T^\nu$ is normal.
  The condition $D = 0$ implies that $T$ is geometrically normal over $k_T$, so $T\times_{k_T}\bar k\simeq W \simeq \mathbb P^2$.
  Therefore $\pi^*L$ is trivial for every numerically trivial $L$, and thus $2 L \sim 0$.

  \par\smallskip
  \noindent\textbf{Case~\textup{(b)}.}
  In this case $H^1(Z,M)=0$ for every Cartier divisor $M\equiv2K_Z$.
  By assumption, there exists an effective Cartier divisor $A$ with $A\equiv-K_Z$.
  By Serre duality, $H^1(Z,\mathcal O_Z(A))= H^1(Z, \mathcal O(K_Z - A))^\vee = 0$ since $K_Z - A \equiv 2K_Z$, and $H^2(Z,\mathcal O_Z(A))= H^0(Z,\mathcal O(K_Z - A))^\vee = 0$.
  Hence
  \[
    \chi(\mathcal O_Z(A))=h^0(Z,\mathcal O_Z(A))>0.
  \]
  For a numerically trivial line bundle $L$ on $Z$, Riemann--Roch and Serre duality give
  \[
    \chi(\omega_Z^{-1}\otimes L)=\chi(\mathcal O_Z(A))>0,
    \qquad H^2(Z,\omega_Z^{-1}\otimes L)=0.
  \]
  Hence $h^0(Z,\omega_Z^{-1}\otimes L)\geq\chi(\omega_Z^{-1}\otimes L)>0$.
  Choose $A_L\in|\omega_Z^{-1}\otimes L|$.
  Each prime component of $A_L$ is Cartier because $Z$ is regular, and its pullback is linearly equivalent to a positive integral multiple of $dH$.
  Since $\psi^*A_L\sim dH$, the divisor $A_L$ consists of a single prime component with multiplicity one, hence is nonzero and integral.
  Adjunction and Riemann--Roch give, respectively,
  \begin{equation}\label{eq:del-Pezzo-p2-adjunction}
    L|_{A_L}\simeq\omega_{A_L},\qquad
    \chi(\mathcal O_{A_L}) =-\frac12A_L\cdot(A_L+K_Z)=0.
  \end{equation}
  By Serre duality and $\chi(\mathcal O_{A_L}) = 0$, we have
  \[
    h^0(A_L,\omega_{A_L}) = h^1(A_L,\mathcal O_{A_L})
      = h^0(A_L,\mathcal O_{A_L}) \geq 1.
  \]
  Thus $\omega_{A_L}$ has degree zero and a nonzero section, so it is trivial; in particular $L|_{A_L} \cong \mathcal O_{A_L}$.
  Let $2A_L\subset Z$ denote the closed subscheme defined by the ideal $\mathcal O_Z(-2A_L)$.
  The closed immersion $A_L\hookrightarrow 2A_L$ induces the restriction homomorphism $\Pic(2A_L)\to\Pic(A_L)$, sending the class of a line bundle $\mathcal N$ to that of $\mathcal N|_{A_L}$.
  The ideal sheaf of $A_L$ in $2A_L$ is
  \[
    \mathcal I = \mathcal O_Z(-A_L)/\mathcal O_Z(-2A_L)
      \subset \mathcal O_{2A_L}.
  \]
  Locally, if $A_L$ is defined by $f\in R$, then $\mathcal I$ corresponds to the ideal $(f)/(f^2)$ in $R/(f^2)$, whose square is zero.
  Let $\mathcal N$ be a line bundle on $2A_L$ whose restriction to $A_L$ is trivial.
  Fix a trivialization $\mathcal N|_{A_L}\simeq\mathcal O_{A_L}$.
  Choose local trivializations of $\mathcal N$ extending this trivialization, so that its transition functions have the form $1+u_{ij}$ with $u_{ij}$ local sections of $\mathcal I$.
  Since $\mathcal I^2 = 0$ and the characteristic is two, these transition functions satisfy
  \[
    (1+u_{ij})^2 = 1+2u_{ij}+u_{ij}^2 = 1.
  \]
  Thus $\mathcal N^{\otimes2}\simeq\mathcal O_{2A_L}$.
  Taking $\mathcal N=L|_{2A_L}$, whose restriction to $A_L$ is trivial as obtained before, we obtain $L^{\otimes2}|_{2A_L}\simeq\mathcal O_{2A_L}$.
  Finally, the exact sequence
  \begin{equation}\label{eq:del-Pezzo-p2-double}
    0\longrightarrow\mathcal O_Z(2K_Z)
      \longrightarrow L^{\otimes2}
      \longrightarrow\mathcal O_{2A_L}\longrightarrow0
  \end{equation}
  and $H^1(Z,\mathcal O_Z(2K_Z))=0$ give a nonzero section of $L^{\otimes2}$ and hence $2 L \sim 0$.
\end{proof}

\begin{lemma}\label{lemm:genus-one-p3-forms}
  Let $k$ be a separably closed field of characteristic $3$, and let $C$ be a regular, geometrically integral projective curve over $k$ with $H^0(C,\mathcal O_C)=k$ and $p_a(C)=1$.
  If $C$ is not smooth over $k$, then
  \[
    C\simeq\Proj k[x,y,z]/(y^2z-x^3-az^3)
    \qquad\text{for some }a\in k\setminus k^3.
  \]
\end{lemma}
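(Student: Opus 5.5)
Since $p_a(C)=1$ and $H^0(C,\mathcal O_C)=k$, the plan is to embed $C$ as a plane cubic by Riemann--Roch, show it has a rational point, and then use characteristic $3$ to reduce the cubic to the displayed form.

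First we produce a $k$-rational point. The normalization of $C_{\bar k}$ has genus zero, because a non-smooth regular curve of arithmetic genus one becomes singular over $\bar k$ and so drops genus. In particular $C$ is not smooth at any point over $\bar k$ with this singularity, and the singular locus of $C_{\bar k}$ is a single point. It is a purely inseparable point: its image in $C$ is a closed point $P$ with $\kappa(P)/k$ purely inseparable. To get a $k$-rational point instead, consider the smooth locus $C^{\mathrm{sm}}$, which is a nonempty open subset of a geometrically integral curve. Since $k$ is separably closed, smooth $k$-varieties have dense $k$-points, so there is $O\in C(k)$ lying in the smooth locus.

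Next we embed $C$. The divisor $3O$ has degree $3$ on a genus-one curve with $\omega_C\simeq\mathcal O_C$. Here $\deg\omega_C=0$, and $h^0(\omega_C)=1$ forces $\omega_C$ to be trivial. So $\mathcal O_C(3O)$ is very ample with $h^0=3$, and it embeds $C$ as a plane cubic. Choosing the standard basis $1,x,y$ of $H^0(\mathcal O(nO))$ with pole orders $0,2,3$, we obtain the usual Weierstrass equation
\[
y^2+a_1xy+a_3y=x^3+a_2x^2+a_4x+a_6 .
\]
This is the same argument as for elliptic curves; it uses only that $O$ is a $k$-point. Since $p=3\neq2$, completing the square gives $y^2=x^3+a_2x^2+a_4x+a_6$.

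The main step is the normalization of the cubic $f(x)=x^3+a_2x^2+a_4x+a_6$. Regularity means $C$ is regular at its geometric singular point, which lies at some $(x_0,0)$ with $f(x_0)=f'(x_0)=0$ over $\bar k$. If $a_2\neq0$, then $f'=2a_2x+a_4$ is a nonzero linear polynomial, so the double root $x_0=-a_4/(2a_2)$ lies in $k$. The singular point would then be $k$-rational, and a $k$-rational point that is singular over $\bar k$ is already singular over $k$, contradicting regularity. Hence $a_2=0$, so $f'=a_4$ is constant. Non-smoothness forces $f$ to have a repeated root, which requires $a_4=0$. Thus $f=x^3+a_6$. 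Writing $a:=a_6$, regularity at the unique non-smooth point $x^3=-a$ requires $a\notin k^3$. Indeed, if $a=-b^3$ with $b\in k$, the point $(b,0)$ is a $k$-point singular on $C$; conversely, when $a\notin k^3$ the local equation $x^3+a$ is a uniformizer-type element, which is consistent with $C$ being regular. Homogenizing gives $y^2z=x^3+az^3$, and replacing $a$ by $-a$ (note $-a\notin k^3$ as well) yields the stated form $y^2z-x^3-az^3$.

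I expect the obstacle to be justifying the Weierstrass model over a non-perfect field, namely that the basis choice and the very ampleness of $\mathcal O(3O)$ hold for a regular but non-smooth genus-one curve. Riemann--Roch over $k$ together with $\omega_C\simeq\mathcal O_C$ suffice for this, since neither needs smoothness; only the $k$-point $O$ is required, and the first step supplies it.
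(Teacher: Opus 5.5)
Your argument is correct and is essentially the paper's proof: a smooth $k$-point from separable closedness, the Weierstrass cubic given by $|3O|$ (which the paper cites from Tanaka rather than proving), completing the square, and then excluding a non-triple multiple root of $f$ because it would give a $k$-rational singular point. You exclude it via the linearity of $f'=2a_2x+a_4$ in characteristic $3$ instead of via $\gcd(f,f')$, which is the same idea. Two small remarks: if you prove the embedding by hand, $C\simeq W$ for the Weierstrass cubic $W$ follows because $C\to W$ is finite birational with $p_a(C)=p_a(W)=1$, not from smoothness of the image as in the elliptic-curve argument; and no sign change is needed at the end, since $y^2z=x^3+az^3$ is already the displayed equation.
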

\begin{proof}
  Since $C$ is regular with $p_a(C)=1$, we have $\deg\omega_C=0$ and $h^0(C,\omega_C)=h^1(C,\mathcal O_C)=1$; a nonzero section of a degree-zero line bundle on an integral projective curve vanishes nowhere, so $\omega_C\simeq\mathcal O_C$.

  The smooth locus of $C$ is a nonempty open subscheme, and it has a $k$-rational point $P$ because $k$ is separably closed.
  By \cite[Corollary~11.12\textup{(1)}]{tanaka_invariants_2021}, the linear system $|3P|$ embeds $C$ into $\mathbb P^2_k$ as a cubic.
  In Weierstrass coordinates, its equation has the form
  \[
    y^2z+b_1xyz+b_3yz^2=x^3+b_2x^2z+b_4xz^2+b_6z^3,
    \qquad b_i\in k.
  \]
  Since $2$ is invertible in $k$, set $Y=y+\frac12(b_1x+b_3z)$.
  The identity
  \[
    y^2z+(b_1x+b_3z)yz
      =Y^2z-\frac14(b_1x+b_3z)^2z
  \]
  eliminates the terms linear in $Y$.
  Renaming $Y$ as $y$ and the resulting coefficients as $a_2,a_4,a_6$, we obtain
  \[
    C=\{y^2z=x^3+a_2x^2z+a_4xz^2+a_6z^3\} .
  \]
  The point $(0:1:0)$ is smooth, and a point of $C\cap\{z\neq0\}$ is singular over $\overline{k}$ if and only if $y=0$ and $x$ is a multiple root of $f(x):=x^3+a_2x^2+a_4x+a_6$.
  Such a root exists because $C$ is not smooth.
  If it were not a triple root, it would be the root of the greatest common divisor of $f$ and $f'$ in $k[x]$, hence would lie in $k$, and $C$ would have a singular $k$-rational point, contradicting regularity.
  Therefore $f=(x-x_0)^3=x^3-x_0^3$, so $a_2=a_4=0$ and $a:=a_6$; moreover $a\notin k^3$, since otherwise the singular point would be $k$-rational.
\end{proof}

\subsection{The case \texorpdfstring{$K_X\equiv0$}{K\_X numerically trivial}}\label{sec:caseC-numerically-trivial}
\begin{proposition}\label{prop:Alb-trivial}
  Let $X$ be a regular, geometrically non-normal projective surface over a field $k$ of characteristic $p>0$ such that $H^0(X,\mathcal O_X)=k$, $K_X\equiv0$ and $\dim\Alb_{X/k}=0$.
  Then every numerically trivial line bundle $L$ on $X$ satisfies
  \[
    pL\sim0\quad\text{if }p\geq3,\qquad
    4L\sim0\quad\text{if }p=2.
  \]
\end{proposition}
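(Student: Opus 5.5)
We reduce to $F$-finite separably closed $k$ and use Construction~\ref{cons:MMP} with $\ell=k_1:=k^{1/p}$. Put $X_1:=(X\times_k k_1)^\nu_{\mathrm{red}}$ and $\phi\colon X_1\to X$. By Theorem~\ref{th:geonon-normal} we have $K_{X_1}+(p-1)\mathfrak C\sim\phi^*K_X\equiv0$ with $\mathfrak C>0$, so the MMP on the minimal resolution $\widetilde X_1$ ends in a Mori fiber space $g\colon Z\to B$. Since $\phi$ has height one, Lemma~\ref{lemma-finite-Cartier}(2) shows it suffices to prove $\phi^*L\sim0$ when $p\ge5$, $\phi^*L\sim0$ when $p=3$ (giving $3L\sim0$), and $2\phi^*L\sim0$ when $p=2$. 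Equivalently, we must control $M:=\mu^*\phi^*L$. This bundle is numerically trivial, so Lemma~\ref{lemm:descent-div} gives $M\simeq\epsilon^*M_Z$ if $\dim B=0$, and $M\simeq f^*M_B$ with $\deg M_B=0$ if $\dim B=1$.

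\textbf{Del Pezzo case ($\dim B=0$).} Push \eqref{eq:JJXM} forward by $\epsilon$. This gives $-K_Z\equiv(p-1)\epsilon_*\widetilde{\mathfrak C}+\epsilon_*\sum m_iE_i$, which is an effective Cartier divisor $A>0$ on the regular surface $Z$, and for $p\ge3$ it is non-reduced because $p-1\ge2$. Proposition~\ref{prop-ell_F-del-Pezzo} then gives $M_Z\sim0$ for $p\ge5$. For $p=3$, Lemma~\ref{lemm:del-Pezzo-p3}(b) gives $M_Z\sim0$, and for $p=2$, Lemma~\ref{lemm:del-Pezzo-p2} gives $2M_Z\sim0$. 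In every case we then pull back to $\widetilde X_1$ and push down to $X_1$ via $\mu_*\mathcal O=\mathcal O$.

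\textbf{Ruled case ($\dim B=1$).} Since $\dim\Alb=0$, Lemma~\ref{prop:Alb-eq-dim} makes $B$ of geometric genus zero, and Lemma~\ref{lemm:geometric-genus-zero-torsion} reduces the problem to bounding $p_a(B)$. If $p_a(B)=0$, then $M_B\sim0$ and we are done. The key step is to show $p_a(B)\le1$. Restricting \eqref{eq:JJXM} to the generic fiber (a conic, Proposition~\ref{prop:Mori-f-BT22}) gives $2=(p-1)\widetilde{\mathfrak C}\cdot F+\sum m_iE_i\cdot F$, so there is a horizontal curve $T$ (a component of $\widetilde{\mathfrak C}$ or an $E_i$) of degree at most $2$ over $B$. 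We then run the adjunction argument of Lemma~\ref{lemm:C1^2} on $T$: the equation $(K+T)|_{T^\nu}+(\text{effective})\equiv0$ and $T^2\ge0$, or the Hodge index argument, together with the positivity of Lemma~\ref{lemm:Ejiri-base-curve} applied to $f$ with $\Delta$ the horizontal boundary, force $\deg K_{T^\nu}\le0$. Hence $p_a(T^\nu)\le1$. Because $T\to B$ has degree at most $2$ (and is birational when $p\ge5$, since then $(p-1)\widetilde{\mathfrak C}\cdot F=2$ leaves no room otherwise), this yields $p_a(B)\le1$. When $p_a(B)=1$, Lemma~\ref{lemm:geometric-genus-zero-torsion}(2) forces $p\in\{2,3\}$ and $pM_B\sim0$, so for $p\ge5$ we already get $M_B\sim0$.

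\textbf{Main obstacle.} The remaining difficulty is $p_a(B)=1$ with $p\in\{2,3\}$, where Lemma~\ref{lemm:geometric-genus-zero-torsion} only gives $pM_B\sim0$. That yields $9L\sim0$ or $4L\sim0$, which is sharp for $p=2$ but not for $p=3$. For $p=3$ I would show directly that $M_B\sim0$, as follows. By Lemma~\ref{lemm:genus-one-p3-forms}, $B$ is the cuspidal form $y^2z=x^3+az^3$ with $a\notin k_1^3$. The horizontal curve $T$ must then be birational to $B$ with $\operatorname{Diff}=0$, so $T\simeq B$ is a section. Restricting gives $M_B\simeq M|_T$, and $M|_T=\mu^*\phi^*L|_T$. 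This in turn factors through the image curve $T_0\subset X$, which is a regular genus-one curve over $k$ purely inseparably covered by $T$. I would then compare $L|_{T_0}$ with its pullback, using that $\operatorname{Pic}^0$ of the cuspidal form is unipotent, and show that the pullback kills its $3$-torsion. This gives $\phi^*L\sim0$. I expect this last step, the explicit geometry of the geometric singularities of $B$, to require the most case analysis.
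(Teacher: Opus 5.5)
Your skeleton (reduction to $\phi^*L$, the MMP on $\widetilde X_1$, descent to $Z$ or $B$, and Lemma~\ref{lemm:geometric-genus-zero-torsion}) matches the paper. However, the ruled case has a genuine gap: the case $p=3$, $p_a(B)=1$, $\Delta_{\mathrm{hor}}=2\Gamma$ is not actually handled.

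\textbf{The case $p=3$, $p_a(B)=1$.} Your sketch relies on Lemma~\ref{lemm:genus-one-p3-forms}, which needs $B$ geometrically integral; but $B$ may be geometrically non-reduced. Even for the cuspidal form, pulling back along $T\to T_0$ kills $\Pic^0(T_0)$ only if this map factors through $(T_0\times_{k_{T_0}}k_{T_0}^{1/3})^\nu_{\mathrm{red}}$. That requires the constant field $k_{T_0}$ of $T_0$ to equal $k$, whereas a priori you only know $k\subseteq k_{T_0}\subseteq k_1$. Unipotence of $\Pic^0$ gives $3$-torsion, not triviality.

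The missing idea in the paper is to show that this configuration cannot occur, using the foliation $\mathscr G=T_{X_1/X}$:
\begin{itemize}
\item If $B=\{y^2z=x^3+az^3\}$ is geometrically integral, then $\partial(a)=2Y\partial(Y)$ for every local section $\partial$ of $\mathscr G$. Lemma~\ref{lemm:foliation-evaluation} then gives $\mathfrak C\sim\mathfrak D+E$ with $E\ge0$ and $(\mathfrak D\cdot C)_{k_1}=3$, contradicting $\mathfrak C\cdot C=0$.
\item If $B$ is geometrically non-reduced, the Gauss map of $\mathscr G'$ yields a second pencil with $K(X_1)=\mathbb K(t)$. This forces $\mathscr G_\eta\subseteq\mathfrak d\otimes_{k_1}\mathbb K(t)$ for a proper restricted subalgebra $\mathfrak d\subset\operatorname{Der}_k(k_1)$. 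Jacobson's correspondence then puts an element of $k_1\setminus k$ into $K(X)$, a contradiction.
\end{itemize}

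\textbf{The reduction to $p_a(B)\le1$.} This preliminary claim is also not established as stated.
\begin{itemize}
\item For $p\ge5$, $(p-1)\widetilde{\mathfrak C}\cdot F$ is divisible by $p-1\ge4$, so it vanishes; your parenthetical claim that it equals $2$ is impossible. Hence every horizontal component of $\Delta$ is $\mu$-exceptional. Then $T^2<0$ and adjunction bounds nothing. If $T$ is a section with $m_i=2$, the fibre pair $(\mathbb P^1,2P)$ is not $F$-pure, so Lemma~\ref{lemm:Ejiri-base-curve} fails as well. The paper avoids genus bounds here: $\phi^*L$ is trivial near the point $\mu(\Gamma)$, so $h^*L_B\sim0$, giving $dL_B\sim0$ with $d\le2$. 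Since $L_B$ has $p$-power order, this suffices.
\item For a non-exceptional $T$ you cannot assume $T^2\ge0$, because the argument in Lemma~\ref{lemm:C1^2} used the nontrivial Albanese. Instead, $T^2<0$ forces the image curve on $X$ to have genus zero, which trivializes $L_B$.
\item For $p=2$ with a purely inseparable $2$-section (quasilinear generic conic), $p_a(T^\nu)\le1$ does not bound $p_a(B)$. A purely inseparable cover can have strictly smaller arithmetic genus than its target. The paper needs a separate argument here: the height-one composite through $\widehat C$ when $H^0(C,\mathcal O_C)=k_1$, and otherwise the identity $\theta(a)=v^2\theta(b)$ together with Lemma~\ref{lemm:foliation-evaluation}.
\end{itemize}

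\textbf{Del Pezzo case, $p=3$.} The divisor $\epsilon_*\Delta$ need not be non-reduced if $\epsilon$ contracts every component of $\widetilde{\mathfrak C}$. The paper instead uses Lemma~\ref{lemm:del-Pezzo-p3}(a) when $Z$ is geometrically integral, and the surviving movable part $\epsilon_*\widetilde{\mathfrak M}>0$ otherwise.
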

\begin{proof}
  By descending $(X,L)$ to a model satisfying the same hypotheses over a finitely generated subfield and then passing to its separable closure, we may assume that $k$ is $F$-finite and separably closed.
  Set $k_1=k^{1/p}$ and $X_1=(X\times_k k_1)_{\mathrm{red}}^\nu$.
  Construction~\ref{cons:MMP} gives
  \[
    \begin{tikzcd}[row sep=tiny]
      && \widetilde X_1 \ar[ld,"\mu"'] \ar[rd,"\epsilon"] \ar[rdd,"f"'] \\
      X & X_1 \ar[l,"\phi"'] && Z \ar[d,"g"] \\[2ex]
        &&& B\rlap{\,.}
    \end{tikzcd}
  \]
  Here $\mu$ is the minimal resolution, $\epsilon$ is a $K_{\widetilde X_1}$-MMP contraction, $g$ is a Mori fiber space, and $f=g\circ\epsilon$.
  Note that
  \[
    H^0(X_1,\mathcal O_{X_1})=H^0(\widetilde X_1,\mathcal O_{\widetilde X_1})
      =H^0(Z,\mathcal O_Z)=H^0(B,\mathcal O_B)=k_1.
  \]
  In this proof, intersection numbers are computed over $k$ on $X$ and over $k_1$ on $X_1$, $\widetilde X_1$, $Z$, and their further birational models.
  Theorem~\ref{th:geonon-normal} and \eqref{eq:JJXM} give
  \begin{equation}\label{eq:caseC-boundary}
      K_{X_1}+(p-1)\mathfrak C\sim\phi^*K_X,\quad \Delta:=(p-1)\widetilde{\mathfrak C}+\sum_i m_iE_i, \quad K_{\widetilde X_1}+\Delta\sim\mu^*\phi^*K_X\equiv0,
  \end{equation}
  where $\mathfrak C>0$ is an effective integral divisor, $\widetilde{\mathfrak C}$ is its strict transform, the $E_i$ are $\mu$-exceptional, and $m_i\in\mathbb Z_{\geq0}$.
  Lemma~\ref{lemma-finite-Cartier}\textup{(2)} reduces the assertion to
  \begin{equation}\label{eq:goal-C}
    \phi^*L\sim0\quad\text{if }p\geq3,\qquad
    2\phi^*L\sim0\quad\text{if }p=2.
  \end{equation}

  \par\smallskip
  \noindent\textbf{The del Pezzo case.}
  Suppose that $\dim B=0$.
  By Lemma~\ref{lemm:descent-div}\textup{(1)}, there is a numerically trivial line bundle $L_Z$ on $Z$ such that
  \begin{equation}\label{eq:caseC-descent-Z}
    \mu^*\phi^*L\simeq\epsilon^*L_Z.
  \end{equation}
  By injectivity of $\mu^*\colon \Pic(X_1)\to \Pic(\widetilde X_1)$, to prove \eqref{eq:goal-C} it suffices to show that $L_Z\sim0$ for $p\geq3$ and $2L_Z\sim0$ for $p=2$.

  For $p\geq5$, Proposition~\ref{prop-ell_F-del-Pezzo} gives $L_Z\sim0$.

  Suppose that $p=3$.
  If $Z$ is geometrically integral, then $L_Z \sim 0$ by Lemma~\ref{lemm:del-Pezzo-p3}.
  If $Z$ is not geometrically integral, then $Z$, and hence $X$, is geometrically non-reduced, so $\mathfrak C=\mathfrak M+\mathfrak F$ has a nonzero movable part.
  In this case, set
  \[
    A=\epsilon_*\widetilde{\mathfrak M}>0,\qquad
    R=\epsilon_*\left(2\widetilde{\mathfrak F}+\sum_i m_iE_i\right)\geq0.
  \]
  Pushing forward \eqref{eq:caseC-boundary} to $Z$ gives $-K_Z\equiv2A+R$.
  So in this case, we also have $L_Z\sim0$ by Lemma~\ref{lemm:del-Pezzo-p3}.

  Suppose now that $p=2$.
  Set $A:=\epsilon_*\Delta$.
  Since $Z$ is regular, $A$ is an effective Cartier divisor, and pushing forward \eqref{eq:caseC-boundary} gives $A\equiv-K_Z$.
  The ampleness of $-K_Z$ implies $A>0$.
  Lemma~\ref{lemm:del-Pezzo-p2} therefore gives $L_Z^{\otimes2}\simeq\mathcal O_Z$.

  \par\smallskip
  \noindent\textbf{The ruled case.}
  Suppose that $\dim B=1$.
  Since $\Alb_{X/k}$ is trivial, $B$ has geometric genus zero.
  By Lemma~\ref{lemm:descent-div}\textup{(2)}, there is a degree-zero line bundle $L_B$ on $B$ such that
  \begin{equation}\label{eq:caseC-descent-B}
    \mu^*\phi^*L\simeq f^*L_B.
  \end{equation}
  We must prove $L_B\sim0$ for $p\geq3$ and $2L_B\sim0$ for $p=2$.
  Lemma~\ref{lemm:geometric-genus-zero-torsion}\textup{(1)} shows that $L_B$ has $p$-power order.
  We first settle the case $p\geq5$ using the horizontal components of $\Delta$, and then treat $p=2,3$ by distinguishing components of coefficient two and one.
  Let $F$ be the generic fiber of $f$.
  It is a regular conic over $K(B)$, and \eqref{eq:caseC-boundary} gives
  \[
    \deg_{K(B)}\Delta|_F=2.
  \]

  Suppose first that a horizontal component $\Gamma$ of $\Delta$ is $\mu$-exceptional.
  The morphism $h\colon\Gamma^\nu\to B$ has degree $d\in\{1,2\}$.
  Since $\phi^*L$ is a line bundle on $X_1$, it is trivial on a neighbourhood of the point $\mu(\Gamma)$.
  Thus $(\mu^*\phi^*L)|_{\Gamma^\nu}\simeq\mathcal O_{\Gamma^\nu}$, and restriction of \eqref{eq:caseC-descent-B} gives $h^*L_B\sim0$.
  Lemma~\ref{lemma-finite-Cartier}\,\textup{(1)} applied to $h$ yields $dL_B\sim0$.
  For $p\geq3$, coprimality of $d$ and $p$ gives $L_B\sim0$; for $p=2$, we obtain $2L_B\sim0$.
  We may therefore assume that every horizontal component of $\Delta$ is nonexceptional.
  Such a component has coefficient divisible by $p-1$.
  If $p\geq5$, this contradicts $\deg_{K(B)}\Delta|_F=2$, completing that case.
  Hence $p\in\{2,3\}$, and either $\Delta_{\mathrm{hor}}=2\Gamma$ with $\Gamma^\nu\simeq B$, or $p=2$ and $\Delta_{\mathrm{hor}}$ is reduced.

  \smallskip
  As in Subsection~\ref{sec:geo-non-normal-base-change}, set $r=\dim_{k_1}\operatorname{Der}_k(k_1)$ and $\mathscr G=T_{X_1/X}$.
  Restriction of derivations to $k_1$ gives an inclusion $\mathscr G\hookrightarrow\operatorname{Der}_k(k_1)\otimes_{k_1}\mathcal O_{X_1} \simeq\mathcal O_{X_1}^{\oplus r}$.
  Let $\mathscr G'$ be the saturation of this image.

  \par\smallskip
  \noindent\textbf{Case 1: $\Delta_{\mathrm{hor}}=2\Gamma$.}
  Let $C=\mu(\Gamma)$, so $C^\nu\simeq B$, and write
  \[
    (p-1)\mathfrak C=2C+C',\qquad
    C'\geq0,\qquad C\not\subset\Supp C'.
  \]
  If $(C^2)_{k_1}<0$, its image $D=\phi(C)_{\mathrm{red}}$ satisfies $(D^2)_k<0$ by Corollary~\ref{lemma-intersection-general}.
  The adjunction formula gives
  \[
    \deg_kK_{D^\nu}+\deg_k\operatorname{Diff}_{D^\nu}
      =((K_X+D)\cdot D)_k=(D^2)_k<0.
  \]
  Thus $D^\nu$ has arithmetic genus zero, and $L|_{D^\nu}$ is trivial.
  Pullback to $\Gamma^\nu\simeq B$ trivializes $L_B$.
  Suppose that $(C^2)_{k_1}\geq0$.
  By restricting $K_{X_1}+2C+C'\equiv0$ to $C^\nu \simeq B$ and applying the adjunction formula, we obtain
  \begin{equation}\label{eq:caseC-adjunction}
       0=\deg_{k_1}K_B+\deg_{k_1}\operatorname{Diff}_{C^\nu}+(C^2)_{k_1}+(C'\cdot C)_{k_1}.
  \end{equation}
  The last three terms are nonnegative, so $\deg_{k_1}K_B\leq0$ and hence $p_a(B)\leq1$.
  If $p_a(B)=0$, then $L_B\sim0$.
  If $p=2$ and $p_a(B)=1$, then Lemma~\ref{lemm:geometric-genus-zero-torsion}\textup{(2)} gives $2L_B\sim0$.

  It remains in this case to consider $p=3$ and $p_a(B)=1$.
  Equation~\eqref{eq:caseC-adjunction} gives
  \begin{equation}\label{eq:caseC-p3-residual}
    C\simeq B,\qquad (C^2)_{k_1}=0,\qquad ((\mathfrak C-C)\cdot C)_{k_1}=0,
  \end{equation}
  and $X_1$ is regular along $C$.
  In particular, $C$ is nef and $\mathfrak C\cdot C=0$.
  The curve $B$ is not smooth over $k_1$, since its geometric genus is zero.
  We consider separately whether $B$ is geometrically integral or geometrically non-reduced, and rule out both cases.

  \par\smallskip
  \noindent\textbf{Subcase 1(a): The base $B$ is geometrically integral.}
  We will show that $\mathfrak C\sim\mathfrak D+E$ with $E\ge0$ and $\mathfrak D=\mu_*f^*Q$ for a closed point $Q\in B$, contradicting $\mathfrak C\cdot C=0$.

  Lemma~\ref{lemm:genus-one-p3-forms} allows us to write $B=\{y^2z=x^3+az^3\}$ with $a\in k_1\setminus k_1^3=k_1\setminus k$.
  Put $U:=x/z$ and $Y:=y/z$, so that $Y^2=U^3+a$ in $K(B)\subseteq K(X_1)$.
  The zero divisor of $Y$ on $B$ is the unique non-smooth point of $B/k_1$,
  \[
    Q=\operatorname{Spec}\bigl(k_1[U]/(U^3+a)\bigr),\qquad [\kappa(Q):k_1]=3,
  \]
  with multiplicity one, since $Y$ generates the maximal ideal of $\mathcal O_{B,Q}$.
  Let $\mathfrak D$ be the zero divisor of $Y$ on $X_1$, so $\mathfrak D=\mu_*f^*Q$.
  Since $\mu$ is an isomorphism near $C$ and $\Gamma\to B$ is a section,
  \begin{equation}\label{eq:foliation-vanishing-degree}
    (\mathfrak D\cdot C)_{k_1}=(f^*Q\cdot\Gamma)_{k_1}=\deg_{k_1}Q=3.
  \end{equation}

  The equation of $B$ forces $\partial(a)$ to vanish along $\mathfrak D$ for every local section $\partial$ of $\mathscr G$.
  Indeed, any local section $\partial$ of $\mathscr G$ is a $k$-derivation, so in characteristic three
  \[
    \partial(a)=2Y\partial(Y).
  \]
  At a prime divisor $P$ where $Y$ vanishes, both $Y$ and $\partial(Y)$ lie in $\mathcal O_{X_1,P}$, so $v_P(\partial(a))\geq v_P(Y)$.
  Lemma~\ref{lemm:foliation-evaluation}, applied with $D=\mathfrak D$, therefore gives $\mathfrak C\sim\mathfrak D+E$ for some effective divisor $E$, as required.


  \par\smallskip
  \noindent\textbf{Subcase 1(b): The base $B$ is geometrically non-reduced.}
  We will show that $\mathfrak M\ne0$ and use it to construct a second pencil.
  This construction will yield $K(X)\cap k_1\supsetneq k$, contradicting the fact that $k$ is algebraically closed in $K(X)$.

  Put $\mathbb K=K(B)$, $V=\operatorname{Der}_k(k_1)$, $q = \operatorname{rank} \mathscr G$ and $s = r-q$.
  The section $\Gamma$ gives a $\mathbb K$-rational point of the generic fiber $F$ of $f$, so
  \[
    F\simeq\mathbb P^1_{\mathbb K}\quad \text{and} \quad K(X_1)=\mathbb K(u), 
  \]
  for a transcendental element $u$.
  Since $B$ is geometrically non-reduced, $X_1$ is geometrically non-reduced too, and so is $X$.
  Theorem~\ref{th:geonon-normal} therefore gives $\mathfrak M\ne0$.

  Since $C$ is nef and $\mathfrak C\cdot C = 0$, we have
  \[
    \mathfrak M\cdot C = \mathfrak F\cdot C = 0 .
  \]
  Consider the Gauss map from \eqref{eq:foliation-grassmannian-map},
  \[
    \gamma_0\colon X_1\dashrightarrow\operatorname{Gr}(q,V)\hookrightarrow\mathbb P(\wedge^s V).
  \]
  It records the subspaces $\mathscr G'(x) \subset V \otimes_{k_1} \kappa(x)$, and its hyperplanes define the movable system $\mathcal M = \gamma_0^*|\mathcal O(1)|$ which contains $\mathfrak M$.
  Because $\mathfrak M\ne0$ and $\mathcal M$ has no fixed components, $\gamma_0$ is nonconstant.
  Choose a member of $\mathcal M$ not containing $C$.
  Its intersection with $C$ is zero, so it is disjoint from $C$.
  Consequently $\gamma_0$ is defined along $C$.

  Resolve the indeterminacy of $\gamma_0\circ\mu$ by a birational morphism $\rho\colon\widehat X_1\to\widetilde X_1$ with $\widehat X_1$ regular.
  Since $\gamma_0$ is a morphism along $C$ and $\mu$ is an isomorphism near $C$, we can take $\rho$ to be an isomorphism near $\Gamma$.
  Write
  \[
    \gamma\colon\widehat X_1\longrightarrow\operatorname{Gr}(q,V)\hookrightarrow\mathbb P(\wedge^s V),
    \qquad A=\gamma^*\mathcal O(1).
  \]
  Keeping the notation $\Gamma$ and $F$ for their transforms, we have
  \[
    (A\cdot\Gamma)_{k_1}=(\Gamma^2)_{k_1}=0.
  \]
   Since the generic fiber $F$ is unchanged by the resolution of the base points, we have \[
     0\leq\deg_{\mathbb K}(A|_F) \le \deg_{\mathbb K}(((\mu\circ\rho)^*\mathfrak M)|_F) \le \frac12 \deg_{\mathbb K} (\Delta|_{F})=1.
  \]
  Since $(A\cdot\Gamma)_{k_1}=(\Gamma^2)_{k_1}=0$ and $\Gamma\not\equiv0$, the Hodge index theorem gives $(A^2)_{k_1}\leq0$. 
  Then the global generation of $A$ shows $(A^2)_{k_1}=0$, and the nonconstant map $\gamma$ has a curve as its image.

  Let $T$ be the normalization of this image, and factor $\gamma$ through $h\colon \widehat X_1\to T$.
  Then $A=h^*H$, where $H$ is the pullback of $\mathcal O(1)$ to $T$.
  The two pencils fit into
  \[
    \begin{tikzcd}[column sep=large,row sep=large]
      & \widehat X_1 \ar[dl,"f\circ\rho"'] \ar[d,"\mu\circ\rho"] \ar[dr,"h"] & \\
      B & X_1 \ar[l,dashed] \ar[r,dashed] & T.
    \end{tikzcd}
  \]
  Since $A\cdot\Gamma=0$, the image $h(\Gamma)$ is a point, and its residue field embeds into $H^0(\Gamma,\mathcal O_\Gamma)=k_1$.
  Thus $T$ has a $k_1$-rational point.
  The regular curve $T$ is smooth at this point, hence generically smooth; as $k_1$ is separably closed, $T$ is geometrically integral.

  If $\deg_{\mathbb K}(A|_F)=0$, then $h$ is constant on $F$ and factors through a nonconstant morphism $B\to T$, contradicting the constancy of $h|_\Gamma$.
  It follows that $\deg_{\mathbb K}(A|_{F}) = 1$ and the induced morphism \[
    h_F\colon\mathbb P^1_{\mathbb K}\to T_{\mathbb K}
  \]
  is finite and satisfies
  \[
    1 = \deg_{\mathbb K}(h_F^*H_{\mathbb K}) = \deg(h_F)\deg_{k_1}H.
  \]
  Thus $\deg(h_F) = \deg_{k_1}H = 1$.
  Since $H$ is globally generated of degree one, it induces a isomorphism $T\to\mathbb P^1_{k_1}$.
  If $t$ is its rational coordinate, then
  \begin{equation}\label{eq:caseC-p3-second-pencil}
    K(X_1)=\mathbb K(t).
  \end{equation}

  Let $\eta$ be the generic point of $X_1$.
  Saturation does not change the generic stalk, so $\mathscr G_\eta = \mathscr G'_\eta$ as subspaces of $V\otimes_{k_1}K(X_1)$.
  Since $\gamma$ factors through $T$ and $K(T)=k_1(t)$, this subspace is defined over $k_1(t)$.
  More precisely, the generic point of $T$ maps to a $k_1(t)$-point of $\operatorname{Gr}(q,V)$, that is, to a $q$-dimensional subspace
  \[
    W(t)\subset V\otimes_{k_1}k_1(t)
    \quad\text{with}\quad
    \mathscr G'_\eta=W(t)\otimes_{k_1(t)}K(X_1).
  \]
  Thus, in this product description, the subspace of foliation directions depends only on $t$.

  \smallskip
  We now use this description of $\mathscr G'_\eta$ to construct an element $a\in k_1\setminus k$ annihilated by every derivation in $\mathscr G$, which will contradict the algebraic closedness of $k$ in $K(X)$.
  Restricting derivations to $k_1$ gives an exact sequence of $\mathbb K$-vector spaces
  \[
    0\longrightarrow\operatorname{Der}_{k_1}(\mathbb K)
    \longrightarrow\operatorname{Der}_k(\mathbb K)
    \xrightarrow{\mathrm{res}}\operatorname{Der}_k(k_1,\mathbb K) \cong V \otimes_{k_1}\mathbb K,
  \]
  and we define $\mathfrak h\subset V\otimes_{k_1}\mathbb K$ to be the image of $\mathrm{res}$.
  Geometrically, $\mathrm{res}$ is the differential $T_{B/k}\to V\otimes_{k_1}\mathcal O_B$ of the structure morphism $B\to\operatorname{Spec}k_1$ at the generic point of $B$.
  Thus $\mathfrak h$ consists of the $k$-derivations $k_1\to\mathbb K$ that extend to $k$-derivations of $\mathbb K$, that is, the tangent directions of $\operatorname{Spec}k_1$ over $k$ that lift to rational vector fields on $B$.

  Restriction of derivations is dual to the natural map \[
    \alpha\colon\mathbb K\otimes_{k_1}\Omega_{k_1/k}\to\Omega_{\mathbb K/k}
  \]
  So $\mathfrak h = \operatorname{Ann}(\ker \alpha)$

  Note that $\Omega_{k_1/k}=\Omega_{k_1/\mathbb F_3}$ and $\Omega_{\mathbb K/k}=\Omega_{\mathbb K/\mathbb F_3}$, so $\alpha$ is also the natural map of differentials over $\mathbb F_3$.
  The extension $\mathbb K/k_1$ is not separably generated, because $B$ is geometrically non-reduced.
  Hence the differential criterion for separability \cite[Tag~\href{https://stacks.math.columbia.edu/tag/031X}{031X}]{StacksProject} shows that $\alpha$ is not injective.
  Consequently $\mathfrak h\ne V\otimes_{k_1}\mathbb K$.
  We regard $V$ as a $k_1$-subspace of $V\otimes_{k_1}\mathbb K$ via $v\mapsto v\otimes1$.
  Under the identification $\operatorname{Der}_k(k_1,\mathbb K)\cong V\otimes_{k_1}\mathbb K$, this sends $\delta\in\operatorname{Der}_k(k_1)$ to its composite with the inclusion $k_1\subset\mathbb K$.
  Since $V$ spans $V\otimes_{k_1}\mathbb K$ over $\mathbb K$, it follows that
  \[
    \mathfrak d:=V\cap\mathfrak h
      =\{\delta\in\operatorname{Der}_k(k_1):\delta\text{ extends to a derivation of }\mathbb K\}
  \]
  is a proper $k_1$-subspace of $V$.
  If $\delta_1,\delta_2\in\mathfrak d$ extend to derivations $D_1,D_2$ of $\mathbb K$, then $[D_1,D_2]$ and $D_1^3$ extend $[\delta_1,\delta_2]$ and $\delta_1^3$, so $\mathfrak d$ is a restricted Lie subalgebra of $V$.

  Every $D\in\mathscr G_\eta$ is a $k$-derivation of $K(X_1)=\mathbb K(t)$, and $D|_{k_1}=(D|_{\mathbb K})|_{k_1}$ with $D|_{\mathbb K}\in\operatorname{Der}_k(\mathbb K,\mathbb K(t))=\operatorname{Der}_k(\mathbb K)\otimes_{\mathbb K}\mathbb K(t)$.
  Hence $D|_{k_1}\in\mathfrak h\otimes_{\mathbb K}\mathbb K(t)$ for every $D\in\mathscr G_\eta$.
  Since $\mathscr G_\eta$ sits in $V\otimes_{k_1}K(X_1)$ via $D\mapsto D|_{k_1}$, this means
  \[
    W(t)\otimes_{k_1(t)}\mathbb K(t)=\mathscr G_\eta\subseteq\mathfrak h\otimes_{\mathbb K}\mathbb K(t).
  \]
  Let $w\in W(t)$ be arbitrary.
  After multiplying $w$ by a common denominator of its coordinates in a $k_1$-basis of $V$, we may write $w=\sum_i v_i t^i$ with $v_i\in V$.
  Its image modulo $\mathfrak h$ is zero, and transcendence of $t$ over $\mathbb K$ forces each $v_i$ to lie in $\mathfrak h$, hence in $\mathfrak d$.
  It follows that $\mathscr G_\eta=\mathscr G'_\eta\subseteq\mathfrak d\otimes_{k_1}\mathbb K(t)$.
  Let $k_1^{\mathfrak d}$ be the common constant field of $\mathfrak d$.
  Jacobson's correspondence \cite[pp.~561, 565]{Gerstenhaber-1964} gives
  \[
    [k_1:k_1^{\mathfrak d}]=3^{\dim_{k_1}\mathfrak d}<3^r=[k_1:k].
  \]
  Choose $a\in k_1^{\mathfrak d}\setminus k$.
  Every derivation in $\mathscr G$ annihilates $a$, so $a\in K(X_1)^{\mathscr G}=K(X)$, contradicting the algebraic closedness of $k$ in $K(X)$.
  This finishes the proof of this subcase.

  \par\smallskip
  \noindent\textbf{Case 2: $p=2$ and $\Delta_{\mathrm{hor}}$ is reduced.}
  Choose a horizontal component $\Gamma$ of $\Delta$, which has coefficient one.
  Put $h\colon\Gamma^\nu\to B$ and $e=\deg h\in\{1,2\}$.
  The curve $\Gamma$ is the strict transform of a component $C$ of $\mathfrak C$, and we write $\mathfrak C=C+C'$ with $C'\geq0$ not containing $C$.
  Adjunction gives
  \[
    0=\deg_{k_1}K_{C^\nu}
      +\deg_{k_1}\operatorname{Diff}_{C^\nu}+(C'\cdot C)_{k_1}.
  \]
  Thus $C^\nu$ has arithmetic genus at most one over its field of constants.
  In genus zero, $h^*L_B$ is trivial, so Lemma~\ref{lemma-finite-Cartier}\,\textup{(1)} again gives $2L_B\sim0$.
  In genus one, we have $\deg_{k_1} \mathrm{Diff}_{C^\nu} = C' \cdot C = 0$, so $C$ is regular, $X_1$ is regular along $C$, and $C'$ is disjoint from $C$.
  The minimal resolution $\mu$ is therefore an isomorphism near $C$, and we identify $\Gamma$ with $C$.
  If $(C^2)_{k_1}<0$, the argument as in Case~1 with $D=\phi(C)_{\mathrm{red}}$ trivializes $h^*L_B$, so Lemma~\ref{lemma-finite-Cartier}\,\textup{(1)} again gives $2L_B\sim0$.
  We may assume $(C^2)_{k_1}\geq0$, so $(\Gamma^2)_{k_1}\geq0$ and $\Gamma$ is nef on $\widetilde X_1$.

  We distinguish whether the generic conic $F$ is smooth or quasilinear.

  \par\smallskip
  \noindent\textbf{Subcase 2(a): The generic conic $F$ is smooth.}
  Put $\Delta_0:=\Delta-\Gamma\geq0$.
  Then
  \[
    -(K_{\widetilde X_1}+\Delta_0)\equiv\Gamma,\qquad
    \deg_{K(B)}\Delta_0|_F=2-e\in\{0,1\}.
  \]
  Thus $-(K_{\widetilde X_1}+\Delta_0)$ is nef.
  Since $F$ is smooth, the geometric generic fiber $(F_{\overline{K(B)}},\Delta_0|_{F_{\overline{K(B)}}})$ of $f\colon(\widetilde X_1,\Delta_0)\to B$ is $(\mathbb P^1,0)$ or $(\mathbb P^1,P)$ for one reduced point $P$, hence is $F$-pure.
  Lemma~\ref{lemm:Ejiri-base-curve}\textup{(1)} gives $p_a(B)\leq1$, and Lemma~\ref{lemm:geometric-genus-zero-torsion}\textup{(2)} gives $2L_B\sim0$.

  \par\smallskip
  \noindent\textbf{Subcase 2(b): The generic conic $F$ is quasilinear.}
  The conic $F$ has no $K(B)$-rational point, $e=2$ and the extension $K(C)/K(B)$ is purely inseparable. 
  Put $\ell=H^0(C,\mathcal O_C)$. Since $k_1$ is separably closed and $e=2$, either $\ell=k_1$ or $[\ell:k_1]=2$.

  First suppose that $\ell=k_1$.
  Since $C\to B$ is purely inseparable and $B$ has geometric genus zero, $C$ also has geometric genus zero.
  As $H^0(C,\mathcal O_C)=k_1$, Lemma~\ref{lemm:geometric-genus-zero-torsion}\,\textup{(2)} shows that the curve $\widehat C=(C\times_{k_1}k_1^{1/2})_{\mathrm{red}}^\nu$ has arithmetic genus zero.
  The composite $\widehat C\to C\to B$ has height one and the pullback of $L_B$ to $\widehat C$ is trivial, so Lemma~\ref{lemma-finite-Cartier}\textup{(2)} gives $2L_B\sim0$.

  Finally suppose that $[\ell:k_1]=2$, and write $\ell=k_1(\sqrt a)$ with $a\in k_1\setminus k_1^2=k_1\setminus k$.
  Then $K(C)=K(B)(\sqrt a)$.
  Since $F$ is quasilinear, it is defined by a quadratic form
  \[
    sx^2+ty^2+rz^2 \quad\text{with }s,t,r\in K(B).
  \]
  Since $p=2$, we have $(\lambda x+\mu y+\nu z)^2=\lambda^2x^2+\mu^2y^2+\nu^2z^2$, so every linear change of coordinates keeps the form a combination of $x^2,y^2,z^2$ only.
  After a linear change of coordinates, we may assume that $\{z=0\}$ is the line spanned by the degree-two point $C|_F$.
  Then $C|_F$ is the zero of $sx^2+ty^2$ on this line, so $K(B)(\sqrt{t/s})=K(C)=K(B)(\sqrt a)$.
  Hence $t/s=\alpha^2+\beta^2a$ for some $\alpha,\beta\in K(B)$, and a linear change of $x$ and $y$ gives
  \[
    F=\{x^2+ay^2+bz^2=0\},\qquad b\in K(B),\qquad C|_F=\{z=0\}.
  \]
  On $F\cap\{y\ne0\}$, put $u=x/y$ and $v=z/y$.
  At the generic point of $C$, the function $v$ is a uniformizer, and $b$, viewed as a function on $X_1$ via $f$, is regular there since $C$ dominates $B$.
  For a local section $\theta$ of $\mathscr G=T_{X_1/X}$, differentiating $u^2+a+bv^2=0$ gives
  \begin{equation}\label{eq:caseC-p2-theta}
    \theta(a)=v^2\theta(b),
  \end{equation}
  where $\theta(b)$ is regular along $C$.
  Thus $\theta(a)$ vanishes to order at least two along $C$, and Lemma~\ref{lemm:foliation-evaluation}, applied with $D=2C$, gives $\mathfrak C\sim2C+E$ for some $E\geq0$.
  This contradicts $\deg_{K(B)}((\mu^*\mathfrak C)|_F)\leq\deg_{K(B)}(\Delta|_F)=2$.
  This finishes the proof of this subcase and hence the whole proof.
\end{proof}

\subsection{The case \texorpdfstring{$-K_X$ nef and $K_X\not\equiv0$}{-K\_X nef and K\_X numerically nontrivial}}\label{sec:caseC-nef-nontrivial}
\begin{proposition}\label{prop:Alb-trivial-nef}
  Let $k$ be a field of characteristic $p > 0$, and let $X$ be a regular, geometrically non-normal projective $k$-surface such that $H^0(X,\mathcal O_X) = k$, $-K_X$ is nef and $K_X\not\equiv0$, and $\dim \Alb_{X/k} = 0$.
  Then every numerically trivial line bundle $L$ on $X$ satisfies the following:
  \begin{enumerate}[label=\textup{(\arabic*)}]
    \item If $p \ge 5$, then $L \sim 0$.
    \item If $p = 3$, then $3L \sim 0$.
    \item If $p = 2$, then $2^2L \sim 0$.
  \end{enumerate}
\end{proposition}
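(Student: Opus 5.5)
We follow the strategy of Proposition~\ref{prop:Alb-trivial}, but exploit the extra positivity coming from $K_X\not\equiv0$. First reduce to $k$ $F$-finite and separably closed. Set $k_1=k^{1/p}$ and $X_1=(X\times_kk_1)^\nu_{\mathrm{red}}$ with $\phi\colon X_1\to X$. Run Construction~\ref{cons:MMP}, obtaining $\mu,\epsilon,g$ and
\[
  K_{\widetilde X_1}+\Delta\sim\mu^*\phi^*K_X,\qquad \Delta=(p-1)\widetilde{\mathfrak C}+\sum_i m_iE_i .
\]
Here $-\mu^*\phi^*K_X$ is nef and nonzero, so $K_{\widetilde X_1}$ is again not pseudo-effective and $g$ is a Mori fiber space. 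By Lemma~\ref{lemma-finite-Cartier}\textup{(2)} it suffices to bound the order of $\phi^*L$ (or of $L_Z$, $L_B$) by one factor of $p$ less than the claimed bound. In fact the claimed bounds are exactly those of Proposition~\ref{prop-ell_F-del-Pezzo}, so the target is: $\phi^*L\sim0$ for $p\ge3$ and $2\phi^*L\sim0$ for $p=2$, with one extra factor of $p$ allowed only for $p\in\{2,3\}$.

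\textbf{Del Pezzo case ($\dim B=0$).} Descend $\mu^*\phi^*L\simeq\epsilon^*L_Z$ by Lemma~\ref{lemm:descent-div}\textup{(1)}. For $p\ge5$, Proposition~\ref{prop-ell_F-del-Pezzo} gives $L_Z\sim0$, hence $L\sim0$ because $\phi$ has height one and $\phi^*L\sim0$ implies $pL\sim0$. This is not yet enough, so we argue directly on $X$ instead. The anticanonical divisor $-K_X$ is nef, not numerically trivial, and $K_X^2\ge0$.
\begin{itemize}
  \item If $K_X^2>0$, then $-K_X$ is nef and big, so $X$ itself satisfies the hypotheses of Proposition~\ref{prop-ell_F-del-Pezzo}. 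This gives the statement directly (regular implies canonical), and disposes of the del Pezzo case whenever $-K_X$ is big.
  \item If $K_X^2=0$, we must treat it separately.
\end{itemize}
So the plan is: apply Proposition~\ref{prop-ell_F-del-Pezzo} to $X$ when $K_X^2>0$, and reduce the remaining case $K_X^2=0$ to the ruled analysis.

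\textbf{Case $K_X^2=0$, $-K_X$ nef, $K_X\not\equiv0$.} First show that $|-mK_X|\ne\emptyset$ for some $m$, using Riemann--Roch $\chi(-mK_X)=\chi(\mathcal O_X)$ together with $h^2(-mK_X)=h^0((m+1)K_X)=0$. This requires $\chi(\mathcal O_X)>0$, which should follow from $H^1(X,\mathcal O_X)$ being controlled by the trivial Albanese; this is the step I expect to be the main obstacle, as it may need the Tanaka-type nonvanishing for $-K_X$ nef. Once $-K_X$ is effective with self-intersection zero, Proposition~\ref{prop:tanaka-abun-3.1}-type arguments give semi-ampleness of a multiple of $-K_X$, and hence an elliptic-type fibration $X\to D$ with $-K_X\equiv$ a positive multiple of a fiber.

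Then $L$ is trivial on fibers and descends as a degree-zero bundle $L_D$ on $D$; the trivial Albanese forces $D$ to have geometric genus zero. Lemma~\ref{lemm:geometric-genus-zero-torsion} then gives that $L_D$ is trivial if $p_a(D)=0$, and that $pL_D\sim0$ with $p\in\{2,3\}$ if $p_a(D)=1$. To bound $p_a(D)$, compute $K_X\sim h^*(K_D+M)$ for a canonical bundle formula with $M$ effective, and use $-K_X$ nef nonzero to obtain $\deg K_D<0$, i.e.\ $p_a(D)=0$. If necessary, run this after passing to $X_1$ and using Lemma~\ref{lemm:Ejiri-base-curve}\textup{(2)}--\textup{(3)}. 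This yields all three bounds, the $p=2$ bound of $4$ arising as $2\cdot2$ from Lemma~\ref{lemm:geometric-genus-zero-torsion} combined with Lemma~\ref{lemma-finite-Cartier}\textup{(2)}.
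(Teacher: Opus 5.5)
Your treatment of $K_X^2>0$ is correct and a reasonable shortcut. $X$ is regular, hence canonical, and $-K_X$ is nef and big, so Proposition~\ref{prop-ell_F-del-Pezzo} applied to $X$ itself gives exactly the claimed bounds. You were also right to abandon the detour through $X_1$, which costs a factor of $p$ and is fatal for $p\ge5$.

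The case $K_X^2=0$ fails, because the anticanonical genus-one fibration is the wrong structure. Take $k$ separably closed of characteristic $3$ and $B=\{y^2z=x^3+az^3\}\subset\mathbb P^2_k$ with $a\in k\setminus k^3$; this curve is regular, has $\omega_B\simeq\mathcal O_B$, and has geometric genus zero. Put $X=B\times_k\mathbb P^1_k$. Then $X$ satisfies every hypothesis, with $-K_X=\mathrm{pr}_2^*\mathcal O(2)$ and $K_X^2=0$, and your fibration is $\mathrm{pr}_2\colon X\to D=\mathbb P^1_k$. Choose $0\ne M\in\Pic^0(B)$, for instance $\mathcal O_B(P-\infty)$ for a $k$-point $P\ne\infty$, which exists because $k$ is separably closed. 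Then $L=\mathrm{pr}_1^*M$ is numerically trivial and nontrivial, and it restricts to a nontrivial bundle on every fiber of $\mathrm{pr}_2$.

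So your claim that $L$ is trivial on fibers and descends to $D$ is false. Your argument would conclude $L\sim0$ from $p_a(D)=0$, whereas here only $3L\sim0$ holds. Degree-zero line bundles on genus-one fibers need not be trivial; Lemma~\ref{lemm:descent-div} works only for contractions and Mori fiber spaces, whose fibers are rational. The same example has $\chi(\mathcal O_X)=0$, so your Riemann--Roch nonvanishing step also fails: a trivial Albanese does not force $h^1(\mathcal O_X)=0$. Finally, Proposition~\ref{prop:tanaka-abun-3.1} cannot give semi-ampleness of $-K_X$ without an integral curve whose normal bundle is torsion, and you never produce one.

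The missing idea is to run the $K_X$-MMP on $X$ itself. Since $-K_X$ is nef and $K_X\not\equiv0$, it ends in a Mori fiber space $g\colon Z\to B$, and $L$ descends to $Z$ or to $B$ by Lemma~\ref{lemm:descent-div}, with no loss of a factor of $p$.
\begin{itemize}
\item If $\dim B=0$, apply Proposition~\ref{prop-ell_F-del-Pezzo} to $Z$. This also covers surfaces with $K_X^2=0$ whose MMP ends in a del Pezzo surface.
\item If $\dim B=1$ and $p>2$, then $B$ has geometric genus zero and the conic bundle $g$ is smooth over the separably closed field \cite[Proposition~2.18]{Bernasconi-Tanaka-22}. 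Lemma~\ref{lemm:Ejiri-base-curve}\textup{(1)} gives $p_a(B)\le1$, and Lemma~\ref{lemm:geometric-genus-zero-torsion} then gives $L_B\sim0$ for $p\ge5$ and $3L_B\sim0$ for $p=3$. The example above is exactly the case $p_a(B)=1$, with $g=\mathrm{pr}_1$.
\item If $\dim B=1$ and $p=2$, the subcases $K_Z^2>0$ and smooth generic conic go the same way. The quasilinear case with $K_Z^2=0$ needs genuinely new input, described next.
\end{itemize}

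In that quasilinear case, one passes to $Y=(Z\times_kk^{1/2})^\nu_{\mathrm{red}}$ and its minimal resolution $S$. In the main subcase one shows that $B$ is not smooth, so the base-change divisor $D_B>0$ of $B$ (Theorem~\ref{th:geonon-normal}) satisfies $s^*D_B\le\Delta$. Consequently $N=\mu^*\phi^*(-K_Z)$ is nef with $N^2=0$ but $K_S\cdot N<0$. This strict inequality makes $h^0(mN)$ grow by Riemann--Roch regardless of $\chi(\mathcal O_S)$. That is precisely what fails on $X$, where $K_X\cdot(-K_X)=0$. The resulting fibration $S\to D$ has rational generic fiber. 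Along it, a horizontal component of $s^*D_B$ of degree at most two trivializes twice the pullback of $L_B$ to the height-one cover $B_1\to B$, and hence $4L\sim0$.
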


\begin{proof}
  We may assume that $k$ is $F$-finite and separably closed.
  Since $-K_X$ is nef and numerically nontrivial, the $K_X$-MMP ends with a Mori fiber space
  \[
    \begin{tikzcd}[column sep=large,row sep=small]
      X \ar[r,"\epsilon"] \ar[dr,"f"'] & Z \ar[d,"g"] \\
        & B\rlap{\,.}
    \end{tikzcd}
  \]
  The divisor $-K_Z$ is nef.

  Suppose first that $\dim B = 0$.
  By Lemma~\ref{lemm:descent-div}\,\textup{(1)}, there exists a numerically trivial line bundle $L_Z$ on the regular del Pezzo surface $Z$ such that $L \simeq \epsilon^*L_Z$.
  Then the conclusion follows from Proposition~\ref{prop-ell_F-del-Pezzo}.

  Suppose that $\dim B = 1$.
  By Lemma~\ref{lemm:descent-div}\textup{(2)}, there exists a numerically trivial line bundle $L_B$ on $B$ such that $L \simeq f^*L_B$.
  Since $\Alb_{X/k}$ is trivial, the curve $B$ has geometric genus zero.

  We divide the argument into the cases $p>2$ and $p=2$.

  \par\smallskip
  \noindent\textbf{Case~\textup{(a)}: $p>2$.}
  By \cite[Proposition~2.18]{Bernasconi-Tanaka-22}, the Mori fibration $g$ is smooth because $p>2$ and $k$ is separably closed.
  Its geometric generic fiber is $\mathbb P^1$, so positivity (Lemma~\ref{lemm:Ejiri-base-curve}\,\textup{(1)}) gives $p_a(B)\leq1$.
  If $p\geq5$, Lemma~\ref{lemm:geometric-genus-zero-torsion}\,\textup{(2)} shows $p_a(B)=0$ and hence $L_B\sim0$.
  If $p=3$, the same lemma gives $3L_B\sim0$.
  Pulling back by $f$ proves the required bounds for $L$.

  \par\smallskip
  \noindent\textbf{Case~\textup{(b)}: $p=2$.}
  If $K_Z^2>0$, then $-K_Z$ is nef and big, so Proposition~\ref{prop-ell_F-del-Pezzo} applied to $g^*L_B$ gives $4L\sim0$.
  We may therefore assume that $K_Z^2=0$.
  If the generic conic of $g$ is smooth, Lemma~\ref{lemm:Ejiri-base-curve}\,\textup{(1)} gives $p_a(B)\le1$, and Lemma~\ref{lemm:geometric-genus-zero-torsion}\,\textup{(2)} gives $2L_B\sim0$.
  Hence we may assume that the generic conic is quasilinear.

  Set $k_1=k^{1/2}$ and
  \[
    B_0:=\bigl(B\times_k k_1\bigr)_{\mathrm{red}}^\nu,
    \qquad
    Y:=\bigl(Z\times_k k_1\bigr)_{\mathrm{red}}^\nu,
  \]
  and let $\phi\colon Y\to Z$ be the natural morphism.
  We have $H^0(Y,\mathcal O_Y)=H^0(B_0,\mathcal O_{B_0})=k_1$.
  Let
  \[
    Y\xrightarrow{h}B_1\xrightarrow{\tau}B
  \]
  be the Stein factorization of $g\circ\phi$ and put $M:=\tau^*L_B$.
  The morphism $\tau$ has height at most one, so it suffices to prove $2M\sim0$.
  The curve $B_1$ has geometric genus zero and $H^0(B_1,\mathcal O_{B_1}) = k_1$.
  Since $Y$ is normal and dominates $(B\times_k k_1)_{\mathrm{red}}$, the morphism $g\circ\phi$ factors as $Y\to B_0\xrightarrow{\beta}B$.
  The Stein factorization of $Y\to B_0$ is then also a Stein factorization of $g\circ\phi$, so $\tau=\beta\circ\tau_0$ for a finite morphism $\tau_0\colon B_1\to B_0$.

  Let $\mu\colon S\to Y$ be a minimal resolution and set
  \[
    s:=h\circ\mu,
    \qquad N_Y:=\phi^*(-K_Z),
    \qquad N:=\mu^*N_Y.
  \]
  The morphisms introduced so far fit into the commutative diagram
  \[
    \begin{tikzcd}[column sep=large,row sep=large]
      S \ar[r,"\mu"] \ar[dr,"s"'] & Y \ar[rr,"\phi"] \ar[d,"h"] & & Z \ar[d,"g"] & X \ar[l,"\epsilon"'] \ar[dl,"f"] \\
      & B_1 \ar[r,"\tau_0"] \ar[rr,bend right=25,"\tau"'] & B_0 \ar[r,"\beta"] & B\rlap{\,.} &
    \end{tikzcd}
  \]
  The canonical base-change formula and the minimal-resolution argument in \eqref{eq:JJXM} give
  \[
    K_S+\Delta\sim-N
  \]
  for an effective integral divisor $\Delta$.
  The divisor $N$ is nef with $N^2=0$, and $N_Y$ is $h$-ample.

  Put $\mathbb K:=K(B)$ and $\mathbb K_0:=K(B_0)$.
  The generic conic $V$ of $g$ is quasilinear, so it is defined by an equation
  \begin{equation}\label{eq:caseC-nef-conic}
    c_1x^2+c_2y^2+c_3z^2=0 \quad\text{with }c_1,c_2,c_3\in\mathbb K.
  \end{equation}
  Suppose first that $V_{\mathbb K_0}$ is not regular.
  After a linear change of coordinates, its equation is $x^2=0$ or $x^2+a y^2=0$ with $a\notin \mathbb K_0^2$.
  Its reduced normalization is therefore $\mathbb P^1$ over $\mathbb K_0$ or $\mathbb K_0(\sqrt a)$, respectively, and the hyperplane bundle pulls back to $\mathcal O_{\mathbb P^1}(1)$.
  Thus the generic fiber $F$ of $s$ is $\mathbb P^1_{K(B_1)}$, and
  \[
    \deg N|_F=1,
    \qquad \deg\Delta|_F=1.
  \]
  Consequently, the geometric generic fiber $(F_{\overline{K(B_1)}},\Delta|_{F_{\overline{K(B_1)}}})$ of $s\colon(S,\Delta)\to B_1$ is $(\mathbb P^1,P)$ for one reduced point $P$, hence is $F$-pure.
  Lemma~\ref{lemm:Ejiri-base-curve}\,\textup{(1)} gives $p_a(B_1)\le1$, and Lemma~\ref{lemm:geometric-genus-zero-torsion}\,\textup{(2)} gives $2M\sim0$.

  It remains to treat the case where $V_{\mathbb K_0}$ is regular.
  Then $H^0(V_{\mathbb K_0},\mathcal O_{V_{\mathbb K_0}})=\mathbb K_0$, so $B_1=B_0$.
  We next show that $B$ is not smooth.
  Suppose to the contrary that $B$ is smooth.
  Then $\mathbb K$ is separable over $k(t)$ for some $t\in\mathbb K$, so $\mathbb K=\mathbb K^2k(t)$ and $t\notin\mathbb K^2k$, hence $[\mathbb K:\mathbb K^2k]=2$.
  Consequently, $c_1,c_2,c_3$ are linearly dependent over $\mathbb K^2k=\mathbb K_0^2$, so $V_{\mathbb K_0}$ has a $\mathbb K_0$-rational point.
  By \cite[Theorem~9.10\,(3)]{tanaka_invariants_2021}, the regular conic $V_{\mathbb K_0}$ is then isomorphic to $\mathbb P^1_{\mathbb K_0}$ and hence smooth, which is impossible because all partial derivatives of the polynomial in \eqref{eq:caseC-nef-conic} vanish.
  Hence $B$ is not smooth.

  By Theorem~\ref{th:geonon-normal} applied to $B$, there is an integral divisor $D_B> 0$ on $B_1=B_0$ such that
  \[
    K_{B_1}+D_B\sim\tau^*K_B.
  \]
  The conic fibration $g$ is flat and Gorenstein, so $W:=Z\times_B B_1$ is Gorenstein and is integral because its generic fiber is integral.
  Its normalization is $Y$, since both have function field $K(Z)k_1$.
  Relative dualizing base change and the conductor formula therefore give
  \[
    K_Y+C_{\mathrm{nor}}+h^*D_B\sim\phi^*K_Z,
    \qquad C_{\mathrm{nor}}\ge0.
  \]
  Since $K_Y+C_{\mathrm{nor}}\sim\phi^*K_Z-h^*D_B$ is Cartier and $\mu$ is the minimal resolution, we have $\mu^*(K_Y+C_{\mathrm{nor}})\sim K_S+\Delta'$ with $\Delta'\geq0$ integral, so we may take $\Delta:=\Delta'+s^*D_B$, and then
  \[
    K_S+\Delta\sim-N,
    \qquad \Delta\ge s^*D_B.
  \]
  In particular, $\Delta\cdot N>0$ because $N_Y$ is $h$-ample, and hence $K_S\cdot N<0$.

  Since $(K_S-mN)\cdot N<0$, we have $h^2(S,mN)=0$, so Riemann--Roch gives
  \[
    h^0(S,mN)\geq\chi(S,\mathcal O_S(mN))
      =\chi(S,\mathcal O_S)-\frac m2K_S\cdot N\to\infty.
  \]
  For $m$ sufficiently large, the moving part $A$ of $|mN|$ satisfies $N\cdot A=0$.
  The Hodge index theorem and movability give $A^2=0$, so $|A|$ has no base points.
  Its Stein factorization defines a morphism $q\colon S\to D$ to a regular projective curve, with $N$ numerically proportional to a fiber of $q$.
  The generic fiber $C$ is regular with $H^0(C,\mathcal O_C)=K(D)$, and
  \[
    \deg K_C=-\deg\Delta|_C<0.
  \]
  Thus $p_a(C)=0$ and $\deg\Delta|_C=2$.
  Running a $K_S$-MMP over $D$ and applying Lemma~\ref{lemm:descent-div}\textup{(2)}, we obtain a numerically trivial line bundle $M_D$ on $D$ such that
  \[
    s^*M\simeq q^*M_D.
  \]

  Since $(s^*D_B)\cdot N>0$ and $N$ is numerically proportional to a fiber of $q$, the divisor $s^*D_B$ has a $q$-horizontal prime component $\Gamma$.
  The morphisms used below fit into the commutative diagram
  \[
    \begin{tikzcd}[column sep=large,row sep=large]
      \Gamma^\nu \ar[r] \ar[dr] & S \ar[r,"s"] \ar[d,"q"] & B_1\rlap{\,,} \\
      & D &
    \end{tikzcd}
  \]
  where $\Gamma^\nu\to S$ is the normalization of $\Gamma$ followed by the inclusion.
  Since $\Delta\geq s^*D_B$, the finite morphism $\Gamma^\nu\to D$ has degree
  \[
    1\leq d=\deg_{K(D)}(\Gamma|_C)\leq\deg_{K(D)}(\Delta|_C)=2.
  \]
  The curve $\Gamma$ is $s$-vertical, so $s^*M|_{\Gamma^\nu}$ is trivial, being pulled back from a closed point of $B_1$.
  Restricting $s^*M\simeq q^*M_D$ to $\Gamma^\nu$ and applying Lemma~\ref{lemma-finite-Cartier}\,\textup{(1)} gives $dM_D\sim0$, hence $2M_D\sim0$.
  Since $s_*\mathcal O_S=\mathcal O_{B_1}$, pullback is injective and $2M\sim0$.
  This finishes the proof of this subcase and thus the whole proof.
\end{proof}

\section{Twisted covers and triviality in large characteristic}\label{sec:case-C-p-ge7}
In this section, we prove that every numerically trivial line bundle on a surface as in Theorem~\ref{thm:main}\textup{(C)} is trivial when $p\ge7$.

\subsection{The \texorpdfstring{$t$}{t}-twisted \texorpdfstring{$\mu_p$}{mu-p}-cover}\label{sec:generically-twisted-mu-p-cover}
Let $k$ be a field of characteristic $p>0$, and let $X$ be a $k$-scheme.
Let $L$ be a line bundle on $X$ such that $L^{\otimes p}\simeq\mathcal O_X$. We fix an isomorphism
\[
  \tau\colon L^{\otimes p}\xrightarrow{\sim}\mathcal O_X.
\]
Let $t$ be transcendental over $k$, set $K=k(t)$, and write
$X_K=X\times_k K$.  Denote the pullbacks of $L$ and $\tau$ to
$X_K$ by $L_K$ and $\tau_K$, respectively, and define the trivialization
\[
  \tau_t\colon L_K^{\otimes p}\xrightarrow{\sim}\mathcal O_{X_K},\qquad v\longmapsto t^{-1}\tau_K(v).
\]
Regarding $\tau_t$ as a nowhere vanishing section of $L_K^{-p}=\mathcal H\!om(L_K^{\otimes p},\mathcal O_{X_K})$, let $\lambda_t\colon L_K^{-p}\xrightarrow{\sim}\mathcal O_{X_K}$ be the isomorphism with $\lambda_t(\tau_t)=1$.
We define an $\mathcal O_{X_K}$-algebra
\[
  \mathcal A_{L,t}:=\bigoplus_{i=0}^{p-1}L_K^{-i}
\]
by taking the multiplication to be the usual tensor product on local sections of degrees $i$ and $j$ with $i+j<p$, and the map
\[
  L_K^{-i}\otimes L_K^{-j}
    \simeq
  L_K^{-(i+j-p)}\otimes L_K^{-p}
    \xrightarrow{\,1\otimes\lambda_t\,}
  L_K^{-(i+j-p)}
\]
when $i+j\geq p$.
Let
\[
  \pi\colon\widehat Y:=\operatorname{Spec}_{X_K}\mathcal A_{L,t}\longrightarrow X_K.
\]
The $\mathbb Z/p\mathbb Z$-grading induces a $\mu_p$-action on
$\widehat Y$, and $\pi$ is a $\mu_p$-torsor.  In
particular, it is finite flat of degree $p$.
We call $\pi$ the \emph{$t$-twisted $\mu_p$-cover} associated with $(L,\tau)$.

\smallskip
To describe it locally,
let $U=\operatorname{Spec}R\subset X$ be an open subset on which $L$
is free, and choose a basis $e$ of $L|_U$.  Its dual basis is the
section
\[
  z=e^\vee\in H^0(U,L^{-1}),\qquad z(e)=1.
\]
Thus $z(ae)=a$ for every local function $a$, and $z^{\otimes i}$ is a
basis of $L^{-i}|_U$.  Write
\[
  \tau(e^{\otimes p})=u\in R^\times.
\]
After passing to $U_K$, we have $\tau_t(e^{\otimes p})=t^{-1}u$, that is,
\[
  \tau_t=t^{-1}u z^{\otimes p}
\]
as a section of $L_K^{-p}$.
Consequently, $\lambda_t(z^{\otimes p})=tu^{-1}$, so the degree-one
element $z\in L_K^{-1}$ satisfies $z^p=tu^{-1}$ in the covering
algebra.  Hence the cover over $U_K$ is
\[
  \operatorname{Spec}
    \bigl((R\otimes_kK)[z]/(z^p-tu^{-1})\bigr).
\]

\subsection{Triviality in characteristic \texorpdfstring{$p\ge7$}{p >= 7}}\label{sec:triviality-p-ge7}%
\begin{proposition}
  \label{prop:generically-twisted-mu-p-cover}
  Let $k$ be a field of characteristic $p>0$, and let $X$ be a regular projective $k$-variety such that $H^0(X,\mathcal O_X)=k$.
  Let $L$ be a line bundle on $X$ of exact order $p$, and let
  \[
    \pi\colon \widehat Y
      =\operatorname{Spec}_{X_K}\mathcal A_{L,t}\longrightarrow X_K
  \]
  be the $t$-twisted $\mu_p$-cover associated with $(L,\tau)$ for an isomorphism $\tau\colon L^{\otimes p}\xrightarrow{\sim}\mathcal O_X$.
  Then
  \begin{enumerate}[label=\textup{(\arabic*)}]
    \item $\widehat Y$ is a regular projective variety over $K$ with $H^0(\widehat Y,\mathcal O_{\widehat Y})=K$,
    \item $\omega_{\widehat Y}\simeq\pi^*\omega_{X_K}$ and $\pi^*L_K\simeq\mathcal O_{\widehat Y}$.
  \end{enumerate}
  Assume furthermore that $X_{\overline{k}}$ is integral and $\nu^*L_{\overline{k}}\simeq\mathcal O_Z$, where $\nu\colon Z=X_{\overline{k}}^\nu\to X_{\overline{k}}$ is the normalization.
  Then
  \begin{enumerate}[label=\textup{(\arabic*)},resume]
    \item $\widehat Y$ is geometrically non-reduced over $K$.
  \end{enumerate}
\end{proposition}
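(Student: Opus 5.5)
The plan is to work locally with the explicit description $\operatorname{Spec}\bigl((R\otimes_kK)[z]/(z^p-tu^{-1})\bigr)$ for (1), use the $\mu_p$-torsor structure for (2), and pass to $\bar k(t^{1/p})$ for (3).

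For (1), fix a point $y\in\widehat Y$ over $x\in X_K$, and let $x_0\in X$ be its image. Since $X$ is regular and $K/k$ is separable, $X_K$ is regular, so $\mathcal O_{X_K,x}$ is a regular local ring with residue field $\kappa(x)$. The fiber of $\pi$ over $x$ is $\operatorname{Spec}\kappa(x)[z]/(z^p-c)$, where $c=tu^{-1}(x)$. If $c\notin\kappa(x)^p$, the fiber is a field, $\mathfrak m_x$ generates $\mathfrak m_y$, and $\mathcal O_{\widehat Y,y}$ is regular, being flat over $\mathcal O_{X_K,x}$ with regular closed fiber.

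If $c\in\kappa(x)^p$, write $c=w^p$ and lift $w$ to $\tilde w\in\mathcal O_{X_K,x}$. Put $g:=tu^{-1}-\tilde w^p\in\mathfrak m_x$. Then $\mathcal O_{\widehat Y,y}=\mathcal O_{X_K,x}[z']/(z'^p-g)$ localized, with $z'=z-\tilde w$. This ring is regular if and only if $g\notin\mathfrak m_x^2$, equivalently $dg\neq0$ in $\mathfrak m_x/\mathfrak m_x^2$.

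This is the key step, and I expect it to be the main obstacle. I would use the derivation $\partial_t$ of $K$ over $k$, which extends to $X_K=X\times_kK$ and kills $\tilde w^p$. Then $\partial_t g=u^{-1}$ is a unit. If $g\in\mathfrak m_x^2$, we would get $\partial_t g\in\mathfrak m_x$, a contradiction; so $g\notin\mathfrak m_x^2$ and $\widehat Y$ is regular. Some care is needed because $x$ need not be closed and $K$-derivations do not see $t$, but $\partial_t$ is a genuine derivation of $\mathcal O_{X_K}$, so the argument goes through.

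Integrality and $H^0$ are then settled together. We have $H^0(\widehat Y,\mathcal O)=\bigoplus_i H^0(X_K,L_K^{-i})$. For $0<i<p$, $L^{-i}$ is numerically trivial and nontrivial because $\operatorname{ord}L=p$, so it has no nonzero sections; the same holds for $L_K^{-i}$ by flat base change. Hence $H^0=K$. Since $\widehat Y$ is regular, it is a disjoint union of integral components, and $H^0=K$ forces it to be connected, hence integral. Projectivity is clear, since $\pi$ is finite.

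For (2), $\pi$ is a $\mu_p$-torsor, hence finite flat with $\pi_*\mathcal O=\mathcal A_{L,t}$. The relative dualizing sheaf is $\mathcal Hom(\mathcal A,\mathcal O)\simeq\mathcal A\otimes L_K^{p-1}$, and as an $\mathcal A$-module this is free generated by the dual of the top degree piece. Equivalently, locally $\omega_{\widehat Y/X_K}$ is generated by $dz$-type data with the unit $pz^{p-1}$ replaced via the trace form, and since the cover is a torsor the trace pairing is perfect. This gives $\omega_{\widehat Y}\simeq\pi^*\omega_{X_K}$. For the second claim, the degree-one piece $z$ is a global section of $\pi^*L_K^{-1}$, and it is nowhere vanishing because $z^p=tu^{-1}$ is a unit; hence $\pi^*L_K\simeq\mathcal O$.

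For (3), I would base change to $\bar K\supseteq\bar k(t^{1/p})$. There, with $s=t^{1/p}$, the cover becomes the $\mu_p$-cover of $X_{\bar K}$ given by $L$ and $s^p\tau$, which is isomorphic to the untwisted cover of $(L,\tau)$. Pulling back along $\nu_{\bar K}\colon Z_{\bar K}\to X_{\bar K}$, the bundle $L$ becomes trivial, and $\tau$ becomes a unit $v$ on the proper normal integral $Z_{\bar K}$, hence a constant in $\bar K$. The cover pulled back to $Z_{\bar K}$ is therefore $Z_{\bar K}[z]/(z^p-v)$ with $v$ a $p$-th power in $\bar K$, so it equals $Z_{\bar K}[z']/(z'^p)$, which is non-reduced. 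Since $\nu$ is finite birational, $\widehat Y_{\bar K}\times_{X_{\bar K}}Z_{\bar K}$ agrees with $\widehat Y_{\bar K}$ over a dense open set, so $\widehat Y_{\bar K}$ is non-reduced at its generic point. Thus $\widehat Y$ is geometrically non-reduced.
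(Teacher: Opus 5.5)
Your plan is sound. Parts (2) and (3) are essentially the paper's argument. In (2) you use the self-duality of $\mathcal A_{L,t}$ and the unit $z$ to trivialize $\pi^*L_K$. In (3) you show the generic covering algebra is non-reduced once the constant becomes a $p$-th power over an algebraically closed field. Your preliminary untwisting by $t^{1/p}$ is harmless but not needed, since the constant $c\in\Omega^\times$ already absorbs $t$.

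The genuine difference is the regularity part of (1). You argue pointwise. If the fibre $\kappa(x)[z]/(z^p-c)$ is a field, you conclude by flatness. Otherwise you reduce to showing $g=tu^{-1}-\tilde w^p\notin\mathfrak m_x^2$. This follows because the $k$-derivation $\partial_t$ of $\mathcal O_{X_K,x}$ kills $\tilde w^p$, maps $\mathfrak m_x^2$ into $\mathfrak m_x$, and sends $g$ to the unit $u^{-1}$. This is correct.

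The paper avoids the case distinction. It notes that $(R\otimes_kK)[z]/(z^p-tu^{-1})$ is a localization of $R[t,z]/(z^p-tu^{-1})$, which is isomorphic to the polynomial ring $R[z]$ via $t\mapsto uz^p$. That gives regularity in one line. Your version is longer but makes the mechanism explicit: the transcendental $t$ supplies a derivation that does not kill the branch function.

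One justification in (2) is wrong and must be replaced. The trace pairing of $\mathcal A_{L,t}$ over $\mathcal O_{X_K}$ is not perfect; it is identically zero. Indeed $\operatorname{Tr}(1)=p=0$, and multiplication by $z^i$ with $0<i<p$ shifts the grading by $i\not\equiv 0$, so its matrix has no diagonal entries and $\operatorname{Tr}(z^i)=0$. Likewise $pz^{p-1}=0$, so there is no Jacobian unit to adjust.

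What is perfect is the pairing $(a,b)\mapsto\operatorname{pr}_0(ab)$ used in the paper. It is perfect because $\lambda_t\colon L_K^{-i}\otimes L_K^{-(p-i)}\to\mathcal O_{X_K}$ is an isomorphism for $0<i<p$. Alternatively, use the top-degree projection $\mathcal A_{L,t}\to L_K^{-(p-1)}$. That is what your ``dual of the top degree piece'' amounts to, and it yields $\mathcal{H}om_{X_K}(\mathcal A_{L,t},\mathcal O_{X_K})\simeq\mathcal A_{L,t}\otimes L_K^{p-1}$ as $\mathcal A_{L,t}$-modules.

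Either way $\omega_{\widehat Y/X_K}$ is trivial; in the second version you also need $\pi^*L_K\simeq\mathcal O_{\widehat Y}$. Duality for the finite flat morphism $\pi$ then gives $\omega_{\widehat Y}\simeq\pi^*\omega_{X_K}$. With this correction your proof is complete.
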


\begin{proof}
  (1) The purely transcendental extension $k\subset K$ is separable, so $X_K$ is regular.
  Let $U=\operatorname{Spec}R\subset X$ be an affine open on which $L$ is free, and let $e$, $z=e^\vee$, and $u=\tau(e^{\otimes p})\in R^\times$ be as in Subsection~\ref{sec:generically-twisted-mu-p-cover}.
  By the local description,
  \[
    \Gamma(U_K,\mathcal A_{L,t}) =\bigoplus_{i=0}^{p-1}(R\otimes_kK)z^i \simeq (R\otimes_kK)[z]/(z^p-tu^{-1}).
  \]
  Since $R\otimes_kK$ is the localization of $R[t]$ at $k[t]\setminus\{0\}$, the coordinate ring $(R\otimes_kK)[z]/(z^p-tu^{-1})$ of $\pi^{-1}(U_K)$ is a localization of $R[t,z]/(z^p-tu^{-1})$.
  The latter ring is isomorphic to $R[z]$ via $t\mapsto uz^p$, which is regular since $U$ is open in the regular variety $X$, so $\pi^{-1}(U_K)$ is regular.
  These inverse images cover $\widehat Y$, proving its regularity.
  Finally, $\pi$ is finite, hence projective, and $X_K$ is projective over $K$.
  Their composition shows that $\widehat Y$ is projective over $K$.

  Next, for $1\le i\le p-1$, the line bundle $L^{-i}$ is nontrivial and $(L^{-i})^{\otimes p}\simeq\mathcal O_X$, so $H^0(X,L^{-i})=0$.
  Therefore $H^0(X_K,L_K^{-i}) = 0$.
  Since $\pi$ is affine and $\pi_*\mathcal O_{\widehat Y}=\mathcal A_{L,t}$, we obtain \[
    H^0(\widehat Y,\mathcal O_{\widehat Y}) =\bigoplus_{i=0}^{p-1} H^0(X_K,L_K^{-i})=K.
  \]
  Thus $\widehat Y$ is connected, hence integral because it is regular.

  \smallskip
  (2) We first compute the canonical sheaf of $\widehat Y$.
  Multiplication followed by projection onto the degree-zero summand defines an $\mathcal O_{X_K}$-bilinear pairing
  \[
    \mathcal A_{L,t}\otimes\mathcal A_{L,t}
      \longrightarrow\mathcal O_{X_K},
    \qquad a\otimes b\longmapsto\operatorname{pr}_0(ab).
  \]
  This pairing is perfect, because $\lambda_t\colon L_K^{-i}\otimes L_K^{-(p-i)}=L_K^{-p}\to\mathcal O_{X_K}$ is an isomorphism for $1\le i<p$.
  Therefore the induced map \[
    \mathcal A_{L,t}\longrightarrow
      \mathcal H\!om_{X_K}(\mathcal A_{L,t},\mathcal O_{X_K}),
    \qquad
    a\longmapsto\bigl(b\mapsto\operatorname{pr}_0(ab)\bigr),
  \]
  is an isomorphism of $\mathcal A_{L,t}$-modules, where $\mathcal A_{L,t}$ acts on the target by precomposition with multiplication.
  Since $\pi\colon\widehat Y\to X_K$ is a finite flat morphism of regular varieties, duality for finite morphisms gives
  \[
    \pi_*\omega_{\widehat Y}
      \simeq
    \mathcal H\!om_{X_K}(\mathcal A_{L,t},\omega_{X_K}).
  \]
  The algebra $\mathcal A_{L,t}$ is locally free over $\mathcal O_{X_K}$, so its self-duality gives
  \begin{align*}
    \mathcal H\!om_{X_K}(\mathcal A_{L,t},\omega_{X_K})
      &\simeq
        \mathcal H\!om_{X_K}(\mathcal A_{L,t},\mathcal O_{X_K})
          \otimes\omega_{X_K}\\
      &\simeq \mathcal A_{L,t}\otimes\omega_{X_K}
       \simeq \pi_*\pi^*\omega_{X_K}.
  \end{align*}
  These are isomorphisms of $\mathcal A_{L,t}$-modules.
  Since $\pi_*$ identifies quasi-coherent $\mathcal O_{\widehat Y}$-modules with quasi-coherent $\mathcal A_{L,t}$-modules, we obtain
  \[
    \omega_{\widehat Y}\simeq\pi^*\omega_{X_K}.
  \]

  Next, we trivialize $\pi^*L_K$.
  Under the adjunction $\operatorname{Hom}_{\mathcal O_{\widehat Y}}(\pi^*L_K^{-1},\mathcal O_{\widehat Y})=\operatorname{Hom}_{\mathcal O_{X_K}}(L_K^{-1},\pi_*\mathcal O_{\widehat Y})$, the inclusion of the degree-one summand $L_K^{-1}\hookrightarrow\mathcal A_{L,t}=\pi_*\mathcal O_{\widehat Y}$ corresponds to the homomorphism
  \begin{equation}\label{eq:pullback-Linv}
    \pi^*L_K^{-1}\longrightarrow \mathcal O_{\widehat Y},\qquad a\otimes w\longmapsto aw,
  \end{equation}
  where $a$ is a local function on $\widehat Y$ and $w$ is a local section of $L_K^{-1}$, viewed as a function on $\widehat Y$.
  Over $\pi^{-1}(U_K)$, the line bundle $\pi^*L_K^{-1}$ has the basis $1\otimes z$, where $z=e^\vee$, and this map sends it to the element $z$ of the covering ring $(R\otimes_kK)[z]/(z^p-tu^{-1})$.
  The relation $z^p=tu^{-1}$ shows that $z$ is a unit, so \eqref{eq:pullback-Linv} is an isomorphism over $\pi^{-1}(U_K)$.
  These affine opens cover $\widehat Y$, so $\pi^*L_K^{-1}\simeq\mathcal O_{\widehat Y}$.
  Dualizing gives $\pi^*L_K\simeq\mathcal O_{\widehat Y}$.

  \smallskip
  (3) Let $\Omega$ be an algebraic closure of $K$, choose a $k$-embedding $\overline{k}\hookrightarrow\Omega$, and identify $\overline{k}$ with its image.
  By assumption, $X_\Omega$ is integral, and the base change $\nu_\Omega\colon Z_\Omega\to X_\Omega$ is its normalization.

  Fix an isomorphism $\alpha\colon\nu^*L_{\overline{k}}\xrightarrow{\sim}\mathcal O_Z$, and denote its base change by $\alpha_\Omega\colon L_\Omega|_{Z_\Omega}\xrightarrow{\sim}\mathcal O_{Z_\Omega}$.
  Set $s_Z:=\alpha_\Omega^{-1}(1)$.
  Let $\tau_Z\colon(L_\Omega|_{Z_\Omega})^{\otimes p}\xrightarrow{\sim}\mathcal O_{Z_\Omega}$ be the pullback of $\tau_t$ to $Z_\Omega$, and set
  \[
    c:=\tau_Z(s_Z^{\otimes p}) \in H^0(Z_\Omega,\mathcal O_{Z_\Omega})^\times =\Omega^\times.
  \]
  Let $F=\Omega(Z_\Omega)=\Omega(X_\Omega)$ be the common function field.
  At the generic point, the element $w=s_Z^\vee$ satisfies $w^p=c^{-1}$, so the generic covering algebra is \[
    F[w]/(w^p-c^{-1}).
  \]
  Since $\Omega$ is algebraically closed, we have $c^{-1}=a^p$ for some $a\in\Omega^\times$, so the generic covering algebra $F[w]/((w-a)^p)$ is non-reduced.
  Hence $(\widehat Y)_\Omega$ is non-reduced, that is, $\widehat Y$ is geometrically non-reduced over $K$.
\end{proof}

\begin{corollary}\label{cor:case-C-p-ge7}
  Let $k$ be a field of characteristic $p\ge7$, and let $X$ be a regular, geometrically non-normal projective $k$-surface such that $H^0(X,\mathcal O_X)=k$, $K_X\equiv0$. 
  Then every numerically trivial line bundle $L$ on $X$ is trivial.
\end{corollary}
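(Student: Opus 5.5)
The plan is to reduce to the two ingredients already sketched in the introduction: Proposition~\ref{prop:Alb-trivial} together with its proof, and the twisted cover of Proposition~\ref{prop:generically-twisted-mu-p-cover}.

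\emph{Reductions.} Since $p\ge7$, Proposition~\ref{prop:a_X-surjective}(1) shows that the Albanese variety of $X$ is trivial. By \cite[Theorem~1.2]{WZ26}, $X$ is geometrically integral. Pullback of line bundles along separable base field extensions is injective (\cite[Tag~0CC5]{StacksProject}). We may therefore pass to an $F$-finite separably closed model, as at the start of the proof of Proposition~\ref{prop:Alb-trivial}.

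\emph{The $p$-torsion structure of $L$.} By Proposition~\ref{prop:Alb-trivial}, $L^{\otimes p}\simeq\mathcal O_X$. Suppose $L\not\sim0$. Then $\ord(L)=p$, since $p$ is prime. Next we check $\nu^*L_{\bar k}\simeq\mathcal O_{X^\nu_{\bar k}}$. Set $W:=(X_{\bar k})^\nu$, which here equals $(X_{\bar k})_{\mathrm{red}}^\nu$ because $X_{\bar k}$ is integral. Apply Construction~\ref{cons:MMP} with $\ell=\bar k$. The canonical formula gives $K_W+(p-1)\mathfrak C\sim\psi^*K_X\equiv0$ with $\mathfrak C>0$, so $K_{\widetilde W}$ is not pseudo-effective. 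The MMP then ends in a Mori fiber space $Z\to B$ over $\bar k$. By Lemma~\ref{prop:Alb-eq-dim}(3), $\dim\Alb_{W/\bar k}=0$, so in the ruled case $B\simeq\mathbb P^1$. In the del Pezzo case $Z$ is a smooth rational surface. In either case $\widetilde W$ is rational, so every numerically trivial line bundle on it is trivial. Lemma~\ref{lemm:descent-div} together with injectivity of $\Pic(W)\to\Pic(\widetilde W)$ then gives $\nu^*L_{\bar k}\sim0$.

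\emph{Contradiction via the twisted cover.} Apply Proposition~\ref{prop:generically-twisted-mu-p-cover} to $(L,\tau)$. The resulting $\widehat Y$ is a regular projective surface over $K=k(t)$ with $H^0(\widehat Y,\mathcal O_{\widehat Y})=K$. It satisfies $\omega_{\widehat Y}\simeq\pi^*\omega_{X_K}$, so $K_{\widehat Y}\equiv0$ because $\pi$ is finite and $K_{X_K}\equiv0$. It is also geometrically non-reduced over $K$. Since $\operatorname{char}K=p\ge7$, this contradicts \cite[Theorem~1.2]{WZ26}, which asserts geometric integrality of regular projective $K$-trivial surfaces in this characteristic. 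Hence $L\sim0$.

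\emph{Main obstacle.} The step needing the most care is the verification $\nu^*L_{\bar k}\sim0$. One must confirm that the Albanese triviality transfers to $W$, so that the ruled base is $\mathbb P^1$ rather than an elliptic curve. Alternatively, one can extract this directly from the proof of Proposition~\ref{prop:Alb-trivial}, where, for $p\ge5$, the pullback of $L$ was trivialized already on the normalization of the height-one base change, and then iterate along the tower $k\subset k^{1/p}\subset\cdots$ up to $\ell_F(X/k)$. I would first try the geometric rationality argument above, since it avoids this iteration.
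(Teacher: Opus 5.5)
Your reductions, the use of Proposition~\ref{prop:Alb-trivial} to get $L^{\otimes p}\simeq\mathcal O_X$, and the final contradiction via Proposition~\ref{prop:generically-twisted-mu-p-cover} and \cite[Theorem~1.2]{WZ26} all agree with the paper. The gap is in your verification of $\nu^*L_{\bar k}\simeq\mathcal O$. From $\dim\Alb_{W/\bar k}=0$ (Lemma~\ref{prop:Alb-eq-dim}(3)) you conclude that the ruled base $B$ is $\mathbb P^1$. But $B$ is the base of a ruling of the resolution $\widetilde W$, not of $W$, and Albanese triviality does not pass from a normal surface to its resolution. For example, the projective cone over an elliptic curve has trivial Albanese, while its minimal resolution is ruled over that elliptic curve.

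What you actually need is that the ruling $f\colon\widetilde W\to B$ descends to $W$, i.e.\ that every $\mu$-exceptional curve is $f$-vertical. This holds once the exceptional curves are known to be rational, and Lemma~\ref{lemm:height-n} gives this. Descend $\widetilde W$ to some $k^{1/p^n}$; this is possible because $k$ is separably closed, so $\bar k/k$ is purely inseparable. Then apply the lemma to the height-$n$ morphism $X^{1/p^n}\to(X_{k^{1/p^n}})^\nu_{\mathrm{red}}$ from the regular surface $X^{1/p^n}$, exactly as in the proof of Proposition~\ref{prop:geo-normal-alb}. If $B$ had positive genus, $f$ would then factor through $W$. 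The composite $W\to B\to\operatorname{Jac}(B)$ would be a nonconstant morphism to an abelian variety, contradicting $\dim\Alb_{W/\bar k}=0$. Without this step, your claim that $\widetilde W$ is rational is unjustified.

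The paper avoids this detour. It uses what you list as your fallback, but with no iteration. In the proof of Proposition~\ref{prop:Alb-trivial}, \eqref{eq:goal-C} already gives $\phi^*L\simeq\mathcal O_{X_1}$ on $X_1=(X_{k^{1/p}})^\nu_{\mathrm{red}}$ for $p\ge3$. The map $X^\nu_{\bar k}\to X$ factors through $X_1$, since it is a dominant morphism from a normal integral scheme to $X_{k^{1/p}}$. Hence $\nu^*L_{\bar k}$ is trivial immediately. Going further up the Frobenius tower is unnecessary. It would also not work directly, since $X_1$ need not be regular or $K$-trivial, so Proposition~\ref{prop:Alb-trivial} cannot be reapplied to it.
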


\begin{proof}
  We may assume that $k$ is separably closed.
  Since $-K_X$ is nef and $p\ge7$, \cite[Theorem~1.2]{WZ26} shows that $X$ is geometrically integral.
  We have $\dim\Alb_{X/k}=0$ by Proposition~\ref{prop:a_X-surjective} since $p\geq5$.
  It follows from Proposition~\ref{prop:Alb-trivial} that $L^{\otimes p}\simeq\mathcal O_X$.
  Moreover, \eqref{eq:goal-C} in the proof of Proposition~\ref{prop:Alb-trivial} gives $\phi^*L\simeq\mathcal O_Z$ for the natural morphism $\phi\colon Z:=X_{\bar k}^\nu\to X$.
  If $L$ were nontrivial, it would have exact order $p$.
  Proposition~\ref{prop:generically-twisted-mu-p-cover} would then produce a regular projective surface $\widehat Y$ over $K=k(t)$ such that $H^0(\widehat Y,\mathcal O_{\widehat Y})=K$
  and $K_{\widehat Y}\equiv\pi^*K_{X_K}\equiv0$, while $\widehat Y$ is geometrically non-reduced.
  This contradicts \cite[Theorem~1.2]{WZ26}.
  Hence $L\simeq\mathcal O_X$.
\end{proof}

\section{Examples}\label{sec:examples}
\subsection{Diagonal quotients in characteristics two and three}\label{sec:caseB-examples}%
By Proposition~\ref{prop:a_X-surjective}\,\textup{(4)}, every surface in Case~\textup{(B)} is essentially a diagonal quotient $(E\times C)/G$, where $E$ is the Albanese variety, $C$ is an Albanese fiber, and $G\subset E$ is a finite subgroup scheme. 
Table~\ref{tab:caseB-examples} lists four such examples $X=(C\times_kE)/G$.
They show that the bounds $12$ and $6$ in Case~\textup{(B)} are optimal for $p=2$ and $p=3$, respectively.

\par\medskip\noindent
\begin{minipage}{\linewidth}
  \centering
  \begin{tabular}{cccccc}
    \hline
    Example & $p$ & $k$ & $C$ & $G$ & $\ord(K_X)$\\
    \hline
    \ref{ex:caseB-char2-order4} & $2$ & $\mathbf F_2(a,b)$ & $y^2=ax^4+bz^4$ & $\mu_4$, non-reduced & $4$\\
    \ref{ex:caseB-char2-order3} & $2$ & $\mathbf F_2(s)$ & $y^2z=x^3+sz^3$ & $\mu_3$, \'{e}tale & $3$\\
    \ref{ex:caseB-order2} & $3$ & $\mathbf F_3(s)$ & $y^2z=x^3+sz^3$ & $\mu_2$, \'{e}tale & $2$\\
    \ref{ex:caseB-order3} & $3$ & $\mathbf F_3(s,t)$ & $sx^3+ty^3+z^3=0$ & $\mu_3$, non-reduced & $3$\\
    \hline
  \end{tabular}
  \captionof{table}{Examples in Case~\textup{(B)}; $C\subset\mathbb P^2_k$ except in Example~\ref{ex:caseB-char2-order4}, where $C\subset\mathbb P_k(1,2,1)$.}
  \label{tab:caseB-examples}
\end{minipage}
\par\medskip

\begin{lemma}\label{lem:canonical-character-descent}
  Let $G$ be a finite $k$-group scheme that is commutative, \'{e}tale, or linearly reductive, and let $q\colon Y\to X$ be a $G$-torsor between projective Gorenstein $k$-schemes such that $H^0(Y,\mathcal O_Y)=k$ and $\omega_Y\simeq\mathcal O_Y$.
  Then $G$ acts on the one-dimensional $k$-vector space $H^0(Y,\omega_Y)$ through a character $\chi\colon G\to\mathbb G_m$, and
  \[
    \ord(K_X)=\ord(\chi),
  \]
  where $\ord(\chi)$ is the least positive integer $m$ such that $\chi^m=1$.
\end{lemma}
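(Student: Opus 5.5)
The plan is to identify $\Pic(X)$-torsion classes pulling back to $\mathcal O_Y$ with characters of $G$, via descent along the torsor $q$, and then to compare the character attached to $\omega_X^{\otimes m}$ with $\chi^m$.

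\textbf{Step 1: an exact sequence.} Since $q$ is a $G$-torsor, line bundles on $X$ are the same as $G$-linearized line bundles on $Y$ (fppf descent). A $G$-linearization of $\mathcal O_Y$ is a $1$-cocycle $G\times Y\to\mathbb G_m$. Because $H^0(Y,\mathcal O_Y)=k$, the units of $G\times Y$ are the units of $\mathcal O(G)$, so such a cocycle is a character $G\to\mathbb G_m$. Two linearizations are isomorphic exactly when they differ by an element of $\operatorname{Aut}(\mathcal O_Y)=k^\times$, and $k^\times$ acts trivially on characters. This gives an exact sequence
\[
  0\longrightarrow\operatorname{Hom}(G,\mathbb G_m)\longrightarrow\Pic(X)\xrightarrow{q^*}\Pic(Y),
\]
in which a character $\psi$ goes to the line bundle $M_\psi$ obtained by descending $(\mathcal O_Y,\psi)$. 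The map is a homomorphism because tensor products of linearizations multiply the characters.

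\textbf{Step 2: locating $\omega_X$.} Since $q$ is flat with $G$-torsor structure, $\omega_{Y/X}\simeq q^*(\mathrm{Lie}\ \text{twist})$. More concretely, $\omega_Y\simeq q^*\omega_X\otimes\omega_{Y/X}$, and $\omega_{Y/X}\simeq q^*\omega_{G}$-type sheaf, which is trivial, so $q^*\omega_X\simeq\omega_Y\simeq\mathcal O_Y$. The natural $G$-linearization on $q^*\omega_X$ descends $\omega_X$. Transporting it to $\mathcal O_Y$ through a trivializing section $\sigma\in H^0(Y,\omega_Y)$, the linearization becomes $g\cdot\sigma=\chi(g)\sigma$, up to the contribution of the character $\det$ of $G$ acting on $\omega_{Y/X}$.

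\textbf{Step 3: the main obstacle.} The hard step is showing that this extra character is trivial, so that $\omega_X=M_\chi$. Equivalently, I need the canonical isomorphism $\omega_Y\simeq q^*\omega_X$ for a torsor to be $G$-equivariant, with $G$ acting on $H^0(Y,\omega_Y)$ by its natural action. Here the hypothesis on $G$ is used.

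For \'etale $G$, $q$ is \'etale, $\omega_Y=q^*\omega_X$ canonically and equivariantly, and there is nothing more to check.

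For commutative $G$, the adjoint action on $\mathrm{Lie}(G)$ is trivial. The relative dualizing sheaf of a torsor is $q^*$ of the descent of $\omega_{G}$ with its adjoint-twisted linearization, which is trivial in this case. I would check this using the finite-duality description $q_*\omega_{Y/X}=\mathcal H om(q_*\mathcal O_Y,\mathcal O_X)$ together with the invariant integral on $\mathcal O(G)$.

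For linearly reductive $G$, I would reduce to the two previous cases through the connected–\'etale decomposition, since the connected part of such a group is diagonalizable and hence commutative. If that reduction turns out to be delicate, I would instead use the modular function of $G$, which is trivial for linearly reductive groups.

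\textbf{Step 4: conclusion.} Once $\omega_X=M_\chi$ is established, injectivity of $\psi\mapsto M_\psi$ gives $\omega_X^{\otimes m}\simeq\mathcal O_X$ if and only if $\chi^m=1$. Hence $\ord(K_X)=\ord(\chi)$.
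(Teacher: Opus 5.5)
Your proposal is correct and follows the paper's route. In both, you descend along the torsor, reduce to showing that $\omega_{Y/X}$ in $\omega_Y\simeq q^*\omega_X\otimes\omega_{Y/X}$ has a $G$-invariant generator, and then read off $\operatorname{ord}(K_X)$ from $\chi^m=1$. Your invariant-integral generator is exactly the paper's degree-zero projection $\varepsilon$ in the case $G=\mu_n$, which is the only case the paper works out (it cites references for the others), and your sequence $0\to\operatorname{Hom}(G,\mathbb G_m)\to\operatorname{Pic}(X)\to\operatorname{Pic}(Y)$ is a repackaging of the paper's final step.
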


\begin{proof}
  Fix a generator $\eta$ of $\omega_Y$, so that $H^0(Y,\omega_Y)=k\eta$, and put $Y_T:=Y\times_kT$ for a $k$-scheme $T$.
  For every $T$ and every $\lambda\in G(T)$, the pullback $\lambda^*\eta$ is again a generator of $\omega_{Y_T/T}$, so $\lambda^*\eta=\chi(\lambda)\eta$ for some $\chi(\lambda)\in H^0(Y_T,\mathcal O_{Y_T}^\times)=\Gamma(T,\mathcal O_T)^\times$.
  This assignment is functorial in $T$ and satisfies $\chi(\lambda\mu)=\chi(\lambda)\chi(\mu)$, so it defines a character $\chi\colon G\to\mathbb G_m$.

  By fpqc descent 
  along the $G$-torsor $q$, a line bundle on $X$ is the same as a $G$-equivariant line bundle on $Y$.
  We first show that $q^*\omega_X\simeq\omega_Y$ as $G$-equivariant line bundles.
  By Grothendieck duality \cite[Tag~\href{https://stacks.math.columbia.edu/tag/0C10}{0C10}]{StacksProject}, the canonical isomorphism $\omega_Y\simeq q^*\omega_X\otimes\omega_{Y/X}$ is $G$-equivariant.
  Thus it suffices to show that $\omega_{Y/X}$ has a $G$-invariant generator.
  We only treat the case $G = \mu_n$, which suffices for the applications below, and refer to \cite[Tags~\href{https://stacks.math.columbia.edu/tag/0FWI}{0FWI} and~\href{https://stacks.math.columbia.edu/tag/0ATX}{0ATX}]{StacksProject} if $G$ is \'{e}tale, to \cite[Lemma~4.3.1(ii)]{Brion-equivariantly-normal} if $G$ is linearly reductive, and to \cite[\S\S4.1.4--4.1.5]{Vial-2026} if $G$ is commutative.

  Suppose $G = \mu_n$, and write $q_*\mathcal O_Y = \bigoplus_{i\in\mathbb Z/n\mathbb Z}\mathcal L_i$ for the grading, where $\mathcal L_0 = \mathcal O_X$.
  Let $\varepsilon\colon q_*\mathcal O_Y\to\mathcal O_X$ be the projection onto degree zero.
  As in the proof of Proposition~\ref{prop:generically-twisted-mu-p-cover}\,\textup{(2)}, the torsor grading makes the pairing $(b,c)\mapsto\varepsilon(bc)$ perfect.
  By duality, $q_*\omega_{Y/X}\simeq\mathcal{H}om_{\mathcal O_X}(q_*\mathcal O_Y,\mathcal O_X)$ as $q_*\mathcal O_Y$-modules, where $(b\cdot\varphi)(c) = \varphi(bc)$.
  Then the $q_*\mathcal O_Y$-linear map $q_*\mathcal O_Y\to q_*\omega_{Y/X}$ sending $1$ to $\varepsilon$ is an isomorphism, so $\varepsilon$ generates $\omega_{Y/X}$.
  Write $\varepsilon_T$ for the base change of $\varepsilon$ to a $k$-scheme $T$.
  For every $\lambda\in G(T)$ and every local section $\sum_i b_i$ of the base-changed graded algebra, where $b_i$ has degree $i$, we have $\lambda^*b_i = \lambda^i b_i$ and hence
  \[
    \varepsilon_T\!\Bigl(\lambda^*\sum_i b_i\Bigr)
    = \varepsilon_T\!\Bigl(\sum_i\lambda^i b_i\Big)
    = b_0
    = \varepsilon_T\!\Bigl(\sum_i b_i\Bigr).
  \]
  Thus $\varepsilon$ is a $G$-invariant generator of $\omega_{Y/X}$, so $q^*\omega_X\simeq\omega_Y$ as $G$-equivariant line bundles.

  Consequently, for every $m\ge1$, the line bundle $\omega_X^{\otimes m}$ is trivial if and only if $\omega_Y^{\otimes m}\simeq\mathcal O_Y$ as $G$-equivariant line bundles, that is, if and only if $\omega_Y^{\otimes m}$ has a $G$-invariant generator.
  Every generator of $\omega_Y^{\otimes m}$ is a constant multiple of $\eta^{\otimes m}$, so such an invariant generator exists if and only if $\chi^m=1$.
  This proves the assertion.
\end{proof}

\medskip
For a scheme $Y$ over a field $k$ and a point $y\in Y(k)$, write $y_S\colon S\to Y_S$ for the induced section over a $k$-scheme $S$.
The \emph{stabilizer} $\operatorname{Stab}(y)\subset\operatorname{Aut}_k(Y)$ is the subgroup functor defined by
\[
  \operatorname{Stab}(y)(S)
  :=\{\varphi\in\operatorname{Aut}_S(Y_S)\mid\varphi\circ y_S=y_S\}.
\]
Given an action $\alpha\colon H\to\operatorname{Aut}_k(Y)$ of a $k$-group scheme $H$, the \emph{fixed point functor} $Y^H\subset Y$ is defined on each $k$-scheme $S$ by
\[
  Y^H(S):=\{x\in Y(S)\mid h\cdot x_{S'}=x_{S'}
  \text{ for every }S'\to S\text{ and every }h\in H(S')\},
\]
where $x_{S'}$ denotes the pullback of $x$ to $S'$.
The action fixes $y$ scheme-theoretically if $y\in Y^H(k)$, or equivalently, if $\alpha(H)\subseteq\operatorname{Stab}(y)$; see \cite[Definition~2.2.5 and Theorem~2.2.6]{Brion-structure-2017}.

\begin{lemma}[Regular geometrically non-normal diagonal quotients]\label{prop:diagonal-quotient-nonnormal}
  Let $C$ be a regular curve over $k$.
  Let $G:=\mu_n$ act on $C$, and let $E\to B$ be a $G$-torsor with $E$ a smooth curve over $k$.
  Then the quotient $X:=(C\times_kE)/G$ exists as a $k$-scheme.
  Write $\alpha\colon G_{\bar k}\to\operatorname{Aut}(C_{\bar k})$ for the induced action.
  \begin{enumerate}[label=\textup{(\arabic*)}]
    \item The scheme $X$ is regular.
      If $C$ is geometrically reduced, then $X$ is geometrically reduced.
    \item Assume that $C$ is geometrically reduced and that $C_{\bar k}$ has a unique singular point $c$.
      If
      \[
        \alpha(G_{\bar k})\subseteq\operatorname{Stab}(c),
      \]
      then $X$ is geometrically non-normal.
  \end{enumerate}
\end{lemma}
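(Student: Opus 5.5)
Existence is handled first. The diagonal action of $G=\mu_n$ on $C\times_kE$ is free because it is free on the factor $E$: the morphism $C\times_kE\to C\times_k B$ base-changed along $\operatorname{pr}_E$ makes $C\times_kE$ a $G$-torsor over $X$. Concretely, we form the contracted product $X:=C\times^G E$, which is the fppf sheaf quotient. Since $E\to B$ is a $G$-torsor with $G$ finite flat, and $C\times_k E$ is quasi-projective ($C$ is a curve, hence quasi-projective), the quotient exists as a scheme. The map $X\to B$ is fppf-locally on $B$ isomorphic to $C\times B$: after the base change $E\to B$ it becomes $C\times_kE\to E$. So $X\to B$ is a flat, finitely presented morphism which is a twisted form of $C\times_k B\to B$, and $q\colon C\times_kE\to X$ is a $G$-torsor, hence finite flat.

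\textbf{(1).} For regularity, $q$ is faithfully flat and $C\times_kE$ is regular, since $E$ is smooth over $k$ and $C$ is regular, so the product is smooth over the regular scheme $C$. Regularity descends along flat local homomorphisms (if $A\to A'$ is flat local and $A'$ is regular, then $A$ is regular), so $X$ is regular. For geometric reducedness, we base change to $\bar k$: $q_{\bar k}\colon C_{\bar k}\times E_{\bar k}\to X_{\bar k}$ is still faithfully flat. If $C$ is geometrically reduced, the product of the reduced $\bar k$-schemes $C_{\bar k}$ and $E_{\bar k}$ (the latter smooth) is reduced, and reducedness descends along faithfully flat maps because $\mathcal O_X\hookrightarrow q_*\mathcal O$. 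Hence $X_{\bar k}$ is reduced.

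\textbf{(2).} We work over $\bar k$, where $G_{\bar k}=\mu_n$, and we may assume $C_{\bar k}$ is reduced with unique singular point $c$. The singular locus of $C_{\bar k}\times E_{\bar k}$ is $\{c\}\times E_{\bar k}$, and since $q_{\bar k}$ is flat with smooth fibres, its image $\Sigma\subset X_{\bar k}$ is a curve, which is a codimension-one subset. The aim is to show that $X_{\bar k}$ is not normal along $\Sigma$; by Serre's criterion it suffices to show that $X_{\bar k}$ fails $R_1$ there. Non-regularity of a local ring of $X_{\bar k}$ at the generic point of $\Sigma$ would follow directly if regularity descended in this direction, but it does not, so we use the stabilizer hypothesis instead. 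Since $\alpha(G_{\bar k})\subseteq\operatorname{Stab}(c)$, the point $c$ is a scheme-theoretic fixed point, and the $G$-action on $C_{\bar k}$ preserves the closed subscheme $\{c\}$, hence also the local ring $\mathcal O_{C_{\bar k},c}$ and its completion. We then compare with the flat map $X_{\bar k}\to B_{\bar k}$. Fppf-locally over $B_{\bar k}$, $X_{\bar k}$ is $C_{\bar k}\times B_{\bar k}$, and the fixedness of $c$ ensures that the twisted form $C\times^G E$ contains the untwisted section $\{c\}\times^G E= B$ (more precisely $(\{c\}\times E)/G\cong E/G=B$). We therefore obtain a closed subscheme $\Sigma\cong B_{\bar k}$ along which $X_{\bar k}\to B_{\bar k}$ has fibres isomorphic to $C_{\bar k}$ near $c$. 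The fibre of $X_{\bar k}\to B_{\bar k}$ through a point of $\Sigma$ is $C_{\bar k}$, which is singular at $c$. A flat family over a smooth curve whose total space is regular at a point $x$ has fibre through $x$ that is a Cartier divisor in a regular surface, so this does not immediately contradict anything. The cleaner route is therefore to pass to the complete local ring of $X_{\bar k}$ at the generic point $\xi$ of $\Sigma$. Étale/fppf locally, $\widehat{\mathcal O}_{X_{\bar k},\xi}$ is the completion of $\mathcal O_{C_{\bar k},c}\otimes_{\bar k}\kappa(\xi)$, which is a one-dimensional non-regular local ring. Equivalently, the generic fibre of $\Sigma$-transversal slices is $C_{\kappa(\xi)}$ at $c$, which is non-normal. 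This gives failure of $R_1$ at $\xi$, so $X_{\bar k}$ is non-normal.

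The main obstacle is making this last step precise: showing that the local ring of $X_{\bar k}$ at $\xi$ is isomorphic, after a faithfully flat extension, to $\mathcal O_{C_{\bar k},c}\otimes\kappa(\xi)$. The step where fixedness is needed is trivializing the torsor near $c$ compatibly with the action. First, I would use that $\mu_n$ fixing $c$ scheme-theoretically acts on $\mathcal O_{C_{\bar k},c}$ by a grading, and write $\mathcal O_{X_{\bar k}}$ near $\Sigma$ as degree-zero invariants of $\mathcal O_{C}\otimes\mathcal O_E$ with $\mathcal O_E=\bigoplus\mathcal L_i$. Second, localized at $\xi$, the line bundles $\mathcal L_i$ are trivial. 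Third, since $E\to B$ is a torsor, each $\mathcal L_i$ contains a unit $e_i$ in degree $i$, and $f\mapsto\sum f_i e_{-i}$ then identifies the invariants with $\mathcal O_{C,c}\otimes\mathcal O_{B,\xi}$, which is non-normal.
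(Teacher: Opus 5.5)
\textbf{Gap in part (2).} Your existence argument and part (1) are fine and agree with the paper: the diagonal action is free, the quotient by a finite flat group scheme exists, and regularity and reducedness descend along the faithfully flat torsor $q$. One side remark is wrong: $q_{\bar k}$ does not have smooth fibres when $p\mid n$. You only need $\Sigma$ to be a curve, so this is harmless.

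In (2) you pick the right point: the generic point $\xi$ of $\Sigma=(\{c\}\times E_{\bar k})/G\simeq B_{\bar k}$, which is the image of the paper's section $s$. The gap is in the step meant to show that $\mathcal O_{X_{\bar k},\xi}$ is not regular.

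The map $f\mapsto\sum_i f_ie_{-i}$ is an isomorphism of modules, not of rings. Locally write $\mathcal O_E=A[r]/(r^n-u)$ with $u\in A^\times$. Then the invariants are $\bigoplus_{i=0}^{n-1}R_i\otimes A\,r^{-i}$, and $r^{-i}\cdot r^{-j}=u^{-1}r^{-(i+j-n)}$ whenever $i+j\geq n$. Normalizing so that $e_{-i}e_{-j}=e_{-(i+j)}$ for all $i,j\in\mathbb Z/n$ amounts to a degree-one unit whose $n$-th power is $1$. That is a trivialization of the torsor over $K(B_{\bar k})$, and none exists when $E_{\bar k}$ is connected: the generic fibre of $E\to B$ is then the spectrum of a field, not $\mu_n$.

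So the local ring at $\xi$ is a genuinely twisted form of $\mathcal O_{C_{\bar k},c}\otimes K(B_{\bar k})$; compare the equation $\eta^2-auw^4-b$ in Example~\ref{ex:caseB-char2-order4}. Both your identification and the ``\'etale/fppf locally'' description of the completion are therefore unproved exactly where the difficulty lies.

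\textbf{How to close it.} The paper avoids descending non-regularity altogether. Put $\ell:=K(B_{\bar k})$ (for a component). The section $s$ makes $\xi$ an $\ell$-rational point of the generic fibre $\overline X_\eta$. Over $\Omega=\bar\ell$ the torsor is trivial and $(\overline X_\eta)_\Omega\simeq C_\Omega$ with $\xi_\Omega\mapsto c_\Omega$, so $\overline X_\eta$ is not smooth at $\xi$. At a rational point this means $\mathcal O_{\overline X_\eta,\xi}=\mathcal O_{\overline X,\xi}$ is not regular (Stacks Project, Tag 00TV).

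Your own set-up can also be repaired in either of two ways.
\begin{enumerate}[label=(\roman*)]
\item The twist only multiplies homogeneous products by units of $\ell$. So the ideal $\mathfrak n=\bigoplus_i(\mathfrak m_c)_i\otimes \ell\,r^{-i}$ of the twisted ring satisfies $\dim_\ell\mathfrak n/\mathfrak n^2=\dim_{\bar k}\mathfrak m_c/\mathfrak m_c^2\geq2$, while the local ring at $\xi$ is one-dimensional.
\item Contrary to your remark, non-regularity does descend here. By the stabilizer hypothesis $q^{-1}(\Sigma)=\{c\}\times E_{\bar k}$ scheme-theoretically. So $q^{-1}(\xi)$ is the generic fibre of the reduced torsor $E_{\bar k}\to B_{\bar k}$, which is a product of fields. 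Hence the flat local map $\mathcal O_{X_{\bar k},\xi}\to\mathcal O_{C_{\bar k}\times E_{\bar k},\xi'}$ has regular closed fibre. The target is non-regular because $\mathcal O_{C_{\bar k},c}$ is, so the source is non-regular by the ascent theorem (Matsumura, Thm.~23.7). This is precisely where the stabilizer hypothesis is needed.
\end{enumerate}
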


\begin{proof}
  The diagonal $G$-action is free because $E\to B$ is a $G$-torsor.
  By \cite[Tag~\href{https://stacks.math.columbia.edu/tag/07S7}{07S7}]{StacksProject}, the quotient $X$ exists as a scheme and the quotient map $q\colon C\times_kE\to X$ is a $G$-torsor, hence finite and faithfully flat.
  Since $C$ is regular and $E$ is smooth over $k$, the product $C\times_kE$ is regular.
  Since $q$ is faithfully flat, $X$ is regular by descent.
  If $C$ is geometrically reduced, then $C_{\bar k}\times_{\bar k}E_{\bar k}$ is reduced, and reducedness descends to $X_{\bar k}$.
  This proves \textup{(1)}.

  For \textup{(2)}, work over $\bar k$ and write $\overline C:=C_{\bar k}$, $\overline E:=E_{\bar k}$, $\overline B:=B_{\bar k}$, $\overline G:=G_{\bar k}$, and $\overline X:=X_{\bar k}$.
  The fixed point $c$ gives a $\overline G$-equivariant section $\overline E\to\overline C\times_{\bar k}\overline E$, which descends to a section
  \[
    s\colon\overline B\longrightarrow\overline X
  \]
  of the natural morphism $f\colon\overline X\to\overline B$.
  The curve $\overline B$ is smooth, since geometric regularity descends through the torsor $\overline E\to\overline B$ by \cite[Tag~\href{https://stacks.math.columbia.edu/tag/07NH}{07NH}]{StacksProject}.
  The two projections induce the following diagram:
  \[
    \begin{tikzcd}
      \overline C \times_{\bar k} \overline E \rar["q_{\bar k}"] & \overline X \rar \dar["f"'] & \overline C/\overline G \\
      & \overline B \ar[u,bend right,"s"']
    \end{tikzcd}
  \]
  Let $\eta$ be the generic point of a component of $\overline B$, put $\ell:=\kappa(\eta)$, and let $\xi:=s(\eta)\in\overline X_\eta(\ell)$.
  Let $\Omega$ be an algebraic closure of $\ell$ and choose a point $e\in\overline E_\eta(\Omega)$.
  The point $e$ trivializes the torsor $(\overline E_\eta)_\Omega$, so the map $x\mapsto(x,e)$ followed by the quotient map gives an isomorphism $\overline C_\Omega\simeq(\overline X_\eta)_\Omega$.
  By the construction of $s$, this isomorphism sends $c_\Omega$ to $\xi_\Omega$.
  Thus $\overline X_\eta$ is not smooth at $\xi$.
  Since $\xi$ is $\ell$-rational, its local ring is not regular by \cite[Tag~\href{https://stacks.math.columbia.edu/tag/00TV}{00TV}]{StacksProject}.
  So $\overline X$ is not normal at $\xi$.

\end{proof}

\begin{remark}\label{rmk:diagonal-quotient-cusp}
  Under the assumptions on $C$ in Lemma~\ref{prop:diagonal-quotient-nonnormal}\textup{(2)}, every element of $G_{\bar k}(\bar k)$ fixes $c$, since automorphisms preserve singular points and $c$ is the only singular point.
  The stabilizer condition requires the orbit morphism $G_{\bar k}\to C_{\bar k}$, $g\mapsto g\cdot c$, to be constant with value $c$ as a morphism of schemes.
  If $G$ is \'{e}tale, then $G_{\bar k}$ is a disjoint union of copies of $\operatorname{Spec}\bar k$, so fixing $c$ on geometric points is enough to verify this condition.
  Thus, when $G$ is \'{e}tale, Lemma~\ref{prop:diagonal-quotient-nonnormal}\textup{(2)} applies as soon as $C$ is geometrically reduced and $C_{\bar k}$ has a unique singular point.
  For non-reduced $G$, the orbit morphism may be nonconstant even though every geometric point maps to $c$, so the stabilizer condition must be checked separately.
\end{remark}

\begin{example}[$p=2$ and $\ord(K_X)=4$]\label{ex:caseB-char2-order4}%
  Put $k:=\mathbf F_2(a,b)$, where $a,b$ are algebraically independent over $\mathbf F_2$.
  Let $C:=V(y^2-ax^4-bz^4)\subset\mathbb P_k(1,2,1)$.
  The curve $C$ is regular and integral, with $H^0(C,\mathcal O_C)=k$, $p_a(C)=1$, and $\omega_C\simeq\mathcal O_C$.
  Let $\mu_4$ act on $C$ by
  \[
    \lambda\cdot[x:y:z]=[\lambda x:y:z],
    \qquad \lambda^4=1.
  \]
  On $U:=C\cap\{z\ne0\}$, put $\xi:=x/z$, $\eta:=y/z^2$, $f:=\eta^2-a\xi^4-b$, and $\mathcal I:=(f)$.
  By adjunction,
  \[
    \omega_C|_U\simeq
    \left.\Omega^2_{\mathbb A^2_k/k}\right|_U
    \otimes_{\mathcal O_U}(\mathcal I/\mathcal I^2)^\vee.
  \]
  A global generator $\tau$ is represented on $U$ by $(d\xi\wedge d\eta)\otimes f^\vee$, where $f^\vee(f\bmod\mathcal I^2)=1$.
  The action sends $\xi$ to $\lambda\xi$ and fixes $\eta$, $f$, and $f^\vee$, so
  \[
    \lambda^*\tau
    =(\lambda\,d\xi\wedge d\eta)\otimes f^\vee
    =\lambda\tau.
  \]

  Choose an elliptic curve $E$ over $k$ together with an embedding $\mu_4\hookrightarrow E$ (for example, $E\colon y^2+xy=x^3+1$).
  Let $\mu_4$ act on $E$ by translation and set
  \[
    X:=(C\times_kE)/\mu_4.
  \]
  We have $H^0(X,\mathcal O_X)=k$.
  The quotient map is a $\mu_4$-torsor, $X$ is projective, and Lemma~\ref{prop:diagonal-quotient-nonnormal}\textup{(1)} gives regularity.

  Put $D:=E/\mu_4$.
  Choose a nonempty affine open $\operatorname{Spec}A\subset D$ and a unit $u\in A^\times$ such that the restriction of the torsor $E\to D$ has the form
  \[
    E_A:=E\times_D\operatorname{Spec}A
    \simeq\operatorname{Spec}\bigl(A[r]/(r^4-u)\bigr)
    \longrightarrow\operatorname{Spec}A.
  \]
  In this presentation, $\mu_4$ fixes $A$ and acts by $\lambda^*r=\lambda r$.
  Consider the diagonal $\mu_4$-action on $U\times_k E_A$, where $U=C\cap\{z\ne0\}$.
  Since $r$ is invertible, $w:=\xi/r$ is an invariant function, and the invariant subring of the coordinate ring of $U\times_k E_A$ is
  \[
      R_0:=\bigl(A[\xi,\eta,r]/(\eta^2-a\xi^4-b,\ r^4-u)\bigr)^{\mu_4} 
      \simeq A[w,\eta]/(\eta^2-auw^4-b).
  \]
  Thus $\operatorname{Spec}R_0$ is an affine open subset of $X$.
  Choose $\alpha,\beta\in\overline{k}$ with $\alpha^2=a$ and $\beta^2=b$.
  After base change to $\overline{k}$, this equation becomes
  \[
    (\eta+\beta)^2=\alpha^2u w^4.
  \]
  Put $v:=\eta+\beta$.
  Since $E_{\overline{k}}$ is reduced, $u\in K(D_{\overline{k}})$ is not a square, and thus $R_0\otimes_k\overline{k}$ is an integral domain.
  At the height-one prime $\mathfrak p:=(w,v)$ of $R_0\otimes_k\overline{k}$, the local ring is
  \[
    L:=(R_0\otimes_k\overline{k})_{\mathfrak p}
    \simeq K(D_{\overline{k}})[w,v]_{(w,v)}/(v^2-\alpha^2u w^4).
  \]
  The ring $L$ has dimension one and maximal ideal $\mathfrak m=(w,v)L$.
  Since the defining equation has no linear terms, the classes of $w$ and $v$ form a $K(D_{\overline{k}})$-basis of $\mathfrak m/\mathfrak m^2$.
  Thus $L$ has embedding dimension two and is not regular, so $X$ is geometrically non-normal.

  If $\sigma$ is an invariant differential on $E$, then $\tau\otimes\sigma$ generates $\omega_{C\times_kE}$ and satisfies
  \[
    \lambda^*(\tau\otimes\sigma)=\lambda(\tau\otimes\sigma).
  \]
  Thus the character is $\chi\colon\mu_4\to\mathbb G_m$, given by $\chi(\lambda)=\lambda$.
  This character has exact order four.
  By Lemma~\ref{lem:canonical-character-descent}, we have $\ord(K_X)=4$, so $X$ satisfies the hypotheses of Case~\textup{(B)}.
\end{example}

\begin{example}[$p=2$ and $\ord(K_X)=3$]\label{ex:caseB-char2-order3}%
  Put $k:=\mathbf F_2(s)$, and let
  \[
    C:=\Proj k[x,y,z]/(y^2z-x^3-sz^3).
  \]
  The curve $C$ is a regular plane cubic with $H^0(C,\mathcal O_C)=k$ and $\omega_C\simeq\mathcal O_C$.
  Let $G:=\mu_3$ act on $C$ by
  \[
    \lambda\cdot[x:y:z]=[\lambda x:y:z].
  \]
  Choose an elliptic curve $E$ over $k$ with an embedding $\mu_3\hookrightarrow E$ (for example, $E\colon y^2+y=x^3$), and let $G$ act on $E$ by translation.
  Set
  \[
    X:=(C\times_kE)/G.
  \]
  As in Example~\ref{ex:caseB-char2-order4}, the quotient $X$ is projective with $H^0(X,\mathcal O_X)=k$, and the projection induces a nonconstant morphism $X\to E/G$.
  By Lemma~\ref{prop:diagonal-quotient-nonnormal}\textup{(1)}, $X$ is regular.

  Choose $\beta\in\overline{k}$ with $\beta^2=s$.
  Then $C_{\overline{k}}$ has affine equation $(y+\beta)^2=x^3$, so it is reduced with a unique cusp at $c=(0,\beta)$.
  This cusp is fixed scheme-theoretically by $G_{\overline{k}}$, so $X$ is geometrically non-normal by Lemma~\ref{prop:diagonal-quotient-nonnormal}\textup{(2)}.

  By the same adjunction calculation as in Example~\ref{ex:caseB-char2-order4}, the character on $H^0(C,\omega_C)$ is $\lambda\mapsto\lambda$, of exact order three.
  Tensoring with an invariant differential on $E$ preserves this character, so by Lemma~\ref{lem:canonical-character-descent} we have $\ord(K_X)=3$.
\end{example}

\begin{example}[$p=3$ and $\ord(K_X)=2$]\label{ex:caseB-order2}%
  Let $k:=\mathbf F_3(s)$ and let
  \[
    C:=\Proj k[x,y,z]/(y^2z-x^3-sz^3).
  \]
  The curve $C$ is a regular plane cubic with $H^0(C,\mathcal O_C)=k$ and $\omega_C\simeq\mathcal O_C$.
  Let $G:=\mu_2$ act on $C$ by
  \[
    \lambda\cdot[x:y:z]=[x:\lambda y:z].
  \]
  Choose an elliptic curve $E$ over $k$ with an embedding $\mu_2\hookrightarrow E$, and let $G$ act on $E$ by translation.
  Set
  \[
    X:=(C\times_kE)/G.
  \]
  As in Example~\ref{ex:caseB-char2-order4}, the quotient $X$ is projective with $H^0(X,\mathcal O_X)=k$.
  By Lemma~\ref{prop:diagonal-quotient-nonnormal}\textup{(1)}, $X$ is regular.

  If $\alpha^3=s$ in $\overline{k}$, then $C_{\overline{k}}$ has affine equation $y^2=(x+\alpha)^3$, so it is reduced with a unique cusp at $c=(-\alpha,0)$.
  This cusp is fixed scheme-theoretically by $G_{\overline{k}}$, so $X$ is geometrically non-normal by Lemma~\ref{prop:diagonal-quotient-nonnormal}\textup{(2)}.

  By the same adjunction calculation as in Example~\ref{ex:caseB-char2-order4}, the character on $H^0(C,\omega_C)$ is $\lambda\mapsto\lambda$, of exact order two.
  Tensoring with an invariant differential on $E$ preserves this character, so by Lemma~\ref{lem:canonical-character-descent} we have $\ord(K_X)=2$.
  Thus $X$ satisfies the hypotheses of Case~\textup{(B)}.
\end{example}

\begin{example}[$p=3$ and $\ord(K_X)=3$]\label{ex:caseB-order3}%
  Let $k:=\mathbf F_3(s,t)$ and let
  \[
    C:=\Proj k[x,y,z]/(sx^3+ty^3+z^3).
  \]
  The curve $C$ is a regular integral plane cubic with $H^0(C,\mathcal O_C)=k$ and $\omega_C\simeq\mathcal O_C$.
  Let $G:=\mu_3$ act on $C$ by
  \[
    \lambda\cdot[x:y:z]=[\lambda x:y:z].
  \]
  Choose an elliptic curve $E$ over $k$ with an embedding $\mu_3\hookrightarrow E$, and let $G$ act on $E$ by translation.
  Set
  \[
    X:=(C\times_kE)/G.
  \]
  As in Example~\ref{ex:caseB-char2-order4}, the quotient $X$ is projective with $H^0(X,\mathcal O_X)=k$.
  By Lemma~\ref{prop:diagonal-quotient-nonnormal}\textup{(1)}, $X$ is regular.

  Put $D:=E/G$.
  As in Example~\ref{ex:caseB-char2-order4}, cover $D$ by nonempty affine opens $\operatorname{Spec}A$ on which the torsor has the form
  \[
    E_A:=E\times_D\operatorname{Spec}A
    \simeq\operatorname{Spec}\bigl(A[r]/(r^3-u)\bigr),
    \qquad u\in A^\times.
  \]
  The action fixes $A$ and satisfies $\lambda^*r=\lambda r$.
  For $U:=C\cap\{z\ne0\}$, put $v:=y/z$ and $w:=x/(rz)$ on $U\times_k E_A$.
  The invariant subring of the coordinate ring of $U\times_k E_A$ under the diagonal $G$-action is
  \[
    R_0:=A[v,w]/(su w^3+tv^3+1).
  \]
  Similarly, on $V:=C\cap\{y\ne0\}$, the coordinates $v:=z/y$ and $w:=x/(ry)$ give the invariant ring $R_1:=A[v,w]/(su w^3+t+v^3)$ for $V\times_k E_A$.
  The spectra of $R_0$ and $R_1$ cover $X\times_D\operatorname{Spec}A$.
  Choose $\alpha,\beta\in\overline{k}$ with $\alpha^3=s$ and $\beta^3=t$.
  After base change to $\overline{k}$, the defining equations of $R_0$ and $R_1$ become, respectively,
  \[
    \alpha^3u w^3+(\beta v+1)^3=0,
    \qquad
    \alpha^3u w^3+(v+\beta)^3=0.
  \]
  Since $E_{\overline{k}}$ is integral, $u\notin K(D_{\overline{k}})^3$, so both $R_0\otimes_k\overline{k}$ and $R_1\otimes_k\overline{k}$ are integral domains.  Hence $X$ is geometrically reduced.
  As in Example~\ref{ex:caseB-char2-order4}, the one-dimensional local ring $(R_0\otimes_k\overline{k})_{(w,\beta v+1)}$ has embedding dimension two, since its defining equation has no linear terms.
  Thus $X$ is geometrically non-normal.

  By the same adjunction calculation as in Example~\ref{ex:caseB-char2-order4}, the character on $H^0(C,\omega_C)$ is $\lambda\mapsto\lambda$, of exact order three.
  Tensoring with an invariant differential on $E$ preserves this character, so by Lemma~\ref{lem:canonical-character-descent} we have $\ord(K_X)=3$.
\end{example}

\subsection{A foliation quotient in characteristic five}\label{sec:caseC-order-five-example}%
The following example constructs a geometrically non-normal regular projective surface $Y$ in characteristic $5$ with $\ord(K_Y) = 5$.

\begin{example}\label{ex:case-C-p5}
  We first recall the surface \(S\) constructed in \cite[\S6.1]{WZ26}.
  Let
  \[
    \ell=\mathbf F_5(s_1,s_2),\qquad
    k=\ell(c_1,c_2),\qquad c_i^5=s_i.
  \]
  The construction starts with a regular projective toric \(k\)-surface \(\widetilde X\) carrying a rank-one foliation \(\mathcal D\subset T_{\widetilde X/\ell}\).
  Let \(\sigma\colon\widetilde X\to X\) be the contraction of two toric curves \(E_1\) and \(E_2\).
  Then \(X\) is a normal projective toric surface with a unique cyclic quotient singularity \(P\) of type \(\frac15(1,2)\), and \(\mathcal D\) descends to a foliation \(\mathcal F\) on \(X\).
  Let \(\pi\colon X\to S:=X/\mathcal F\) be the quotient and put \(s_0:=\pi(P)\).
  By \cite[Theorem~6.1]{WZ26}, \(S\) is a regular projective surface, geometrically non-reduced over \(\ell\), with
  \[
    H^0(S,\mathcal O_S)=\ell,\qquad K_S\equiv0.
  \]

  We will show that \(\omega_S\simeq\mathcal O_S\) and construct a nowhere-vanishing \(\ell\)-derivation \(\vartheta\) on \(S\) satisfying \(\vartheta^{[5]}=\vartheta\).
  The associated free \(\mu_5\)-action defines a quotient \(\rho\colon S\to Y:=S/\vartheta\).
  We will prove that \(Y\) is regular, projective, and geometrically non-normal over \(\ell\), with \(H^0(Y,\mathcal O_Y)=\ell\) and \(K_Y\) of exact order \(5\).
  The construction is summarized by the diagram
  \[
    \begin{tikzcd}[column sep=large,row sep=huge]
      (\widetilde X,\mathcal D)
      \arrow[d,"\sigma" left,"\substack{\text{birational contraction}\\\text{of foliated pairs}}" right] \\
      (X,\mathcal F)
      \arrow[r,"\pi"] &
      S:=X/\mathcal F
      \arrow[r,"\rho"] &
      Y:=S/\vartheta.
    \end{tikzcd}
  \]

  \par\smallskip
  \noindent\textbf{Step 1: Triviality of $\omega_S$.}
  On the affine chart \(V_1=\operatorname{Spec}k[t,w]\subset\widetilde X\), the restriction \(\sigma|_{V_1}\colon V_1\to\sigma(V_1)\) is an isomorphism onto an open subset of \(X\), so we also write \(V_1\) for its image in \(X\).
  In the function field \(K=k(t,w)\), the foliation is
  \[
    \mathcal D=T_{\widetilde X/\ell}\cap K\delta,
    \qquad
    \text{where }
    \delta=\partial_{c_1}+t\,\partial_{c_2}+w\,\partial_t
    \in\operatorname{Der}_\ell(K).
  \]
  Put \(L:=\ker\delta\), so \([K:L]=5\).
  Set \(R:=\ker(\delta|_{k[t,w]}) \subset k[t,w]\) to be the coordinate ring of \(\pi(V_1)\).
  The functions
  \[
    A:=t-c_1w,\qquad
    B:=c_2-c_1t+\tfrac12 c_1^2w,\qquad
    W:=w
  \]
  are \(\delta\)-invariant.
  Since \(k[t,w]=\bigoplus_{i=0}^{4}\ell[A,B,W]c_1^i\) and \(\delta(c_1)=1\), the functions \(A,B,W\) generate \(R\) as an \(\ell\)-algebra, with presentation
  \[
    R\simeq\ell[A,B,W]/(F),\qquad
    F=B^5-s_2+s_1A^5+\tfrac12 s_1^2W^5 .
  \]

  Adjunction for \(\operatorname{Spec}R\subset\mathbb A^3_\ell\) gives a generator
  \[
    \eta_0:= (dA\wedge dB\wedge dW)\otimes F^\vee
  \]
  of \(\omega_S\) on \(\pi(V_1)\).

  The boundary \(\widetilde X\setminus V_1\) consists of \(E_1,E_2,G_1,G_2\).
  Writing \(\Gamma_i:=\pi(\sigma(G_i))\), we therefore have \(\operatorname{div}(\eta_0)=a\Gamma_1+b\Gamma_2\) for some \(a,b\in\mathbb Z\).
  To prove that \(\eta_0\) extends to a global generator of \(\omega_S\), it remains to show that \(a=b=0\).
  Put \(H_i:=\sigma(G_i)\).
  We use the notation \((D\cdot C)_k\) for Mumford's numerical intersection pairing over \(k\).
  The toric intersection numbers in $\widetilde X$ are
  \[
    \begin{gathered}
      (E_1\cdot E_1)_k=-3,\quad (E_2\cdot E_2)_k=-2,\quad
      (G_1\cdot G_1)_k=-2,\quad (G_2\cdot G_2)_k=-1,\\
      (E_1\cdot E_2)_k=(E_2\cdot G_2)_k=(G_2\cdot G_1)_k=1,
    \end{gathered}
  \]
  and all other intersections among these four curves are zero.
  The pullbacks are
  \[
    \sigma^*H_1=G_1,\qquad
    \sigma^*H_2=G_2+\tfrac15E_1+\tfrac35E_2.
  \]
  Therefore
  \[
    (H_1\cdot H_1)_k=-2,\qquad (H_1\cdot H_2)_k=1,\qquad
    (H_2\cdot H_2)_k=-1+\tfrac35=-\tfrac25.
  \]
  Let \(e_i>0\) be defined by \(\pi^*\Gamma_i=e_iH_i\).
  Explicitly, $\pi^*\operatorname{div}(\eta_0)=ae_1H_1+be_2H_2$.
  Since \(\operatorname{div}(\eta_0)\equiv K_S\equiv0\), this divisor is numerically trivial. 
  Since the intersection matrix
  \(\bigl((H_i\cdot H_j)_k\bigr)_{1\le i,j\le2}\) is invertible, we obtain \(a=b=0\).

  \par\smallskip
  \noindent\textbf{Step 2: A free $\mu_5$-action on $S$.}
  Define \(\vartheta\in\operatorname{Der}_{\ell}(K)\) by
  \[
    \vartheta(c_1)=-c_1,\quad \vartheta(c_2)=c_2,\quad
    \vartheta(t)=2t,\quad \vartheta(w)=3w.
  \]
  This satisfies \(\vartheta^{[5]}=\vartheta\) and \([\vartheta,\delta]=\delta\).
  Moreover, \(\vartheta\) is diagonal on monomials in \(t,w\), so it preserves \(\mathcal O_X\) and \(L\).
  Here \(\vartheta(L)\subseteq L\) follows from \([\vartheta,\delta]=\delta\): if \(f\in L\), then
  \[
    \delta(\vartheta f)=\vartheta(\delta f)-[\vartheta,\delta](f)=0.
  \]
  It therefore induces a global derivation \(\vartheta\in H^0(S,T_{S/\ell})\), again denoted by \(\vartheta\), with \(\vartheta^{[5]}=\vartheta\).
  We claim that \(\vartheta\) vanishes nowhere on \(S\).
  For this, recall the six affine charts of \(\widetilde X\) and the corresponding local generators \(\partial\) of \(\mathcal D\):
  \[
    {
    \renewcommand{\arraystretch}{1.2}
    \begin{array}{c|c|c|cccc}
      \text{chart} & (y_1,y_2) & \partial & \partial(c_1) & \partial(c_2) & \partial(y_1) & \partial(y_2) \\
      \hline
      V_1 & (t,w) & \delta & 1 & t & w & 0\\
      V_2 & (t,v) & v\delta & v & tv & 1 & 0\\
      V_3 & (t',\tilde v) & t'^2\tilde v\,\delta & t'^2\tilde v & t'\tilde v & -t' & 3\tilde v\\
      V_4 & (t',\eta) & t'\delta & t' & 1 & -t'\eta & 3\eta^2\\
      V_5 & (\tilde w,\rho) & \tilde w^2\rho^2\delta & \tilde w^2\rho^2 & \rho & 2 & 0\\
      V_6 & (\xi,\tau) & \xi^2\tau\,\delta & \xi^2\tau & 1 & 2 & 0
    \end{array}
    }
  \]
  where \(t'=1/t\), \(v=1/w\), \(\tilde w=w/t^3\), \(\tilde v=1/\tilde w\), \(\eta=w/t^2\), \(\xi=t^2/w\), and \(\rho=t^5/w^2=1/\tau\).

  \smallskip
  We check the vanishing of \(\vartheta\) separately at points
  \(s:=\pi(x)\) for \(x\in X\setminus\{P\}\), and at \(s_0 = \pi(P)\).
  For \(x\in X\setminus\{P\}\), by the local structure in \cite{WZ26}, \(\mathcal D\) is a subbundle at \(x\) and \(\mathcal O_{X,x}=\bigoplus_{i=0}^{4}\mathcal O_{S,s}g^i\) for a local generator \(\partial\) of \(\mathcal D\) normalized by \(\partial(g)=1\).
  If \(\vartheta\) vanished at \(s\), then \(\vartheta(a_i)\in\mathfrak m_s\) for every \(a_i\in\mathcal O_{S,s}\).
  Every $f\in\mathcal O_{X,x}$ can be expressed as \(f=\sum_i a_i g^i\). 
  Using \(\partial(g^i)=i g^{i-1}\), one sees that \(\vartheta-\vartheta(g)\partial\) would map \(\mathcal O_{X,x}\) into \(\mathfrak m_x\), so
  \[
    \vartheta\equiv\vartheta(g)\,\partial\pmod{\mathfrak m_x T_{\widetilde X/\ell}}
  .
  \]
  This relation does not hold in any of the six charts in the above table.
  For example, on \(V_1\), the congruences \(-c_1\equiv\vartheta(g)\), \(c_2\equiv\vartheta(g)\,t\), \(2t\equiv\vartheta(g)\,w\), and \(3w\equiv0\) force \(w(x)=0\), then \(t(x)=0\), then \(c_2(x)=0\), which is absurd.
  Thus $\vartheta$ does not vanish at $s$.
  At \(s_0\), recall from \cite{WZ26} that, 
  \[
    \mathcal O_{X,P}=k\bigl[U^aT^b:\ 0\le2b\le5a\bigr]_{\mathfrak m},\qquad
    \partial=T\partial_T+U(T\partial_{c_1}+T^2\partial_{c_2}),
  \]
  where \(U=1/w\), \(T=t\), and \(\mathfrak m\) is the maximal ideal of the point \(P\).
  The invariant ring \(\mathcal O_{S,s_0}=\ker(\partial|_{\mathcal O_{X,P}})\) has residue field \(k\).
  The element \(q:=c_1-t/w\in\mathcal O_{S,s_0}\) is a unit with residue \(c_1\) and satisfies \(\vartheta(q)=-q\), so \(\vartheta\) does not vanish at \(s_0\) either.

  Since \(\vartheta^{[5]}=\vartheta\) and \(\vartheta\) vanishes nowhere, the associated \(\mu_5\)-action on \(S\) is free.

  \par\smallskip
  \noindent\textbf{Step 3: The quotient $Y$ and its canonical order.}
  Let \(\rho\colon S\to Y:=S/\vartheta\) be the quotient.
  Then \(\rho\) is a finite flat \(\mu_5\)-torsor and \(Y\) is regular and projective.
  On \(\pi(V_1)=\operatorname{Spec}R\), the derivation is diagonal:
  \[
    \vartheta(A)=2A,\qquad \vartheta(B)=B,\qquad \vartheta(W)=3W.
  \]
  Let \(T\) be an \(\ell\)-scheme and let \(\lambda\in\mu_5(T)\).
  The associated \(\mu_5\)-action is characterized on \(R_T\) by
  \[
    \lambda^*A=\lambda^2A,\qquad \lambda^*B=\lambda B,\qquad
    \lambda^*W=\lambda^3W,
  \]
  corresponding to the displayed formulas for \(\vartheta\).
  Using the formula
  \(\eta_0=(dA\wedge dB\wedge dW)\otimes F^\vee\) from Step~1 and the invariance of \(F\), we obtain
  \[
    \lambda^*\eta_0=\lambda^{2+1+3}\eta_0=\lambda\eta_0
  \]
  for every such \(T\) and \(\lambda\).
  By Lemma~\ref{lem:canonical-character-descent}, we have \(\ord(K_Y)=5\).

  \smallskip
  This gives a projective regular surface \(Y\) over \(\ell\) with \(H^0(Y,\mathcal O_Y)=\ell\), \(K_Y\equiv0\), and \(\ord(K_Y)=5\).
  Moreover, \(Y\) is geometrically non-normal and its Albanese variety is trivial: otherwise Theorem~\ref{thm:main}\,(A) or (B) would give \(12K_Y\sim0\), contradicting \(\ord(K_Y)=5\).
  Hence \(Y\) falls in case~(C), and the statement of Theorem~\ref{thm:main}\,(C) is sharp for \(p=5\).
\end{example}

\newcommand{\etalchar}[1]{$^{#1}$}

\end{document}